\documentclass[11pt]{article}
\usepackage{fullpage}

\usepackage[utf8]{inputenc} 
\usepackage[T1]{fontenc}    
\usepackage{hyperref}       
\usepackage{url}            
\usepackage{booktabs}       
\usepackage{amsfonts}       
\usepackage{nicefrac}       
\usepackage{microtype}      
\usepackage{tcolorbox}

\usepackage{enumerate}
\usepackage{enumitem}
\usepackage{algorithmic}
\usepackage{algorithm}
\usepackage{subfigure}
\usepackage{graphicx} 
\usepackage{caption}
\usepackage{amsmath}
\usepackage{amsthm}
\usepackage{amssymb}
\usepackage{tikz}
\usepackage{tablefootnote}
\usepackage{multirow}
\usepackage{enumerate}
\usepackage{color}
\usepackage{xcolor}
\usetikzlibrary{arrows}

\allowdisplaybreaks[4]

\usepackage{mathrsfs}


\usepackage{hyperref}
\usepackage{bm,todonotes}


\def\A{\mathcal{A}}

\allowdisplaybreaks

\newtheorem{thm}{Theorem}[section]
\newtheorem{lem}{Lemma}[section]
\newtheorem{cor}{Corollary}[section]
\newtheorem{prop}{Proposition}[section]
\newtheorem{asmp}{Assumption}[section]
\newtheorem{defn}{Definition}[section]

\newtheorem{rem}{Remark}[section]


\usepackage{amsmath,amsfonts,bm}


















\def\1{\bm{1}}

\def\eps{{\varepsilon}}










\DeclareMathAlphabet{\mathsfit}{\encodingdefault}{\sfdefault}{m}{sl}
\SetMathAlphabet{\mathsfit}{bold}{\encodingdefault}{\sfdefault}{bx}{n}






\def\A{{\bf A}}
\def\a{{\bf a}}
\def\B{{\bf B}}
\def\bb{{\bf b}}
\def\C{{\bf C}}
\def\c{{\bf c}}
\def\D{{\bf D}}

\def\e{{\bf e}}

\def\g{{\bf g}}

\def\I{{\bf I}}

\def\u{{\bf u}}

\def\v{{\bf v}}

\def\X{{\bf X}}

\def\ttx{{\widetilde{\bf x}}}
\def\tth{{\widetilde{\bf h}}}

\def\x{{\bf x}}

\def\y{{\bf y}}

\def\z{{\bf z}}
\def\0{{\bf 0}}
\def\1{{\bf 1}}

\def\AM{{\mathcal A}}
\def\CM{{\mathcal C}}
\def\DM{{\mathcal D}}
\def\EM{{\mathcal E}}
\def\GM{{\mathcal G}}

\def\IM{{\mathcal I}}

\def\NM{{\mathcal N}}
\def\OM{{\mathcal O}}
\def\PM{{\mathcal P}}

\def\RB{{\mathbb R}}
\def\EB{{\mathbb E}}

\def\PB{{\mathbb P}}

\def\TB{{\mathbb T}}
\def\GB{{\mathbb G}}
\def\SB{{\mathbb S}}

\def\tx{\tilde{\x}}

\def\bet{\mbox{\boldmath$\beta$\unboldmath}}

\def\tha{\mbox{\boldmath$\theta$\unboldmath}}

\def\argmin{\mathop{\rm argmin}}












\newcommand{\PA}{\mathcal{P}_{\A}}
\newcommand{\PAT}{\mathcal{P}_{\A^{\perp}}}
\newcommand{\xc}{\widetilde{\x}^*}
\newcommand{\sigmaa}{\sigma_{\A,*}^2}
\newcommand{\sigmaat}{\sigma_{\A^\perp, *}^2}

\title{
Delayed Projection Techniques for Linearly Constrained Problems: Convergence Rates, Acceleration, and Applications
}

\author{
	Xiang Li\thanks{School of Mathematical Sciences, Peking University; email: \texttt{lx10077@pku.edu.cn}. } \\
	\and
	Zhihua Zhang\thanks{School of Mathematical Sciences, Peking University; email: \texttt{zhzhang@math.pku.edu.cn}. } \\
}
%

\begin{document}

\maketitle

\begin{abstract}%
	In this work, we study a novel class of projection-based algorithms for linearly constrained problems (LCPs) which have a lot of applications in statistics,  optimization, and machine learning.
	Conventional primal gradient-based methods for LCPs call a projection after each (stochastic) gradient descent, resulting in that the required number of projections equals that of gradient descents (or total iterations). Motivated by the recent progress in distributed optimization, we propose the delayed projection technique that calls a projection once for a while, lowering the projection frequency and improving the projection efficiency. Accordingly, we devise a series of stochastic methods for LCPs using the technique, including a variance reduced method and an accelerated one. We theoretically show that it is feasible to improve projection efficiency in both strongly convex and generally convex cases. Our analysis is simple and unified and can be easily extended to other methods using delayed projections. When applying our new algorithms to federated optimization, a newfangled and privacy-preserving subfield in distributed optimization, we obtain not only a variance reduced federated algorithm with convergence rates better than previous works, but also the first accelerated method able to handle data heterogeneity inherent in federated optimization.
\end{abstract}

\section{Introduction}
The constrained optimization problem is an important ingredient in optimization literature~\cite{nesterov2013introductory,boyd2004convex,linaccelerated}.
It has a lot of applications such as linear programming~\cite{dantzig1998linear}, optimal transport~\cite{villani2008optimal}, reinforcement learning~\cite{sutton2018reinforcement} and distributed optimization~\cite{boyd2011distributed}.
In this paper, we focus on a specific constrained problem--- linearly (or linear equality) constrained problem (LCP), which has many applications in statistics and optimization on which we will give a brief introduction in the next section.
In a nutshell, LCP aims to minimize a (strongly) convex function $F(\x)$ subject to several linear equality constraints on the variable $\x$ which define a feasible regime $\CM = \{\x: \A^\top\x = \bb \}$.

Due to its linearity structure, many methods have been proposed to solve LCP.
If the constraint regime is simple enough like the unit ball or the simplex, a typical method to handle LCP is projected gradient descent, or more generally, proximal gradient descent~\cite{parikh2014proximal}.
Indeed, we can define an indicator function $h(\x) := 1_{\CM}(\x)$ that takes value zero if the variable locates in the feasible regime $\x \in \CM$ and takes infinity otherwise.
In particular, we can show that the indicator function is closed and proper~\cite{boyd2004convex}, which is often required by proximal gradient descent.
Then by adding the indicator function to the original objective, we arrive at a composite optimization problem that minimizes $F(\x) + h(\x)$.
The projected gradient descent (which is equivalent to proximal gradient descent here) iteratively updates the variable according to
\[
\x_{t+\frac{1}{2}} = \x_t - \eta_t \nabla F(\x_t)
\ \text{and} \
\x_{t+1} =  \PM_{\CM} \left( \x_{t+\frac{1}{2}}  \right)
=   \mathrm{prox}_{\eta_{t}h}\left(  \x_{t+\frac{1}{2}}   \right),
\]
where $\eta_t$ is the learning rate, $\PM_{\CM}(\cdot)$ is the projection onto the feasible regime defined by $\PM_{\CM}(\x) := \argmin_{\z \in \CM} \|\z-\x\|^2$, and $\mathrm{prox}_{h}(\cdot)$ is the proximal mapping of $h(\cdot)$ defined by
\[
\mathrm{prox}_{h}(\x)= \argmin_{\z} \left\{ h(\z) + \frac{1}{2} \|\z -\x\|^2 \right\}.
\]
To ensure the feasibility of maintained sequence $\{\x_t\}$,  projected gradient descent (PGD) typically calls a projection after a gradient descent iteration is performed~\cite{nesterov2013introductory}.
This implies that PGD performs the same number for both projection and iteration.
Given projection is cheap to call, PGD is practically feasible and well-understood.
Its convergence shares a lot of similarities with unconstrained optimization methods~\cite{nesterov2013introductory}.


When a projection is impossible to call, the linear structure of the constraint renders researchers an alternative to avoid the projection.
The most natural method is to eliminate equality constraints by reformulating the feasible regime and then solving the resulting unconstrained problem by methods for unconstrained minimization~\cite{boyd2004convex,james2020penalized}.
Indeed, we can rewrite a feasible point $\x$ that satisfies $\A^\top\x = \bb$ as $\x = \y^* + (\A^{\perp}) \z$ where $\A^\top\y^* = \bb$ and the columns of $\A^{\perp}$ locate in the kernel of $\A$.
Then the objective becomes a function of $\z$, which is unconstrained.
If one wants to solve it by gradient methods, however, it will introduce a massive number of matrix-vector products to compute gradients for the new variable $\z$, whose cost will be much higher than that of a hard projection, if the rank of $\A$ is much smaller than its nullity (i.e., the dimension of the kernel of $\A$).
Another celebrated method is to use dual  or primal-dual methods~\cite{komodakis2015playing}.
For example, the Lagrange multiplier method and its augmented extension are  typical approaches for solving LCPs~\cite{birgin2014practical}.
However, these methods need to maintain additional dual iterations and thus require more memories.

In the paper, we focus on the intermediate case between the two extremes, where a projection is possible but expensive to call.
The intermediate case includes many important and interesting problems, such as linearly constrained quadratic programming (LCQP) and the global consensus problem in distributed optimization, which we will introduce in the next subsection.
The central concern in the paper is whether we can use projections the number of which is much less than the total iterations to obtain an $\eps$-suboptimal solution for LCPs.
This question is meaningful only if we focus on the primal perspective because no projection is needed by dual methods for LCPs.
We will give an affirmative answer to the question.

\subsection{Examples of Linearly Constrained Problems (LCPs) 
}
We now present  several important applications from which the interest
for LCPs stems.

\paragraph{Example 1: Linearly Constrained Quadratic Programming (LCQP).}
LCQP  considers the following optimization problem
\[
\min_{\x \in \RB^p} F(\x) :=\x^\top \C \x + \g^\top \x  
\ \text{s.t.} \ \A^\top\x = \bb, 
\]
where $\C \in \RB^{p \times p}$ is a positive definite matrix and $\A \in \RB^{p \times m}$ is the problem-dependent matrix for linear constraints.
Assume that $\C$ has a finite sum structure, i.e., $\C = \frac{1}{N}\sum_{i=1}^N \c_i \c_i^\top$ for a set of $N$ vectors $\{ \c_i \}_{i=1}^N$.
Noting the gradient involves the full evaluation of $\C \x$, which is time-consuming.
If $N$ is quite large, we can estimate $\C$ by sampling a small portion of $\{ \c_i \}_{i=1}^N$, yielding a stochastic gradient.

\paragraph{Example 2: Generalized Lasso Problem.}
Variable selection has received great attention in statistics and machine learning.
~\cite{she2010sparse,tibshirani2011solution} introduced the generalized Lasso problem as follows:
\begin{equation}
\label{eq:g_lasso}
\argmin_{\tha} \frac{1}{2} \| \y - \X \tha \|_2^2 + \lambda \|\D \tha\|_1,
\end{equation}
where $\X \in \RB^{N \times p}$ is the design matrix, $\y \in \RB^{N}$ is the response, and  $\D \in \RB^{r \times p}$ is a fixed, user-specified regularization matrix. 
We choose $\D$ so that sparsity of $\D\tha$ corresponds to some other desired behavior depending on the application, including the fused lasso and trend filtering~\cite{tibshirani2011solution}. 
When $\mathrm{rank}(\D) = r$  ($r \le p$), ~\cite{tibshirani2011solution} showed the generalized lasso can be converted to the classical lasso problem.
When $r > p$, such a reformulation is not possible.
However, in this case,~\cite{james2020penalized} showed  that it can be formulated as an instance of LCP.
\begin{lem}
If $r > p$ where $\mathrm{rank}(\D) = p$, we can find matrices $\A, \C$ and $\widetilde{\X}$ with appropriate dimensions such that the solution of~ Eq.\eqref{eq:g_lasso} is equal to $\tha = \C \bet$, where $\bet$ is given by
\begin{equation}
\label{eq:lcp_lassp}
\argmin_{\bet} \frac{1}{2} \| \y - \widetilde{\X}\bet \|_2^2 + \lambda \|\bet\|_1
\ \mathrm{s.t.} \
\A^\top \bet = \0.
\end{equation}
\end{lem}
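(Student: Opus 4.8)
The plan is to solve this by an explicit change of variables that turns the composite penalty $\|\D\tha\|_1$ into the plain $\ell_1$ penalty $\|\bet\|_1$, at the cost of a single linear equality constraint. The starting observation is that $\mathrm{rank}(\D) = p$ together with $r > p$ means $\D \in \RB^{r\times p}$ has full column rank, so $\D^\top\D \in \RB^{p \times p}$ is invertible and $\D$ admits the left inverse $\D^{+} := (\D^\top\D)^{-1}\D^\top$ satisfying $\D^{+}\D = \I_p$. First I would introduce the new variable $\bet := \D\tha \in \RB^r$; since $\D$ is injective, $\tha \mapsto \D\tha$ is a bijection between $\RB^p$ and the column space $\range(\D)$, with inverse $\tha = \D^{+}\bet$.

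Next I would read off the three matrices. Setting $\C := \D^{+} = (\D^\top\D)^{-1}\D^\top$ gives $\tha = \C\bet$, which is exactly the advertised relation. Substituting $\tha = \C\bet$ into the data-fit term and defining $\widetilde{\X} := \X\C$ yields $\X\tha = \widetilde{\X}\bet$, so the quadratic loss becomes $\frac12\|\y - \widetilde{\X}\bet\|_2^2$; meanwhile $\|\D\tha\|_1 = \|\bet\|_1$ holds by construction of $\bet$. To encode the requirement that $\bet$ lie in $\range(\D)$, I would take $\A \in \RB^{r\times(r-p)}$ to be any matrix whose columns form a basis of the orthogonal complement $\range(\D)^{\perp}$ (which has dimension $r-p$); then $\A^\top\bet = \0$ if and only if $\bet \in \range(\D)$. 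This exhibits $\A$, $\C$, and $\widetilde{\X}$ with the required dimensions.

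Finally I would assemble the equivalence. The map $\tha \mapsto \bet = \D\tha$ carries $\RB^p$ bijectively onto the feasible set $\{\bet : \A^\top\bet = \0\} = \range(\D)$, and under this correspondence the two objectives agree pointwise. Hence $\tha^\star$ minimizes the generalized Lasso objective over $\RB^p$ if and only if $\bet^\star := \D\tha^\star$ minimizes the constrained objective of Eq.\eqref{eq:lcp_lassp}, and the primal solution is recovered via $\tha^\star = \C\bet^\star$. The step I expect to require the most care is verifying that the single constraint $\A^\top\bet = \0$ captures the feasible set exactly --- that is, that the reparametrization is a genuine bijection onto $\{\bet : \A^\top \bet = \0\}$ rather than merely an objective-preserving substitution --- since this is precisely what guarantees the minimizers, and not just the optimal values, are in correspondence.
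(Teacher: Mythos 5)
Your proof is correct, and it is precisely the standard change-of-variables construction ($\bet=\D\tha$, $\C=(\D^\top\D)^{-1}\D^\top$, $\widetilde{\X}=\X\C$, $\A$ spanning $\range(\D)^{\perp}$) that the paper itself does not reprove but simply imports by citation from the constrained-lasso literature. The key bijection you flag --- that $\{\bet:\A^\top\bet=\0\}=\range(\D)$ so that minimizers, not just optimal values, correspond --- is handled correctly, so nothing is missing.
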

\cite{deng2020efficient} developed a semismooth Newton-based augmented Lagrangian method to solve the above LCP.
Such a reformulation considers only linear equality constraints.
As an extension,~\cite{gaines2018algorithms,james2020penalized} additionally considered linear inequality constraints and proposed a more general framework named as constrained Lasso.
When $N$ is quite large, the finite-sum structure of~Eq.\eqref{eq:lcp_lassp} renders us able to use stochastic gradients generated by uniformly sampling a small batch of data points from the $N$-size training set to save the expensive computation.

\paragraph{Example 3: Network Flow Optimization Problem.}
Consider a network represented by a directed graph $\GM = (\NM, \EM)$ with node set $\NM = [n]$ and edge set $\EM = [e]$.
The network is deployed to support a single information flow specified by incoming rates $b_i > 0$ at source nodes and outgoing rates $b_i < 0$ at sink nodes.
We collect the rate requirements in a vector $\bb = (b_1, b_2, \cdots, b_n)^\top$ that satisfy $\sum_{i=1}^n b_i  = 1_n^\top \bb = 0$ in order to ensure problem feasibility.
The goal of a network flow optimization problem is to determine a flow vector $\x = (x_1, \cdots, x_e)^\top \in \RB^{e}$ with $x_l$ denoting the amount of flow on edge $l = (i, j)$.
Flow conservation is enforced as a linear equality constraint $\A^\top \x = \bb$, where $\A \in \RB^{e \times n}$ is the edge-node incidence matrix defined by
\[
\A(l, i) = 
\left\{\begin{array}{ll}
1 & \text { if edge } l \text { leaves node } i,\\
-1 & \text { if edge } l \text { enters node } i,\\
0 & \text { otherwise. }
\end{array}\right.
\]
Then convex min-cost flow network optimization problem~\cite{zargham2013accelerated} is defined as
\[
\min_{\x \in \RB^e} F(\x) := \sum_{l=1}^e f_l(x_l)
\ \text{s.t.} \ \A^\top\x = \bb.
\]

\paragraph{Example 4: (Global Consensus of) Distributed Optimization.}
In typical distributed optimization, we want to find a global vector that minimizes the average of $n$ local objective function, i.e., $\min_{\x \in \RB^d} \frac{1}{n} \sum_{k=1}^{n} f_{k}(\x) $, where $f_{k}(\x) =  \EB_{\xi\sim \DM_k} f(\x; \xi)$ is the $k$-th local loss function evaluated on local data distribution $\DM_k$.
The global consensus of it~\cite{boyd2011distributed,parikh2014proximal} is then formulated as
\begin{equation}
\label{eq:global_consensus}
\min_{\x^{(1)}, \x^{(2)}, \cdots, \x^{(n)} \in \RB^d} 
\frac{1}{n} \sum_{k=1}^{n} f_{k}(\x^{(k)})  
\quad \mathrm{s.t.} \quad
\x^{(1)} = \x^{(2)} = \cdots = \x^{(n)},
\end{equation}
where the $\x^{(i)} \in \RB^d$ are the local parameters.
If we concatenate all the local variables as $\x = [(\x^{(1)})^\top, (\x^{(2)})^\top, \cdots, (\x^{(n)})^\top]^\top \in \RB^{nd}$, we can rewrite~Eq.\eqref{eq:global_consensus} in a simpler form 
\begin{equation}
\label{eq:constraint}
\min_{\x \in \RB^{nd}} 
F(\x) := \sum_{k=1}^{n} f_{k}(\x^{(k)})  
\quad \mathrm{s.t.} \quad
\A^\top \x = \0, 
\end{equation}
where
\begin{equation}
\label{eq:B}
\A = \I_d \otimes \B^\top \in \RB^{dn \times d(n-1)}
\ \text{and} \
\B = \left(
\begin{matrix}
1 & -1 & 0 & \cdots &0 & 0\\
0 & 1 & -1 & \cdots & 0 & 0 \\
\vdots & \vdots & \vdots & \ddots & \vdots &\vdots \\
0 & 0 & 0 & \cdots & 1 & -1
\end{matrix}
\right) \in \RB^{(n-1) \times n}.
\end{equation}
From~Eq.\eqref{eq:constraint}, the distributed optimization is essentially a constrained optimization problem where the variable lies in a higher dimension space (because it is the concatenate of all local variable $\x = [(\x^{(1)})^\top, (\x^{(2)})^\top, \cdots, (\x^{(n)})^\top]^\top \in \RB^{nd}$) and the constraint is an equality $\A^\top \x=\0$. 
Although this consensus formulation involves more variables, it is more amenable to the analysis of distributed procedures~\cite{boyd2011distributed}.

The global consensus problem has been studied extensively in the literature.
Primal methods include distributed subgradient method~\cite{nedic2009distributed} and the EXTRA method~\cite{shi2015extra}, dual methods include distributed dual averaging~\cite{duchi2011dual}, and primal-dual based methods such as the Alternating Direction Method of Multipliers (ADMM)~\cite{boyd2011distributed} and its variants~\cite{zhang2017distributed}.
Recently, a new distributed computing paradigms named Federated Learning (FL) becomes quite famous for its privacy-preserving property~\cite{kairouz2019advances}.
Though it also can be reformulated as an instance of problem~\eqref{eq:constraint}, it faces more challenges, including expensive communication costs, unreliable connection, massive scale, and privacy constraints~\cite{li2020federated}.
We will introduce FL formally and detailedly in Section~\ref{sec:FL} and give two novel primal methods that overcome the statistical heterogeneity inherent in FL.

Except for the methods introduced previously, other optimization methods (not exclusively) have been proposed for LCPs.
~\cite{fletcher1972algorithm,murtagh1978large} applied quasi-Newton methods to solve large scale non-linear LCPs, which, however, suffer great computation complexity.
~\cite{best1975feasible} used conjugate directions to minimize a nonlinear function subject to linear inequality constraints.
\cite{mahdavi2012stochastic} developed a primal-dual stochastic optimization algorithm that only requires one projection at the last iteration to produce a feasible solution in the given domain. 
~\cite{hong2018gradient,lu2020finding} proposed primal-dual algorithms that converge to the second-order stationary solutions for nonconvex LCPs.
~\cite{armand2017globally} presented a Newton-like method applied to a perturbation of the optimality system that follows from a reformulation of the initial problem by introducing an augmented Lagrangian function.
Our method is from the primal perspective and is mainly motivated by the recent progress in distributed optimization.

\subsection{Motivation}
For simplicity, we assume $\bb=\0$ and thus the equality constraint becomes $\A^\top \x = \0$.
In the latter section, we will explain the importance and feasibility of the assumption.
A typical operation making $\x$ satisfy the linear constraint is projection. 
Let $\PA$ be the projection onto the column space of $\A$ (denoted $\mathcal{R}(\A)$) and $\PAT$ the projection onto the null space of $\A^\top$ (denoted  $\mathcal{R}(\A^\perp)$).
Then, the constraint $\A^\top \x = \0$ is equivalent to requiring $\x$ has no component on $\mathcal{R}(\A)$\footnote{This is because $\PA(\x) = \A(\A^\top \A)^{\dag}\A^\top \x$ with $\dag$ the pseudo inverse.}.

In our last example of distributed optimization, an interesting observation is that $\PAT(\x)$ forms in $\bar{\x} \otimes \1_n$ with $\bar{\x}$ being the average of $n$ block components of $\x$, as show in Lemma~\ref{lem:proj}.
\begin{lem}
	\label{lem:proj}
	For $\x = [(\x^{(1)})^\top, (\x^{(2)})^\top, \cdots, (\x^{(n)})^\top]^\top  \in \RB^{nd}$ and $\A$ is given in~Eq.\eqref{eq:B}, then we have 
	\begin{equation*}
	\PAT(\x) = \bar{\x} \otimes \1_n \ \text{with} \
	\bar{\x} = \frac{1}{n} \sum_{k=1}^n \x^{(k)}.
	\end{equation*}
\end{lem}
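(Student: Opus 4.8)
The plan is to recognize $\mathcal{R}(\A^\perp)=\nul(\A^\top)$ as the consensus subspace $\mathcal{C}_0:=\{\y=[(\y^{(1)})^\top,\dots,(\y^{(n)})^\top]^\top : \y^{(1)}=\cdots=\y^{(n)}\}$ and then verify the candidate $\bar{\x}\otimes\1_n$ (the stacked vector whose $n$ blocks all equal $\bar{\x}$) through the variational characterization of an orthogonal projection. First I would analyze $\B$ from Eq.~\eqref{eq:B}: its rows are consecutive differences, so $\B\1_n=\0$, and since $\B$ has full row rank $n-1$ we get $\nul(\B)=\text{span}(\1_n)$. Transporting this through the Kronecker structure of $\A$ (using $\B\1_n=\0$ and the mixed-product rule for $\otimes$) shows that $\A^\top\y=\0$ forces the across-node differences of every coordinate to vanish, i.e. $\y\in\mathcal{C}_0$; conversely every element of $\mathcal{C}_0$ is annihilated by $\A^\top$. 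A dimension count corroborates this: $\rk(\A)=d(n-1)$, so $\dim\nul(\A^\top)=dn-d(n-1)=d=\dim\mathcal{C}_0$.

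Next I would check the two properties that pin down the orthogonal projection of $\x$ onto $\mathcal{C}_0$ uniquely: membership and residual-orthogonality. Membership is immediate, since $\bar{\x}\otimes\1_n$ has all blocks equal to $\bar{\x}\in\RB^d$ and hence lies in $\mathcal{C}_0$. For orthogonality, take an arbitrary element of $\mathcal{C}_0$, namely the stacked vector $\v\otimes\1_n$ with $\v\in\RB^d$, and compute
\[
\langle \x-\bar{\x}\otimes\1_n,\ \v\otimes\1_n\rangle = \sum_{k=1}^n\langle \x^{(k)}-\bar{\x},\ \v\rangle = \Big\langle \sum_{k=1}^n(\x^{(k)}-\bar{\x}),\ \v\Big\rangle = 0,
\]
where the last step uses precisely that $\bar{\x}=\frac1n\sum_{k=1}^n\x^{(k)}$, so that $\sum_{k=1}^n(\x^{(k)}-\bar{\x})=\0$. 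Because these two properties characterize the projection, $\PAT(\x)=\bar{\x}\otimes\1_n$ follows.

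An equivalent, more mechanical route forms the projector in closed form: since $\B$ has full row rank, $\B\B^\top$ is invertible, and the Kronecker factorization gives $\PA=\A(\A^\top\A)^{-1}\A^\top=\I_d\otimes\big[\B^\top(\B\B^\top)^{-1}\B\big]$; recognizing $\B^\top(\B\B^\top)^{-1}\B=\I_n-\tfrac1n\1_n\1_n^\top$ as the projection onto $\text{span}(\1_n)^\perp$ yields $\PAT=\I-\PA=\I_d\otimes(\tfrac1n\1_n\1_n^\top)$, which sends $\x$ to its blockwise average. I expect the only genuine pitfall to be the Kronecker ordering convention: with the node-major concatenation $\x=[(\x^{(1)})^\top,\dots,(\x^{(n)})^\top]^\top$ the averaging projector is literally $(\tfrac1n\1_n\1_n^\top)\otimes\I_d$ rather than $\I_d\otimes(\tfrac1n\1_n\1_n^\top)$, so one must keep the ``coordinate versus node'' indexing consistent throughout (the two differ only by a fixed permutation and the statement is unaffected). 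Everything else is routine linear algebra, and the sole place where the specific value $\bar{\x}$ is forced is the vanishing of $\sum_{k=1}^n(\x^{(k)}-\bar{\x})$ in the orthogonality step.
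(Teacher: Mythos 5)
Your proof is correct. The paper states Lemma~\ref{lem:proj} without proof (it is used later, e.g.\ in the proof of Lemma~\ref{lem:sigma}, but never justified in the appendix), and your argument --- identifying $\nul(\A^\top)$ with the consensus subspace via $\nul(\B)=\mathrm{span}(\1_n)$ and then verifying membership plus residual-orthogonality, which uniquely characterize the orthogonal projection --- is exactly the standard justification one would supply; the closed-form route via $\PA = \I_d\otimes\big[\B^\top(\B\B^\top)^{-1}\B\big]$ is an equally valid check since $\B\B^\top$ is invertible. Your remark on the Kronecker ordering is well taken: with the node-major stacking $\x=[(\x^{(1)})^\top,\dots,(\x^{(n)})^\top]^\top$ the constraint matrix should strictly be $\B\otimes\I_d$ rather than $\I_d\otimes\B$ (and the projector $(\tfrac1n\1_n\1_n^\top)\otimes\I_d$), a harmless permutation-level inconsistency in the paper's notation that you correctly note does not affect the statement.
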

With Lemma~\ref{lem:proj}, we have a novel interpretation of communication that is inevitable in distributed optimization.
A centralized communication typically synchronizes all devices with the average of all local parameters, which exactly has the same effect of $\PAT$.
Indeed, the original distributed optimization is now formulated as an instance of LCPs, thus projection is the synonyms for synchronization.
The observation bridges distributed optimization and single-machine LCP.
As a result, one could apply methods for LCPs to solve distributed optimization problems and vice versa.
The former idea is stale and has been used to design new distributed algorithms. 
For example, \cite{alghunaim2020linear,hong2018gradient} proposed and analyzed an incremental implementation of the primal-descent dual-ascent gradient method used for the solution of LCPs, and applied it to solve distributed optimization problems.
\cite{pathak2020fedsplit} proposed a new algorithm named as {FedSplit} 
by applying deterministic methods for monotone inclusion problems (of which Problem~\eqref{eq:constraint} is an instance), showing that {FedSplit} converges to optima of the original distributed optimization problem with linear convergence rate.
However, the latter is rarely considered.

\begin{algorithm}[tb]
	\caption{Local SGD}
	\label{alg:local_sgd}
	\begin{algorithmic}
		\STATE {\bfseries Input:} function $\{f_k\}_{k=1}^n$, initial point $\x_0$, step size $\eta_t$, communication set $\IM_T$ with $\mathrm{gap}(\IM_T) = E$.
		\STATE {\bfseries Initialization:} let $\x_{0}^{(k)} = \x_0$ for all $k$.
		\FOR{$t=1$ {\bfseries to} $T$}
		\FOR {each device $k=1$ {\bfseries to} $n$}
		\STATE  $\x_t^{(k)} = \x_{t-1}^{(k)} - \eta_{t-1} \nabla f_k(\x_{t-1}^{(k)}; \xi_{t-1}^{(k)})$  
		\IF {$t \in \IM_T$}
		\STATE {$\x_t^{(k)} \gets \frac{1}{n}\sum_{j=1}^n \x_t^{(j)}  $ \quad \# synchronization}
		\ENDIF
		\ENDFOR
		\ENDFOR
		\STATE $\hat{\y} \gets  \frac{1}{nW_T}\sum_{j=0}^{T-1} (1-\mu \eta)^{T-j-1} \sum_{k=1}^n \x_j^{k}$ where $W_T = \sum_{j=0}^{T-1}(1-\mu \eta)^{T-j-1}$.
		\STATE {\bfseries Return: $\hat{\y}$.} 
	\end{algorithmic}
\end{algorithm}

A representative distributed optimization method is distributed SGD~\cite{zinkevich2010parallelized} that synchronizes local parameters after each device performs one step of stochastic gradient descent (SGD).
In the literature of constrained optimization,  the most famous method is perhaps the projected (stochastic) gradient descent (P-SGD), where the constraint is enforced in a separate step by projecting onto the  constraint space after an update is performed~\cite{nesterov2013introductory}.
Viewing synchronization and projection equivalently, the counterpart  of P-SGD in the context of LCPs is distributed SGD.
Recent progresses in distribution optimization find that lowering the frequency of communication is able to improve communication frequency substantially.
The most famous optimization method is Local SGD~\cite{lin2018don,stich2018local,bayoumi2020tighter,woodworth2020local,woodworth2020minibatch} or Federated Average ~\cite{mcmahan2016communication,li2019convergence,konevcny2017stochastic}.
Local SGD (Algorithm~\ref{alg:local_sgd}) runs SGD independently in parallel on
different workers and averages the sequences only once in a while, lowering the communication frequency.
Here $\IM_T \subset [T]$ denotes the set of iterations that calls a communication, and $\mathrm{gap}(\IM_T)$ is the largest interval between two ordered sequential elements in $\IM_T$.
Typically, we set $\IM_T = \{0, E, 2E,\cdots, E\lfloor \frac{T}{E}\rfloor  \}$, which implies we perform a communication for synchronization every $E$ iterations.

The effectiveness of Local SGD inspires us that we can similarly modify P-SGD to improve projection efficiency.
The current question is its feasibility, i.e., whether it is possible to call projections after several (or constant) steps of unconstrained SGDs rather than alternating between one unconstrained SGD and one projection, and whether such a method is projection efficient, which is measured by the required number of projections to obtain a solution with satisfactory accuracy.
To answer these questions, we are motivated to propose and analyze the delayed projection technique that performs a projection once in a while rather than at each iteration.

\subsection{Our Contribution}
\begin{table}[t]
	\newcommand{\tabincell}[2]{\begin{tabular}{@{}#1@{}}#2\end{tabular}}
	\centering
	\resizebox{\textwidth}{39mm}{
	\begin{tabular}{|c|c|c|}
		\hline
		Methods & Generally Convex $(\mu =0)$ & Strongly Convex  $(\mu > 0)$\\
		\hline
		P-SGD 
		& $\OM\left( \frac{L}{\eps}  + \frac{ \sigma^2}{\eps^2}\right) \cdot \Delta^2$     
		& $\widetilde{\OM} \left(  \kappa + \frac{\sigma^2}{\mu \eps}  \right)$       \\  
		\hline
		\tabincell{c}{	DP-SGD (Alg~\ref{alg:multi}) \\ Theorem~\ref{thm:simple} and~\ref{thm:complicate}}
		& $\OM\left( \frac{L}{\eps}  + \frac{ \sigmaat}{E\eps^2} +  \frac{\sqrt{(E-1)L}\widetilde{\sigma}_{\A,*}}{E\eps^{1.5}} \right) \cdot \Delta^2$     
		& $\widetilde{\OM} \left(  \kappa + \frac{\sigmaat}{E \mu \eps} +  \frac{\sqrt{(E-1)L}\widetilde{\sigma}_{\A,*}}{E \mu \eps^{0.5}}  \right)$       \\  
		\hline
		P-SVRG~\cite{xiao2014proximal} &  $\widetilde{\OM}\left( \frac{1}{\eps} \right) \cdot L\Delta^2$   &  
		$\widetilde{\OM}\left( \kappa \right)$     \\
		\hline
		\tabincell{c}{	DP-SVRG (Alg~\ref{alg:multi_SVRG}) \\ Theorem~\ref{thm:svrg}}
		& 
		 \tabincell{c}{If $m=E$, $\OM\left( \frac{1}{\eps} \right) \cdot L\Delta^2$;\\
			 If $m=N$, $\OM\left( \frac{1}{\eps} \right) \cdot L\Delta^2\cdot \max\{ 1, \frac{N}{E} \}$.}
		 &  
		\tabincell{c}{If $m=\kappa$ or $m=E$, $\widetilde{\OM}\left( \kappa \right) $; \\If $m=N$, $\widetilde{\OM}\left( \max\left\{ \kappa, \frac{N}{E} \right\} \right)$.}  \\
		\hline
		P-ASVRG~\cite{shang2018asvrg}
		&\tabincell{c}{ $\widetilde{\OM}\left(  N \sqrt{\frac{L}{\eps} }\Delta  \right)$ \\
		$\widetilde{\OM}\left(  N + \sqrt{N \cdot\frac{L}{\eps} }\cdot \Delta \right)$ (by~\cite{allen2016optimal})
	}
		&  $\widetilde{\OM}\left( N + \sqrt{N \kappa}  \right)$   \\
		\hline
 \tabincell{c}{	DP-ASVRG (Alg~\ref{alg:multi_acc_SVRG}) \\  Theorem~\ref{thm:acc_svrg_general} and~\ref{thm:acc_svrg_strong}}
 & \tabincell{c}{
 	If $m=E=1$ and using~\cite{allen2016optimal}: $\widetilde{\OM}\left(\sqrt{\frac{L}{\eps}} \right);$\\
 	If $m=E=N$ and using~\cite{allen2016optimal}:$\widetilde{\OM}\left(\left( \frac{L}{\eps} \right)^{2/3} \right);$\\
 	If $m=(\frac{L}{\eps})^{1/4} \Delta^{1/2}E$,
 	$ \widetilde{\OM}\left(\left(\frac{L}{\eps}\right)^{3/4} \Delta^{3/2}\right)$.
}
 & \tabincell{c}{If $m=E=1: \widetilde{\OM}\left(\sqrt{\kappa} \right);$\\
	If $m=E=N: \widetilde{\OM}\left(\kappa^{2/3} \right);$\\
	If $E =\max\left\{ \sqrt{ N/\kappa} , 1\right\},$\\
	and $m=\kappa E: \widetilde{\OM}\left( \kappa\right).$}\\
		\hline
	\end{tabular}}
	\caption{
		Projection complexity of different algorithms, defined as the required number of projections to achieve an $\eps$-suboptimal solution (i.e., $\EB\left[F(\hat{\y}) - F(\xc)\right] \le \eps$).
		$L$ is the smoothness modulus, $\mu$ is the modulus for strong convexity.
		$\kappa = \frac{L}{\mu}$ is the condition number and $\Delta^2 = \EB\|\x_0 - \xc\|^2$.
		$E (E \ge 1)$ is the maximum interval between consecutive projections. 
		When $E=1$, we perform a projection each iteration.
		$\sigma^2, \sigma_{\A^{\perp},*}^2$ and $\sigma_{\A,*}^2$ are gradients variances.
		$m$ is the number of inner loops for variance reduced methods and $N$ is the total training size.
		$\widetilde{\OM}$ hides logarithmic dependence.
	}
	\label{table:1}
\end{table}%

\begin{table}[t]
	\newcommand{\tabincell}[2]{\begin{tabular}{@{}#1@{}}#2\end{tabular}}
	\centering
		\resizebox{\textwidth}{28mm}{
	\begin{tabular}{|c|c|c|}
		\hline
		Methods & Generally Convex $(\min_{k=1}^n \mu_k =0)$& Strongly Convex$(\min_{k=1}^n \mu_k >0)$ \\
		\hline
		D-SGD~\cite{zinkevich2010parallelized,stich2019unified} &  $\OM\left(  \frac{L}{\eps}  + \frac{\sigma^2}{n\eps^2}  \right)\cdot B^2$    &   $\widetilde{\OM}\left( \kappa  + \frac{\sigma^2 }{\mu n \eps} \right)$    \\
		\hline
		Local SGD~\cite{koloskova2020unified} & $ \OM\left(\frac{L}{\eps} + \frac{\sigma^2}{nE\eps^2} + \frac{\sqrt{L} \left( \zeta_*   +  \sigma_*/\sqrt{E} \right) }{\eps^{1.5}}\right)\cdot B^2$  
		&  $\widetilde{\OM}\left( \kappa + \frac{\sigma_*^2 }{\mu n E \eps} + \frac{\sqrt{L}\left( \zeta_*   + \sigma_* /\sqrt{E} \right) }{\mu\eps^{0.5}} \right)$  \\
		Corollary~\ref{cor:local_sgd} 
		& $\OM\left( \frac{L}{\eps}  + \frac{\sigma^2}{nE\eps^2} +  \frac{\sqrt{(E-1)L\left(  E \zeta_*^2 + \frac{n-1}{n} \sigma_*^2 \right)}}{E\eps^{1.5}} \right) \cdot B^2$ 
		& $\widetilde{\OM}\left( \kappa  + \frac{\sigma_*^2 }{\mu n E \eps} + \frac{\sqrt{(E-1)L\left(  E \zeta_*^2 + \frac{n-1}{n} \sigma_*^2 \right)}}{\mu E \eps^{0.5}}\right)$      \\  
		\hline
		SCAFFOLD~\cite{karimireddy2019scaffold} 
		& $\widetilde{\OM}\left(  \frac{LB^2}{\eps} + F+\frac{\sigma^2B^2}{nE\eps^2}  \right)$
		&  $\OM\left( \kappa  + \frac{\sigma^2}{\mu n E \eps} \right)$ \\
		\hline
		\tabincell{c}{	Local SVRG\\
		Corollary~\ref{cor:local_SVRG}}
	&  $ \OM\left( \frac{LB^2 }{\eps} + \frac{m F}{E \eps }\right)$   
	&  $ \widetilde{\OM}\left( \max\left\{ \kappa, \frac{m}{E} \right\} \right)$      \\
		\hline
		\tabincell{c}{	Local ASVRG\\
			Corollary~\ref{cor:local_ASVRG}}&      
		$	\widetilde{\OM}\left( \frac{m}{E}\frac{\sqrt{L}B}{\sqrt{\eps}}  + \left(1-\frac{1}{E^2}\right)^{\frac{1}{3}}\left(\frac{m}{E}\right)^{\frac{1}{3}} \frac{ L^\frac{2}{3}B^\frac{4}{3}}{\eps^{\frac{2}{3}}} \right)$
		&   $	\widetilde{\OM}\left( \max\left\{ \frac{m}{E}, \sqrt{\frac{m \kappa}{E}} \right\} + \kappa^{\frac{2}{3}}\left(1-\frac{1}{E^2}\right)^{\frac{1}{3}}\left(\frac{m}{E}\right)^{\frac{1}{3}} \right)$   \\
		\hline
	\end{tabular}}
	\caption{
		Communication complexity of different algorithms, defined as the required number of communications to achieve an $\eps$-suboptimal solution (i.e., $\EB\left[F(\hat{\y}) - F(\xc)\right] \le \eps$ where $F(\cdot) = \frac{1}{n}\sum_{k=1}^n f_k(\cdot)$).
		$L$ is the maximum smoothness modulus among all $f_k$'s and $\mu$ is the minimum modulus for strong convexity among all $f_k$'s, and $\kappa = \frac{L}{\mu}$ is the condition number.
		$B^2 =  \EB\|\x_0 - \xc\|^2$  and $F=\EB\left[F(\x_0) - F(\xc)\right]$.
		$E (E \ge 1)$ is the maximum interval between consecutive communications.
		When $E=1$, we communicate at each iteration.
		$\sigma^2$ is the gradient variance, $\sigma_*^2$ is that at the constrained optima, and $\zeta_*^2$ characterizes the degree of data heterogeneity (see Lemma~\ref{lem:sigma} for a specific definition).
	}
	\label{table:2}
\end{table}%

In this paper we propose the delayed projection technique and analyze methods using it for LCPs.
From a high-level idea, the delayed projection technique aims to lower the frequency of projection in order to improve the projection efficiency.
\begin{itemize}
	\item In particular, we generalize Local SGD and accordingly propose delayed projected SGD (DP-SGD) that performs a projection after a constant number of steps of SGD.
	See Algorithm~\ref{alg:multi} for more details.
	From derived theories, we find that delayed projection helps reduce the statistical error brought by stochastic gradients but makes DP-SGD suffer from an additional error, termed as a residual error, which slows down the convergence rate.
	\item Viewing the residual error as an another form of variance, we are motivated to eliminate it by variance reduction techniques and thus propose delayed projected SVRG (DP-SVRG) shown in Algorithm~\ref{alg:multi_SVRG}.
	The major difference between P-SVRG~\cite{xiao2014proximal}, a famous algorithm, and our DP-SVRG  is that the former calls projections right after each inner loop to ensure feasible gradients (i.e., in $\mathcal{R}(A^{\perp})$), while the latter performs amortized projection that is called after several inner loops.	
	DP-SVRG is successful in eliminating the statistical error and residual error.
	As a result, it converges much faster than DP-SGD (see Table~\ref{table:3}).
	However, we note that P-SVRG and DP-SVRG have the same projection complexity in the large $\kappa$ regime, making us wonder whether delayed projection can be useful in diminishing variance settings.
	\item Hence, we investigate the fastest convergence rate that methods with delayed projections could achieve.
	We propose and analyze accelerated variants of DP-SVRG (see Algorithm~\ref{alg:multi_acc_SVRG}).
	The delayed projected accelerated SVRG (DP-ASVRG) incorporates two acceleration techniques: one is Nesterov’s acceleration~\cite{nesterov2013introductory}, and the other is  variance reduction for the stochastic gradient~\cite{xiao2014proximal}.		
	We find that DP-ASVRG converges more quickly and efficiently than DP-SVRG and has advantages over its non-delayed-projected counterparts like P-ASVRG~\cite{nitanda2014stochastic,shang2018asvrg} in terms of the required number for projection.   
	In particular, in the case of $N$ finite sum minimization, when the sample size $N$ is larger than the condition number $\kappa$ and hyperparameters are well set, DP-ASVRG only needs $\widetilde{\OM}\left(\kappa\right)$ projections to obtain an $\eps$-suboptimal solution, while P-ASVRG needs $\widetilde{\OM}\left( N\right)$ projections, though the two algorithms have the same gradient complexity.
	This result implies that it is possible and provable to solve LCPs using projections less than the total iterations.
	In addition, the delayed projection method combined with variance reduction techniques can benefit from delayed projection.
	\item When the number of inner loops $m$ and the projection interval $E$ are well set, DP-SVRG and DP-ASVRG are reduced to previously known algorithms, like P-SVRG, P-ASVRG, and Nesterov Accelerated Gradient (NAG).
	Our analysis is so flexible that it provides convergence results for them in a unified way.
	In particular, we decompose the variable $\x_t$ into the sum of two orthogonal iterates $\y_t = \PAT(\x_t)$ and $\z_t = \x_t - \y_t$, where $\PA$ is the projection onto the column space of $\A$.
	Then, we analyze each iteration incrementally and recur the error vector to obtain convergence results.
	We illustrate the analysis procedure in Section~\ref{sec:convergence}.
	Moreover, we give  analysis in both strongly convex and generally convex cases to further characterize their convergence behaviors.
\end{itemize}

The proposed methods can be applied to solve any instance of LCPs.
For example, as shown in the last section, we can reduce distributed optimization into an instance of LCP.
Therefore, it is handy to parallelize delayed projection methods to distributed optimization algorithms as their counterparts and derive theories for them.
In particular, we generalize DP-SVRG and DP-ASVRG to Local SVRG and Local Accelerated SVRG, respectively (see Algorithm~\ref{alg:local_SVRG} and~\ref{alg:local_acc_SVRG}).
In this way, we obtain a better convergence rate than previous work (see Table~\ref{table:2} for a brief comparison and more details in Section~\ref{sec:FL}).
For example, both using variance reduction techniques, Local SVRG is able to eliminate both the statistical error and residual error, while the previous SCAFFOLD~\cite{karimireddy2019scaffold} fails to remove the statistical error.

\section{The Problem Setup and Notation}
\label{sec:notation}
In this paper, we focus on the following affine constrained stochastic optimization problem
\begin{equation}
\label{eq:formal_loss}
\min_{\x \in \RB^p} F(\x) := \EB_{\xi \sim \DM} F(\x; \xi) \quad \mathrm{s.t.} \quad
\A^\top \x = \0,
\end{equation}
where $\A$ is a general matrix whenever $\A^\top \x = \0$ has non-trivial solutions and $\xi$ (possibly lying in a high dimensional space) is generated according to $\DM$.
The constraint can be also reformulated as $\x \in \mathcal{R}(\A^{\perp})$, where $\mathcal{R}(\A)$ denotes the space spanned by the columns of $\A$ and $\mathcal{R}(\A^{\perp})$ denotes the orthogonal complement space of $\mathcal{R}(\A)$.
One may be more interested in the case the constraint is $\A^\top \x = \bb$ for a general $\bb$. 
We argue this is a special case of~\eqref{eq:formal_loss}.
Indeed, we could always first solve a feasible solution $\y^*$ of the linear system $\A^\top \y^* = \bb$ and then replace $F(\x)$ with a new function $\widetilde{F}(\x):= F(\x + \y^*)$.
By change of variables, we still arrive at~Eq.\eqref{eq:formal_loss}.

The main reason for using $\mathcal{R}(\A^{\perp})$ as the domain rather than  $\mathcal{R}(\A^{\perp}) + \y^* := \{ \z + \y^*:  \z \in \mathcal{R}(\A^{\perp})  \}=\{ \z : \A^\top \z = \bb \}$ is the nice property inherent in the projection into $\mathcal{R}(\A^{\perp})$, because $\mathcal{R}(\A^{\perp})$ is a linear space, while $\mathcal{R}(\A^{\perp}) + \y^*$ is not.
The projection into a linear space has a lot of nice properties, including linearity, non-expansiveness,  and orthogonality.
Such properties are crucial for deriving convergence theories.

\begin{prop}
	\label{prop:proj}
	Let $\PA$ be the projection onto $\mathcal{R}(\A)$ and $\PAT$ the projection onto $\mathcal{R}(\A^\perp)$. Then
	\begin{enumerate}
		\item Linearity: $\PA(\alpha\x + \beta\y) = \alpha\PA(\x) + \beta\PA(\y)$ for any $\x, \y \in \RB^p$ and $\alpha, \beta \in \RB$;
		\item Non-expansiveness: $\max\{\|\PA(\x)-\PA(\y)\|, \|\PAT(\x)-\PAT(\y)\|  \} \le \|\x-\y\|$ for any $\x, \y \in \RB^p$;
		\item Orthogonality: any $\x \in \RB^p$ can be decomposed uniquely into $\x = \u + \v$ where $\u = \PA(\x)$ and $\v = \PAT(\x)$ satisfying $\langle \u, \v \rangle = 0$.
	\end{enumerate}
\end{prop}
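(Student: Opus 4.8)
The plan is to reduce all three items to the single foundational fact that, in the finite-dimensional inner product space $\RB^p$, the subspace $V := \mathcal{R}(\A)$ and its orthogonal complement $V^\perp = \mathcal{R}(\A^\perp)$ furnish an orthogonal direct-sum decomposition $\RB^p = V \oplus V^\perp$. Equivalently, working with the matrix form already hinted at in the footnote, I would set $\mathbf{P} := \A(\A^\top\A)^{\dag}\A^\top$ and verify, via the Moore--Penrose identities, that $\mathbf{P}$ is symmetric and idempotent ($\mathbf{P}^\top = \mathbf{P}$ and $\mathbf{P}^2 = \mathbf{P}$), with $\PA(\x) = \mathbf{P}\x$ and $\PAT(\x) = (\mathbf{I}-\mathbf{P})\x$. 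Orthogonality (item 3) then follows at once: for any $\x$, writing $\u = \mathbf{P}\x$ and $\v = (\mathbf{I}-\mathbf{P})\x$ gives $\x = \u + \v$ with $\u \in V$, $\v \in V^\perp$, and $\langle \u, \v\rangle = \x^\top \mathbf{P}^\top(\mathbf{I}-\mathbf{P})\x = \x^\top(\mathbf{P}-\mathbf{P}^2)\x = 0$; uniqueness is immediate from $V \cap V^\perp = \{\0\}$.

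For linearity (item 1), I would invoke the variational characterization of the orthogonal projection: $\u = \PA(\x)$ if and only if $\u \in V$ and $\x - \u \perp V$. It then suffices to check that $\alpha\PA(\x) + \beta\PA(\y)$ satisfies these characterizing conditions for the point $\alpha\x + \beta\y$. Membership $\alpha\PA(\x) + \beta\PA(\y) \in V$ holds because $V$ is a subspace, while the residual $(\alpha\x + \beta\y) - (\alpha\PA(\x) + \beta\PA(\y)) = \alpha(\x - \PA(\x)) + \beta(\y - \PA(\y))$ is a linear combination of two vectors in $V^\perp$ and hence lies in $V^\perp$. By uniqueness of the projection this forces $\PA(\alpha\x + \beta\y) = \alpha\PA(\x) + \beta\PA(\y)$; in the matrix picture this is just linearity of $\x \mapsto \mathbf{P}\x$.

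Finally, for non-expansiveness (item 2), I would apply the Pythagorean identity to the orthogonal splitting of $\x - \y$. Using the linearity just established, $\PA(\x) - \PA(\y) = \PA(\x-\y)$ and $\PAT(\x) - \PAT(\y) = \PAT(\x-\y)$, and by item 3 these two vectors are orthogonal and sum to $\x - \y$; hence $\|\x-\y\|^2 = \|\PA(\x-\y)\|^2 + \|\PAT(\x-\y)\|^2$, which dominates each summand separately and yields the claimed bound on the maximum. None of the three items is genuinely deep; the only point demanding care is the underlying subspace fact in full generality, namely confirming that the stated formula for $\mathbf{P}$ really is the orthogonal projector when $\A$ may be rank-deficient (so that the pseudoinverse is needed). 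I expect this verification through the Moore--Penrose conditions --- or, equivalently, a direct appeal to the orthogonal decomposition theorem --- to be the main, though still routine, obstacle.
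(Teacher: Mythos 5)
Your proof is correct, and its backbone --- the symmetric idempotent projector $\mathbf{P}=\A(\A^\top\A)^{\dag}\A^\top$ together with the orthogonal decomposition $\RB^p=\mathcal{R}(\A)\oplus\mathcal{R}(\A^{\perp})$ --- is exactly what the paper uses for items 1 and 3 (the paper's appendix lemma verifies linearity by citing this matrix form and gets orthogonality from $(\I-\mathbf{P})\mathbf{P}=\0$; your variational characterization of $\PA$ is an equivalent, slightly more self-contained way to get linearity without touching the pseudoinverse). The one place you genuinely diverge is item 2. You deduce non-expansiveness from linearity plus the Pythagorean identity $\|\x-\y\|^2=\|\PA(\x-\y)\|^2+\|\PAT(\x-\y)\|^2$, which is the cleanest route for a subspace. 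The paper instead proves a stronger statement valid for an arbitrary closed convex region $\AM$: from the first-order optimality condition $\langle \PM_{\AM}(\x)-\x,\,\PM_{\AM}(\x)-\y\rangle\le 0$ for $\y\in\AM$ it derives $\|\PM_{\AM}(\x)-\x\|^2+\|\PM_{\AM}(\x)-\y\|^2\le\|\x-\y\|^2$, then specializes to $\y=\0$ and uses linearity to recover the form in the proposition. Your argument buys simplicity but is tied to the subspace structure; the paper's convex-analytic version is reused later (it supplies the continuity of $\PM_{\AM}$ needed in the proof of Proposition~\ref{prop:linear}, where $\AM$ is not yet known to be a subspace), so the extra generality is not gratuitous. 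The only loose end you correctly flag --- that $\mathbf{P}$ is the orthogonal projector even when $\A$ is rank-deficient --- is routine via the Moore--Penrose identities and is also left implicit in the paper.
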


We assume that $F(\cdot)$ is well behaved, namely smoothness and (strong) convexity.
Such an assumption is quite common in the machine learning literature~\cite{nesterov2013introductory,bottou2018optimization,gower2020variance,linaccelerated}.
For example, we consider the finite sum minimization problem where $F(\x)$ has a finite sum structure~\cite{mohri2018foundations}.
In particular, $F(\x) = \frac{1}{N}\sum_{i=1}^N \ell(\x, \varsigma_i) = \EB_{\xi} \ell(\x, \xi)$ where $N$ is the number of total samples, and $\xi$ denotes the uniform distribution on the collected training samples $\{\varsigma_i\}_{i=1}^N$ (that is also the empirical distribution of $\DM$, according to which $\{\varsigma_i\}_{i=1}^N$ is i.i.d. sampled).
Then Assumption~\ref{asmp:smooth} requires $F(\x; \xi) = \ell(\x, \xi)$ to be smooth in $\x$ for any collected samples, while Assumption~\ref{asmp:strong} requires the finite sum function $F(\x) = \frac{1}{N}\sum_{i=1}^N \ell(\x, \varsigma_i)$ to be (strongly) convex in $\x$.

\begin{asmp}[Smoothness]
	\label{asmp:smooth}
	For~\eqref{eq:constraint}, we that assume $F(\x)$ is $L$-smooth convex, i.e.,
	\[
	\| \nabla F(\y; 
	\xi) - \nabla F(\x; \xi)  \| \le L \| \x - \y\|,
	\;     \forall \x, \y \in \RB^p \ \text{and} 
	\ \xi.
	\]
\end{asmp}

\begin{asmp}[Convexity]
	\label{asmp:strong}
	For~\eqref{eq:constraint}, we assume that $F(\x)$ is $\mu$-strongly convex ($\mu \ge 0$), i.e.,
	\[
	F(\y) - F(\x) \ge \langle\nabla F(\x), \y-\x \rangle + \frac{\mu}{2} \| \x-\y\|^2,
	\;      \forall \x, \y \in \RB^p.
	\]
\end{asmp}

\begin{cor}
	\label{cor:xc}
	Let Assumption~\ref{asmp:strong} hold with $\mu > 0$. Then the solution of~\eqref{eq:formal_loss} is unique (denoted $\xc$). Moreover, we have $\PAT (\nabla F(\xc))=\0$.
\end{cor}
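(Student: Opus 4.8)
The plan is to recognize \eqref{eq:formal_loss} as the minimization of the smooth, $\mu$-strongly convex function $F$ over the \emph{linear subspace} $\mathcal{R}(\A^\perp)$, since the constraint $\A^\top\x=\0$ is equivalent to $\x\in\mathcal{R}(\A^\perp)$. Both assertions then follow from the standard first-order theory of convex minimization over a convex set, specialized to the case where the feasible set is a subspace. First I would settle existence and uniqueness: the feasible set $\mathcal{R}(\A^\perp)$ is a nonempty closed convex set, and the restriction of $F$ to it inherits $\mu$-strong convexity from Assumption~\ref{asmp:strong}. Because $\mu>0$, $F$ is coercive, so a minimizer exists, and strict convexity forces it to be unique; denote it $\xc$.

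Next I would establish the gradient condition via the variational characterization of a constrained minimizer. Since $\xc$ minimizes the differentiable convex $F$ over the convex set $\mathcal{R}(\A^\perp)$, the first-order optimality condition reads $\langle \nabla F(\xc),\, \x-\xc\rangle \ge 0$ for every $\x\in\mathcal{R}(\A^\perp)$. The crucial step — the only one that uses more than generic convex analysis — exploits that $\mathcal{R}(\A^\perp)$ is a linear subspace containing $\xc$: for any $\v\in\mathcal{R}(\A^\perp)$, both $\xc+\v$ and $\xc-\v$ remain feasible, so substituting $\x=\xc\pm\v$ yields $\pm\langle \nabla F(\xc),\,\v\rangle\ge 0$, hence $\langle \nabla F(\xc),\,\v\rangle=0$ for all $\v\in\mathcal{R}(\A^\perp)$. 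This says exactly that $\nabla F(\xc)$ is orthogonal to $\mathcal{R}(\A^\perp)$, i.e. $\nabla F(\xc)\in\mathcal{R}(\A)$. By the orthogonal decomposition in Proposition~\ref{prop:proj}, the component of $\nabla F(\xc)$ lying in $\mathcal{R}(\A^\perp)$ vanishes, which is precisely $\PAT(\nabla F(\xc))=\0$.

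There is essentially no technical obstacle here; the one point requiring care is upgrading the one-sided variational inequality to a two-sided equality, and this is exactly where the subspace structure (rather than general convexity of the feasible region) is indispensable — it is what converts the inequality optimality condition into the orthogonality statement $\PAT(\nabla F(\xc))=\0$. Everything else (coercivity from strong convexity, uniqueness from strict convexity, and the validity of the first-order condition for differentiable convex $F$) is standard and can be justified directly from Assumption~\ref{asmp:smooth}, Assumption~\ref{asmp:strong}, and Proposition~\ref{prop:proj}.
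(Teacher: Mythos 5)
Your proof is correct, and it reaches the same two conclusions by essentially the same underlying idea (first-order optimality over a linear subspace forces $\nabla F(\xc)\perp\mathcal{R}(\A^\perp)$, and convexity with $\mu>0$ forces uniqueness), but the mechanics differ slightly from the paper's. For the gradient condition, the paper reparameterizes the feasible set as $\x=\A^\perp\z$ and applies the unconstrained first-order condition to $\z^*=\argmin_\z F(\A^\perp\z)$, obtaining $(\A^\perp)^\top\nabla F(\xc)=\0$ via the chain rule; you instead start from the variational inequality $\langle\nabla F(\xc),\x-\xc\rangle\ge 0$ and use the $\pm\v$ substitution to upgrade it to an equality. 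Both are standard and equivalent here. For uniqueness, the paper's argument is structured so that the orthogonality is proved \emph{first} and then used to kill the inner-product term in the strong-convexity inequality, yielding $0\ge\frac{\mu}{2}\|\x^*-\y^*\|^2$; your route gets uniqueness independently from strict convexity, which is cleaner in that it does not depend on the gradient condition, and you also supply the existence argument (coercivity) that the paper leaves implicit. No gaps; if anything your version is slightly more complete.
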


We focus on solving~\eqref{eq:formal_loss} from the primal perspective where the constraint is enforced in a separate step by projecting onto the constraint space.
The most natural approach is the projected (stochastic) gradient-based algorithm that alternates between one step of (stochastic) gradient descent and one step of projection.
~\cite{nesterov2013introductory} used the technique of gradient mapping to analyze such an algorithm.
However, projection is often expensive and time-consuming to manipulate.
For example, as we discussed in the introduction, as the counterpart of projection in distributed optimization, communication is often the bottleneck of distributed optimization and should be reduced as much as possible.
In this paper, we would reduce the number of projections rather than reduce computation.
Motivated by Local SGD~\cite{lin2018don,stich2018local,kairouz2019advances,koloskova2020unified,woodworth2020local,woodworth2020minibatch}, which is a distributed algorithm that alternates between multiple steps of SGD and one step of communication, we try to reduce the frequency of projection when solving~\eqref{eq:formal_loss} and propose Delayed Projected SGD (see Algorithm~\ref{alg:multi}).

\section{Delayed Projected SGD for Linearly Constrained Problems}
We first analyze the novel algorithm named Delayed Projected SGD (DP-SGD) for LCPs~\eqref{eq:formal_loss}.
We will also show how to derive convergence results for methods using delayed projection techniques in a unified way, the procedure going through all our analysis.
This theoretical framework makes theoretical analysis easier and cleaner and sheds light on the design of new algorithms.

\subsection{The Algorithm}
DP-SGD (Algorithm~\ref{alg:multi}) shares the same philosophy of Local SGD, which is to reduce the frequency of projection (or, equivalently, communication, in the context of distributed optimization).
Let $T$ be the number of total iterations.
Let $\IM_T$, a subset of $[T]:=\{1, \cdots, T\}$, index the iterations that perform a projection and the cardinality $|\IM_T |$ denotes the total number of projections.
If $\IM_T = [T]$, projection happens at every iteration, and DP-SGD degenerates  into projected SGD.
To quantize the frequency of projection, we define the gap of $\IM_T$ as the largest interval between two consecutive elements in $\IM_T$ when we sort all elements in a decrease order and denotes it by $\mathrm{gap}(\IM_T)$.
Hence, $\mathrm{gap}(\IM_T) = 1$ characterizes the case of $\IM_T = [T]$ .

\begin{algorithm}[tb]
	\caption{Delayed Projected SGD (DP-SGD)}
	\label{alg:multi}
	\begin{algorithmic}
		\STATE {\bfseries Input:} function $F$, initial point $\x_0$, step size $\eta_t$, projection set $\IM_T$ with $\mathrm{gap}(\IM_T) = E  (E \ge 1)$.
		\FOR{$t=1$ {\bfseries to} $T$}
		\STATE  $\x_t = \x_{t-1} - \eta_{t-1} \nabla F(\x_{t-1}; \xi_{t-1})$   \quad \# $\xi_{t-1}$ is the selected random sample
		\IF {$t \in \IM_T$}
		\STATE {$\x_t \gets \PAT(\x_t)$ \quad \# project $\x_t$ into $\mathcal{R}(A^{\perp})$ and make it satisfy the linear constraints.}
		\ENDIF
		\ENDFOR
		\STATE $\hat{\y} \gets  \PAT(\frac{1}{W_T}\sum_{j=0}^{T-1} (1-\mu \eta)^{T-j-1} \x_j)$ where $W_T = \sum_{j=0}^{T-1}(1-\mu \eta)^{T-j-1}$.
		\STATE {\bfseries Return: $\hat{\y}$.} 
	\end{algorithmic}
\end{algorithm}

\subsection{Convergence Analysis}
\label{sec:convergence}

In this section, we provide convergence guarantees for Algorithm~\ref{alg:multi} that produces the optimum parameter for affine constrained optimization problems.
We give an outline of our analysis for a glance.
We first decompose the iterate $\x_t$ into two components $\x_t = \y_t + \z_t$ where $\y_t = \PAT(\x_t)$ and $\z_t = \PA(\x_t)$.
From Proposition~\ref{prop:proj}, $\y_t$ is orthogonal to $\z_t$.
Then we derive one-step descent analysis for the separate two iterates $\EB\|\y_t-\xc\|^2$ and $\EB \|\z_t\|^2$.
We concatenate them together and denote by $L_t = (\EB\|\y_t-\xc\|^2, \EB \|\z_t\|^2)^\top$.
We formulate an error propagation that depicts how $L_{t+1}$ evolves with $L_t$ and other factors (like gradients variance and residual terms), by which and using a standard recursion argument, we give convergence analysis for DP-SGD.
This analysis procedure goes through all of our analyses.

\begin{asmp}[Bounded variance at the optimum]
	\label{assum:bounded}
	Let $\xc = \argmin_{\x \in \mathcal{R}(A^\perp)}F(\x)$, and then define $\sigmaat = \EB\| \PAT (\nabla F(\xc; \xi) )\|^2$ and $\sigmaa= \EB\|\PA(\nabla F(\xc;\xi))\|^2$.
\end{asmp}
\begin{rem}
	\label{rem:sigma}
	$\sigmaat$ can be rewritten as $\sigmaat = \EB\| \PAT (\nabla F(\xc; \xi) - \nabla F(\xc))\|^2$, which measures the stochastic gradient variance of $F(\cdot; \xi)$ within the space $\mathcal{R}(A^\perp)$ at the constrained optimum $\xc$.
\end{rem}

\begin{lem}
	\label{lem:y_descent}
	Under Assumptions~\ref{asmp:smooth},~\ref{asmp:strong} and~\ref{assum:bounded},
	letting $\y_t = \PAT (\x_t)$ be the projection onto $\mathcal{R}(A^\perp)$ and $\z_t = \PA(\x_t)$ the projection onto $\mathcal{R}(A)$, and if $\eta_t \le \frac{1}{10L}$, we have
	\begin{equation}
	\label{eq:y_descent}
	\EB\|\y_{t+1} - \xc\|^2
	\le (1-\mu \eta_t)  \EB \| \y_t-\xc\|^2 -\eta_t \EB\left[ F(\y_t) - F(\xc)\right]  + 3  \eta_t^2 \sigmaat + 2L\eta_t\EB \| \z_t\|^2.
	\end{equation}
\end{lem}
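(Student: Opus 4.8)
The plan is to track only the \emph{feasible} component $\y_t = \PAT(\x_t)$ and to exploit that this component is insensitive to whether or not a projection is triggered at step $t+1$. First I would record the one-step recursion for $\y_t$. Writing the pre-projection iterate as $\x_t - \eta_t\nabla F(\x_t;\xi_t)$ and using linearity of $\PAT$ (Proposition~\ref{prop:proj}), one checks that in \emph{both} cases---projection triggered or not---the feasible part evolves identically,
\[
\y_{t+1} = \y_t - \eta_t\,\PAT\!\big(\nabla F(\x_t;\xi_t)\big),
\]
because a projection only deletes the $\mathcal{R}(\A)$-component $\z_t$ and leaves $\PAT(\x_{t+1})$ unchanged (idempotence of $\PAT$). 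This is the crux that makes delayed projection analyzable. Expanding the square and taking the conditional expectation (the stochastic gradient being unbiased, so $\EB[\PAT\nabla F(\x_t;\xi_t)\mid\x_t] = \PAT\nabla F(\x_t)$) gives
\[
\EB\|\y_{t+1}-\xc\|^2 = \EB\|\y_t-\xc\|^2 - 2\eta_t\,\EB\langle\y_t-\xc,\PAT\nabla F(\x_t)\rangle + \eta_t^2\,\EB\|\PAT\nabla F(\x_t;\xi_t)\|^2,
\]
and it remains to bound the inner-product (bias) term and the second-moment (variance) term.

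For the inner-product term I would use the orthogonal splitting $\y_t - \xc = (\x_t - \xc) - \z_t$, legitimate because $\y_t - \xc$ and $\z_t$ lie in the complementary subspaces $\mathcal{R}(\A^\perp)$ and $\mathcal{R}(\A)$ (Proposition~\ref{prop:proj}); since $\y_t-\xc\in\mathcal{R}(\A^\perp)$ the projection can be dropped, $\langle\y_t-\xc,\PAT\nabla F(\x_t)\rangle=\langle\y_t-\xc,\nabla F(\x_t)\rangle$. Applying $\mu$-strong convexity (Assumption~\ref{asmp:strong}) to $\langle\x_t-\xc,\nabla F(\x_t)\rangle$ and the descent lemma (Assumption~\ref{asmp:smooth}) to convert $F(\x_t)$ into $F(\y_t)$---note $\y_t = \x_t - \z_t$---the two stray terms $\langle\z_t,\nabla F(\x_t)\rangle$ cancel, and using $\|\x_t-\xc\|^2 = \|\y_t-\xc\|^2 + \|\z_t\|^2 \ge \|\y_t-\xc\|^2$ I expect to obtain
\[
-2\eta_t\langle\y_t-\xc,\nabla F(\x_t)\rangle \le -2\eta_t[F(\y_t)-F(\xc)] - \mu\eta_t\|\y_t-\xc\|^2 + L\eta_t\|\z_t\|^2.
\]

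For the variance term I would center the stochastic gradient at $\xc$: splitting $\PAT\nabla F(\x_t;\xi_t) = \PAT(\nabla F(\x_t;\xi_t)-\nabla F(\xc;\xi_t)) + \PAT\nabla F(\xc;\xi_t)$ and using Young's inequality $\|a+b\|^2\le(1+\theta)\|a\|^2+(1+\theta^{-1})\|b\|^2$, the second piece contributes $\sigmaat$ (Assumption~\ref{assum:bounded}), while the first, after dropping $\PAT$ by non-expansiveness and invoking smoothness-plus-convexity (co-coercivity), is controlled by the Bregman gap $2L[F(\x_t)-F(\xc)-\langle\nabla F(\xc),\x_t-\xc\rangle]$. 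Here Corollary~\ref{cor:xc} ($\PAT\nabla F(\xc)=\0$, i.e. $\nabla F(\xc)\in\mathcal{R}(\A)$) forces $\langle\nabla F(\xc),\y_t-\xc\rangle=0$, so the Bregman gap at $\x_t$ reduces to one at $\y_t$ up to an $O(L\|\z_t\|^2)$ residual. Crucially the whole contribution carries the prefactor $\eta_t^2$; the step-size restriction $\eta_t\le\frac{1}{10L}$ then turns $L\eta_t^2[F(\x_t)-F(\xc)]$ into a small multiple of $\eta_t[F(\y_t)-F(\xc)]$ (strictly less than $\eta_t$) that is absorbed by the $-2\eta_t[F(\y_t)-F(\xc)]$ produced above, leaving $-\eta_t[F(\y_t)-F(\xc)]$, and likewise folds the residuals into $\le 2L\eta_t\|\z_t\|^2$ and the noise into $\le 3\eta_t^2\sigmaat$.

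The main obstacle I anticipate is bookkeeping the coupling between the two components cleanly: the infeasibility $\z_t$ enters both through the descent-lemma conversion $F(\x_t)\leftrightarrow F(\y_t)$ and through the gradient being evaluated at the infeasible point $\x_t$, producing several cross terms $\langle\nabla F(\cdot),\z_t\rangle$ that must be shown to cancel or to be dominated by $L\eta_t\|\z_t\|^2$ via Young's inequality together with co-coercivity $\|\PAT\nabla F(\y_t)\|^2 \le 2L[F(\y_t)-F(\xc)]$. Keeping every $\|\y_t-\xc\|^2$ coefficient exactly at $(1-\mu\eta_t)$---rather than $(1-\mu\eta_t)+O(L\eta_t)$---is the delicate point, and it is precisely why the variance term must be charged to the function gap (through convexity/co-coercivity) and not to the squared distance; this matters most in the generally convex regime $\mu=0$, where there is no strong-convexity slack to absorb an errant $\|\y_t-\xc\|^2$.
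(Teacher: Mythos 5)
Your proposal is correct and follows essentially the same route as the paper's proof: it isolates the feasible component via the recursion $\y_{t+1}=\y_t-\eta_t\PAT(\nabla F(\x_t;\xi_t))$, converts the inner product to $-[F(\y_t)-F(\xc)]-\tfrac{\mu}{2}\|\y_t-\xc\|^2$ plus an $O(L\|\z_t\|^2)$ residual by combining strong convexity at $\x_t$ with smoothness between $\x_t$ and $\y_t$, and charges the second moment to the function gap and to $\sigmaat$ before absorbing the surplus with $\eta_t\le\frac{1}{10L}$ (the paper's Lemmas~\ref{lem:var_localsgd} and~\ref{lem:y_descent_general}). The only cosmetic difference is that you bound the full second moment by centering at $\xc$ with Young's inequality, whereas the paper first performs a bias--variance split and then a three-way decomposition; both yield the same constants.
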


In Lemma~\ref{lem:y_descent}, we show that $\y_{t+1}$, the projection on $\mathcal{R}(\A^{\perp})$ of one step of SGD from $\x_t$, behaves similarly to traditional SGD.
This makes sense; since as long as $\nabla F(\x_t)$ has a non-trivial component on $\mathcal{R}(\A^{\perp})$, $\y_t$ could make use of that to move further towards $\xc$.
However, due to the decayed projection, typically $\nabla F(\x_t)$ also has a non-trivial component on $\mathcal{R}(\A)$ that pushes $\x_t$ to go beyond $\mathcal{R}(\A^{\perp})$.
Hence, one step descent of $\y_t$ also depends on the value of $\z_t$, the projection of $\x_t$ on $\mathcal{R}(\A)$.
The larger $\EB \|\z_t\|^2$, the smaller decent $\y_t$ would make, because $\EB \|\z_t\|^2$ measures the difference between $\nabla F(\x_t)$ and $\nabla F(\y_t)$ (due to the smoothness assumption).
When $\EB \|\z_t\|^2$ vanishes,~Eq.\eqref{eq:y_descent} recovers the result of one-step descent of projected gradient descent.
Next, we are going to bound $\EB \|\z_t\|^2$ that is typically non-zero. 
To warm up, we first make an idealized assumption that $\nabla F(\x) \in \mathcal{R}(\A^\perp)$ uniformly over $\x \in \mathcal{R}(\A^\perp)$ and remove this assumption to derive similar bounds on $\EB \|\z_t\|^2$.

\begin{asmp}[Almost unconstrained gradients]
	\label{asmp:g_unconstrained}
	Assume that the expected gradient on any element of $\mathcal{R}(\A^{\perp})$ is always in $\mathcal{R}(\A^{\perp})$. Thus we have $\PA(\nabla F(\x)) = \0$ for all $\x \in \mathcal{R}(\A^{\perp})$.
\end{asmp}

Assumption~\ref{asmp:g_unconstrained} means that gradients descent is closed under the constraint space in expectation; once the algorithm reaches a feasible point $\x_{t_0}$ in $\mathcal{R}(\A^{\perp})$, the next iterate $\x_{t_0+1}$ produced by one step of expected gradient descent is not going to violate the constraint, implying $\x_{t_0+1} = \x_{t_0} - \eta_{t_0}\nabla F(\x_{t_0}) \in \mathcal{R}(\A^{\perp})$ still holds.
Then we easily find that all the following iterates $\{\x_{t}\}_{t \ge t_0}$ are feasible.
This gives an illusion that the linear-equation constraint disappears, where the so-called almost-unconstrained-gradient name comes from.
However, such an ideal case is not practical, because we often make use of stochastic gradients rather than expected gradients.
Randomness inherent in stochastic gradients is going to provoke violation of linear constraints. 
Assumption~\ref{asmp:g_unconstrained} simplifies the situation: such deviation is purely caused by randomness.
Once the almost-unconstrained-gradient assumption fails, an additional factor $\PA(\nabla F(\x))$ will also affect the generation of iterates; it will complicate the situation and deteriorate the convergence.

\begin{rem}
	If both Assumptions~\ref{asmp:strong} and~\ref{asmp:g_unconstrained} hold, we know that $\nabla F(\xc) = \0$ because $\nabla F(\xc) = \PA(\nabla F(\xc)) +  \PAT(\nabla F(\xc)) =  \0$.
	It means $\argmin_{\x \in \RB^p} F(\x) = \argmin_{\x \in \mathcal{R}(A^\perp)} F(\x)$.
\end{rem}

\begin{lem}
	\label{lem:z_descent}
Under Assumptions~\ref{asmp:smooth},~\ref{asmp:strong},~\ref{assum:bounded} and~\ref{asmp:g_unconstrained}, and if $\eta_t \le \frac{1}{2L}$, we have
	\begin{equation}
	\label{eq:z_descent_0}
	\EB \|\z_{t+1}\|^2 \le 	(1 {-}\mu \eta_t) \EB \|\z_{t}\|^2  +  2\eta_t\EB\left[  F(\y_t)  - F(\xc) \right] + 2\eta_t^2 \sigmaa. 
	\end{equation}
	Without Assumptions~\ref{asmp:g_unconstrained},  if $\eta_t \le \frac{1}{L(2E+3)}$ and $\mathrm{gap}(\IM_T) \le E$, then~\eqref{eq:z_descent_0} becomes
	\begin{equation*}
	\label{eq:z_descent_1}
	\EB \|\z_{t+1}\|^2	\le\left(1 {-} \mu \eta_t+\frac{1-\mu\eta_{t}}{E}+L\eta_t\right) \EB \|\z_{t}\|^2  + 2\eta_t\EB\left[ F(\y_t) - F(\xc) \right] +  3\eta_t^2  \left[\sigmaa  + E\|\nabla F(\xc)\|^2 \right].
	\end{equation*}
\end{lem}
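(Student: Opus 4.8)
The plan is to track the constraint-violating component $\z_t=\PA(\x_t)$ through a single iteration, using the linearity and non-expansiveness of the projections from Proposition~\ref{prop:proj}. Write $g_t=\nabla F(\x_t;\xi_t)$ and $\bar g_t=\nabla F(\x_t)$. Applying $\PA$ to the gradient step gives $\PA(\x_t-\eta_t g_t)=\z_t-\eta_t\PA(g_t)$; if $t+1\in\IM_T$ the ensuing projection sets $\z_{t+1}=\0$ and both claimed inequalities hold trivially because their right-hand sides are nonnegative (note $F(\y_t)-F(\xc)\ge0$ since $\y_t,\xc\in\mathcal R(\A^\perp)$ and $\xc$ minimizes $F$ there). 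So I only treat $t+1\notin\IM_T$. Conditioning on the past and using $\EB\,\PA(g_t)=\PA(\bar g_t)$,
\[
\EB\|\z_{t+1}\|^2=\|\z_t\|^2-2\eta_t\langle\z_t,\bar g_t\rangle+\eta_t^2\,\EB\|\PA(g_t)\|^2 .
\]
Two reductions drive the rest. First, since $\z_t\in\mathcal R(\A)$ we have $\langle\z_t,\PA(v)\rangle=\langle\z_t,v\rangle$ for every $v$, so the cross term feeds directly into (strong) convexity via $\langle\z_t,\bar g_t\rangle=\langle\nabla F(\x_t),\x_t-\y_t\rangle\ge F(\x_t)-F(\y_t)+\tfrac{\mu}{2}\|\z_t\|^2$. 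Second, splitting $g_t=(g_t-\nabla F(\xc;\xi_t))+\nabla F(\xc;\xi_t)$, using non-expansiveness of $\PA$, Assumption~\ref{assum:bounded}, and the standard smoothness-plus-convexity bound on each $F(\cdot;\xi_t)$ yields $\EB\|\PA(g_t)\|^2\le 4L\big(F(\x_t)-F(\xc)-\langle\nabla F(\xc),\x_t-\xc\rangle\big)+2\sigmaa$.

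For the first bullet (under Assumption~\ref{asmp:g_unconstrained}), the remark following that assumption gives $\nabla F(\xc)=\0$, so the second-order bound simplifies to $4L(F(\x_t)-F(\xc))+2\sigmaa$ and, because $\xc$ is then the \emph{global} minimizer, $F(\x_t)-F(\xc)\ge0$. Writing $-2\eta_t(F(\x_t)-F(\y_t))=-2\eta_t(F(\x_t)-F(\xc))+2\eta_t(F(\y_t)-F(\xc))$ and collecting all $F(\x_t)-F(\xc)$ contributions produces the coefficient $-2\eta_t(1-2L\eta_t)$, which is nonpositive once $\eta_t\le\frac1{2L}$. Discarding this nonpositive multiple of the nonnegative suboptimality gap leaves exactly $(1-\mu\eta_t)\|\z_t\|^2+2\eta_t(F(\y_t)-F(\xc))+2\eta_t^2\sigmaa$, which is~\eqref{eq:z_descent_0}.

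The second bullet is the main obstacle. Without Assumption~\ref{asmp:g_unconstrained} we have $\nabla F(\xc)\neq\0$ and, more seriously, $F(\x_t)-F(\xc)$ may be \emph{negative}, since $\x_t$ is unconstrained while $\xc$ minimizes $F$ only over $\mathcal R(\A^\perp)$; hence the nonpositive coefficient $-2\eta_t(1-2L\eta_t)$ can no longer simply be dropped. The key maneuver is to lower-bound the gap by convexity, $F(\x_t)-F(\xc)\ge\langle\nabla F(\xc),\x_t-\xc\rangle$, and then invoke Corollary~\ref{cor:xc}: because $\PAT(\nabla F(\xc))=\0$ and $\y_t-\xc\in\mathcal R(\A^\perp)$, the inner product collapses to $\langle\nabla F(\xc),\x_t-\xc\rangle=\langle\PA(\nabla F(\xc)),\z_t\rangle$. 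The same quantity $\langle\PA(\nabla F(\xc)),\z_t\rangle$ also appears in the second-order bound, and its $\mathcal O(L\eta_t^2)$ contribution merges with the leading one so that what survives is essentially a single inner-product term $-2\eta_t\langle\PA(\nabla F(\xc)),\z_t\rangle$ plus a smaller $\mathcal O(L\eta_t^2)$ piece.

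Finally I would absorb these inner products with Young's inequality, calibrating its parameter to $E$ so that the first piece contributes $\frac{1-\mu\eta_t}{E}\|\z_t\|^2+E\eta_t^2\|\nabla F(\xc)\|^2$ and the residual $\mathcal O(L\eta_t^2)$ inner product contributes the extra $L\eta_t\|\z_t\|^2$ together with a term foldable into $E\eta_t^2\|\nabla F(\xc)\|^2$; using $\|\PA(\nabla F(\xc))\|\le\|\nabla F(\xc)\|$ throughout. The step-size restriction $\eta_t\le\frac1{L(2E+3)}$ is precisely what keeps $1-2L\eta_t\ge0$, keeps $\mu\eta_t$ small enough that the Young constants stay below the advertised value, and makes the factor $3$ on the residual $\sigmaa+E\|\nabla F(\xc)\|^2$ come out as claimed. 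Collecting the $\|\z_t\|^2$, suboptimality, and residual terms then gives the stated inequality; the role of $\mathrm{gap}(\IM_T)\le E$ is only to make this choice of Young parameter consistent with the projection schedule used later when the one-step bound is unrolled.
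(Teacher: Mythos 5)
Your proposal is correct. For the first inequality (under Assumption~\ref{asmp:g_unconstrained}) your argument is essentially the paper's: the same one-step expansion of $\EB\|\z_{t+1}\|^2$, the same strong-convexity bound on the cross term $\langle\z_t,\nabla F(\x_t)\rangle$, the same two-way split of the second moment through $\xc$, and the same observation that $F(\x_t)-F(\xc)\ge 0$ once $\nabla F(\xc)=\0$, so the $-2\eta_t(1-2L\eta_t)$-weighted gap can be discarded.

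For the second inequality your decomposition genuinely differs from the paper's. The paper abandons the full convexity bound on the cross term and instead splits $\nabla F(\x_t)=(\nabla F(\x_t)-\nabla F(\y_t))+\nabla F(\y_t)$, uses gradient monotonicity on the first piece, applies Young's inequality with parameter $\gamma=\mu+\frac{1-\mu\eta}{E\eta}$ to $\langle\z_t,\nabla F(\y_t)\rangle$, and then bounds $\|\nabla F(\y_t)\|^2\le 4L[F(\y_t)-F(\xc)]+2\|\nabla F(\xc)\|^2$; its second moment is controlled by a three-way split through $\y_t$ and $\xc$, which is where the $3L^2\eta_t^2\le L\eta_t$ inflation of the $\|\z_t\|^2$ coefficient originates. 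You instead keep the part-one cross-term bound, repair the possibly negative gap $F(\x_t)-F(\xc)$ by the further convexity bound $F(\x_t)-F(\xc)\ge\langle\nabla F(\xc),\x_t-\xc\rangle=\langle\PA(\nabla F(\xc)),\z_t\rangle$ (valid since $\PAT(\nabla F(\xc))=\0$ and $\y_t-\xc\in\mathcal{R}(\A^\perp)$), and note that this inner product merges exactly with the one produced by the two-way second-moment split, leaving a single term $-2\eta_t\langle\PA(\nabla F(\xc)),\z_t\rangle$ to which Young's inequality with $\gamma=\frac{1-\mu\eta_t}{E\eta_t}$ is applied. This buys you a unified treatment of the two cases (the extra terms arise purely from $\nabla F(\xc)\neq\0$) and in fact a slightly tighter constant: carried out exactly, your route needs neither the $+L\eta_t\|\z_t\|^2$ inflation nor the factor $3$ on $\sigmaa$, so the stated inequality follows with room to spare. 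The paper's route, by contrast, keeps every quantity anchored at the feasible shadow $\y_t$, which is the form it reuses verbatim in the SVRG residual lemma where $\nabla F(\y_t)$ is replaced by $\nabla F(\y_t)-\nabla F(\ttx_s)$. Both are valid proofs of the lemma as stated.
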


Lemma~\ref{lem:z_descent} shows that $\EB\|\z_{t+1}\|^2$ behaves differently from $\EB\|\y_{t+1}-\xc\|^2$.
Essentially, we expect $\EB\|\y_{t+1}-\xc\|^2$ decreases with $t$ because the negative term $-\eta_t \EB\left[ F(\y_t) - F(\xc)\right]$ always decays the right hand side of~Eq.\eqref{eq:y_descent}.
Instead, $\eta_t \EB\left[ F(\y_t) - F(\xc)\right]$ that appears on the right hand side of~Eq.\eqref{eq:z_descent_0} is positive and looses the bound, indicating $\EB \|\z_{t+1}\|^2$ is gradually increasing in $t$.
Latter on, we will illustrate this with a simple example.
Assumption~\ref{asmp:g_unconstrained} affects the relative magnitudes of $\EB \|\z_{t+1}\|^2$ with respect to $\EB \|\z_{t}\|^2$.
When the gradient is almost unconstrained, $\EB \|\z_{t+1}\|^2$ shrinks $\EB \|\z_{t}\|^2$ by a factor of $1-\mu\eta_t$ and then suffers an additive error $\OM(\eta_t \EB\left[ F(\y_t) - F(\xc)\right] + \eta_t^2 \sigmaa )$.
By contrast, when the almost-unconstrained-gradient assumption disappears, $\EB \|\z_{t+1}\|^2$ enlarges $\EB \|\z_{t}\|^2$ by a factor of $1+\frac{1}{E}-\mu\eta_t +3L\eta_t$ and then suffers a larger additive error $\OM(\eta_t \EB\left[ F(\y_t) - F(\xc)\right] + \eta_t^2 \left[\sigmaa + \|\nabla F(\xc)\|^2\right] )$.
Indeed, when $\nabla F(\xc) = \PA(\nabla F(\xc)) \neq \0$, $\x_t$ always moves away from $\xc$ because $\PA(\nabla F(\x_t)) \neq \0$ even if $\x_t$ is quite close to $\xc$, implying $\EB\|\z_t\|^2$ accumulates faster than before.
Fortunately, $\EB \|\z_{t}\|^2$ is set as zero periodically at an interval no larger than $E$, so such exponential enlargement will not last for a long time.

Let $L_t = (\EB\|\y_t-\xc\|^2, \EB \|\z_t\|^2)^\top \in \RB^2$.
Lemma~\ref{lem:y_descent} together with Lemma~\ref{lem:z_descent} depict how $L_{t+1}$ evolves with $L_t$.
It is in  form of $L_{t+1} \le A L_{t} -\eta \delta_t\bb + \eta^2 \c$ where $A \in \RB^{2 \times 2}$, $\bb, \c \in \RB^2$ are some problem-dependent factors and the inequality holds element-by-element.
Using a standard recursion argument (see Lemma~\ref{lem:error_prop} in  Appendix), we can give a convergence analysis for DP-SGD.
We find that the almost-unconstrained-gradient assumption does not affect the convergence and only increases the residual error by an additional term $\|\nabla F(\xc)\|^2$.

\begin{thm}[Simple case]
	\label{thm:simple}
	Suppose that Assumptions~\ref{asmp:smooth},~\ref{asmp:strong},~\ref{assum:bounded} and~\ref{asmp:g_unconstrained} hold, and consider a constant learning rate such that $\eta_t = \eta \le \min\{\frac{1}{10L}, \frac{1}{\mu + 8L(E-1)}\}$.
	Let $\Delta^2 = \EB\|\PAT(\x_{0}) -\xc\|^2$. Then it follows that 
	\begin{equation}
	\label{eq:y_error}
	\EB \left[ F(\hat{\y})-F(\xc)\right]
	\le   
	\underbrace{	\OM\left(\min\left\{\frac{1}{T}, (1-\mu\eta)^T \right\}  \frac{\Delta^2}{\eta}\right)}_{\text{optimization error} }  + 
	\underbrace{\vphantom{ \left(\frac{a^{0.3}}{b}\right) } \OM\left( \eta \sigmaat \right)}_{\text{statistic error}} + 
	\underbrace{\vphantom{ \left(\frac{a^{0.3}}{b}\right) }  \OM\left((E-1) L \eta^2\sigmaa\right)}_{\text{residual error}}. 
	\end{equation}
	Choosing an appropriate step size $\eta$,  we have that for convex case $(\mu = 0)$, 
	\begin{equation}
		\label{eq:y_mu0}
	\EB \left[F(\hat{\y})-F(\xc)\right] = \OM\left( \frac{LE\Delta^2}{T} +\frac{\Delta}{\sqrt{T}} \cdot \sigma_{\A^{\perp},*} + \frac{\sqrt[3]{(E-1)L\Delta^4}}{T^{\frac{2}{3}}} \cdot \sigma_{\A,*}^{\frac{2}{3}}\right) , 
	\end{equation}
	and that for strongly convex case $(\mu > 0)$,
	\begin{equation}
	\label{eq:y_mu>}
	\EB \left[F(\hat{\y})-F(\xc)\right] = \widetilde{\OM}\left( LE\Delta^2 \cdot\exp\left(-\Theta\left(\frac{ T}{\kappa E}\right) \right) + \frac{\sigmaat}{\mu T}  + \frac{(E-1)L\cdot\sigmaa}{\mu^2 T^2}  \right),
	\end{equation}
	where $\kappa = \frac{L}{\mu}$ is the condition number.
\end{thm}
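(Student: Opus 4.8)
The plan is to follow the two-component framework already set up in the excerpt: track the pair $L_t = (\EB\|\y_t - \xc\|^2, \EB\|\z_t\|^2)^\top$ and turn the two one-step inequalities of Lemmas~\ref{lem:y_descent} and~\ref{lem:z_descent} into a single scalar bound on the averaged suboptimality. Since Assumption~\ref{asmp:g_unconstrained} is in force, I would use the cleaner recursion~\eqref{eq:z_descent_0} (no $\|\nabla F(\xc)\|^2$ term), so that only $\sigmaat$ and $\sigmaa$ appear. Writing $D_t := \EB[F(\y_t) - F(\xc)] \ge 0$ (nonnegative because $\y_t \in \mathcal{R}(\A^\perp)$ is feasible and $\xc$ is the constrained minimizer), the two lemmas read $\EB\|\y_{t+1}-\xc\|^2 \le (1-\mu\eta)\EB\|\y_t - \xc\|^2 - \eta D_t + 3\eta^2\sigmaat + 2L\eta \EB\|\z_t\|^2$ and $\EB\|\z_{t+1}\|^2 \le (1-\mu\eta)\EB\|\z_t\|^2 + 2\eta D_t + 2\eta^2\sigmaa$.

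The crucial step is to control the coupling term $2L\eta\EB\|\z_t\|^2$, and here the delayed-projection structure enters. Because a projection onto $\mathcal{R}(\A^\perp)$ resets $\z_t$ to zero and occurs at intervals no larger than $E = \mathrm{gap}(\IM_T)$, I would unroll the $\z$-recursion within each block between consecutive projections, starting from $\EB\|\z_t\|^2 = 0$. Dropping the factors $(1-\mu\eta) \le 1$ and exchanging the order of summation, a double-sum counting argument bounds the block-wise sum of $\EB\|\z_t\|^2$ by $O(E\eta)\sum_{s\in\text{block}} D_s + O(E^2\eta^2)\sigmaa$; summing over the $\approx T/E$ blocks, the quadratic-in-$E$ variance contribution collapses, giving $\sum_{t} \EB\|\z_t\|^2 \le O(E\eta)\sum_t D_t + O(E\eta^2 T)\sigmaa$. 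This is the main obstacle: obtaining a clean linear-in-$E$ factor (really $E-1$, since the first post-projection step contributes nothing) rather than $E^2$, and ensuring that the positive $+2\eta D_t$ drift in the $\z$-recursion is reabsorbed by the negative $-\eta D_t$ drift in the $\y$-recursion.

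Feeding this back into the summed $\y$-recursion, the term $2L\eta\sum_t \EB\|\z_t\|^2$ produces a coefficient $O(LE\eta^2)$ in front of $\sum_t D_t$ together with an additive residual $O((E-1)L\eta^3\sigmaa T)$. Provided $\eta \le \frac{1}{\mu + 8L(E-1)}$, the net coefficient of $\sum_t D_t$ stays bounded below by $\Theta(\eta)$, so the $D_t$ terms can be moved to the left and divided out. Handling the geometric weights $(1-\mu\eta)^{T-j-1}$ through the standard recursion of Lemma~\ref{lem:error_prop} then yields the three-term decomposition~\eqref{eq:y_error}, where $\min\{1/T,(1-\mu\eta)^T\}$ captures the telescoping ($\mu=0$) and weighted-geometric ($\mu>0$) regimes simultaneously; finally, linearity of $\PAT$ (Proposition~\ref{prop:proj}) plus Jensen's inequality converts $\frac{1}{W_T}\sum_j (1-\mu\eta)^{T-j-1} D_j$ into $\EB[F(\hat{\y})-F(\xc)]$.

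Finally I would tune $\eta$. For $\mu=0$ the bound is $O(\Delta^2/(\eta T) + \eta\sigmaat + (E-1)L\eta^2\sigmaa)$ subject to the cap $\eta \le 1/(8L(E-1))$; balancing the optimization term separately against the statistic term (giving $\eta \asymp \Delta/\sqrt{\sigmaat T}$) and against the residual term (giving $\eta \asymp (\Delta^2/((E-1)L\sigmaa T))^{1/3}$), while the cap itself contributes the $LE\Delta^2/T$ leading term, reproduces~\eqref{eq:y_mu0}. For $\mu>0$ the same step-size cap forces $\mu\eta = \Theta(1/(\kappa E))$, so the geometric factor becomes $\exp(-\Theta(T/(\kappa E)))$; choosing $\eta \asymp \log T/(\mu T)$ in the recursion lemma turns the $\eta\sigmaat$ and $(E-1)L\eta^2\sigmaa$ contributions into $\sigmaat/(\mu T)$ and $(E-1)L\sigmaa/(\mu^2 T^2)$ respectively, yielding~\eqref{eq:y_mu>}.
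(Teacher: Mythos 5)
Your proposal is correct and follows essentially the same route as the paper: the paper packages your block-wise unrolling of the $\z$-recursion (with reset to zero at each projection) as the $2\times 2$ upper-triangular matrix recursion of Lemma~\ref{lem:error_prop}, imposes exactly your condition that the propagated $+2\eta D_t$ drift be absorbed by the $-\eta D_t$ drift (which is where $\eta \le \frac{1}{\mu+8L(E-1)}$ comes from), and then tunes $\eta$ via the same balancing arguments (Lemmas~\ref{lem:lr1} and~\ref{lem:lr2}). No substantive differences.
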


\begin{thm}[Complicated case]
	\label{thm:complicate}
	Under the same conditions of Theorem~\ref{thm:simple} but without Assumption ~\ref{asmp:g_unconstrained}, setting a constant learning rate $\eta_t = \eta \le \min\{\frac{1}{L(E+9)}, \frac{1}{\mu + 25L(E-1)}\}$, Algorithm~\ref{alg:multi} has a similar error decomposition~\eqref{eq:y_error} except that $\sigmaa$ is replaced by 
	\begin{equation}
	\label{eq:new_sigmaa}
		\widetilde{\sigma}_{\A,*}^2 = \sigmaa + (E-1)\|\nabla F(\xc)\|^2.
	\end{equation}
	With a similar choice of learning rate in Theorem~\ref{thm:simple}, the bounds~\eqref{eq:y_mu0} and~\eqref{eq:y_mu>} still hold by replacing $\sigmaa$ with $\widetilde{\sigma}_{\A,*}^2$.
\end{thm}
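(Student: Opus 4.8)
The plan is to rerun the two-component analysis behind Theorem~\ref{thm:simple} essentially verbatim, swapping out only the single ingredient that relied on Assumption~\ref{asmp:g_unconstrained}. Writing $\y_t = \PAT(\x_t)$, $\z_t = \PA(\x_t)$ and abbreviating $a_t = \EB\|\y_t-\xc\|^2$, $b_t = \EB\|\z_t\|^2$, $\delta_t = \EB[F(\y_t)-F(\xc)]$, I observe that Lemma~\ref{lem:y_descent} never used Assumption~\ref{asmp:g_unconstrained}, so the $\y$-descent inequality~\eqref{eq:y_descent} is available unchanged. The only substitution is to replace the contractive $\z$-recursion~\eqref{eq:z_descent_0} by the Assumption~\ref{asmp:g_unconstrained}-free bound of Lemma~\ref{lem:z_descent}, whose expansion factor exceeds one and whose additive noise carries the extra drift term $E\|\nabla F(\xc)\|^2$.

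Stacking the two inequalities into $L_t = (a_t, b_t)^\top$ produces the same vector recursion $L_{t+1} \le A L_t - \eta\delta_t\bb + \eta^2\c$ employed in the simple case, with $A = \left(\begin{smallmatrix} 1-\mu\eta & 2L\eta \\ 0 & \rho \end{smallmatrix}\right)$, coupling vector $\bb = (1,-2)^\top$, and noise vector $\c = (3\sigmaat,\ 3\sigmaa + 3E\|\nabla F(\xc)\|^2)^\top$. Relative to Theorem~\ref{thm:simple}, exactly two entries change: the $(2,2)$ entry has grown from $1-\mu\eta$ to $\rho = 1-\mu\eta+\frac{1-\mu\eta}{E}+L\eta > 1$, and the second coordinate of $\c$ has picked up the drift $3E\|\nabla F(\xc)\|^2$ that was zero under Assumption~\ref{asmp:g_unconstrained}.

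I expect the main obstacle to be taming the now-\emph{expansive} second coordinate, and this is exactly what forces the tighter step-size ceiling $\eta \le \frac{1}{L(E+9)}$. Since $\mathrm{gap}(\IM_T)\le E$, the coordinate $b_t$ is reset to $0$ at least once every $E$ iterations, so it only ever compounds over a window of length at most $E$; and once $L\eta \le \frac{1}{E+9}$ we have $\rho \le 1 + \frac{2}{E}$, hence $\rho^{E} \le (1+\frac{2}{E})^{E} = \OM(1)$. Consequently, within each window $b_t$ accumulates only an $\OM(1)$ multiple of the freshly injected noise, and its windowed average stays $\OM\!\big(E\eta^2(\sigmaa + E\|\nabla F(\xc)\|^2)\big)$. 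This is precisely the step where the periodic reset is indispensable: without it the factor $\rho>1$ would compound across all $T$ iterations and the bound would diverge. I would fold this windowed control into the same recursion lemma (Lemma~\ref{lem:error_prop}) used for Theorem~\ref{thm:simple}, so that the structure of the argument is untouched.

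Feeding the windowed bound on $b_t$ back through the $2L\eta b_t$ term of~\eqref{eq:y_descent} and invoking Lemma~\ref{lem:error_prop} exactly as before, the argument closes with the same shape of bound. The sole footprint of dropping Assumption~\ref{asmp:g_unconstrained} is that wherever $\sigmaa$ previously entered the residual error it is now accompanied by the accumulated drift: the per-step factor $E$ on $\|\nabla F(\xc)\|^2$ in Lemma~\ref{lem:z_descent}, recorded through the same window-averaging that already produces the overall $(E-1)$ residual prefactor, is exactly what the bookkeeping registers as the summand $(E-1)\|\nabla F(\xc)\|^2$ of $\widetilde{\sigma}_{\A,*}^2$ in~\eqref{eq:new_sigmaa} (so that it correctly vanishes at $E=1$, where projection is performed every step). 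Thus $\sigmaa$ is replaced throughout by $\widetilde{\sigma}_{\A,*}^2$, and optimizing $\eta$ under the ceiling $\eta\le\min\{\frac{1}{L(E+9)},\frac{1}{\mu+25L(E-1)}\}$ precisely as in Theorem~\ref{thm:simple} reproduces~\eqref{eq:y_mu0} and~\eqref{eq:y_mu>} with this replacement. The one genuinely delicate check is that this smaller ceiling simultaneously keeps $\rho^{E}=\OM(1)$ and preserves the sign of the $\delta_t$-coefficient after the $\z$-side contributes $+2\eta\delta_t$, which is what pins down the constants $9$ and $25$.
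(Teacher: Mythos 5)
Your proposal is correct and follows essentially the same route as the paper: keep the Assumption-free $\y$-descent of Lemma~\ref{lem:y_descent}, substitute the expansive $\z$-recursion of Lemma~\ref{lem:z_descent}, stack into the vector recursion of Lemma~\ref{lem:error_prop} with the $(2,2)$ entry now $\theta=(1+\tfrac1E)(1-\mu\eta)+L\eta>1$, and use the periodic reset plus the step-size ceiling to keep $(\theta/(1-\mu\eta))^{E-1}=\OM(1)$ over each window while the constraint $\eta\le\frac{1}{\mu+25L(E-1)}$ preserves the negative $\delta_t$-coefficient. The only cosmetic difference is in how the drift constant is bookkept (the paper carries $\frac{2E}{3}\|\nabla F(\xc)\|^2$ through the recursion before absorbing it into $\widetilde{\sigma}_{\A,*}^2$), which is immaterial inside the $\OM(\cdot)$.
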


\section{Removing Residual Errors via Variance Reduction}
The residual error, though with a positive dependence on the projection interval $E$, still forms like a variance.
To remove the dependence, we are motivated to use variance reduction methods~\cite{johnson2013accelerating,allen2016improved,gower2020variance}.

\subsection{Delayed Projected SVRG}
Delayed projected SVRG (DP-SVRG), shown in Algorithm~\ref{alg:multi_SVRG}, is divided into $S$ epochs, each consisting of $m$ inner iterations.
Like Algorithm~\ref{alg:multi}, we call a projection only when $t+1 \in \IM_m$ where $\IM_m$ is the projection set with $\mathrm{gap}(\IM_m) = E$.
Typically, we can use $\IM_m^0 = \{ 0, E, 2E, \cdots \} \cap [m]$ that means we call a projection after every $E (E \ge 1)$ inner iterations are finished.

There are several important features that should be highlighted.
First, the stochastic gradient makes use of control variate that is known as the main ingredient for variance reduction.
The gradient $\g_t^s = \nabla F(\x_{t}^s; \xi_{t}^s) - \nabla F(\ttx_s; \xi_{t}^s) + \PAT(\nabla F(\ttx_{s}))$ consists of two parts: (i) the stochastic part $\nabla F(\x_{t}^s; \xi_{t}^s) - \nabla F(\ttx_s; \xi_{t}^s)$ is projection-free, and the randomness mainly comes from a randomly generated sample\footnote{Here we don't consider the minibatch setting  for simplicity where multiple samples are used to form stochastic gradients.
Besides, it is quite easy to extend our result to that setting.}; and (ii) the deterministic part $\PAT(\nabla F(\ttx_{s}))$ is evaluated at $\ttx_{s}$ at the beginning of an epoch, which is the counterpart of the full gradient if we consider finite-sum minimization.

Second,  the gradient $\g_t^s$ is not an unbiased estimator for $\PAT(\nabla F(\x_t^s))$ and even may not lie within $\mathcal{R}(\A^{\perp})$.
It implies in expectation the updated iterate $\x_t^s - \eta_{t}^s \g_t^s$ may violate the affine constraint.
However, the algorithm has two mechanisms to ensure convergence even with biased inner updates.
The most obvious one is we force the feasibility by delayed projections and repeated restarts.
For one thing, we call a projection at an interval no more than $E$ iterations to remove the infeasible part of $\x_{t+1}^s$ (i.e., $\PA(\x_{t+1}^s)$).
For another thing, we set the starting vector $\x_0^{s+1}$ as the projected ending vector of the previous stage $\PAT(\x_m^s)$.
The second one is implicit in our theory; that is we set a sufficiently small step size, typically $\eta_t^s = \Theta\left(\frac{1}{LE}\right)$.
In this way, the effect of multiple inner loops between two consecutive projections is similar to one  step feasible update with a larger step size, which is very important to convergence.

Third, the snapshot $\ttx_{s+1}$ is a projected weighted average of $\{\x_t^s\}_{t=0}^{E-1}$ in the most recent stage.
The projection ensures $\ttx_{s+1}$ is feasible (i.e., in $\mathcal{R}(\A^{\perp})$) and is invoked after the weighted average is computed.
When $\mu > 0$, the weight decreases geometrically with $t$, implying more recent iterate has a larger weight and thus is much more important.
When $\mu = 0$, the geometrically weighted average is reduced to a simple average, the latter having been used by many previous algorithms and shown to work well in practice~\cite{johnson2013accelerating,xiao2014proximal,allen2016improved,shang2018vr}.

Finally, the output $\hat{\y}$ is a weighted average of all snapshots $\{\ttx_{s}\}_{s=1}^S$.
Noting the structure of snapshot points, we have $\hat{\y} =  \PAT(\frac{1}{W_T}\sum_{j=0}^{T-1} (1-\mu \eta)^{T-j-1} \x_j)$ where $W_T = \sum_{j=0}^{T-1}(1-\mu \eta)^{T-j-1}$.

\begin{algorithm}[tb]
	\caption{Delayed Projected SVRG (DP-SVRG)}
	\label{alg:multi_SVRG}
	\begin{algorithmic}
		\STATE {\bfseries Input:} function $F$, initial point $\x_0$ (and let $\ttx_0 = \x_0^0= \PAT(\x_0)$), step size $\eta_t^s$, stage number $S$, loop iteration $m$,
		projection set $\IM_m \subset [m]$ with $\mathrm{gap}(\IM_m) = E  (E \ge 1)$.
		\FOR{$s=0$ {\bfseries to} $S-1$}
		\STATE {$\tth_{s} \gets \PAT(\nabla F(\ttx_{s}))$ }
		\FOR{$t=0$ {\bfseries to} $m-1$}
		\STATE  $\g_{t}^s \gets \nabla F(\x_{t}^s; \xi_{t}^s) - \nabla F(\ttx_s; \xi_{t}^s) + \tth_s$  with  $\xi_{t}^s$ sampled independently 
		\STATE  $\x_{t+1}^s \gets \x_{t}^s - \eta_{t}^s\g_t^s $  
		\IF {$(t+1)\in \IM_m$}
		\STATE {$\x_{t+1}^s \gets \PAT(\x_{t+1}^s)$ \quad \# project $\x_{t+1}^s$ into $\mathcal{R}(A^{\perp})$ and make it satisfy the linear constraints.}
		\ENDIF
		\ENDFOR
		\STATE {$\x_0^{s+1} \gets \PAT(\x_m^s), \ttx_{s+1} \gets \PAT(\sum_{i=0}^{m-1}(1-\mu\eta)^{i}\x_{m-i-1}^s/\sum_{j=0}^{m-1}(1-\mu\eta)^{j})$}
		\ENDFOR
		\STATE If $\mu = 0$, $\hat{\y} \gets \frac{1}{S}\sum_{s=1}^S \ttx_{s}$; otherwise $\hat{\y} \gets \ttx_{S}$.
		\STATE {\bfseries Return:} $\hat{\y}$.
	\end{algorithmic}
\end{algorithm}

\subsection{Convergence Analysis}
\begin{thm}
	\label{thm:svrg}
	Assume Assumption~\ref{asmp:smooth} and~\ref{asmp:strong} hold.
	Let $\y_0 = \PAT(\x_0), \Delta^2 =  \EB \|\y_0 - \xc \|^2$ and $T=mS$.
	Run Algorithm~\ref{alg:multi_SVRG}  for $S$ stages, each stage has $m$ inner loops and performs projections at $\IM_m$ with gap $E$.
	By choosing an appropriate constant step size $\eta_t^s = \eta =\Theta(\frac{1}{LE})$, for the convex case $(\mu = 0)$,
	\begin{equation}
	\label{eq:y_mu0_svrg}
	\EB \left[F(\hat{\y})-F(\xc)\right] 
	= \OM\left(  \frac{LE\Delta^2}{T} + \frac{\EB[F(\y_0) - F(\xc)]}{S} \right).
	\end{equation}
	and for the strongly convex case $(\mu > 0)$, 
	\begin{equation}
	\label{eq:y_mu>_svrg}
	\EB \left[F(\hat{\y})-F(\xc)\right] 
\le \OM \left( \left[   L E\Delta^2 + \EB[F(\y_0) - F(\xc)] \right] \cdot 
\exp\left( -  \Theta \left(\frac{T}{\max\{ \kappa E,  m \}} \right) \right)\right)
	\end{equation}
		where $\kappa = \frac{L}{\mu}$ is the condition number.
\end{thm}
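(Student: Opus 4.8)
The plan is to reuse the orthogonal splitting $\x_t^s = \y_t^s + \z_t^s$ with $\y_t^s = \PAT(\x_t^s)$ and $\z_t^s = \PA(\x_t^s)$ from Proposition~\ref{prop:proj}, and to run the same two-track (feasible/infeasible) argument as in Lemmas~\ref{lem:y_descent} and~\ref{lem:z_descent}, but now for the variance-reduced gradient $\g_t^s$. First I would record two structural facts that make the estimator behave well under the splitting. By linearity of the projections, $\PAT(\g_t^s)$ is an \emph{unbiased} estimator of $\PAT(\nabla F(\x_t^s))$, while — and this is the decisive gain of the control variate — the infeasible part collapses to
\[
\PA(\g_t^s) = \PA\!\left( \nabla F(\x_t^s;\xi_t^s) - \nabla F(\ttx_s;\xi_t^s) \right),
\]
because $\PA(\tth_s) = \0$. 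Thus the \emph{deterministic} drift into $\mathcal{R}(\A)$ that produced the residual error $\sigmaa$ and the $\|\nabla F(\xc)\|^2$ term in Theorems~\ref{thm:simple} and~\ref{thm:complicate} vanishes entirely; the growth of $\z_t^s$ is now driven only by a gradient \emph{difference}, which per-sample smoothness bounds pathwise by $L\|\x_t^s - \ttx_s\|$.

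Second, I would establish the SVRG one-step inequality for the feasible track. Expanding $\EB\|\y_{t+1}^s - \xc\|^2 = \EB\|\y_t^s - \eta\PAT(\g_t^s) - \xc\|^2$, using unbiasedness for the cross term, bridging $\nabla F(\x_t^s)$ to $\nabla F(\y_t^s)$ through smoothness (which costs an $\OM(L\eta\,\EB\|\z_t^s\|^2)$ term exactly as in Lemma~\ref{lem:y_descent}), and invoking strong convexity, yields
\[
\EB\|\y_{t+1}^s - \xc\|^2 \le (1-\mu\eta)\,\EB\|\y_t^s-\xc\|^2 - c_1\eta\,\EB[F(\y_t^s)-F(\xc)] + c_2 L\eta^2\,\EB[F(\ttx_s)-F(\xc)] + c_3 L\eta\,\EB\|\z_t^s\|^2 .
\]
Here the variance is controlled by the standard bound $\EB\|\nabla F(\cdot;\xi)-\nabla F(\xc;\xi)\|^2\le 2L[F(\cdot)-F(\xc)]$ evaluated at the \emph{feasible} points $\y_t^s$ and $\ttx_s$, where Corollary~\ref{cor:xc} kills the linear remainder since $\PAT(\nabla F(\xc))=\0$. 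Crucially there is \emph{no} $\sigmaat$ term: variance reduction removes the statistical error at its source.

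The heart of the argument is controlling $\sum_t\EB\|\z_t^s\|^2$. Within each block $\mathcal{B}$ of length $\le E$ between consecutive projections, $\z$ starts at $\0$ and accumulates only gradient differences, so pathwise $\|\z_{t}^s\| \le \eta L \sum_{j\in\mathcal{B},\, j<t}\|\x_j^s - \ttx_s\|$; Cauchy--Schwarz and summation over the block give
\[
\sum_{t\in\mathcal{B}} \EB\|\z_{t}^s\|^2 \;\lesssim\; \eta^2 L^2 E^2 \sum_{j\in\mathcal{B}} \EB\|\x_j^s - \ttx_s\|^2 .
\]
This is where delayed projection pays off: with $\eta=\Theta(1/(LE))$ the prefactor $\eta^2 L^2 E^2 = \Theta(1)$, so the factor $E$ never blows up. Decomposing $\EB\|\x_t^s-\ttx_s\|^2 = \EB\|\y_t^s-\ttx_s\|^2 + \EB\|\z_t^s\|^2$ by orthogonality and taking the constant in $\eta$ small enough lets me absorb $\sum\EB\|\z_t^s\|^2$ on the right back into the left, leaving $\sum_t\EB\|\z_t^s\|^2$ bounded purely by the feasible drift $\sum_t\EB\|\y_t^s-\ttx_s\|^2 \le 2\sum_t\EB\|\y_t^s-\xc\|^2 + 2m\,\EB\|\ttx_s-\xc\|^2$.

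Finally I would feed this $\z$-bound into the feasible recursion and telescope over the inner loop, weighting the step-$t$ inequality by $(1-\mu\eta)^{-(t+1)}$ so the geometric weights match the snapshot $\ttx_{s+1} = \PAT(\sum_i (1-\mu\eta)^i \x_{m-i-1}^s/\sum_j(1-\mu\eta)^j)$; convexity then turns the weighted sum of $\EB[F(\y_t^s)-F(\xc)]$ into $\EB[F(\ttx_{s+1})-F(\xc)]$, producing an epoch recursion $\Phi_{s+1}\le\rho\,\Phi_s$ for a potential $\Phi_s = a\,\EB\|\y_0^s-\xc\|^2 + b\,\EB[F(\ttx_s)-F(\xc)]$ with $\rho = \exp(-\Theta(m/\max\{\kappa E, m\}))$, and iterating over $S=T/m$ epochs gives~\eqref{eq:y_mu>_svrg}. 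For $\mu=0$ the weights degenerate to a simple average, the distance recursion telescopes additively via Lemma~\ref{lem:error_prop}, and the same $\z$-control yields~\eqref{eq:y_mu0_svrg}. I expect the main obstacle to be the $\y$--$\z$ coupling in the convex case: without strong convexity there is no quadratic-growth inequality to convert $\sum\EB\|\y_t^s-\xc\|^2$ into function values, so I must keep the distance and function-gap recursions linked and pin the constant in $\eta=\Theta(1/(LE))$ tightly enough that the $c_3L\eta\sum\EB\|\z_t^s\|^2$ feedback is dominated by $-c_1\eta\sum\EB[F(\y_t^s)-F(\xc)]$ after the absorption above; verifying the admissible step-size threshold and that all constants stay uniform in $E$ is the delicate bookkeeping on which the whole improvement hinges.
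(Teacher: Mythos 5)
Your skeleton is the right one — the same orthogonal splitting $\x_t^s=\y_t^s+\z_t^s$, the observation that $\PA(\tth_s)=\0$ so the control variate kills both the deterministic drift into $\mathcal{R}(\A)$ and the $\sigmaat$ term, a feasible-track descent inequality of exactly the paper's form, and a stage recursion matched to the geometric snapshot weights. Your per-block unrolling of $\z_t^s$ into a sum of past gradients (rather than the paper's incremental $\z_{t+1}^s$-versus-$\z_t^s$ recursion fed into the $2\times 2$ error-propagation lemma) is also legitimate in itself; the paper uses precisely that unrolling for DP-ASVRG.

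The genuine gap is the \emph{form} of your residual bound. You bound the accumulated gradient differences pathwise by $L\|\x_j^s-\ttx_s\|$ and end up with $\sum_t\EB\|\z_t^s\|^2\lesssim \eta^2L^2E^2\sum_t\EB\|\x_t^s-\ttx_s\|^2$, i.e., a bound in \emph{squared distances}. Feeding $c_3L\eta\sum_t\EB\|\z_t^s\|^2$ back into the feasible recursion then produces, with $\eta=\Theta(1/(LE))$, a term of order $\frac{1}{E}\bigl(\sum_t\EB\|\y_t^s-\xc\|^2+m\,\EB\|\ttx_s-\xc\|^2\bigr)$ per block. The only negative terms available to absorb it are $-\eta\sum_t\EB[F(\y_t^s)-F(\xc)]$ and the $(1-\mu\eta)$ contraction. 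For $\mu=0$ there is no inequality of the form $L\|\y-\xc\|^2\lesssim F(\y)-F(\xc)$ (take $F$ flat in some feasible direction), so the feedback is simply not dominated and the additive $\frac{m}{E}\EB\|\ttx_s-\xc\|^2$ per stage wrecks the telescoping; for $\mu>0$ the conversion $\|\y-\xc\|^2\le\frac{2}{\mu}[F(\y)-F(\xc)]$ costs a factor $\kappa$, and matching the per-step contraction $-\mu\eta\EB\|\y_t^s-\xc\|^2=-\Theta(1/(\kappa E))\EB\|\y_t^s-\xc\|^2$ forces the constant in $\eta$ down to $O(\kappa^{-1})$, destroying the claimed rate. "Pinning the constant tightly" cannot rescue this. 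The paper avoids the issue by never passing through distances: it splits $\nabla F(\x_t^s;\xi)-\nabla F(\ttx_s;\xi)$ through $\y_t^s$ and $\xc$ and uses $\EB\|\nabla F(\y;\xi)-\nabla F(\xc;\xi)\|^2\le 2L\,\EB[F(\y)-F(\xc)]$ (valid since $\PAT(\nabla F(\xc))=\0$), so the residual recursion reads entirely in the function-value gaps $\EB[F(\y_t^s)-F(\xc)]$ and $\EB[F(\ttx_s)-F(\xc)]$ — quantities the negative term in the feasible recursion and the stage-level contraction can absorb with $\eta=\Theta(1/(LE))$ uniformly in $\kappa$. Rewriting your block bound in that function-value form (both for the cross term $\langle\z_t^s,\PA(\nabla F(\x_t^s)-\nabla F(\ttx_s))\rangle$ and for $\EB\|\PA(\g_t^s)\|^2$) closes the argument.
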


Note that $\EB[F(\y_0) - F(\xc)]= \OM(L\Delta^2)$.
By comparing the convergence bounds \eqref{eq:y_mu0_svrg} and~\eqref{eq:y_mu>_svrg} with those for DP-SGD~\eqref{eq:y_mu0} and~\eqref{eq:y_mu>}, we find the biggest difference is that both the statistical error and residual error are eliminated.
The statistical error is eliminated as expected due to the control variates we use.
Indeed, when the iterates approach the optimum, the difference between $\x_t^s$ and $\ttx_{s}$ is on the decline, implying the fluctuation caused by random samples is diminishing.
The control variates also account for the disappearance of residual errors.
When the algorithm starts to converge, the gradient $\g_t^s = \nabla F(\x_{t}^s; \xi_{t}^s) - \nabla F(\ttx_s; \xi_{t}^s) + \PAT(\nabla F(\ttx_{s}))$ would be dominated by $\PAT(\nabla F(\ttx_{s}))$ and thus how $\ttx_{s}$ converges determines the performance of the algorithm.
The fact that $\ttx_{s}$ is always feasible (i.e., in $\mathcal{R}(\A^\perp)$) and is always closer to $\xc$ than its prior iterate $\ttx_{s-1}$ (see Lemma~\ref{lem:error_svrg_stage} in the appendix) explains the whole story.

As a result, DP-SVRG achieves a convergence rate of $\OM(\frac{1}{S})$ for generally convex functions and of $\OM(\exp(-c \min \left\{   \frac{m}{\kappa E} ,1 \right\}\cdot S) )$ for strongly convex functions.
These are the same rates on $S$ achieved by gradient descent under these assumptions~\cite{nesterov2013introductory}, and are much faster than the $\OM(\frac{1}{\sqrt{mS}})$ and $\OM(\frac{1}{mS})$ rate of DP-SGD in the corresponding settings.

Finally, let us compare DP-SVRG with another competitive baseline method, Proximal SVRG~\cite{xiao2014proximal} (P-SVRG) thoroughly.
They are comparable for two reasons.
First, when we set the regularization function as $h(\x) = 1_{\x \in \mathcal{R}(\A^{\perp})} = \infty $ if $ \x \notin \mathcal{R}(\A^{\perp}) $ otherwise $=0$ for P-SVRG, it is also able to solve LCPs.
Second, P-SVRG also uses a multi-stage scheme to progressively reduce the variance of the stochastic gradient.
The biggest difference between P-SVRG and our DP-SVRG is how each projection is performed.
P-SVRG performs immediate projections; it calls a projection right after each gradient descent, so it uses feasible stochastic gradients (i.e., $\PAT(\g_t^s)$) each step.
DP-SVRG performs amortized projections or delayed projections; it performs several non-feasible stochastic gradients (i.e., $\g_t^s \notin \mathcal{R}(\A^{\perp})$ generally) and then rectifies the bias periodically.

\begin{table}[t]
	\newcommand{\tabincell}[2]{\begin{tabular}{@{}#1@{}}#2\end{tabular}}
	\centering
	\begin{tabular}{|c|c|c|}
		\hline
		Items &\tabincell{c}{	P-SVRG \\ $m=\kappa, E=1$} & DP-SVRG \\
		\hline
		\tabincell{c}{	Iteration  \\ $(\TB)$ } 
		& $\OM\left( \kappa \ln \frac{L\Delta^2}{\eps} \right)$
		& $\OM\left( \max\{ \kappa E, m \} \ln \frac{LE\Delta^2}{\eps} \right)$ \\
		\hline
		\tabincell{c}{	Stage  \\ $(\SB = {\TB}/{m})$ } 
		& $\OM\left( \ln \frac{L\Delta^2}{\eps} \right)$
		& $\OM\left( \max\{ \frac{\kappa E}{m}, 1 \} \ln \frac{LE\Delta^2}{\eps} \right)$ \\
		\hline
		\tabincell{c}{	Projection  \\  $(\PB = \SB + {\TB}/{E})$ } 
		& $\OM\left( \kappa \ln \frac{L\Delta^2}{\eps} \right)$
		& $\OM\left( \max\{ \kappa , \frac{m}{E} \} \ln \frac{LE\Delta^2}{\eps} \right)$ \\
		\hline
		\tabincell{c}{	Gradient  \\  $(\GB = \TB + N\SB)$  } 
		& $\OM\left(  (N + \kappa) \ln \frac{L\Delta^2}{\eps} \right)$
		&		\tabincell{c}{	$\OM\left( \max\left\{ \kappa E +  \frac{\kappa EN}{m}, N + m \right\} \ln \frac{LE\Delta^2}{\eps} \right)$  \\  minimum is $\OM\left(  \left(N + \kappa E\right) \ln \frac{LE\Delta^2}{\eps} \right)$ } 
		 \\
\hline
	\end{tabular}
	\caption{Compare P-SVRG and DP-SVRG with $\IM_m^0$ in four aspects under the strongly convex setting.
	P-SVRG is a special case of DP-SVRG when $m=\kappa$ and $E=1$.
	We use $\TB, \SB, \PB, \GB$ to stand for the four complexities (see Definition~\ref{def:complexity}).
	We give their relations in the brackets of the first column.
	$N$ is the number of total samples.
	}
	\label{table:3}
\end{table}%

Let $F(\x) = \frac{1}{N} \sum_{i=1}^N F(\x; \xi_i)$ be the empirical form of~\eqref{eq:formal_loss}, where $\{\xi_i\}_{i=1}^N$ is generated independently and $N$ is the training datasize.
We investigate four kinds of complexity for P-SVRG and DP-SVRG (see the following definition), and show the results in Table~\ref{table:3}.

\begin{defn}
	\label{def:complexity}
	In the process of obtaining an $\eps$-optimal solution, four complexities are taken into account to evaluate the optimization efficiency of considered algorithms, namely 
	\begin{enumerate}
		\item iteration complexity $(\TB)$: how many inner iterations are used;
		\item stage complexity $(\SB)$: how many stages are used;
		\item projection complexity $(\PB)$: how many projections are performed;
		\item gradient complexity $(\GB)$: how many stochastic gradient computations are used.
	\end{enumerate}
\end{defn}

For simplicity, we assume $\IM_m^0 = \{0, E, 2E, \cdots \} \cap [m]$ is used.
As discussed, P-SVRG is a special case of DP-SVRG when $E=1$ and $m=\kappa$.
Thus, we can derive convergence analysis for P-SVRG by letting $E=1$ and $m=\kappa$ in bounded~\eqref{eq:y_mu0_svrg} and~\eqref{eq:y_mu>_svrg}, which give the results for the second column of Table~\ref{table:3}, which is consistent with previous analysis~\cite{johnson2013accelerating}.
It is worth to mention that our result gives a unified analysis for P-SVRG under both generally convex and strongly convex cases.
Previous works do that mainly by designing new algorithms~\cite{schmidt2017minimizing,defazio2014saga,allen2016improved} or using reduction methods~\cite{xiao2014proximal,allen2016optimal}.
Besides, our analysis allows $E$ and $m$ to vary, illustrating the flexibility and expansibility of our analysis.

From Table~\ref{table:3}, if we set $m=N$ for DP-SVRG, its projection complexity becomes $\widetilde{\OM}(\max\{\kappa, N/E\})$.
When $\kappa \le N/E$, i.e., the case of big data scenario, the projection complexity decreases as $E$ increases, which can be verified by our experiments.
If we set $m=\kappa$ for DP-SVRG, then its iteration complexity, stage complexity, and gradient complexity are all $E$ times larger than those of P-SVRG, while its projection complexity remains unchanged.
It indicates that delayed projections seem not so useful for variance reduced methods, at least for P-SVRG.
Recall that delayed projection is quite useful in stochastic optimization where gradient variances are often the bottleneck of optimization.
It is unknown and quite interesting to see whether delayed projection technique combined with the variance reduced technique could better trade-off the four complexities or improve the projection complexity.
We explore the question in the next section.

\section{Accelerating Delayed Projected Methods}

\subsection{Accelerated Delayed Projected SVRG}
Accelerating stochastic gradient methods was a hot topic in literatures~\cite{mairal2013optimization,su2014differential,linaccelerated}.
For deterministic optimization, \cite{nesterov2013introductory} proposed his accelerated gradient descent (AGD) for convex optimization that respectively achieves $\OM\left(\frac{1}{\sqrt{\eps}}\right)$ and $\OM\left(\sqrt{\kappa}\ln\frac{1}{\eps}\right)$ under generally convex and strongly convex smooth problems.
For stochastic optimization, using the variance reduction method, ~\cite{allen2017katyusha} proposed the first truly accelerated stochastic algorithm, named Katyusha.
Many other papers also work on that topic~\cite{nitanda2014stochastic,allen2017katyusha,shang2018asvrg}.
To accelerate Algorithm~\ref{alg:multi_SVRG}, we borrow the acceleration technique proposed in~\cite{shang2018asvrg}, which is much simpler than other stochastic acceleration momentum~\cite{allen2017katyusha,allen2018katyusha}.
We summarize the algorithm in Algorithm~\ref{alg:multi_acc_SVRG}.
It is quite similar to P-ASVRG~\cite{shang2018asvrg} except for the following features.

First, as discussed in the description of DP-SVRG, we also use delayed projections here, which results (possibly) biased updates between two consecutive projection iterations.
As a remedy, $\mathrm{gap}(\IM_m) = E$ limits the growth rate of residual errors caused by infrequent projections.

Second, we maintain two sequences $\{ \x_t^s \}$ and $\{\u_t^s\}$, and always initialize them by $\u_0^{s+1} \gets \PAT(\u_m^s)$ and $\x_0^{s+1} \gets \ttx_{s+1}$ at the beginning of each stage to ensure constraint feasibility, no matter whether $\mu > 0$ or not.

Finally, the only momentum parameter $\theta_{s}$ in the algorithm is tuned more carefully, since we need to control the residual error.
Let $\delta = 9(E^2-1)\eta^2 L^2$ and we will choose the learning rate $\eta$ sufficiently small such that $\delta \in [0, 1)$.
Under the strongly convex case $(\mu>0)$, we set all $
\theta_{s}$ as a constant $\theta = 2\delta + \sqrt{4\delta^2 + \eta\mu m} \in (2\delta, 1+\delta)$.
Under the generally convex case $(\mu=0)$, we define the sequence $\{\theta_{s} \}$ recursively: let $\theta_{0}  = 1 - \frac{2\eta L}{1-\eta L} \in (2\delta, 1+\delta)$ and $\theta_{s+1}  = \sqrt{ \frac{1+\delta}{1-\delta}\theta_{s}^2 + \frac{\theta_{s}^4}{4(1-\delta)^2}} - \frac{\theta_{s}^2}{2(1-\delta)} $.
Note that $\theta_{s+1}$ is the positive root of $\frac{1-\theta_{s+1}+\delta}{1-\delta} \cdot \frac{1}{\theta_{s+1} ^2} = \frac{1}{\theta_{s}^2}$.
When $\delta=0$, the second-order equation is reduced to $\frac{1-\theta_{s+1}}{\theta_{s+1} ^2} = \frac{1}{\theta_{s}^2}$, which is crucial for the acceleration methods that aim to solve convex but not strongly convex smooth problems~\cite{nesterov2013introductory,nitanda2014stochastic,lin2015universal}.
The reason why we incorporate $\delta$ in the second-order equation is to control the residual error.

\begin{algorithm}[tb]
	\caption{Delayed Projected Accelerated SVRG (DP-ASVRG)}
	\label{alg:multi_acc_SVRG}
	\begin{algorithmic}
		\STATE {\bfseries Input:} function $F$, initial point $\x_0$ (and $\ttx_0 = \u_0^0 = \x_0^0 = \PAT(\x_0)$), stage number $S$, loop iteration $m$, 			projection set $\IM_m \subset [m]$ with $\mathrm{gap}(\IM_m) = E (1 \le E \le m )$, (sufficiently small) step size $\eta$, $\delta = 9(E^2-1)\eta^2 L^2 \in [0, 1)$.
		\STATE {\bfseries An auxiliary sequence:}  {If $\mu > 0$, $\theta_{s}\equiv \theta = 2\delta + \sqrt{4\delta^2 + \eta\mu m} \in (2\delta, 1+\delta)$ for all $s \ge 0$.}
		\STATE {If $\mu =0$, let $\theta_{0}  = 1 - \frac{2\eta L}{1-\eta L} \in (2\delta, 1+\delta)$ and $\theta_{s+1}  = \sqrt{ \frac{1+\delta}{1-\delta}\theta_{s}^2 + \frac{\theta_{s}^4}{4(1-\delta)^2}} - \frac{\theta_{s}^2}{2(1-\delta)} $.}
		\FOR{$s=0$ {\bfseries to} $S-1$}
		\STATE {$\tth_{s} \gets \PAT(\nabla F(\ttx_{s}))$ }
		\FOR{$t=0$ {\bfseries to} $m-1$}
		\STATE  $\g_{t}^s \gets \nabla F(\x_{t}^s; \xi_{t}^s) - \nabla F(\ttx_s; \xi_{t}^s) + \tth_s$  with  $\xi_{t}^s$ sampled independently 
		\STATE  $\u_{t+1}^s \gets \u_{t}^s - \frac{\eta}{\theta_s} \cdot \g_{t}^s $  
		\STATE  $\x_{t+1}^s \gets \ttx_{s} + \theta_s(\u_{t+1}^s - \ttx_{s}) $  
		\IF {$(t+1)\in \IM_m$}
		\STATE {$\x_{t+1}^s \gets \PAT(\x_{t+1}^s), \u_{t+1}^s \gets \PAT(\u_{t+1}^s)$ }
		\ENDIF
		\ENDFOR
		\STATE {$\u_0^{s+1} \gets \PAT(\u_m^s),\ttx_{s+1} \gets \PAT(\frac{1}{m}\sum_{i=1}^{m}\x_{i}^s), \x_0^{s+1} \gets \ttx_{s+1}$}
		\ENDFOR
		\STATE If $\mu > 0$, $\hat{\y} \gets \frac{1}{S}\sum_{s=1}^S \ttx_{s}$; otherwise $\hat{\y} \gets \ttx_{S}$.
		\STATE {\bfseries Return:} $\hat{\y}$.
	\end{algorithmic}
\end{algorithm}

\subsection{Analysis for Strongly Convex Objectives}

\begin{table}[t]
	\newcommand{\tabincell}[2]{\begin{tabular}{@{}#1@{}}#2\end{tabular}}
	\centering
	\begin{tabular}{|c|c|c|c|}
		\hline
		Items &  \tabincell{c}{	P-ASVRG~\cite{shang2018asvrg} \\ $m=N$ \\ $E=1$}
        &\tabincell{c}{	DP-ASVRG (Ours) \\  $m=\kappa^p, 0 \le p \le 1$ \\ $ E =1$}
		& \tabincell{c}{	DP-ASVRG (Ours) \\  $m=\kappa^p E, 0 \le p \le 1$ \\ $ E =\sqrt{ N/\kappa^q}, 0 \le q < \log_{\kappa} N$}\\
		\hline
		\tabincell{c}{	Stage\\ $(\SB)$ }
		& $\widetilde{\OM}\left(  \max\left\{ 1, \sqrt{\frac{\kappa}{N}} \right\}  \right)$
      & $ \widetilde{\OM}\left(  \kappa^{\frac{1}{2}(1-p)} \right)$
		&   $ \widetilde{\OM}\left(  \kappa^{\frac{2}{3}(1-p)} \right)$\\
		\hline	
		\tabincell{c}{	Iteration\\ $(\TB=m\SB)$ }
		& $\widetilde{\OM}\left( N+ \sqrt{N\kappa}   \right)$
        & $ \widetilde{\OM}\left(  \kappa^{\frac{1}{2}(1+p)} \right)$
		& $ \widetilde{\OM}\left( \sqrt{N} \kappa^{\frac{1}{3}(2+p)-\frac{q}{2}} \right)$\\
		\hline
		\tabincell{c}{	Projection\\ $(\PB = \SB + {\TB}/{E})$  }
		& $\widetilde{\OM}\left ( N+ \sqrt{N\kappa}  \right)$
       & $ \widetilde{\OM}\left(  \kappa^{\frac{1}{2}(1+p)} \right)$
		& $ \widetilde{\OM}\left(  \kappa^{\frac{1}{3}(2+p)} \right)$\\
		\hline
		\tabincell{c}{	Gradient \\ $(\GB = (m+N)\SB)$ }
		& $\widetilde{\OM}\left(  N + \sqrt{N\kappa} \right)$
		&  $\widetilde{\OM}\left(  N \kappa^{\frac{1}{2}(1-p)}+ \kappa^{\frac{1}{2}(1+p)} \right)$
		& $\widetilde{\OM}\left(  N \kappa^{\frac{2}{3}(1-p)}+
		 \sqrt{N} \kappa^{\frac{1}{3}(2+p)-\frac{q}{2}}\right)$\\
		\hline
	\end{tabular}
	\caption{Compare P-ASVRG and DP-ASVRG with $\IM_m^0$ in four aspects under the strongly convex setting.
		$N$ is the number of total samples and $\widetilde{\OM}(\cdot)$ omits a factor of $\ln \frac{L\Delta^2}{\eps}$.
		Note that P-ASVRG is a special case of DP-ASVRG when $m=N$ and $E=1$.
		We use $\TB, \SB, \PB, \GB$ to stand for the four complexities (see Definition~\ref{def:complexity}).
		We give their relations in the brackets of the first column.
		All results are obtained by plugging corresponding parameters in Corollary~\ref{cor:strong_DP_ASVRG}.
	}
	\label{table:4}
\end{table}%

To apply DP-ASVRG to solve the strongly convex problems, we restart the algorithm repeatedly and initialize each new restart with the output parameter $\hat{\y}$ produced in the last restart.
The following theorem specifies the number of stages each restart needs to half the optimization error, implying only $\OM\left(\ln \frac{E\left[F(\x_0)-F(\xc)\right]}{\eps} \right)$ restarts are needed.

\begin{thm}[Strongly convex case]
	\label{thm:acc_svrg_strong}
	Assume Assumption~\ref{asmp:smooth} and~\ref{asmp:strong} hold and let $\y_0 = \PAT(\x_0)$ and $\Delta^2 =  \EB \|\y_0 - \xc \|^2$.
	For strongly convex case $(\mu > 0)$, run Algorithm~\ref{alg:multi_acc_SVRG}  for $S$ stages and each stage has $m$ iterations with projection set as $\IM_m$ where $\mathrm{gap}(\IM_m) = E$ (so $m \ge E$).
	By choosing an appropriate constant step size $\eta = \Theta\left(\frac{1}{LE\kappa^{\frac{1}{3}}}\right)$, 
	when 
	\[
	S= \OM\left(  \max\left\{ 1, \sqrt{\frac{\kappa E}{m}} \right\} + \kappa^{\frac{2}{3}} \sqrt[3]{\frac{E^2-1}{m^2}}   \right),
	\]
	with $\kappa = \frac{L}{\mu}$ the condition number, we have
	\[
		\EB \left[F(\hat{\y}) - F(\xc)\right] \le \frac{1}{2} \cdot \EB \left[F(\y_0) - F(\xc)\right].
	\]
\end{thm}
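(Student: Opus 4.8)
The plan is to establish a per-stage contraction of a Lyapunov potential and then unroll it across the $S$ stages of one restart, so that the potential — and hence the function gap at the averaged output $\hat{\y} = \frac{1}{S}\sum_{s=1}^S \ttx_s$ — is at least halved. Following the unified template of Section~\ref{sec:convergence}, I would first use the orthogonal decomposition of Proposition~\ref{prop:proj} to split every inner iterate as $\x_t^s = \y_t^s + \z_t^s$, with $\y_t^s = \PAT(\x_t^s) \in \mathcal{R}(\A^\perp)$ the feasible part and $\z_t^s = \PA(\x_t^s) \in \mathcal{R}(\A)$ the residual, and likewise track $\PA(\u_t^s)$. Because $\ttx_s$, $\x_0^{s+1}$ and $\u_0^{s+1}$ are all explicitly projected at stage boundaries in Algorithm~\ref{alg:multi_acc_SVRG}, the snapshots $\{\ttx_s\}$ remain feasible and, by Corollary~\ref{cor:xc}, it is their dynamics that drive convergence to $\xc$. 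The candidate potential is
\[
\Phi_s = \EB\left[F(\ttx_s) - F(\xc)\right] + \frac{c\,\mu}{2}\,\EB\left\|\u_0^s - \xc\right\|^2
\]
for a constant $c$ to be fixed, mimicking the estimate-sequence potential for Nesterov's method with $\u$ as the auxiliary sequence.

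First I would derive the per-iteration accelerated descent inside a stage. Expanding the coupled updates $\u_{t+1}^s = \u_t^s - \frac{\eta}{\theta}\g_t^s$ and $\x_{t+1}^s = \ttx_s + \theta(\u_{t+1}^s - \ttx_s)$, taking the conditional expectation, and using convexity to turn $\langle \EB\g_t^s, \u_t^s - \xc\rangle$ into a function-gap decrease, I would obtain a one-step inequality. The control-variate form $\g_t^s = \nabla F(\x_t^s;\xi_t^s) - \nabla F(\ttx_s;\xi_t^s) + \PAT(\nabla F(\ttx_s))$ is what makes the stochastic fluctuation collapse: Assumption~\ref{asmp:smooth} bounds the variance $\EB\|\g_t^s - \EB\g_t^s\|^2$ by a sum of function gaps at $\x_t^s$ and $\ttx_s$, while a further $L^2\,\EB\|\z_t^s\|^2$ term enters because the gradient is evaluated at the infeasible $\x_t^s$ rather than at $\y_t^s$. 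Summing over $t = 0,\dots,m-1$, invoking $\mu$-strong convexity from Assumption~\ref{asmp:strong} to couple the distance and gap terms, and using non-expansiveness of $\PAT$ together with $\ttx_{s+1} = \PAT(\frac{1}{m}\sum_{i=1}^m \x_i^s)$, I would telescope the inner loop into a single-stage bound relating $\Phi_{s+1}$ to $\Phi_s$ up to a residual.

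The crux — and the step I expect to be the main obstacle — is controlling that residual, which is produced entirely by the delayed projections. Between two consecutive indices of $\IM_m$ the iterate may leave $\mathcal{R}(\A^\perp)$, so $\z_t^s$ accumulates over a window of length at most $E$ before $\PAT$ resets it to $\0$. I would bound $\z_t^s$ by unrolling the $\PA$-component of the updates across such a window: each step contributes an increment of order $\eta\,\|\g\|$ in $\mathcal{R}(\A)$, so a discrete Cauchy--Schwarz over $\le E$ steps gives $\sum_t \EB\|\z_t^s\|^2 = \OM\big((E^2-1)\eta^2 L^2 \cdot (\text{gap energy})\big)$, which is exactly the origin of $\delta = 9(E^2-1)\eta^2 L^2$. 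The delicate point is that this residual is proportional to the very function gaps being contracted, so it can be absorbed only if the momentum is large enough; the choice $\theta = 2\delta + \sqrt{4\delta^2 + \eta\mu m}$ — the positive root of the quadratic that pits the acceleration gain $\eta\mu m$ against the residual loss $4\delta$ — guarantees $\theta > 2\delta$ (with $\delta \in [0,1)$ needed for this root to lie in $(2\delta,1+\delta)$) and turns the single-stage bound into a genuine contraction of $\Phi_s$.

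Finally I would unroll over the restart and count stages. Iterating the contraction across the $S$ stages, together with the bound $\EB[F(\hat{\y}) - F(\xc)] \le \frac{1}{S}\sum_{s=1}^S \EB[F(\ttx_s) - F(\xc)]$ from convexity of $F$ and the averaged output, reduces the task to solving for the smallest $S$ that drives the accumulated potential below $\frac{1}{2}\Phi_0$; the generic recursion of Lemma~\ref{lem:error_prop} packages this bookkeeping. Substituting $\eta = \Theta(\frac{1}{LE\kappa^{1/3}})$ and separating the acceleration contribution (governed by $\eta\mu m$, i.e.\ an effective rate $\sqrt{\mu m/(LE)}$) from the residual contribution (governed by $\delta$), then using $\mu$-strong convexity to trade squared distance for function gap, produces the two additive terms $\max\{1,\sqrt{\kappa E/m}\}$ and $\kappa^{2/3}\sqrt[3]{(E^2-1)/m^2}$ whose sum is the stated $S$. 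As a sanity check, at $E=1$ the residual term vanishes ($\delta=0$) and the count collapses to the P-ASVRG entry in Table~\ref{table:4}, confirming both the constants and that the delayed projections are the sole source of the second term; repeating the restart $\OM(\ln\frac{E[F(\x_0)-F(\xc)]}{\eps})$ times then yields the linear rate.
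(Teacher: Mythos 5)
Your proposal follows essentially the same route as the paper: orthogonal decomposition of the iterates, an accelerated one-step descent via a three-points/estimate-sequence argument with the control-variate gradient (Lemma~\ref{lem:y_descent_2}), a window-unrolling bound on the $\PA$-components that yields exactly $\delta = 9(E^2-1)\eta^2L^2$ (Lemma~\ref{lem:z_descent_2}), the stage recursion of Lemma~\ref{lem:error_acc_svrg_stage} with $\theta$ chosen as the positive root of $\theta^2 - 4\delta\theta - \eta\mu m = 0$, and finally averaging over stages plus Jensen to pick $S$ so the gap halves, with restarts supplying the linear rate.

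One caution: the stage inequality is $F_{s+1} \le (1-\rho)F_s + c\,(D_s - D_{s+1})$ with $D_s = \EB\|\PAT(\u_0^s)-\xc\|^2$, in which the distance terms telescope with coefficient \emph{one}; this is not a geometric contraction of your potential $\Phi_s = F_s + cD_s$, since nothing forces $D_s$ itself to shrink. The paper instead sums the recursion (after the rearrangement $F_{s+1}\le \frac{1-\rho}{\rho}(F_s - F_{s+1}) + \dots$), telescopes, and uses strong convexity once to bound $D_0 \le \frac{2}{\mu}F_0$, obtaining only an $\OM(1/S)$ decay of the averaged gap within a restart --- which is precisely why the output is $\hat{\y}=\frac{1}{S}\sum_{s=1}^S\ttx_s$ and why restarting is needed at all. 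Your final paragraph effectively performs this summation, so the argument goes through, but the phrase ``genuine contraction of $\Phi_s$'' should be dropped; likewise, Lemma~\ref{lem:error_prop} is the two-dimensional recursion used for DP-SGD/DP-SVRG and plays no role here --- the accelerated bookkeeping is handled by the telescoping just described.
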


\begin{cor}
	\label{cor:strong_DP_ASVRG}
	Under the setting of Theorem~\ref{thm:acc_svrg_strong}, in order to obtain an $\eps$-suboptimal solution (i.e., $\EB\left[F(\hat{\y}) - F(\xc)\right] \le \eps$), we can restart Algorithm~\ref{alg:multi_acc_SVRG} with $\IM_m^0$ for $\log_2\frac{\EB \left[ F(\y_0) - F(\xc) \right]}{\eps} $ times. 
	So the stage complexity is 
	\[
	\SB = \OM\left(  \left[  \max\left\{ 1, \sqrt{\frac{\kappa E}{m}} \right\} + \kappa^{\frac{2}{3}} \sqrt[3]{\frac{E^2-1}{m^2}}    \right] \ln\frac{\EB \left[ F(\y_0) - F(\xc) \right]}{\eps}   \right).
	\]
	As a result, its iteration complexity is $
	\TB=m\SB = \widetilde{\OM}\left(  \max\left\{ m, \sqrt{m\kappa E} \right\} + \kappa^{\frac{2}{3}}(E^2-1)^{\frac{1}{3}}m^{\frac{1}{3}} \right)$, projection complexity is $\PB = \SB + \frac{\TB}{E} = \widetilde{\OM}\left( \max\left\{ \frac{m}{E}, \sqrt{\frac{m\kappa}{E}} \right\} + \kappa^{\frac{2}{3}}\left(1-\frac{1}{E^2}\right)^{\frac{1}{3}}\left(\frac{m}{E}\right)^{\frac{1}{3}} \right)$, and gradient complexity is $\GB=\widetilde{\OM}\left(  \max\left\{ 1, \sqrt{\frac{\kappa E}{m}} \right\} + \kappa^{\frac{2}{3}} \sqrt[3]{\frac{E^2-1}{m^2}}   \right)\cdot(m + N)$, where $\widetilde{\OM}(\cdot)$ omits a factor of $\ln \frac{ \EB \left[ F(\y_0) - F(\xc) \right]}{\eps}$ for simplicity.
\end{cor}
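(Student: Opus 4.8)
The plan is to treat Corollary~\ref{cor:strong_DP_ASVRG} as a direct restart-and-count consequence of Theorem~\ref{thm:acc_svrg_strong}, which does all the heavy lifting. The theorem guarantees that a single run of $S = \OM(\max\{1, \sqrt{\kappa E/m}\} + \kappa^{2/3}\sqrt[3]{(E^2-1)/m^2})$ stages contracts the suboptimality by a factor of $\tfrac{1}{2}$, and crucially this contraction factor is a fixed constant independent of the starting point. So I would first set up the restart scheme: index restarts by $r = 1, \dots, R$, feed the output $\hat{\y}_{r-1}$ of restart $r-1$ as the initialization of restart $r$ (with $\hat{\y}_0 = \y_0 = \PAT(\x_0)$), and note that since the algorithm parameters $\kappa, E, m, \eta$ do not change, each restart uses the same number $S$ of stages.

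The first key step is chaining the per-restart contraction. Because the starting iterate of restart $r$ is the random output $\hat{\y}_{r-1}$, I would apply Theorem~\ref{thm:acc_svrg_strong} conditionally on the randomness of restarts $1, \dots, r-1$ and then invoke the tower property. The theorem's bound is purely multiplicative in $\EB[F(\y_0)-F(\xc)]$ and holds for every feasible deterministic initialization; each intermediate output $\hat{\y}_r$ is feasible (i.e., in $\mathcal{R}(\A^\perp)$) by construction of Algorithm~\ref{alg:multi_acc_SVRG}, so its hypothesis $\y_0 = \PAT(\x_0)$ is met at every restart. This yields $\EB[F(\hat{\y}_r)-F(\xc)] \le \tfrac{1}{2}\EB[F(\hat{\y}_{r-1})-F(\xc)]$ and hence $\EB[F(\hat{\y}_R)-F(\xc)] \le 2^{-R}\EB[F(\y_0)-F(\xc)]$. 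Demanding the right-hand side be at most $\eps$ gives $R = \lceil \log_2 \frac{\EB[F(\y_0)-F(\xc)]}{\eps}\rceil$ (the ceiling absorbed into $\OM$), which is the stated number of restarts.

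The second step is purely bookkeeping the four complexities. The total stage count is $\SB = R \cdot S$, which gives the claimed $\SB$ at once. From the relations in Definition~\ref{def:complexity}, $\TB = m\SB$, $\PB = \SB + \TB/E$, and $\GB = (m+N)\SB$; substituting $\SB$ into each reduces to elementary simplifications, such as $m\cdot\max\{1,\sqrt{\kappa E/m}\} = \max\{m, \sqrt{m\kappa E}\}$ and $(E^2-1)^{1/3}/E = ((1-1/E^2)/E)^{1/3}$, which recover the expressions in the statement. The only point needing a short argument is simplifying $\PB$: since $m \ge E$ (forced by $\mathrm{gap}(\IM_m)=E \le m$), we have $\TB/E = m\SB/E \ge \SB$, so the additive $\SB$ term is absorbed and $\PB = \OM(\TB/E)$.

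There is no deep obstacle here; the statement is a corollary precisely because the contraction estimate lives in Theorem~\ref{thm:acc_svrg_strong}. The most delicate point I expect is the chaining across restarts: I must justify that applying the theorem with a \emph{random} initialization is legitimate, which requires a conditional form of the contraction together with the feasibility of each intermediate output so that the theorem's hypotheses hold verbatim at each restart. Once that is in place, the remainder is the algebraic substitution above and the single observation that $\TB/E$ dominates $\SB$.
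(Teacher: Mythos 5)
Your proposal is correct and takes essentially the same route as the paper: the paper folds the restart-and-count argument into the tail of the proof of Theorem~\ref{thm:acc_svrg_strong} (each block of $S$ stages halves the expected suboptimality, so $\log_2\frac{\EB[F(\y_0)-F(\xc)]}{\eps}$ restarts suffice), and the four complexities then follow from the bookkeeping relations $\TB=m\SB$, $\PB=\SB+\TB/E$, $\GB=(m+N)\SB$ exactly as you describe. Your extra care in chaining the contraction across random initializations via the tower property is a detail the paper glosses over, but it is consistent with the theorem's expectation-form bound and does not change the argument.
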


When $E$ and $m$ are set correspondingly, DP-ASVRG is reduced to many previous algorithms (with slight differences).
Our theorem not only allows $E$ and $m$ to vary, but also recovers previous analysis for those reduced algorithms (see Table~\ref{table:4}).
There are some interesting observations:
\begin{itemize}
	\item When $m=N$ and $E=1$, DP-ASVRG is reduced to P-ASVRG.
	Its projection complexity and iteration complexity are the same since a  projection is performed at each iteration.
	Its gradient complexity is optimal in the sense that it achieves the lower bound on the number of gradient oracle accesses needed to find an $\eps$-suboptimal solution~\cite{woodworth2016tight}.
	There are other works trying to accelerate SVRG in other ways.
	For example, \cite{nitanda2014stochastic} propose accelerated proximal SVRG that uses Nesterov’s acceleration method and shows that with an appropriate mini-batch size, it achieves lower overall gradient complexity than proximal SVRG and accelerated proximal gradient descent.
	\item When $m=E=1$, DP-ASVRG is reduced to Nesterov Accelerated Gradient (NAG).
	Setting $p=0$ in the second column of Table~\ref{table:4}, NAG obtains the optimal iteration complexity and better projection complexity (i.e., $\widetilde{\OM}(\sqrt{\kappa})$), however, has the worst gradient complexity (i.e., $\widetilde{\OM}(N\sqrt{\kappa})$).
	\item When $E > 1$ (which is equivalent to $q < \log_{\kappa}N$ in the rightest column of Table~\ref{table:4}), delayed projections start to involve in optimization.
	 Additionally assuming $m/E =\kappa^p$, its projection complexity increases to $\widetilde{\OM}(\kappa^{\frac{1}{3}(2+p)})$.
	 The achievable smallest projection complexity is $\widetilde{\OM}(\kappa^{\frac{2}{3}})$, though larger that NAG but much smaller than DP-SVRG.
	 It implies DP-SVRG indeed can be accelerated, with statistical errors eliminated and projection complexity reduced.
	\item When $N \ge \kappa$ and we set $m=\kappa E = \sqrt{N\kappa}$ (i.e., $p=q=1$ in the rightest column of Table~\ref{table:4}), DP-ASVRG only needs $\widetilde{\OM}\left( \kappa\right)$ projections to obtain an $\eps$-suboptimal solution, while P-ASVRG needs $\widetilde{\OM}\left( N\right)$ projections, though the two algorithms has a same gradient complexity (i.e., $\widetilde{\OM}(  N + \sqrt{N\kappa} )$).
	Hence, it is possible to solve LCPs using projections much less than total iterations, and delayed projection technique will hnot cancel with variance reduction techniques.
\end{itemize}

\subsection{Analysis for General Convex Objectives}

\begin{thm}[Generally convex case]
	\label{thm:acc_svrg_general}
	Assume Assumption~\ref{asmp:smooth} and~\ref{asmp:strong} hold and let $\y_0 = \PAT(\x_0)$ and $\Delta^2 =  \EB \|\y_0 - \xc \|^2$.
	For convex case $(\mu = 0)$, run Algorithm~\ref{alg:multi_acc_SVRG} for $S$ stages and each stage has $m$ iterations with projection set as $\IM_m$ where $\mathrm{gap}(\IM_m) = E$ (so $m \ge E$).
	By choosing an appropriate constant step size $\eta = \widetilde{\Theta}\left(\frac{1}{EL\sqrt{S}}\right)$, we have
	\begin{equation*}
	\EB \left[F(\hat{\y}) - F(\xc)\right] 
	=\widetilde{\OM}\left(   \frac{F(\x_{0}) - F(\xc)}{S^2} + \frac{LE\Delta^2}{mS^2} + \frac{\sqrt{E^2-1}L\Delta^2}{mS^{1.5}}  \right).
	\end{equation*}
	If $E=1$, we can safely replace the above $\widetilde{\OM}(\cdot)$ with ${\OM}(\cdot)$.
\end{thm}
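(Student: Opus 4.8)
The plan is to run an accelerated-SVRG Lyapunov argument while carrying the orthogonal decomposition $\x_t^s = \y_t^s + \z_t^s$ with $\y_t^s = \PAT(\x_t^s)$ and $\z_t^s = \PA(\x_t^s)$ (and likewise for $\u_t^s$) throughout the inner loop. Since $\xc \in \mathcal{R}(\A^{\perp})$ and the projection is linear, orthogonal and non-expansive (Proposition~\ref{prop:proj}), only the feasible parts interact with $\xc$ in the relevant inner products, whereas the residual parts enter only through the smoothness gap $\|\nabla F(\x_t^s) - \nabla F(\y_t^s)\| \le L\|\z_t^s\|$. First I would record the SVRG variance bound projected onto $\mathcal{R}(\A^{\perp})$, i.e.\ control $\EB\|\PAT(\g_t^s) - \PAT(\nabla F(\x_t^s))\|^2$ by the function-value gaps at $\x_t^s$ and $\ttx_s$ — this is what eliminates the statistical error, exactly as in DP-SVRG. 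Second, and this is the delayed-projection-specific ingredient, I would prove a drift bound showing that over any window of at most $E$ consecutive unprojected steps the accumulated residual $\sum \EB\|\z_t^s\|^2$ is of order $(E^2-1)\eta^2 L^2$ times the local function-gap energy; the $E^2-1$ arises because the $k$-th unprojected step contributes an infeasible increment of order $k\eta$ and $\sum_{k\le E} k^2 \asymp E^3$, while $E=1$ forces $\z \equiv \0$.

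With these two ingredients, the central task is the per-stage descent lemma. Running the Nesterov coupling $\u_{t+1}^s = \u_t^s - \tfrac{\eta}{\theta_s}\g_t^s$ and $\x_{t+1}^s = \ttx_s + \theta_s(\u_{t+1}^s - \ttx_s)$, summing the standard three-point/convexity inequalities over the $m$ inner steps, and folding in the variance and drift bounds, I expect to arrive at a potential inequality of the form
\begin{equation*}
\frac{1}{\theta_s^2}\EB[F(\ttx_{s+1}) {-} F(\xc)] + D\,\EB\|\u_m^s {-} \xc\|^2 \le \frac{1-\theta_s+\delta}{(1-\delta)\theta_s^2}\EB[F(\ttx_s) {-} F(\xc)] + D\,\EB\|\u_0^s {-} \xc\|^2,
\end{equation*}
where $\delta = 9(E^2-1)\eta^2 L^2$ absorbs the residual drift and $D = \Theta(1/(\eta m))$ is the step-size-dependent distance weight. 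The role of $\delta$ is precisely to pay for infeasibility: inflating the function-gap coefficient by the $(1-\delta)^{-1}(1+\delta)$-type slack is exactly what prevents the residual from surfacing as an uncontrolled additive error. This lemma is the main obstacle, because the momentum sequence $\{\u_t^s\}$ can magnify the infeasible component across the $E$-step window, and one must show this magnification stays inside the $\delta$ budget; it is this coupling of acceleration with delayed projection — not any single inequality — that forms the technical heart, and it is also what forces the carefully tuned $\theta_s$.

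Finally I would turn the lemma into a telescope. By construction $\theta_s$ is the positive root of $\tfrac{1-\theta_s+\delta}{(1-\delta)\theta_s^2} = \tfrac{1}{\theta_{s-1}^2}$, so with the potential $\Phi_s = \tfrac{1}{\theta_{s-1}^2}\EB[F(\ttx_s) {-} F(\xc)] + D\,\EB\|\u_0^s {-} \xc\|^2$ and the restart identity $\u_0^{s+1} = \PAT(\u_m^s)$ (whence $\EB\|\u_0^{s+1} {-} \xc\|^2 \le \EB\|\u_m^s {-} \xc\|^2$ by non-expansiveness), the lemma reduces to $\Phi_{s+1} \le \Phi_s + r_s$, where $r_s$ is a $\delta$-proportional residual left over from the infeasible drift. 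Summing gives $\tfrac{1}{\theta_{S-1}^2}\EB[F(\ttx_S) {-} F(\xc)] \le \Phi_1 + \sum_{s} r_s$. Two further estimates are needed. For the momentum growth, rewriting the recursion for $a_s = 1/\theta_s$ as $\tfrac{1+\delta}{1-\delta}a_{s+1}^2 - \tfrac{1}{1-\delta}a_{s+1} = a_s^2$ gives $a_{s+1} \ge a_s + \Theta(1)$ once $\delta$ is small, hence $1/\theta_{S-1}^2 = \Omega(S^2)$; the choice $\eta = \widetilde{\Theta}(1/(EL\sqrt{S}))$ keeps $\delta = \widetilde{O}(1/S)$ so that the distortion $(\tfrac{1+\delta}{1-\delta})^S = O(1)$ and acceleration survives. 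For the initial potential, $\Phi_1$ splits (using $\ttx_0 = \y_0$) into a $\theta_0^{-2}[F(\x_0) {-} F(\xc)]$ piece and a $D\Delta^2$ distance piece. Dividing by $1/\theta_{S-1}^2 = \Theta(S^2)$, inserting the balanced step size, and collecting $\Phi_1$ together with $\sum_s r_s$ by their $S$- and $E$-dependence yields the three advertised terms — the initial-gap and distance contributions, and the $\sqrt{E^2-1}$ delayed-projection penalty carried by $\sum_s r_s$ at the slower $S$-power. Since $\hat{\y} = \ttx_S$, this is the claim; and when $E=1$ we have $\delta = 0$, so $r_s \equiv 0$, the recursion and telescope are exact, the penalty vanishes, and every $\widetilde{\OM}$ tightens to $\OM$.
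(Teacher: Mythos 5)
Your proposal is correct and follows essentially the same route as the paper: the projected SVRG variance bound, the $(E^2-1)\eta^2L^2$ residual-drift bound over each unprojected window, the stage-wise potential inequality with $\delta=9(E^2-1)\eta^2L^2$, the $\theta_s$ recursion chosen to make the telescope close, and the step-size constraint $\delta=\widetilde{\OM}(1/S)$ that produces the $\sqrt{E^2-1}\,L\Delta^2/(mS^{1.5})$ term. One bookkeeping remark: since you define $\theta_s$ exactly as the root of $\tfrac{1-\theta_s+\delta}{(1-\delta)\theta_s^2}=\tfrac{1}{\theta_{s-1}^2}$, the telescope is exact and your additive residual $r_s$ vanishes — the delayed-projection penalty enters solely through the smaller admissible $\eta$ inflating the $\Delta^2/(\eta m)$ term and through $\theta_S\le 2\delta+(1-\delta)^{S}=\widetilde{\OM}(1/S)$, which is how the paper accounts for it.
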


\begin{cor}
	\label{cor:general_DP_ASVRG}
	Under the setting of Theorem~\ref{thm:acc_svrg_general}, for Algorithm~\ref{alg:multi_acc_SVRG} using $\IM_m^0$ to obtain an $\eps$-suboptimal solution (i.e., $\EB\left[F(\hat{\y}) - F(\xc)\right] \le \eps$), the stage complexity is 
	\begin{align*}
	\SB
	&=\widetilde{\OM}\left(  \frac{\sqrt{L}\Delta}{\sqrt{\eps}} +\sqrt[3]{\frac{E^2-1}{m^2}}\cdot  \frac{ L^\frac{2}{3}\Delta^\frac{4}{3}}{\eps^{\frac{2}{3}}}  \right).
	\end{align*}
	As a result, its iteration complexity is $\TB=m\SB = \widetilde{\OM}\left(  m\frac{\sqrt{L}\Delta}{\sqrt{\eps}} + 
	(E^2-1)^{\frac{1}{3}}m^{\frac{1}{3}}	\frac{ L^\frac{2}{3}\Delta^\frac{4}{3}}{\eps^{\frac{2}{3}}} \right)$, projection complexity is $\PB = \frac{\TB}{E} = \widetilde{\OM}\left( \frac{m}{E}\frac{\sqrt{L}\Delta}{\sqrt{\eps}}  + \left(1-\frac{1}{E^2}\right)^{\frac{1}{3}}\left(\frac{m}{E}\right)^{\frac{1}{3}} \frac{ L^\frac{2}{3}\Delta^\frac{4}{3}}{\eps^{\frac{2}{3}}} \right)$, and gradient complexity is $\GB = \widetilde{\OM}\left(  m\frac{\sqrt{L}\Delta}{\sqrt{\eps}} + 
	(E^2-1)^{\frac{1}{3}}m^{\frac{1}{3}}	\frac{ L^\frac{2}{3}\Delta^\frac{4}{3}}{\eps^{\frac{2}{3}}}   \right)\cdot(m + N)$, where $\widetilde{\OM}(\cdot)$ omits a factor of $\ln \frac{1}{\eps}$ for simplicity.
	If $E=1$, we can safely replace the above $\widetilde{\OM}(\cdot)$ with ${\OM}(\cdot)$.
\end{cor}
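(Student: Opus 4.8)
The plan is to treat Theorem~\ref{thm:acc_svrg_general} as a black box and simply invert its convergence bound to read off how many stages suffice for an $\eps$-suboptimal solution, after which the remaining three complexities follow mechanically from Definition~\ref{def:complexity}. First I would rewrite the right-hand side of the bound in Theorem~\ref{thm:acc_svrg_general} as a sum of three contributions in $S$, namely $\frac{F(\x_0)-F(\xc)}{S^2}$, $\frac{LE\Delta^2}{mS^2}$, and $\frac{\sqrt{E^2-1}L\Delta^2}{mS^{1.5}}$, and demand that each be at most $\eps$ up to constants. Since a sum of three nonnegative quantities is $\OM(\eps)$ exactly when each of them is, this reformulation is equivalent to the original accuracy requirement and loses nothing.

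Next I would solve each of the three inequalities for $S$. To simplify the first term I would combine smoothness with constrained optimality: because the feasible initialization $\y_0 = \PAT(\x_0)$ and $\xc$ both lie in $\mathcal{R}(\A^\perp)$ and $\PAT(\nabla F(\xc)) = \0$ by Corollary~\ref{cor:xc}, the cross term in the $L$-smooth upper expansion vanishes, giving $F(\y_0) - F(\xc) \le \frac{L}{2}\Delta^2$, i.e. $F(\x_0)-F(\xc) = \OM(L\Delta^2)$. The first inequality then yields $S \gtrsim \frac{\sqrt{L}\Delta}{\sqrt{\eps}}$, the third yields $S \gtrsim \bigl(\frac{E^2-1}{m^2}\bigr)^{1/3}\frac{L^{2/3}\Delta^{4/3}}{\eps^{2/3}}$, and the second yields $S \gtrsim \sqrt{\frac{E}{m}}\cdot\frac{\sqrt{L}\Delta}{\sqrt{\eps}}$. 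The key observation is that the second requirement is dominated by the first, since the standing hypothesis $m \ge E$ forces $\sqrt{E/m} \le 1$; hence that term never binds. Taking the larger of the two surviving lower bounds, equivalently their sum, produces exactly the claimed $\SB$.

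Finally I would turn the crank on Definition~\ref{def:complexity}. Each stage runs $m$ inner iterations, so $\TB = m\SB$ immediately. Using $\IM_m^0$, a projection is triggered once every $E$ inner steps together with $O(1)$ projections at each stage boundary, so the projection count is $\SB + \TB/E$; but $m \ge E$ gives $\TB/E = (m/E)\SB \ge \SB$, absorbing the boundary projections into $\PB = \OM(\TB/E)$. For gradients, every stage spends $\OM(N)$ evaluations on the snapshot full gradient $\PAT(\nabla F(\ttx_s))$ and $\OM(m)$ evaluations on the inner stochastic gradients $\g_t^s$, so $\GB = (m+N)\SB$. Substituting the expression for $\SB$ into each reproduces the stated $\TB$, $\PB$, and $\GB$. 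For the $E=1$ claim I would note two simplifications: the residual factor $\sqrt{E^2-1}$ vanishes, killing the second term of each complexity, and with $\delta = 9(E^2-1)\eta^2L^2 = 0$ the recursion for $\theta_s$ collapses to the classical Nesterov relation, removing the need to tune $\eta$ adaptively in $S$, so the $\widetilde{\OM}$ bounds may be replaced by tilde-free $\OM$.

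The genuinely delicate point, rather than this essentially routine algebra, is the circular dependence of the prescribed step size $\eta = \widetilde{\Theta}\bigl(\frac{1}{EL\sqrt{S}}\bigr)$ on $S$ itself. I would emphasize that this circularity is already resolved inside Theorem~\ref{thm:acc_svrg_general}, whose bound is stated for precisely this $\eta$; the corollary only needs the inverse map from target accuracy $\eps$ to stage count $S$, so no re-optimization over $\eta$ is performed here and the inversion above is legitimate.
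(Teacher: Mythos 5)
Your proposal is correct and follows essentially the same (implicit) route the paper takes: invert the three-term bound of Theorem~\ref{thm:acc_svrg_general} term by term, drop the $\frac{LE\Delta^2}{mS^2}$ requirement because $m\ge E$ makes it dominated by the first, use $F(\y_0)-F(\xc)=\OM(L\Delta^2)$, and then read off $\TB$, $\PB$, $\GB$ from Definition~\ref{def:complexity}. One small note: your formula $\GB=(m+N)\SB$ is the correct one (consistent with Corollary~\ref{cor:strong_DP_ASVRG} and Table~\ref{table:3}); the expression printed in the corollary statement multiplies the $\TB$ expression by $(m+N)$, which carries a spurious extra factor of $m$ and appears to be a typo rather than a flaw in your argument.
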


We have the following observations:
\begin{itemize}
	\item When $m=N$ and $E=1$, DP-ASVRG is reduced to P-ASVRG that has ${\OM}(N \sqrt{\frac{L}{\eps}} \Delta)$ projection complexity and ${\OM}( N \sqrt{\frac{L}{\eps}} \Delta)$ gradient complexity, consistent with previous analysis~\cite{shang2018asvrg}.
	\item When $E > 1$, DP-SAVRG converges with rate $\widetilde{\OM}(1/S^{1.5})$, much faster than DP-SVRG's  $\widetilde{\OM}(1/S)$, implying acceleration works.
	However, one can see that DP-ASVRG rate fails to match the best known result in~\cite{allen2017katyusha,nguyen2019accelerated,shang2018asvrg} that attains the optimal convergence rate $\OM(1/S^2)$ due to the residual error.
	\item We find that DP-ASVRG has advantage over P-ASVRG on projection complexity at the large $N$ regime or low accuracy (large $\eps$) regime.
	For example, when $m=(\frac{L}{\eps})^{1/4} \Delta^{1/2}E$, the projection complexity for DP-ASVRG is $ \widetilde{\OM}\left((\frac{L}{\eps})^{3/4} \Delta^{3/2}\right)$, which will be tremendously smaller than P-ASVRG's ${\OM}( N \sqrt{\frac{L}{\eps}} \Delta )$ when $\eps \ge \frac{L \Delta^2}{N^4}$.
	Fortunately, typical machine learning tasks don't need high accuracy solution.
	For example,~\cite{anonymous2021sharper} suggests that if $F(\cdot)$ satisfies the Polyak-Lojasiewicz (PL) inequality~\cite{polyak1963gradient} at the optima, typically $\eps = \Theta(1/N)$ is enough for a good generalization.
\end{itemize}

The best know gradient complexity for generally convex smooth finite sum minimization is $\widetilde{\OM}\left( N + \sqrt{N\frac{L}{\eps}}  \right) $.
As argued by~\cite{allen2017katyusha,shang2018vr}, to achieve that lower bound, we can use the adaptive regularization technique proposed in~\cite{allen2016optimal} to the original non-strongly convex optimization.
In particular, we aim to minimize $F(\x) + \frac{\mu_l}{2} \| \x - \x_0\|^2$ with a exponentially decreasing value $\sigma_l$ (e.g., $\sigma_l = \sigma_{l-1}/2$) and we will decrease the value of $\mu_l$ at an appropriate time until it reaches around $\Theta(\eps)$.
As a result, we can improve Corollary~\ref{cor:general_DP_ASVRG} to the following.
As a thumb of rule, one can derive Corollary~\ref{cor:general_DP_ASVRG_1} by replacing $\mu$ with $\eps$ in Corollary~\ref{cor:strong_DP_ASVRG}.
Hence, the discussion in the last subsection can apply here.
For example, when $N \ge \frac{L}{\eps}$ and we set $m= \frac{L}{\eps} E = \sqrt{N \frac{L}{\eps}}$ for DP-ASVRG, DP-ASVRG only needs $\widetilde{\OM}\left(  \frac{L}{\eps} \right)$ projections to obtain an $\eps$-optimal solution, while P-ASVRG needs $\widetilde{\OM}\left( N + \sqrt{N \frac{L}{\eps}}\right)$ projections, though the two algorithms has a same gradient complexity.

\begin{cor}
	\label{cor:general_DP_ASVRG_1}
	Under the same condition of Theorem~\ref{thm:acc_svrg_general}, in order to obtain an $\eps$-suboptimal solution (i.e., $\EB\left[F(\hat{\y}) - F(\xc)\right] \le \eps$), we use the adaptive regularization technique in~\cite{allen2016optimal} to the original non-strongly convex optimization.
	Then, the required stage complexity is 
	\[
	\SB = \widetilde{\OM}\left(  \left[  \max\left\{ 1, \sqrt{\frac{L E}{\eps m}} \right\} + \left( \frac{L}{\eps} \right)^{\frac{2}{3}} \sqrt[3]{\frac{E^2-1}{m^2}}    \right]  \right).
	\]
	As a result, its iteration complexity is $
	\TB=m\SB = \widetilde{\OM}\left(  \max\left\{ m, \sqrt{m\frac{L}{\eps} E} \right\} + \left(\frac{L}{\eps}\right)^{\frac{2}{3}}(E^2-1)^{\frac{1}{3}}m^{\frac{1}{3}} \right)$, projection complexity is $\PB = \SB + \frac{\TB}{E} = \widetilde{\OM}\left( \max\left\{ \frac{m}{E}, \sqrt{\frac{mL}{E\eps}} \right\} + \left(\frac{L}{\eps}\right)^{\frac{2}{3}}\left(1-\frac{1}{E^2}\right)^{\frac{1}{3}}\left(\frac{m}{E}\right)^{\frac{1}{3}} \right)$, and gradient complexity is $\GB=\widetilde{\OM}\left(  \max\left\{ 1, \sqrt{\frac{ EL}{m\eps}} \right\} + \left(\frac{L}{\eps}\right)^{\frac{2}{3}} \sqrt[3]{\frac{E^2-1}{m^2}}   \right)\cdot(m + N)$, where $\widetilde{\OM}(\cdot)$ omits a factor of $\ln \frac{ 1}{\eps}$ for simplicity.
\end{cor}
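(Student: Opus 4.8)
The plan is to reduce the generally convex case to the strongly convex case already established in Corollary~\ref{cor:strong_DP_ASVRG}, via the adaptive regularization (AdaptReg) black-box reduction of~\cite{allen2016optimal}. Concretely, I would introduce the regularized objectives $F_\ell(\x) := F(\x) + \frac{\mu_\ell}{2}\|\x - \x_0\|^2$ for a geometrically decreasing sequence $\mu_\ell = \mu_0/2^\ell$, observing that each $F_\ell$ is $\mu_\ell$-strongly convex and $(L+\mu_\ell)$-smooth. Since the regularizer is quadratic and $\x_0$ may be taken feasible (so that $\y_0 = \PAT(\x_0)$), the regularized problem is again an instance of~\eqref{eq:formal_loss} over $\mathcal{R}(\A^\perp)$; all the projection and feasibility machinery of Algorithm~\ref{alg:multi_acc_SVRG} carries over verbatim, now with smoothness modulus $L_\ell = L + \mu_\ell \le 2L$ whenever $\mu_\ell \le L$.

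For each fixed $\ell$, I would invoke Corollary~\ref{cor:strong_DP_ASVRG} with condition number $\kappa_\ell = L_\ell/\mu_\ell = \Theta(L/\mu_\ell)$ to drive the regularized suboptimality gap down by a constant factor per restart, warm-starting round $\ell$ from the output of round $\ell-1$. The AdaptReg analysis guarantees that solving each $F_\ell$ to accuracy $\Theta(\mu_\ell \Delta^2)$ and then halving $\mu_\ell$ yields, after $\OM(\log(\mu_0/\eps))$ rounds terminating at $\mu_L = \Theta(\eps)$, a point that is $\eps$-suboptimal for the original $F$. The per-round stage complexity supplied by Corollary~\ref{cor:strong_DP_ASVRG} is $\widetilde{\OM}\left( \max\{1, \sqrt{\kappa_\ell E/m}\} + \kappa_\ell^{2/3}\sqrt[3]{(E^2-1)/m^2}\right)$.

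The final step is to sum these per-round complexities over $\ell = 0, \dots, L$. Because $\kappa_\ell = \Theta(L/\mu_\ell)$ grows geometrically in $\ell$, and both the $\sqrt{\kappa_\ell}$ and $\kappa_\ell^{2/3}$ contributions are increasing in $\kappa_\ell$, the geometric sum is dominated---up to the logarithmic round count and a constant absorbing the series ratio---by its largest term, namely the terminal round where $\mu_L = \Theta(\eps)$ and hence $\kappa_L = \Theta(L/\eps)$. Substituting $\kappa \mapsto L/\eps$ into Corollary~\ref{cor:strong_DP_ASVRG} then reproduces exactly the stage complexity $\SB = \widetilde{\OM}\left( \max\{1, \sqrt{LE/(\eps m)}\} + (L/\eps)^{2/3}\sqrt[3]{(E^2-1)/m^2}\right)$ in the statement; the iteration, projection, and gradient complexities follow immediately from $\TB = m\SB$, $\PB = \SB + \TB/E$, and $\GB = (m+N)\SB$.

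The main obstacle I anticipate is ensuring the AdaptReg reduction is genuinely compatible with the delayed-projection residual-error terms. Unlike the vanilla strongly convex setting, the bound in Theorem~\ref{thm:acc_svrg_strong} carries an $E$-dependent residual contribution $\kappa^{2/3}\sqrt[3]{(E^2-1)/m^2}$ stemming from the infrequent projections; I must verify that, taken with $\kappa = \kappa_\ell$, this term still decays geometrically across rounds so that its cumulative effect does not dominate the terminal-round term, and that the inter-round warm-start (which re-projects the iterates onto $\mathcal{R}(\A^\perp)$) does not reintroduce an uncontrolled component in $\mathcal{R}(\A)$. Concretely, this amounts to checking that the residual error at the end of round $\ell$ sits on the same $\Theta(\mu_\ell\Delta^2)$ scale as the regularized optimization error, so that the black-box accuracy requirement of~\cite{allen2016optimal} is met at every stage.
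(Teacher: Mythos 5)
Your proposal matches the paper's argument exactly: the paper derives this corollary by applying the adaptive regularization (AdaptReg) reduction of Allen-Zhu and Hazan to the sequence of objectives $F(\x) + \frac{\mu_\ell}{2}\|\x-\x_0\|^2$ with $\mu_\ell$ halved each round until $\mu_\ell = \Theta(\eps)$, then observing that the geometric sum of per-round complexities from Corollary~\ref{cor:strong_DP_ASVRG} is dominated by the terminal round, i.e.\ the stated bounds follow by substituting $\mu \mapsto \eps$ (equivalently $\kappa \mapsto L/\eps$) in the strongly convex corollary. The compatibility check you flag for the delayed-projection residual term is a reasonable point of care, but since that term also scales as a power of $\kappa_\ell$ it is likewise dominated by the terminal round, consistent with the paper's (informal) treatment.
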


\section{Applications in Federated Optimization}
\label{sec:FL}

Federated Learning (FL) emerges as a new distributed computing paradigms that try to perform private distributed optimization in large-scale networks of remote clients~\cite{kairouz2019advances}.
In particular, we have the following distributed optimization problem across $n$ worker nodes:
\begin{equation}
\label{eq:loss}
f(\x)
= \frac{1}{n} \sum_{k=1}^{n} f_{k}(\x) 
:= \frac{1}{n} \sum_{k=1}^{n} \EB_{\xi \sim \DM_k} f(\x; \xi).
\end{equation}
In conventional distribute learning, a distribute system evenly allocates the whole dataset into $n$ worker nodes and often periodically shuffles the data to make sure each worker node has access to the underlying data distribution.
Therefore, $\DM_1 = \cdots = \DM_n = \DM$.
However, in Federated Learning, for the sake of privacy protection, data are generated locally and are prohibitive to be uploaded to the data center, which incurs a discrepancy among local data distributions, i.e., $\{\DM_i\}_{i=1}^n$ are not necessarily identical anymore.
What's more, any third party including the center has no access to data instances generated by any worker node.
Apart from data heterogeneity, FL systems also present other challenges characterized by expensive communication costs, unreliable connection, massive scale, and privacy constraints~\cite{li2020federated}.

In the section, we show how to apply our new methods with delayed projections to federated optimization and how derived theories help understand their convergence behaviors.
We assume that there are $n$ machines and denote its parameter by $\x_t^{(k)}$ or $\x_{t,s}^{(k)}$ with $t, s$ denoting the inner and outer iterations when two loops are used.
The selected sample at that iteration is denoted by $\xi_t^{(k)}$ or $\xi_{t,s}^{(k)}$.
Each device holds a objective $f_k(\x)$ in a form $f_k(\x) = \EB_{\xi \sim \DM_k} f(\x; \xi)$ or $f_k(\x) = \frac{1}{n_k} \sum_{i=1}^{n_k} f(\x; \varsigma_i)$ where $\varsigma_i$ are generated independently from $\DM_k$.

In this section, the notation will be slightly different from that in the introduction, since we need additional superscripts to distinguish different devices. 
Let $\x_t = [(\x_t^{(1)})^\top, \cdots, (\x_t^{(n)})^\top]^\top \in \RB^{nd}$ be the concatenated variable at iteration $t$
and $\xi_t =[(\xi_t^{(1)})^\top, \cdots, (\xi_t^{(n)})^\top]^\top \in \RB^{n}$ the concatenated samples selected at iteration $t$.
Let $F(\x_t) = \sum_{k=1}^n f_k(\x_t^{(k)}) $ the objective function of it.
Assuming each local function $f_k(\x) = \EB_{\xi \sim \DM_k}f(\x; \xi)$ is $L_k$-smooth and $\mu_k$-strongly convex, one can show that $F(\cdot)$ is $L$-smooth with modulus $L=\max_{k \in [n]} L_k$ and $\mu$-strongly convex with modulus $\mu=\min_{k \in [n]} \mu_k$ by definition, satisfying Assumption~\ref{asmp:smooth} and~\ref{asmp:strong}.
Let $\x^* = \argmin_{\x \in \RB^{d}} \sum_{k=1}^n f_k(\x)$ and $\xc = \argmin_{\x \in \mathcal{R}(\A^{\perp})} \sum_{k=1}^n f_k(\x^{(k)})$ with $\A$ given in~\eqref{eq:B}, then obviously $\xc = \x^* \otimes 1_n$.

\subsection{Recover the analysis for Local SGD}
In this section, we show how derived Theorem~\ref{thm:simple} and~\ref{thm:complicate} help to give theoretical results for Local SGD (Algorithm~\ref{alg:local_sgd}).
This is a typical procedure of reducing a distribution optimization problem to an LCP.

The stochastic gradient of $F(\x)$ is given by $\nabla F(\x; \xi) = [\nabla f(\x^{(1)}; \xi^{(1)})^\top, \cdots, \nabla f(\x^{(n)}; \xi^{(n)})^\top]^\top \in \RB^{nd}$ where $\xi = [\xi^{(n)}, \cdots, \xi^{(n)}]^\top$ denotes by the selected samples used to generate stochastic gradients.
Here each $\xi^{(k)} \sim \DM_k$ is generated independently but may conform to different distributions.
By assuming each $f_k(\cdot)$ has bounded stochastic gradient variance on $\x^*$, $F(\cdot)$ meets Assumption~\ref{assum:bounded} with parameters satisfying the following relation:

\begin{lem}
	\label{lem:sigma}
	Define  
	\begin{equation}
	\label{eq:local_sigma}
	\sigma_*^2 = \frac{1}{n} \sum_{k=1}^n \EB_{\xi^{(k)}\sim \DM_k} \| \nabla f(\x^*; \xi^{(k)}) -\nabla f_k(\x^*) \|^2 
	\ \text{and} \
	\zeta_*^2 = \frac{1}{n} \sum_{k=1}^n \| \nabla f_k(\x^*) \|^2.
	\end{equation}
	Then $F(\x) = \sum_{k=1}^n f_k(\x_k)$ satisfies Assumption~\ref{assum:bounded} with parameters
	\[
	\sigmaat  =   \sigma_*^2
	\ \text{and} \
	\sigmaa = n 	\zeta_*^2 + (n-1) \sigma_*^2.
	\]
\end{lem}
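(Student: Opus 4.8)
The plan is to evaluate the two quantities in Assumption~\ref{assum:bounded} directly, exploiting the block structure of the federated gradient together with the projection formula from Lemma~\ref{lem:proj}. Write $\g_k := \nabla f(\x^*; \xi^{(k)})$ for the local stochastic gradient, so that at $\xc = \x^* \otimes \1_n$ the concatenated stochastic gradient is $\nabla F(\xc; \xi) = [\g_1^\top, \cdots, \g_n^\top]^\top$, whose blocks are mutually independent (the $\xi^{(k)}$ are drawn independently) with $\EB \g_k = \nabla f_k(\x^*)$. The one structural fact I would isolate first is the optimality condition: since $\x^* = \argmin_{\x} \sum_k f_k(\x)$ minimizes an unconstrained sum, we have $\sum_{k=1}^n \nabla f_k(\x^*) = \0$, equivalently $\frac1n \sum_k \EB \g_k = \0$; this is precisely the statement $\PAT(\nabla F(\xc)) = \0$ from Corollary~\ref{cor:xc}.

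For $\sigmaat$, I would apply Lemma~\ref{lem:proj} to obtain $\PAT(\nabla F(\xc; \xi)) = \bar{\g} \otimes \1_n$ with $\bar{\g} = \frac1n \sum_k \g_k$. Since this consists of $n$ identical copies of $\bar{\g}$, we get $\sigmaat = \EB\|\bar{\g} \otimes \1_n\|^2 = n\,\EB\|\bar{\g}\|^2$. Because $\EB\bar{\g} = \0$ by the optimality condition and the $\g_k$ are independent, a bias--variance decomposition collapses the mean term and gives $\EB\|\bar{\g}\|^2 = \frac{1}{n^2}\sum_k \EB\|\g_k - \nabla f_k(\x^*)\|^2 = \frac{1}{n^2}\cdot n\sigma_*^2 = \sigma_*^2/n$, hence $\sigmaat = \sigma_*^2$.

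For $\sigmaa$, rather than computing the projection $\PA$ explicitly, I would use the orthogonal decomposition of Proposition~\ref{prop:proj}, which yields the Pythagorean identity $\EB\|\nabla F(\xc;\xi)\|^2 = \sigmaat + \sigmaa$. It then suffices to evaluate the total second moment blockwise: $\EB\|\nabla F(\xc;\xi)\|^2 = \sum_k \EB\|\g_k\|^2 = \sum_k \big(\|\nabla f_k(\x^*)\|^2 + \EB\|\g_k - \nabla f_k(\x^*)\|^2\big) = n\zeta_*^2 + n\sigma_*^2$. Subtracting $\sigmaat = \sigma_*^2$ delivers $\sigmaa = n\zeta_*^2 + (n-1)\sigma_*^2$, as claimed.

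I do not expect a serious obstacle; the argument is a short computation once the two structural facts are secured. The points that genuinely require care are (i) invoking the optimality condition $\sum_k \nabla f_k(\x^*) = \0$ so that $\bar{\g}$ is centered --- without it, $\sigmaat$ would acquire a spurious $n\,\|\frac1n\sum_k \nabla f_k(\x^*)\|^2$ term and the clean identity would fail; and (ii) using independence across devices so that the variance of the \emph{average} scales like $1/n$ rather than $O(1)$. This $1/n$ factor is exactly what makes $\sigmaat = \sigma_*^2$ (the averaged, communication-friendly direction) much smaller than the $n\zeta_*^2$ contribution appearing in $\sigmaa$, which is the feature later exploited to explain why delayed projection reduces the statistical error.
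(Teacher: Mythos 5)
Your proposal is correct and follows essentially the same route as the paper's proof: both use Lemma~\ref{lem:proj} to identify $\PAT(\nabla F(\xc;\xi))$ with the replicated block average, independence across devices to reduce the second moment of that average to $\sigma_*^2/n$, and the orthogonal (Pythagorean) decomposition $\EB\|\nabla F(\xc;\xi)\|^2 = \sigmaat + \sigmaa$ to obtain $\sigmaa$ by subtraction. The only cosmetic difference is that you invoke the first-order condition $\sum_k \nabla f_k(\x^*)=\0$ directly to center $\bar{\g}$, whereas the paper routes the same fact through Remark~\ref{rem:sigma} and Corollary~\ref{cor:xc}.
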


DP-SGD (Algorithm~\ref{alg:multi}) is the synonyms for Local SGD (Algorithm~\ref{alg:local_sgd}) in the context of single-machine LCP.
Each machine performs SGD locally via $\x_t = \x_{t-1} - \eta_{t-1} \nabla F(\x_{t-1}; \xi_{t-1})$ and periodically synchronizes local model parameters with global average that is equivalent to projection here $\x_t \gets \PAT(\x_t)$.
When all local data distribution are identical ($\DM_1 = \DM_2 = \cdots = \DM_n := \DM$), each device has the access to the underlying data distribution.
As a result, $f_1(\x) = f_2(\x) =\cdots = f_n(\x) = f(\x) := \EB_{\xi\sim \DM} f(\x; \xi)$.
With $\A$ given in~\eqref{eq:B}, Lemma~\ref{lem:proj} shows any $\y_0 \in \mathcal{R}(\A^{\perp})$ has $n$ identical block of coordinates. 
Hence, $\nabla F(\y_0) = \EB_{\xi}\nabla F(\y_0; \xi)$ also has $n$ identical block of coordinates and thus belongs to $\mathcal{R}(\A^{\perp})$, implying Assumption~\ref{asmp:g_unconstrained} holds.
Once local data distribution varies (i.e., there exists a pair $i \neq j$ such that $\DM_i \neq \DM_j$), Assumption~\ref{asmp:g_unconstrained} might not hold.

\begin{cor}[Local SGD on identical and heterogeneous data]
	\label{cor:local_sgd}
	Assume each $f_k(\cdot)$ is $L_k$-smooth, $\mu_k$-strongly convex, and has bounded gradient variance at the optimum $\x^*$ with parameters defined in~\eqref{eq:local_sigma}.
	Start from $\x_0$ that $\|\x_0 - \x^*\| \le B$, run Local SGD for $T$ iterations with $\mathrm{gap}(\IM_T) = E \ (E \ge 1)$, and tune the constant learning rate $\eta \le \Theta(\frac{1}{\mu +LE})$ where $L=\max_{k \in [n]} L_k$ and $\mu=\min_{k \in [n]} \mu_k$.
	Then Local SGD produces a global $\hat{\x}$ satisfying: $\EB \left[f(\hat{\x}) - f(\x^*) \right]=$
	\begin{equation}
	\label{eq:y_mu0_local}
	\OM\left( \frac{LEB^2}{T} +\frac{B\sigma_*}{\sqrt{nT}} + \frac{\sqrt[3]{(E-1)LB^4}}{T^{\frac{2}{3}}} \cdot {   \left[E\zeta_*^2 + \frac{n-1}{n} \sigma_*^2\right] }^{\frac{1}{3}}\right) , 
	\end{equation}
	for convex case $(\mu = 0)$ and
	\begin{equation}
	\label{eq:y_mu>_local}
	\widetilde{\OM}\left( LEB^2 \cdot\exp\left(-\Theta\left(\frac{\mu T}{LE}\right) \right) + \frac{\sigma_*^2}{n\mu T}  + \frac{(E-1)L}{\mu^2 T^2}\cdot \left[E\zeta_*^2 + \frac{n-1}{n} \sigma_*^2\right] \right).
	\end{equation}
	for strongly convex case $(\mu > 0)$, no matter whether each machine obtains samples from an identical data distribution or not.
\end{cor}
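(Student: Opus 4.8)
The plan is to derive this as a corollary, not to re-prove anything from scratch: Local SGD (Algorithm~\ref{alg:local_sgd}) is exactly DP-SGD (Algorithm~\ref{alg:multi}) run on the consensus LCP~\eqref{eq:constraint}, so I would invoke Theorems~\ref{thm:simple} and~\ref{thm:complicate} verbatim and translate their guarantees back to the federated objective $f$. The reduction rests on three earlier facts. By Lemma~\ref{lem:proj} the synchronization step $\x_t^{(k)} \gets \frac{1}{n}\sum_j \x_t^{(j)}$ coincides with the projection $\PAT$ for $\A = \I_d \otimes \B^\top$, so averaging \emph{is} projection. The separable objective $F(\x) = \sum_{k=1}^n f_k(\x^{(k)})$ is $L$-smooth and $\mu$-strongly convex with $L = \max_k L_k$, $\mu = \min_k \mu_k$, so Assumptions~\ref{asmp:smooth} and~\ref{asmp:strong} hold. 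Finally, Lemma~\ref{lem:sigma} supplies the LCP variance parameters $\sigmaat = \sigma_*^2$ and $\sigmaa = n\zeta_*^2 + (n-1)\sigma_*^2$.

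First I would fix the dictionary between the two views. Since $\xc = \x^* \otimes \1_n$ and any feasible point lies in $\mathcal{R}(\A^\perp)$, hence has the block-replicated form $\hat{\x} \otimes \1_n$ by Lemma~\ref{lem:proj}, separability gives $\EB[F(\hat{\y}) - F(\xc)] = n\,\EB[f(\hat{\x}) - f(\x^*)]$ and $\Delta^2 = \EB\|\PAT(\x_0) - \xc\|^2 = n\,\EB\|\x_0 - \x^*\|^2 \le nB^2$. Thus each theorem bound carries an overall factor $n$ that cancels the $1/n$ when we descend to $f$, while every occurrence of $\Delta^2$ contributes an extra $n$. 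The verification then reduces to substituting $\Delta^2 = nB^2$, $\sigmaat = \sigma_*^2$, and the appropriate residual variance into~\eqref{eq:y_mu0} and~\eqref{eq:y_mu>}, dividing by $n$, and matching powers of $n$: the statistical term $\Delta\sigma_{\A^{\perp},*}/\sqrt{T}$ becomes $\sqrt{n}\,B\sigma_*/\sqrt{T}$, which after the $1/n$ yields exactly $B\sigma_*/\sqrt{nT}$, and the optimization term $LE\Delta^2/T$ yields $LEB^2/T$.

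The two data regimes are dispatched to the two theorems. When $\DM_1 = \cdots = \DM_n$ all $f_k$ coincide, so $\nabla f_k(\x^*) = 0$, forcing $\zeta_*^2 = 0$ and—crucially—making $\nabla F(\y)$ lie in $\mathcal{R}(\A^\perp)$ for every feasible $\y$ (its $n$ blocks are identical); hence Assumption~\ref{asmp:g_unconstrained} holds and Theorem~\ref{thm:simple} applies with $\sigmaa = (n-1)\sigma_*^2$. When the distributions differ, Assumption~\ref{asmp:g_unconstrained} may fail, so I would instead invoke Theorem~\ref{thm:complicate}, which replaces $\sigmaa$ by $\widetilde{\sigma}_{\A,*}^2 = \sigmaa + (E-1)\|\nabla F(\xc)\|^2$. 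Using $\|\nabla F(\xc)\|^2 = \sum_{k=1}^n \|\nabla f_k(\x^*)\|^2 = n\zeta_*^2$ gives $\widetilde{\sigma}_{\A,*}^2 = nE\zeta_*^2 + (n-1)\sigma_*^2 = n\big(E\zeta_*^2 + \tfrac{n-1}{n}\sigma_*^2\big)$. Tracking this factor $n$ through the residual term $\sqrt[3]{(E-1)L\Delta^4}\,(\widetilde{\sigma}_{\A,*}^2)^{1/3}/T^{2/3}$ produces precisely the bracketed quantity $[E\zeta_*^2 + \tfrac{n-1}{n}\sigma_*^2]^{1/3}$ in~\eqref{eq:y_mu0_local}, and the strongly convex residual $\tfrac{(E-1)L\widetilde{\sigma}_{\A,*}^2}{\mu^2 T^2}$ matches~\eqref{eq:y_mu>_local} the same way. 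Because $\zeta_*^2 = 0$ automatically in the identical case, the single stated bound covers both regimes at once.

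The main obstacle is not conceptual but bookkeeping: propagating the factor $n$ through the cube-root residual terms, where $\Delta^4$ scales like $n^2$ and $\widetilde{\sigma}_{\A,*}^2$ like $n$, so their product scales like $n^{2/3}\cdot n^{1/3} = n$ and cancels cleanly against the $1/n$—an error in any exponent would leave a spurious power of $n$. A secondary point to state explicitly is that the output rule of Algorithm~\ref{alg:local_sgd} (the weighted iterate average with weights $(1-\mu\eta)^{T-j-1}$, averaged over devices) equals the single-block value of the projected weighted average $\hat{\y}$ returned by Algorithm~\ref{alg:multi}, which is immediate from the linearity of $\PAT$ in Lemma~\ref{lem:proj}, so the guarantees transfer to the actually-returned $\hat{\x}$. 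Finally I would check that the stated condition $\eta \le \Theta(\tfrac{1}{\mu + LE})$ is compatible, up to constants, with both $\eta \le \min\{\tfrac{1}{10L}, \tfrac{1}{\mu+8L(E-1)}\}$ of Theorem~\ref{thm:simple} and $\eta \le \min\{\tfrac{1}{L(E+9)}, \tfrac{1}{\mu+25L(E-1)}\}$ of Theorem~\ref{thm:complicate}.
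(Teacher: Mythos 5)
Your proposal is correct and follows essentially the same route as the paper's own proof: reduce Local SGD to DP-SGD on the consensus LCP, invoke Theorems~\ref{thm:simple} and~\ref{thm:complicate}, substitute $\sigmaat=\sigma_*^2$, $\widetilde{\sigma}_{\A,*}^2=nE\zeta_*^2+(n-1)\sigma_*^2$ and $\Delta^2=nB^2$ from Lemmas~\ref{lem:proj} and~\ref{lem:sigma}, and divide by $n$. Your bookkeeping of the powers of $n$ through the cube-root residual term is accurate and in fact more explicit than the paper's one-line "then the conclusion follows."
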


In the above corollary, we derive the convergence result for Local SGD easily from Theorem~\ref{thm:simple} and~\ref{thm:complicate} and the result is is finer than the state-of-the-art analysis~\cite{koloskova2020unified,woodworth2020minibatch}.
Local SGD suffers an additional term named as the residual error, the third term of~\eqref{eq:y_mu0_local} and~\eqref{eq:y_mu>_local}, than traditional SGD~\cite{stich2019unified}.
When $E=1$ (no local updates) or $n=1$ (no other participants\footnote{This follows since the constraint vanishes when $n=1$ and thus $\zeta_* = 0$.}), the residual error vanishes.
Otherwise, distributed methods with local updates inevitably suffer the residual error due to delayed communication and periodic synchronization.
Many previous works including ours prove the residual error should form in a function of $\OM(E\sigma_*^2 + E^2 \zeta_*^2)$~\cite{li2019communication,stich2019error,bayoumi2020tighter,koloskova2020unified,woodworth2020local,woodworth2020minibatch,koloskova2020unified}.
In particular,~\cite{woodworth2020local} and~\cite{woodworth2020minibatch} present lower bounds on the performance of local SGD with
\[
\Omega\left(  \frac{\sigma B}{\sqrt{nT}} + \frac{(LB^4\sigma^2)^{\frac{1}{3}}}{T^{\frac{2}{3}}} + \min\left\{ \frac{LEB^2}{T}, \frac{(LE^2B^4\zeta_*^2)^{\frac{1}{3}}}{T^{\frac{2}{3}}}\right\} \right)
\]
for generally convex cases $(\mu=0)$ and 
\[\Omega\left( \frac{\sigma B}{\mu n T} + \min\left\{ \Delta_0,   \frac{L\sigma^2}{\mu^2T^2} \right\} + \min\left\{ \Delta_0\exp\left(-\frac{6\mu}{L} \frac{T}{E}  \right), \frac{L E^2\zeta_*^2 }{\mu^2 T^2}   \right\}  \right)  \]
for strongly convex cases $(\mu>0)$ where $\sigma^2$ is the uniform bound on stochastic gradients and $\Delta_0^2 = \EB \left[F(\x_0) - F(\x^*) \right]$ is the initial error in function values (note that we almost have $\Delta_0^2 \approx L B^2$ by smoothness).
We can see that those lower bounds almost match the upper bounds~\eqref{eq:y_mu0_local} and~\eqref{eq:y_mu>_local} except that the second term is not matched up to a factor of $E^{\frac{1}{3}}$ and $E$ respectively
It still remains an open problem to close the gap.

\subsection{Remove Statistical Errors and Residual Errors}
\begin{algorithm}[tb]
	\caption{Local SVRG}
	\label{alg:local_SVRG}
	\begin{algorithmic}
		\STATE {\bfseries Input:} functions $\{f_k\}_{k=1}^n$, initial point $\x_0$, step size $\eta_t^s$, stage number $S$, loop iteration $m$,
		communication set $\IM_m \subset [m]$ with $\mathrm{gap}(\IM_m) = E  (E \ge 1)$.
		\STATE {\bfseries Initialization:} let $\ttx_0 = \x_{0,0}^{(k)} = \x_0$ for all $k$.
		\FOR{$s=0$ {\bfseries to} $S-1$}
		\STATE {$\tth_{s} \gets \frac{1}{n} \sum_{k=1}^n \nabla f_k(\ttx_{s})$ }
		\FOR{$t=0$ {\bfseries to} $m-1$}
		\FOR {each device $k=1$ {\bfseries to} $n$}
		\STATE  $\g_{t, s}^{ (k)} \gets \nabla f_k(\x_{t,s}^{(k)}; \xi_{t, s}^{(k)}) - \nabla f_k(\ttx_s; \xi_{t,s}^{(k)}) + \tth_s$  with  $\xi_{t,s}^{(k)}$ sampled independently on device $k$
		\STATE  $\x_{t+1,s}^{(k)} \gets \x_{t,s}^{(n)} - \eta_{t}^s\g_{t,s}^{(k)} $  
		\IF {$(t+1)\in \IM_m$}
		\STATE {$\x_{t+1,s}^{(k)} \gets  \frac{1}{n} \sum_{j=1}^n \x_{t+1,s}^{(j)}  $ \quad \# synchronization }
		\ENDIF
		\ENDFOR
		\ENDFOR
		\STATE {$\x_{0, s+1}^{(k)} \gets  \frac{1}{n} \sum_{k=1}^n \x_{m,s}^{(k)}$ for all $k$ \quad \# initialization for the next stage}
		\STATE {$\ttx_{s+1} \gets \sum_{i=0}^{m-1}(1-\mu\eta)^{i}\frac{1}{n}\sum_{k=1}^n\x_{m-i-1, s}^{(k)}/\sum_{j=0}^{m-1}(1-\mu\eta)^{j}$}
		\ENDFOR
		\STATE If $\mu = 0$, $\hat{\y} \gets \frac{1}{S}\sum_{s=1}^S \ttx_{s}$; otherwise $\hat{\y} \gets \ttx_{S}$.
		\STATE {\bfseries Return:} $\hat{\y}$.
	\end{algorithmic}
\end{algorithm}

When applying DP-SVRG  to solve the specific distributed  problem~\eqref{eq:constraint}, we obtain a novel distributed algorithm, named Local SVRG (Algorithm~\ref{alg:local_SVRG}).
Similar to previous federated optimization methods~\cite{sahu2018convergence,karimireddy2019scaffold,yuan2020federated}, Local SVRG periodically synchronizes local parameters with their average and the synchronization interval is no larger than $E$.
However, Local SVRG can remove both statistical errors and residual errors, achieved by no previous works.

Typically, the residual error is often believed to come from the data heterogeneity (i.e., $\zeta_* > 0$) and local updates (i.e., $E > 1$). 
It is well known that such data heterogeneity degrades the performance of the global model and may even result in divergence~\cite{zhao2018federated,li2019convergence,zhang2020fedpd}.
Previous researchers want to alleviate or even try to remove the effect of data discrepancy via an impractical data sharing strategy~\cite{zhao2018federated} or making use of control variates \cite{karimireddy2019scaffold,liang2019variance} and primal-dual methods~\cite{zhang2020fedpd}.
\cite{liang2019variance} sets the control variate as the accumulated difference of each individual local parameter and the global parameter.
However, their analysis gives guarantees in the non-convex world, and, though, without dependence on the residual error, their algorithm still suffers from the statistical error.
The most related work is~\cite{karimireddy2019scaffold}.
It proposes SCAFFOLD and uses the same type of control variates as we do.\footnote{~\cite{karimireddy2019scaffold} considers a more general situation where each device participates in the training with probability at each communication round. Here we discuss the special case where all devices participate in each round.} 
To achieve an $\eps$-suboptimal solution, SCAFFOLD needs $\widetilde{\OM}\left(  \frac{\sigma^2 B^2}{nE\eps^2} +  \frac{L B^2}{\eps} + F \right)$ and $\widetilde{\OM}\left( \frac{L}{\mu} +\frac{\sigma^2}{\mu n E \eps}  \right)$ communication rounds respectively for generally convex and strongly convex problems, where $\sigma^2$ is the uniform bound on gradient variance and $F = \EB \left[f(\x_0) - f(\x^*)\right]$.
By contrast, Local SVRG only needs $\widetilde{\OM}\left(  \frac{ L B^2}{\eps}  +  \frac{ m F}{E \eps}  \right)$ and $\widetilde{\OM}\left( \max \left\{ \frac{L}{\mu}, \frac{m}{E} \right\}  \right)$ communication rounds for corresponding cases, which are much smaller quantities if $\frac{m}{E} = \Theta(1)$.

\begin{cor}[Local SVRG]
	\label{cor:local_SVRG}
	Assume each $f_k(\cdot)$ is $L_k$-smooth, $\mu_k$-strongly convex.
	Start from $\x_0$ that $\|\x_0 - \x^*\| \le B$, run Local SVRG for $S$ stages, each stages has $m$ iterations with $\mathrm{gap}(\IM_m) = E \ (1 \le E \le m)$, and tune the constant learning rate $\eta \le \Theta(\frac{1}{LE})$ where $L=\max_{k \in [n]} L_k$.
	Let $F = \EB \left[f(\x_0) - f(\x^*)\right]$ denote the initial error.
	Then Local SVRG produces a global $\hat{\x}$ satisfying: $\EB \left[f(\hat{\x}) - f(\x^*) \right]=$
	\[
		\OM\left(  \frac{LEB^2}{T} + \frac{F}{S} \right)
	\]
	for convex case $(\mu = 0)$ and
	\[
	\OM \left( \left[   L EB^2 +F\right] \cdot 
	\exp\left( -  \Theta \left(\frac{T}{\max\{ \kappa E,  m \}} \right) \right)\right)
	\]
	for strongly convex case $(\mu > 0)$, no matter whether each machine obtains samples from an identical data distribution or not.
\end{cor}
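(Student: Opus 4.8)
The plan is to obtain Corollary~\ref{cor:local_SVRG} as a direct instance of Theorem~\ref{thm:svrg} through the distributed-to-LCP reduction developed in this section, so that no fresh estimate is needed. First I would argue that Local SVRG (Algorithm~\ref{alg:local_SVRG}) is exactly DP-SVRG (Algorithm~\ref{alg:multi_SVRG}) run on the consensus reformulation~\eqref{eq:constraint} with $\A$ given in~\eqref{eq:B} and concatenated objective $F(\x)=\sum_{k=1}^n f_k(\x^{(k)})$. The correspondence is block-wise and rests on Lemma~\ref{lem:proj}, by which $\PAT$ acts as averaging across devices: each synchronization $\x_{t+1,s}^{(k)}\gets\frac1n\sum_j \x_{t+1,s}^{(j)}$ coincides with the projection $\PAT(\x_{t+1}^s)$, and likewise for the restart $\x_0^{s+1}\gets\PAT(\x_m^s)$ and the weighted-average snapshot. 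Moreover, since every snapshot $\ttx_s$ is feasible (all blocks equal), the block $k$ of $\nabla F(\ttx_s)$ is $\nabla f_k(\ttx_s)$, so Lemma~\ref{lem:proj} gives $\PAT(\nabla F(\ttx_s))=\left(\frac1n\sum_k\nabla f_k(\ttx_s)\right)\otimes\1_n$; hence the scalar control variate $\tth_s=\frac1n\sum_k\nabla f_k(\ttx_s)$ stored on each device is precisely the common block of $\tth_s=\PAT(\nabla F(\ttx_s))$ in DP-SVRG. With the control-variate gradient and the inner update transferring verbatim, the two algorithms produce identical iterates.

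Next I would fix the parameter translation. Because $f(\cdot)=\frac1n\sum_k f_k$ while $F(\cdot)=\sum_k f_k$, any feasible vector satisfies $F(\x\otimes\1_n)=n f(\x)$, so $F(\hat{\y})-F(\xc)=n\left[f(\hat{\x})-f(\x^*)\right]$ with $\hat{\y}=\hat{\x}\otimes\1_n$ and $\xc=\x^*\otimes\1_n$. Since all devices initialize at a common $\x_0$, the concatenated start is feasible, giving $\y_0=\PAT(\x_0\otimes\1_n)=\x_0\otimes\1_n$, hence $\Delta^2=\EB\|\y_0-\xc\|^2=n\,\EB\|\x_0-\x^*\|^2\le nB^2$ and $\EB[F(\y_0)-F(\xc)]=nF$, where $F$ here is the scalar initial error $\EB[f(\x_0)-f(\x^*)]$ of the statement. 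The block-diagonal structure of $F$ yields smoothness modulus $L=\max_k L_k$ and strong-convexity modulus $\mu=\min_k\mu_k$ with $\kappa=L/\mu$, matching Assumptions~\ref{asmp:smooth} and~\ref{asmp:strong}.

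Finally I would invoke Theorem~\ref{thm:svrg} with step size $\eta=\Theta(1/(LE))$ and divide through by $n$. In the convex case this produces $\EB[F(\hat{\y})-F(\xc)]=\OM\!\left(LEnB^2/T + nF/S\right)$, and dividing by $n$ gives the claimed $\OM\!\left(LEB^2/T + F/S\right)$; the strongly convex bound follows identically, the factor $n$ cancelling between the translated $\Delta^2$ and $\EB[F(\y_0)-F(\xc)]$ terms and the $1/n$ relating $F(\hat{\y})-F(\xc)$ to $f(\hat{\x})-f(\x^*)$. The key point, which explains why the conclusion holds \emph{regardless of data heterogeneity}, is that Theorem~\ref{thm:svrg} invokes only Assumptions~\ref{asmp:smooth} and~\ref{asmp:strong} and never the almost-unconstrained-gradient assumption, so variance reduction removes both the statistical and residual errors without any control on $\PA(\nabla F)$ and no bound on $\zeta_*$ enters. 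The main obstacle is therefore not an estimate but careful bookkeeping: verifying that the per-device scalar control variate and the averaging synchronization are genuinely the block restrictions of $\PAT(\nabla F(\ttx_s))$ and of $\PAT$ on the feasible set, and that the $n$-scaling renders the final bounds dimensionally consistent.
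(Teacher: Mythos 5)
Your proposal is correct and follows exactly the route the paper intends: identify Local SVRG with DP-SVRG on the consensus reformulation via Lemma~\ref{lem:proj}, translate $\Delta^2 = nB^2$ and $\EB[F(\y_0)-F(\xc)] = nF$, invoke Theorem~\ref{thm:svrg}, and divide by $n$ using $F(\hat{\y})-F(\xc) = n[f(\hat{\x})-f(\x^*)]$ — the same pattern the paper makes explicit in its proof of Corollary~\ref{cor:local_sgd}. Your observation that heterogeneity is immaterial because Theorem~\ref{thm:svrg} never invokes Assumption~\ref{asmp:g_unconstrained} or a bound on $\zeta_*$ is precisely the point the paper is making.
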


\subsection{Acceleration}
\begin{algorithm}[tb]
	\caption{Local Accelerated SVRG}
	\label{alg:local_acc_SVRG}
	\begin{algorithmic}
		\STATE {\bfseries Input:} functions $\{f_k\}_{k=1}^n$, initial point $\x_0$, stage number $S$, loop iteration $m$, communication set $\IM_m \subset [m]$ with $\mathrm{gap}(\IM_m) = E (1 \le E \le m )$, (sufficiently small) step size $\eta$, $\delta = 9(E^2-1)\eta^2 L^2 \in [0, 1)$.
		\STATE {\bfseries An auxiliary sequence:}  {If $\mu > 0$, $\theta_{s}\equiv \theta = 2\delta + \sqrt{4\delta^2 + \eta\mu m} \in (2\delta, 1+\delta)$ for all $s \ge 0$.}
		\STATE {If $\mu =0$, let $\theta_{0}  = 1 - \frac{2\eta L}{1-\eta L} \in (2\delta, 1+\delta)$ and $\theta_{s+1}  = \sqrt{ \frac{1+\delta}{1-\delta}\theta_{s}^2 + \frac{\theta_{s}^4}{4(1-\delta)^2}} - \frac{\theta_{s}^2}{2(1-\delta)} $.}
		\STATE {\bfseries Initialization:} let $\ttx_0 = \x_{0,0}^{(k)} = \x_0$ for all $k$.
		\FOR{$s=0$ {\bfseries to} $S-1$}
		\STATE {$\tth_{s} \gets \frac{1}{n} \sum_{k=1}^n \nabla f_k(\ttx_{s})$ }
		\FOR{$t=0$ {\bfseries to} $m-1$}
		\FOR {each device $k=1$ {\bfseries to} $n$}
		\STATE  $\g_{t, s}^{ (k)} \gets \nabla f_k(\x_{t,s}^{(k)}; \xi_{t, s}^{(k)}) - \nabla f_k(\ttx_s; \xi_{t,s}^{(k)}) + \tth_s$  with  $\xi_{t,s}^{(k)}$ sampled independently on device $k$
		\STATE  $\u_{t+1,s}^{(k)} \gets \u_{t,s}^{(k)} - \frac{\eta}{\theta_s} \cdot \g_{t,s}^{(k)} $  
		\STATE  $\x_{t+1,s}^{(k)} \gets \ttx_{s} + \theta_s(\u_{t+1,s}^{(k)} - \ttx_{s}) $  
		\IF {$(t+1)\in \IM_m$}
		\STATE {$\x_{t+1,s}^{(k)} \gets \frac{1}{n}\sum_{j=1}^n \x_{t+1,s}^{(j)}, \u_{t+1,s}^{(k)} \gets   \frac{1}{n}\sum_{j=1}^n\u_{t+1,s}^{(j)}$ }
		\ENDIF
		\ENDFOR
		\ENDFOR
		\STATE {$\ttx_{s+1} \gets \frac{1}{mn}\sum_{i=0}^{m-1}\sum_{k=1}^n\x_{i,s}^{(k)}$}
		\STATE {$\x_{0, s+1}^{(k)} \gets  \ttx_{s+1}$ and $\u_{0, s+1}^{(k)} \gets   \frac{1}{n} \sum_{j=1}^n  \u_{m,s}^{(j)}$ for all $k$ \quad \# initialization for the next stage}
		\ENDFOR
		\STATE If $\mu > 0$, $\hat{\y} \gets \frac{1}{S}\sum_{s=1}^S \ttx_{s}$; otherwise $\hat{\y} \gets \ttx_{S}$.
		\STATE {\bfseries Return:} $\hat{\y}$.
	\end{algorithmic}
\end{algorithm}

When applying DP-ASVRG to solve the specific distributed  problem~\eqref{eq:constraint}, we obtain a novel distributed algorithm, named Local ASVRG (Algorithm~\ref{alg:local_acc_SVRG}).

\begin{cor}[Local ASVRG]
	\label{cor:local_ASVRG}
	Assume each $f_k(\cdot)$ is $L_k$-smooth, $\mu_k$-strongly convex.
	Start from $\x_0$ that $\|\x_0 - \x^*\| \le B$, run Local ASVRG for $S$ stages, each stages has $m$ iterations with $\mathrm{gap}(\IM_m) = E \ (1 \le E \le m)$, and tune the constant learning rate sufficiently small.
	Then Local ASVRG produces an $\eps$-suboptimal solution in 
	\[
	\widetilde{\OM}\left( \frac{m}{E}\frac{\sqrt{L}B}{\sqrt{\eps}}  + \left(1-\frac{1}{E^2}\right)^{\frac{1}{3}}\left(\frac{m}{E}\right)^{\frac{1}{3}} \frac{ L^\frac{2}{3}B^\frac{4}{3}}{\eps^{\frac{2}{3}}} \right)
	\]
	communication rounds for generally convex case $(\mu = 0)$ and in
	\[
	\widetilde{\OM}\left( \max\left\{ \frac{m}{E}, \sqrt{\frac{m \kappa}{E}} \right\} + \kappa^{\frac{2}{3}}\left(1-\frac{1}{E^2}\right)^{\frac{1}{3}}\left(\frac{m}{E}\right)^{\frac{1}{3}} \right)
	\]
communication rounds for strongly convex case $(\mu > 0)$, where $\kappa = \frac{L}{\mu}$ is the condition number.
\end{cor}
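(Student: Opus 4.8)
The plan is to recognize that Local ASVRG (Algorithm~\ref{alg:local_acc_SVRG}) is exactly DP-ASVRG (Algorithm~\ref{alg:multi_acc_SVRG}) run on the consensus reformulation~\eqref{eq:constraint} with $\A$ given in~\eqref{eq:B}, so that the corollary follows by invoking the already-proved Corollaries~\ref{cor:general_DP_ASVRG} and~\ref{cor:strong_DP_ASVRG} and translating the LCP quantities back into distributed ones. First I would verify the algorithmic equivalence line by line. By Lemma~\ref{lem:proj}, on a consensus-feasible point the projection $\PAT$ acts blockwise as the average $\tfrac1n\sum_{k}(\cdot)$, so each synchronization step of Local ASVRG coincides with the corresponding $\PAT$ in DP-ASVRG. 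One then checks blockwise that $\PAT(\nabla F(\ttx_s))=\big(\tfrac1n\sum_k\nabla f_k(\ttx_s)\big)\otimes\1_n$ on feasible $\ttx_s$, matching the control variate $\tth_s$, and that the $\u$-update, the $\x$-update, the momentum $\theta_s$ (with the same $\delta=9(E^2-1)\eta^2L^2$), the snapshot $\ttx_{s+1}$, and the restart rules all agree coordinate-by-coordinate.

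Second, I would transfer the regularity constants. Since $F(\x)=\sum_k f_k(\x^{(k)})$ has block-diagonal Hessian, $F$ is $L$-smooth with $L=\max_k L_k$ and $\mu$-strongly convex with $\mu=\min_k\mu_k$, so Assumptions~\ref{asmp:smooth} and~\ref{asmp:strong} hold with precisely the stated moduli and the two Corollaries apply with the same condition number $\kappa=L/\mu$.

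Third --- and this is the only delicate point --- I would track how the factor $n$ enters the LCP data and confirm it cancels. Initializing every device at $\x_0$ yields a consensus point $\y_0=\x_0\otimes\1_n$, whence $\Delta^2=\EB\|\y_0-\xc\|^2=n\,\EB\|\x_0-\x^*\|^2\le nB^2$. Because the LCP objective is the \emph{unaveraged} sum, on feasible iterates $F(\cdot)=n f(\cdot)$, so a feasible output $\hat{\y}=\hat{\x}\otimes\1_n$ obeys $F(\hat{\y})-F(\xc)=n\,[f(\hat{\x})-f(\x^*)]$; hence driving the distributed gap below $\eps$ is equivalent to driving the LCP gap below $\eps':=n\eps$. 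Substituting $\Delta=\sqrt{n}B$ and $\eps'=n\eps$ into the projection complexity of Corollary~\ref{cor:general_DP_ASVRG} gives $\tfrac{\sqrt{L}\Delta}{\sqrt{\eps'}}=\tfrac{\sqrt{L}B}{\sqrt{\eps}}$ and $\tfrac{\Delta^{4/3}}{(\eps')^{2/3}}=\tfrac{B^{4/3}}{\eps^{2/3}}$, so every power of $n$ disappears and the generally convex bound emerges verbatim. For the strongly convex case, the projection complexity in Corollary~\ref{cor:strong_DP_ASVRG} depends on $\Delta$ and $\eps$ only through the logarithmic factor $\ln\tfrac{\EB[F(\y_0)-F(\xc)]}{\eps'}=\ln\tfrac{\EB[f(\x_0)-f(\x^*)]}{\eps}$, in which $n$ again cancels, while the leading term involves only $\kappa,m,E$ and transfers unchanged.

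The main obstacle is thus purely the bookkeeping of these $n$-factors: one must stay consistent about whether $F$ denotes the summed or averaged objective, correctly propagate the induced $\sqrt{n}$ in $\Delta$ and the $n$ in the target accuracy, and confirm that identifying synchronization with $\PAT$ introduces no further rescaling of the smoothness constant or the momentum schedule. Once the dictionary between the distributed iterates and the DP-ASVRG iterates is pinned down exactly, no new convergence estimate is required and the result is immediate; the whole content lies in arranging the correspondence so that the clean bounds in $B$ and $\eps$ fall out.
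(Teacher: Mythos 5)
Your proposal is correct and follows exactly the route the paper intends: Local ASVRG is DP-ASVRG instantiated on the consensus LCP~\eqref{eq:constraint}, so the communication complexity is the projection complexity of Corollaries~\ref{cor:general_DP_ASVRG} and~\ref{cor:strong_DP_ASVRG} with $L=\max_k L_k$, $\mu=\min_k\mu_k$, $\Delta^2=nB^2$, and target accuracy $n\eps$ for the summed objective, after which the powers of $n$ cancel as you show. The bookkeeping of the $n$-factors and the blockwise identification of $\PAT$ with averaging (via Lemma~\ref{lem:proj}) is precisely the content of the paper's reduction, so nothing further is needed.
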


Typically, all the discussion on DP-ASVRG can be paralleled to Local ASVRG.
For example, let's focus on the strongly convex case.
 The smallest round is $\widetilde{\OM}(\sqrt{\kappa})$, achieved by $m=E=1$ for Local ASVRG, which is also achieved by other distributed algorithm like ADMM~\cite{boyd2011distributed} and AIDE~\cite{reddi2016aide}.
 \cite{arjevani2015communication} shows that $\widetilde{\OM}(\sqrt{\kappa})$ is the optimal communication complexity for strongly convex and smooth distributed optimization problems.
If saving computation is the primal goal, one can set $m=\kappa E$ and $E = \max \{ \sqrt{N/\kappa}, 1 \}$ for Local ASVRG and has $\widetilde{\OM}(N + \max \{ \kappa, \sqrt{N \kappa} \})$ gradient computations, which is optimal in the large scale case where $N \ge \kappa$.
At that case, the required communication round is $\widetilde{\OM}(\kappa)$, which is also achieved by Local SVRG and CoCoA~\cite{jaggi2014communication,ma2015adding}.
When we set $m=E=\max\{N^{1/2}/\kappa^{1/6}, 1\}$, the communication complexity for Local ASVRG becomes $\widetilde{\OM}(\kappa^{2/3})$ and its iteration complexity becomes $\widetilde{\OM}(\max \{\kappa^{2/3}, \sqrt{N\kappa }\})$, both smaller than Local SVRG and Local ASVRG with $m=\kappa E$ and $E = \max \{ \sqrt{N/\kappa}, 1 \}$.
Actually, we can see that if $E>1$, the fastest communication rounds for Local ASVRG is $\widetilde{\OM}(\kappa^{2/3})$, which fails the match the $\widetilde{\OM}(\sqrt{\kappa})$ lower bound.
We believe it results from the residual error since several infeasible updates are performed during two consecutive projections.

Many previous algorithms has been shown to enjoy similar or even better communication complexity when some ideal assumptions are made.
We give a brief introduction to those ideal cases.
\begin{enumerate}
	\item \textbf{Simpler models:}~\cite{jaggi2014communication,ma2015adding,zhang2015disco,wang2018giant} assume a linear model, implying their local objective can be written as $f_k(\x) = \frac{1}{n_k} \sum_{i=1}^{n_k} \ell_i(\x^\top\varsigma_i)$.
	The structure makes it easier to solve, for example, by using dual methods~\cite{jaggi2014communication,ma2015adding} or using subsampled Newton methods~\cite{wang2018giant}.
	Here, we don't impose such constraints and consider arbitrary models as long as they satisfy our assumptions.
	\item \textbf{Similar local objectives:} Many works~\cite{shamir2014communication,jaggi2014communication,zhang2015disco,yang2019federated} assume each local objective functions are the same, which can be achieved by assuming each device has access to the global underlying data distribution or all local data are i.i.d. generated.
	Obviously, such i.i.d. assumptions can't apply to FL.
	~\cite{sahu2018convergence,fan2019communication,li2019convergence,haddadpour2019convergence} define some quantities to measure the degree of data heterogeneity and assume it is finite.
	The finite non-i.i.d. assumption shrinks the class of objective functions taken into account.
	By contrast, we allow arbitrary local objective functions.
	We don't use another quantity to measure the non-i.i.d. degree for Local SVRG and Local ASVRG, since, as shown in the last subsection, they can eliminate both the statistical error and residual error, due to the used variance reduction technique.
	\item  \textbf{Extra dataset:}~\cite{lee2017distributed} proposes a distributed version of SVRG that gives each device access to an extra dataset that conforms to the global data distribution $\DM$ to ensure the unbiasedness of stochastic gradients. 
	However, such an extra dataset is impossible in FL; even if a shared dataset can be obtained voluntarily, it is not easy to ensure it comes from $\DM$.
	Therefore, it is not practical in FL, even though it has admirable communication complexity $\widetilde{\OM}(1 + \frac{\kappa}{\alpha N})$, where $\alpha N$ is the size of the extra dataset.
\end{enumerate}
DP-ASVRG doesn't require any of the three impractical assumptions.
It is the first accelerated algorithm in federated optimization that is able to eliminate the data heterogeneity.

\section{Experiments}

\subsection{Linear Equiality Constrained Logistic Regression}
\begin{figure*}[tp]
	\centering
	\hspace{-1in}
	\subfigure[DP-methods with $E=10$]{\includegraphics[height=65mm] {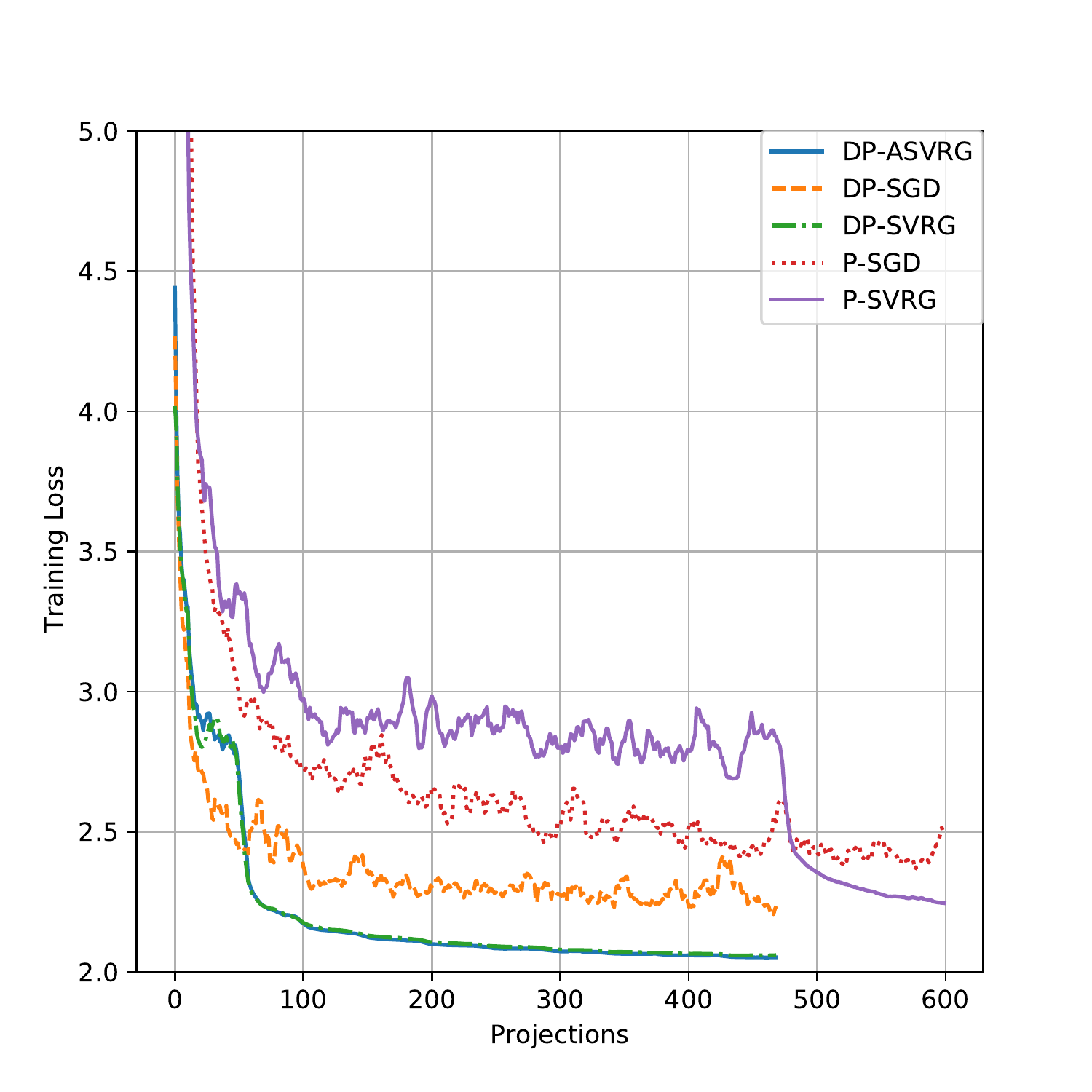} \label{fig:dp_a}}
	\hspace{-0.4in}
	\subfigure[DP-ASVRG with different $\theta$]{\includegraphics[height=65mm] {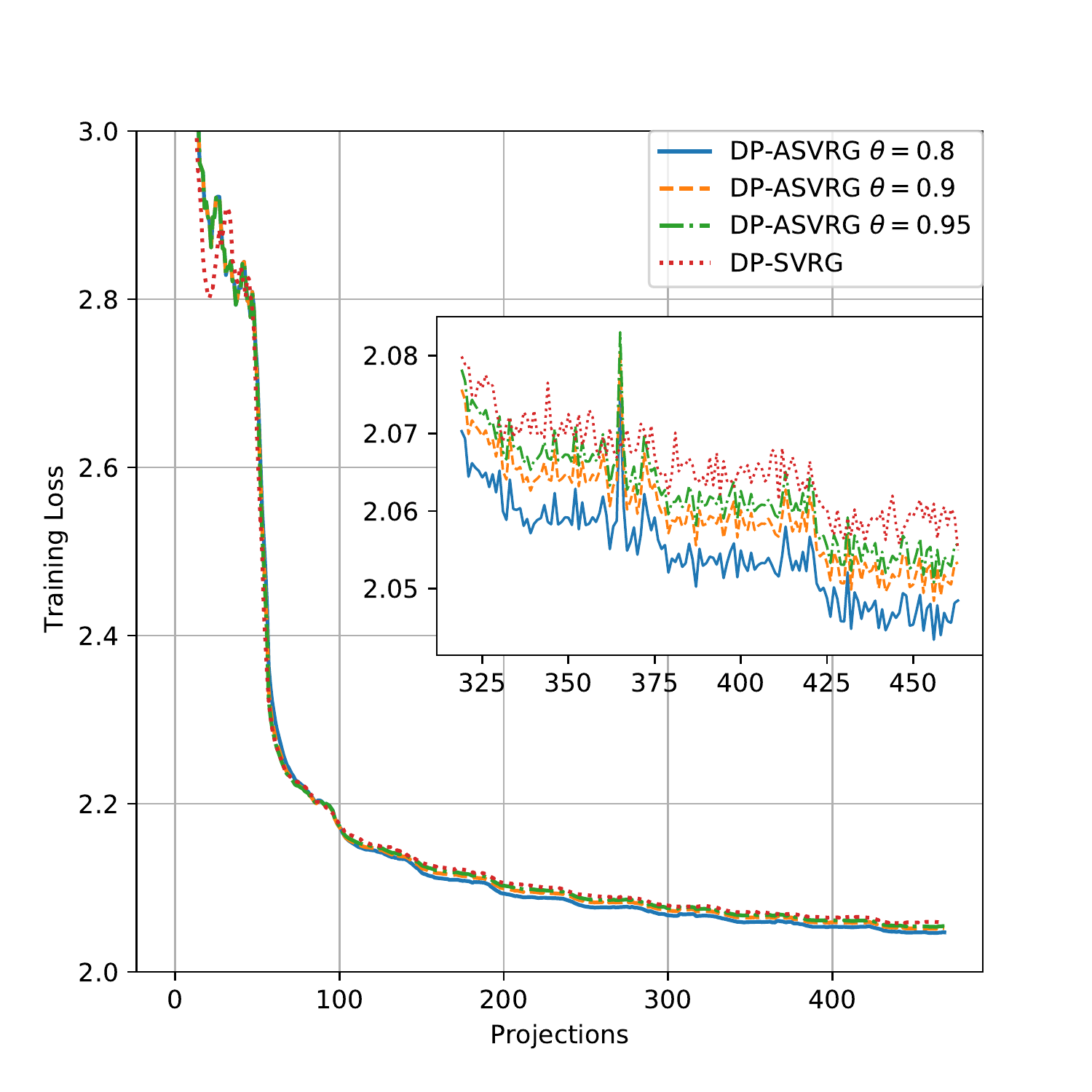} \label{fig:dp_b}}
	\hspace{-0.4in}
	\subfigure[Different $E$.]{\includegraphics[height=65mm] {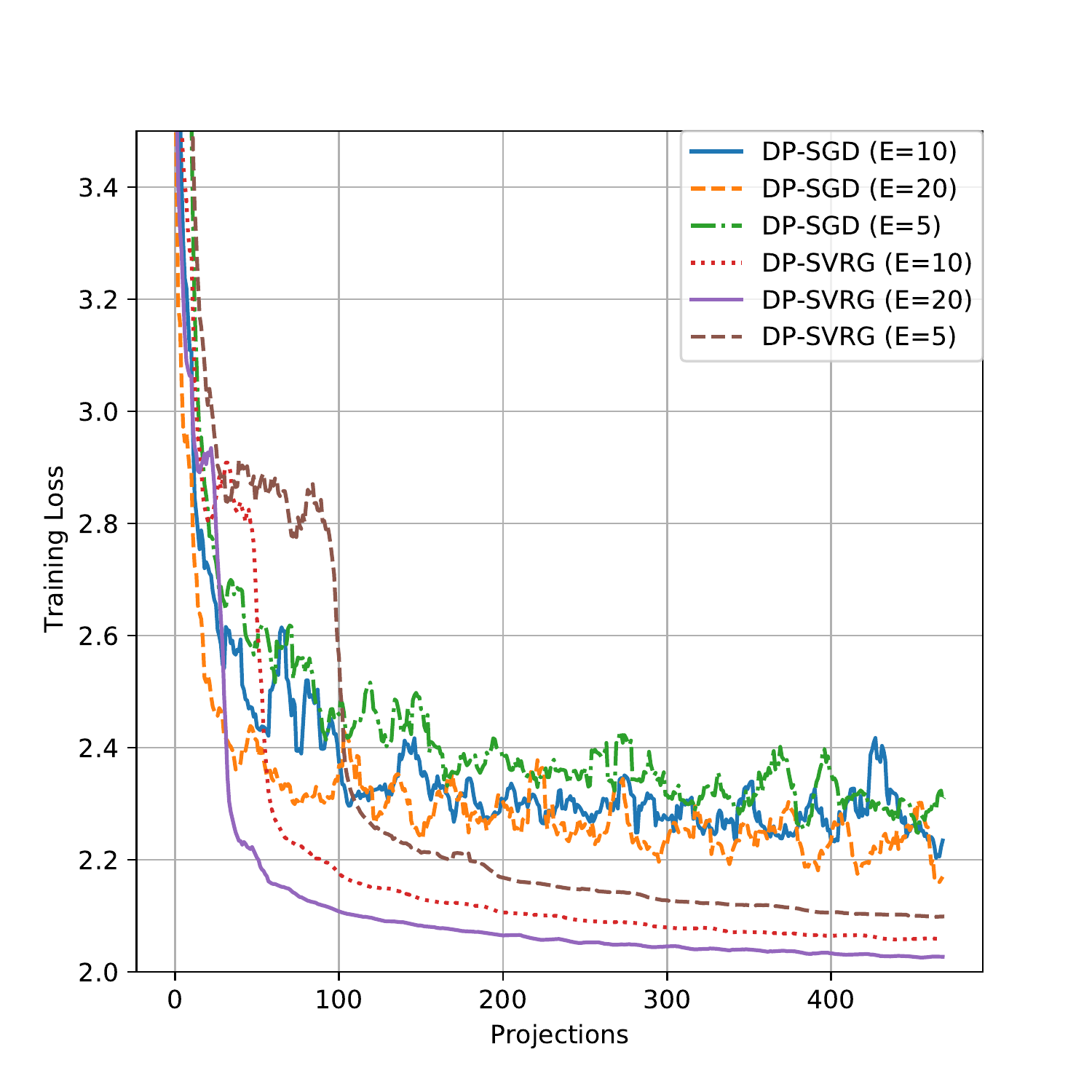} \label{fig:dp_c}} 
		\hspace{-1in}
	\caption{ Comparative results of methods with the delayed projection technique.}
	\label{fig:dp}
\end{figure*}

First, we consider a linear equality constrained logistic regression problem on MNIST dataset~\cite{lecun1998gradient}.
It requires us to classify $N=50,000$ handwriting number images into $10$ corresponding classes.
Here $p=7850, \eta = 0.1$ and weight decay is used to ensure the strongly convexity with $\mu = 10^{-4}$.
To impose restriction $\A^\top\x =\0$, we generate a $7850 \times 200$ matrix $\A$ with each entry generated as independent normal random variables.
To fasten computation, we then conduct Gram-Schmidt orthogonalization on $\A$ such that columns of $\A$ are mutually orthogonal.
Such $\A$ will not necessarily ensure Assumption~\ref{asmp:g_unconstrained}.

We compare three proposed methods with two baseline methods, Projected SGD (P-SGD) and Projected SVRG (P-SVRG), in projection complexity.
All methods start from the same initial point.
The batch size $b$ is set as 128, $\theta$ in DP-ASVRG is set as 0.9, and $m=N/b$ for all variance reduced methods.
The comparative results of training loss v.s. projection is shown in Figure~\ref{fig:dp_a}.
We find all methods with delayed projections are more efficient than the baselines since they reduce the training loss more given any budget of projection.
Variance reduced methods obtain smaller losses and have less fluctuation.
One interesting observation is DP-SVRG and its accelerated variant converges slowly than DP-SGD at the beginning and then decline more rapidly, reaching a smaller training loss.
This is because (DP-)SVRG is a multi-stage scheme algorithm.
At the first stage, little progress could be made due to the inaccurate snapshot model.
It also explains why P-SVRG is much slower than P-SGD; actually, after about 500 projections are performed (i.e., at the second stage), P-SVRG declines to a smaller loss than P-SGD finally fluctuates above.

From Figure~\ref{fig:dp_a}, DP-SVRG and DP-ASVRG with $\theta=0.9$ have similar convergence behaviors.
Indeed,  if $\theta = 1$, DP-SVRG is reduced to DP-SVRG except for the slight difference in the choice of stage-initial points.
It also implies that DP-ASVRG is not sensitive to the value of $\theta$, as suggested by Figure~\ref{fig:dp_b}.
Varying $\theta$ from $0.8$ to $0.95$, the convergence behaviors almost stay unchanged. 

We then explore how different projection interval $E$'s affect convergence.
Since DP-SVRG and DP-ASVRG have similar convergence behaviors, we only show the result of the formal for simplicity.
Figure~\ref{fig:dp_c} shows that large $E$ typically fastens convergence in terms of projections.
The observation is fitted well with established theories.
From Table~\ref{table:1}, the projection complexity of DP-SGD is $\OM(\frac{\sigmaat}{E \mu \eps})$ and that of DP-SVRG is $\widetilde{\OM}(\max\{ \kappa, \frac{N}{E} \})$, both not positively correlated with $E$.
It means increasing projection frequency indeed improves projection efficiency.

\subsection{Federated Logistic Regression}
\begin{figure*}[tp]
	\centering
	\hspace{-1in}
	\subfigure[Local methods with $E=10$]{\includegraphics[height=65mm] {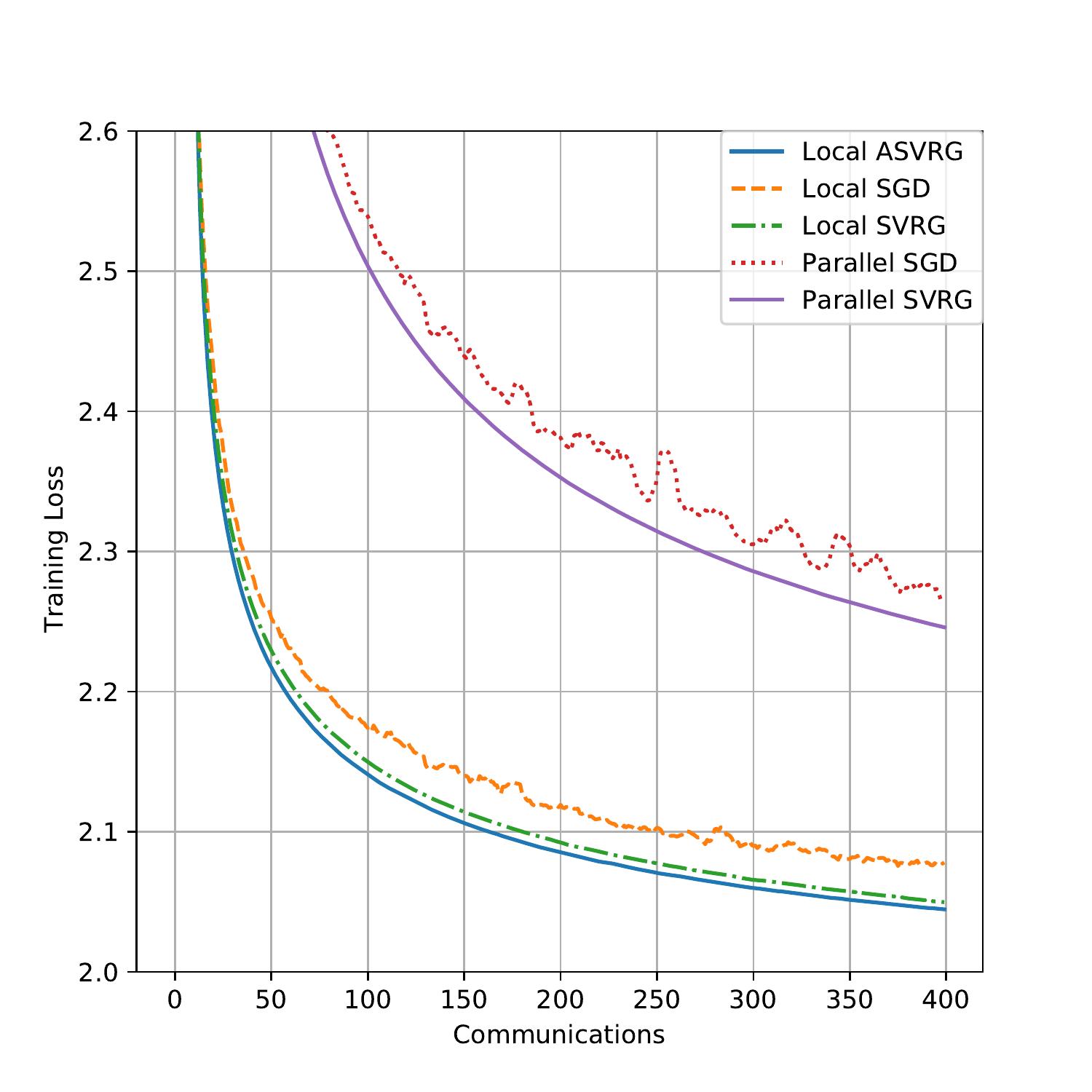} \label{fig:local_m_a}}
	\hspace{-0.4in}
	\subfigure[Local ASVRG with different $\theta$]{\includegraphics[height=65mm] {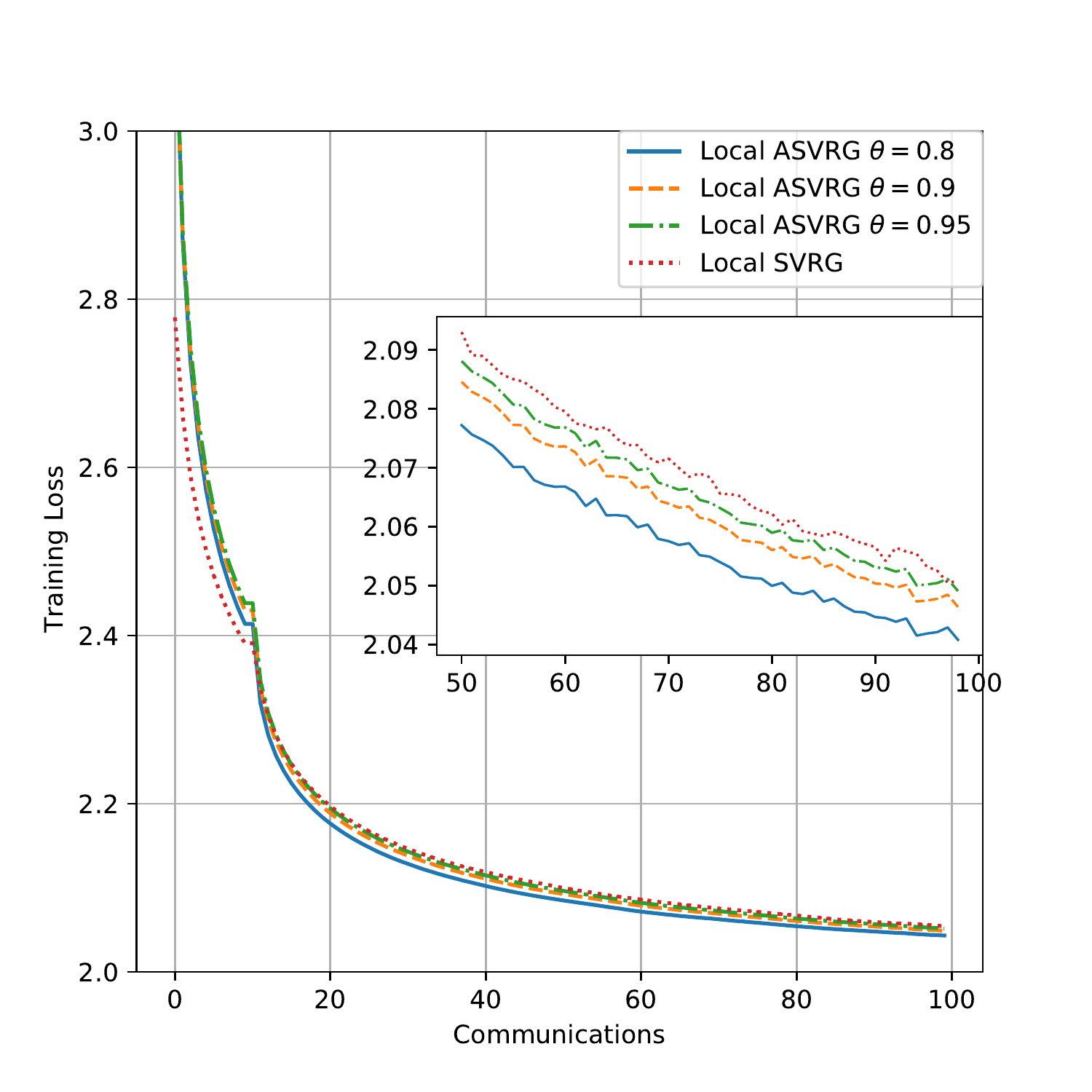} \label{fig:local_m_b}}
	\hspace{-0.4in}
	\subfigure[Different $E$.]{\includegraphics[height=65mm] {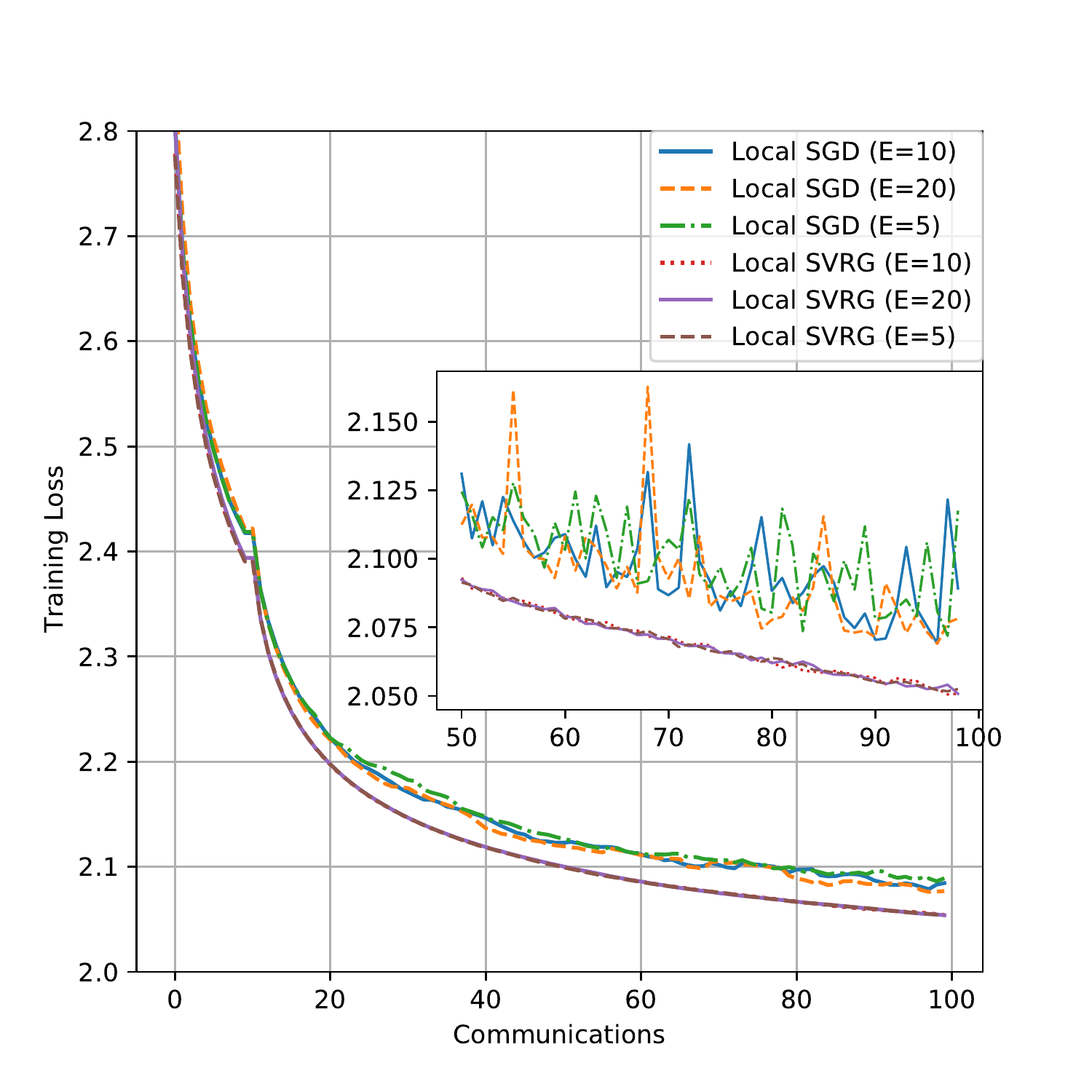} \label{fig:local_m_c}} 
	\hspace{-1in}
	\caption{ Comparative results of local methods with $m=40$.}
	\label{fig:local-m40}
\end{figure*}

\begin{figure*}[tp]
	\centering
	\hspace{-1in}
	\subfigure[Local methods with $E=10$]{\includegraphics[height=65mm] {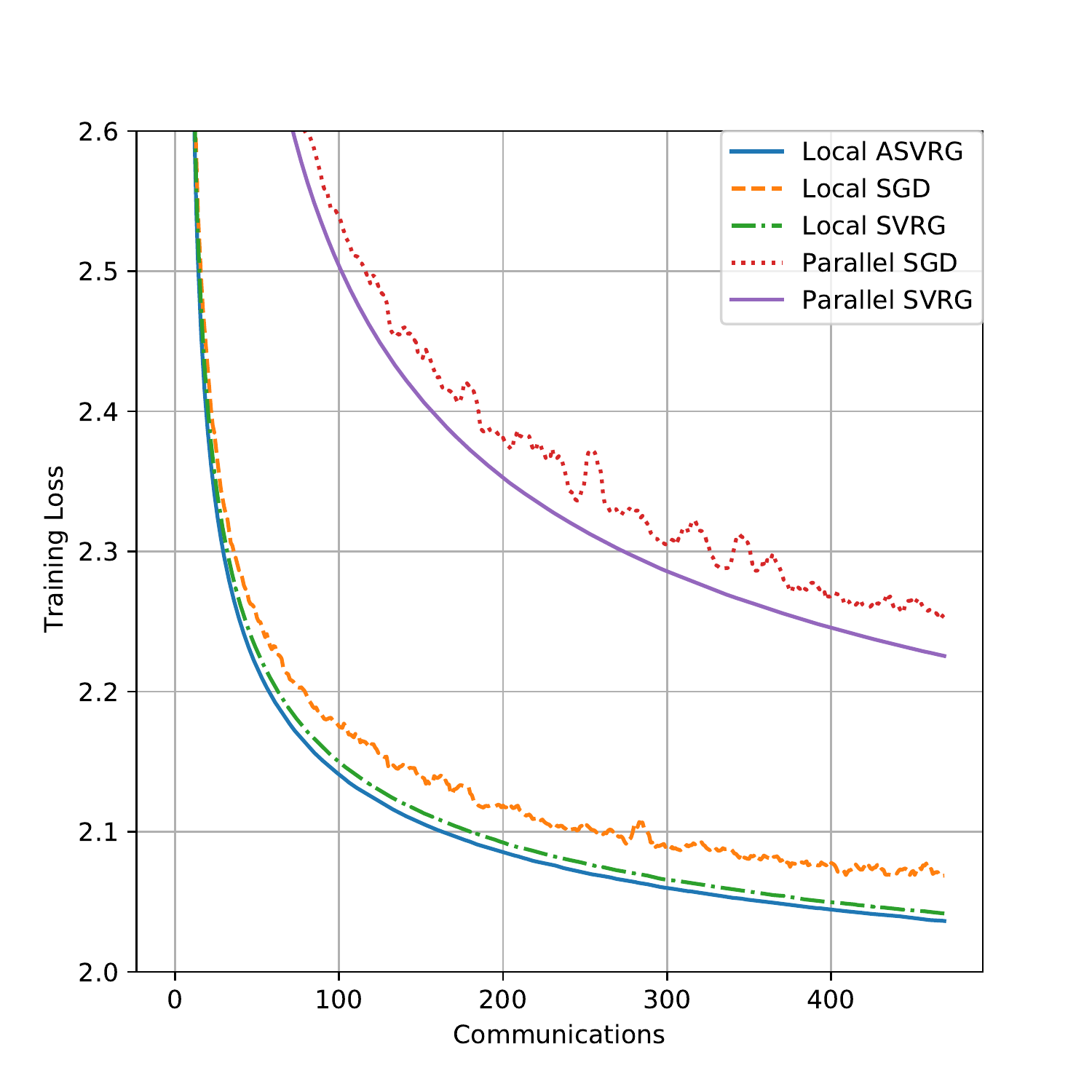} \label{fig:local_a}}
	\hspace{-0.4in}
	\subfigure[Local ASVRG with different $\theta$]{\includegraphics[height=65mm] {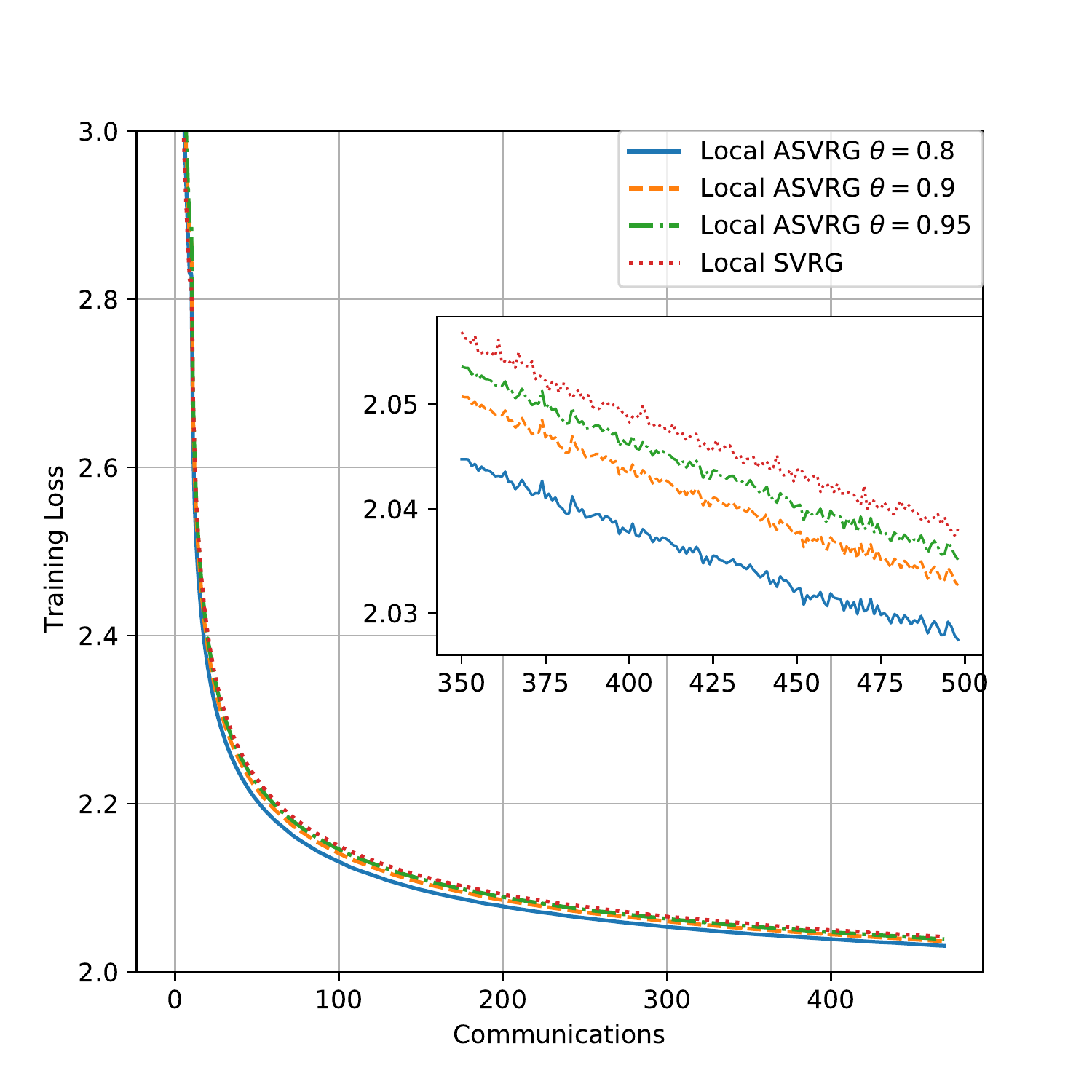} \label{fig:local_b}}
	\hspace{-0.4in}
	\subfigure[Different $E$.]{\includegraphics[height=65mm] {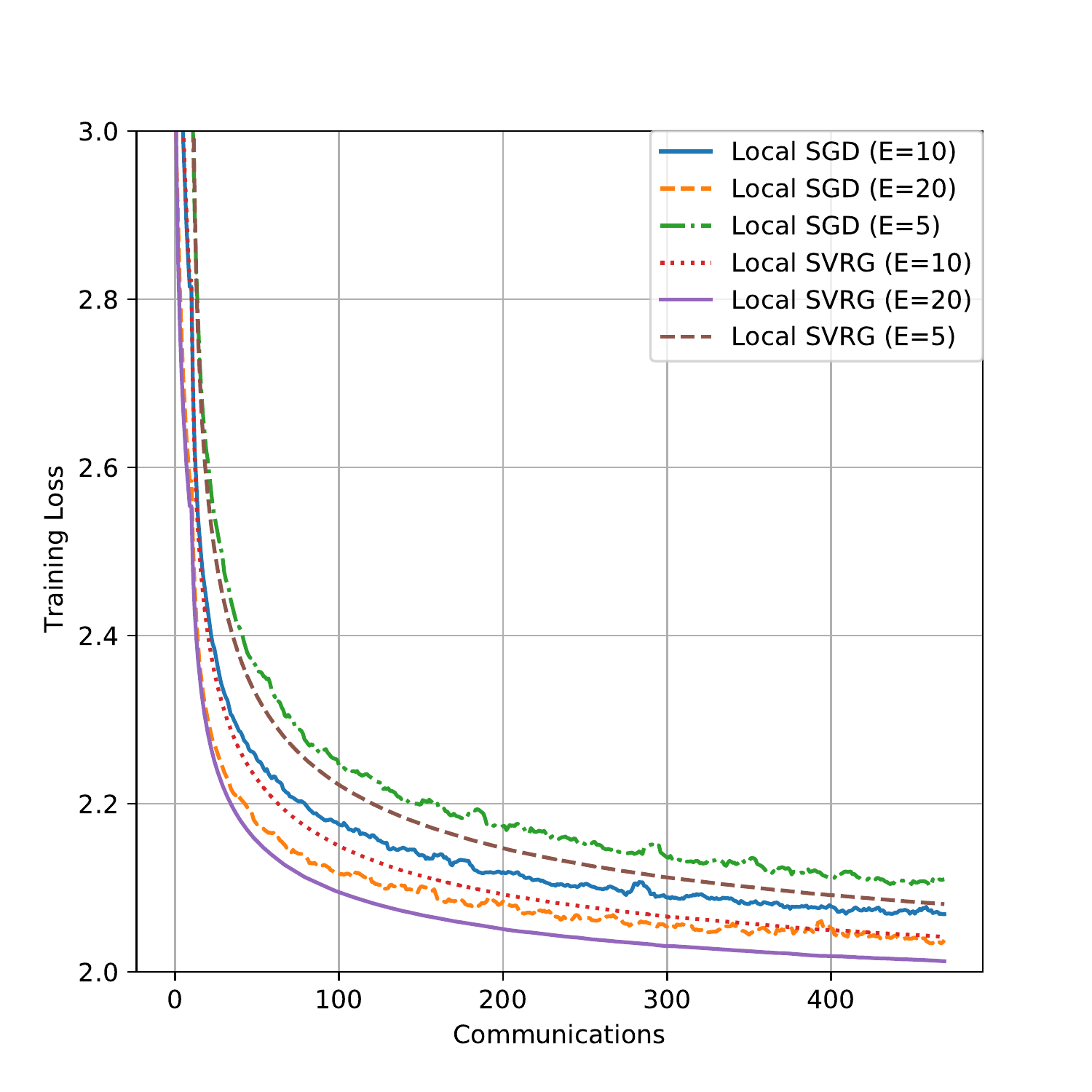} \label{fig:local_c}} 
	\hspace{-1in}
	\caption{ Comparative results of local methods with $m=E$.}
	\label{fig:local}
\end{figure*}

We conduct a logistic regression in a federated setting where the $N=50,000$ training data is evenly distributed into $n=10$ nodes with each node $N/n=5000$ samples.
To protect privacy, only intermediate variables are allowed to communicate.
The model dimension $d=7850$, learning rate $\eta = 0.1$, the batch size $b=128$, and weight decay is set as $\mu = 10^{-4}$ to ensure the strongly convexity.

We first fix $m=40$, which is about $\frac{N}{n b}$, vary $E$, and show the result in Figure~\ref{fig:local-m40}.
We then set $m=E$, an extreme case where nodes communicate with each other only at the end of each stage and show the results in Figure~\ref{fig:local}
We observe a similar convergence pattern in the last subsection, no matter in the fixed $m$ case or $m=E$ case.
Figure~\ref{fig:local_m_a} and~\ref{fig:local_a} show local methods are more efficient than the parallel baselines (which can be viewed as instances of local methods with $E=1$).
Figure~\ref{fig:local_m_b} and~\ref{fig:local_b} imply Local ASVRG is not sensitive to the value of $\theta$, though Local ASVRG converges slightly faster than Local SVRG, no matter what value $\theta$ is.
Figure~\ref{fig:local_m_c} and~\ref{fig:local_c} shows that large communication interval $E$ typically fastens convergence in terms of communication.
An obvious difference is that curves of $m=E$ distinguish from each other more than that of $m=40$.
We speculate the small differences between curves of $m=40$ is caused by too many inner loops.
Besides, we find that local update still fastens the convergence, even though theories derived in Section~\ref{sec:FL} imply communication complexity has nothing to do with $E$ when $m=E$.
It indicates increasing projection frequency improves projection efficiency more than the theories predict.

\section{Conclusion and Discussions}
In this work, we propose delayed projected SGD and two variance reduced variants for linearly constrained problems (LCPs).
We theoretically show it is possible to lower projection frequency and improve projection efficiency simultaneously.
Our analysis is simple and unified and can be extended to other delayed projected algorithms.
An important and natural question is how to extend delayed projected methods to more general cases.

\paragraph{Other feasible regimes.}
An important open problem is whether delayed projection techniques can work for other feasible regimes.
In our work, we main focus on the case where the domain is defined to be a linear space, i.e., $\mathcal{R}(\A^\perp) = \{\x: \A^\top\x = 0 \}$.
The main reason is the three nice properties given by projections into linear space, namely linearity, non-expansiveness, and orthogonality (see Proposition~\ref{prop:proj}).
All of them will be used frequently in our analysis, with linearity the most important.
A natural question is whether there are other regimes into which the projection preserves linearity.
Unfortunately, the following proposition states there is no other regime except a linear space satisfying the condition, implying the algorithm and technique proposed in our paper work only for linear spaces.
Hence, new methods and new techniques are required in order to design delayed projected methods for other regimes like the simplex or polygons.
Optimal transport~\cite{villani2008optimal} and reinforcement learning~\cite{sutton2018reinforcement} has the simplex constraint, while generalized lasso~\cite{gaines2018algorithms} has the polygon domain constraint.
Hence, it is an interesting and important open problem that whether delayed projection techniques work for inequality constrained problems.
\begin{prop}
	\label{prop:linear}
	Let $\AM \subset \RB^p$ be a region and $\PM_{\AM}(\x) = \argmin_{\y \in \AM}\|\y-\x\|^2$ be the projection onto it.
	If for any $\x, \y \in \RB^p$, $\PM_{\AM}(\x  + \y) = \PM_{\AM}(\x) + \PM_{\AM}(\y)$, then $\AM$ must be a linear (sub)space of $ \RB^p$.
\end{prop}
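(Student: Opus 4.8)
The plan is to show directly that $\AM$ contains the origin and is closed under addition and under arbitrary real scaling; since $\AM$ is nonempty, this makes it a linear subspace. The starting observation is that the hypothesis presupposes $\PM_{\AM}$ is a genuine single-valued map on all of $\RB^p$, which forces $\AM$ to be nonempty and topologically closed: if $\AM$ were not closed, pick $\z\in\overline{\AM}\setminus\AM$; then $\inf_{\y\in\AM}\|\y-\z\|^2=0$ is not attained, so $\PM_{\AM}(\z)$ would be undefined. I will keep this closedness in reserve for the very last step.

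First I would pin down the behaviour of $\PM_{\AM}$ on $\AM$ itself and at the origin. Plugging $\x=\y=\0$ into additivity gives $\PM_{\AM}(\0)=2\PM_{\AM}(\0)$, hence $\PM_{\AM}(\0)=\0$, and since $\PM_{\AM}(\0)\in\AM$ we obtain $\0\in\AM$. For any $\a\in\AM$ the minimum $\min_{\y\in\AM}\|\y-\a\|^2$ is attained (uniquely) at $\y=\a$, so $\PM_{\AM}(\a)=\a$; points of $\AM$ are fixed. Combining the fixed-point property with additivity yields closure under the group operations: for $\a,\bb\in\AM$ we get $\PM_{\AM}(\a+\bb)=\PM_{\AM}(\a)+\PM_{\AM}(\bb)=\a+\bb\in\AM$, and from $\0=\PM_{\AM}(\0)=\PM_{\AM}(\a+(-\a))=\PM_{\AM}(\a)+\PM_{\AM}(-\a)$ we read off $\PM_{\AM}(-\a)=-\a\in\AM$.

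Next I would upgrade this additive-group structure to a $\sQ$-vector-space structure. A routine induction on additivity gives $\PM_{\AM}(n\x)=n\PM_{\AM}(\x)$ for every integer $n$ and every $\x$; applied to $\a\in\AM$ this shows $n\a\in\AM$. For a rational $\tfrac{m}{n}$, the identity $n\,\PM_{\AM}\big(\tfrac{m}{n}\a\big)=\PM_{\AM}(m\a)=m\a$ forces $\PM_{\AM}\big(\tfrac{m}{n}\a\big)=\tfrac{m}{n}\a$, whence $\tfrac{m}{n}\a\in\AM$. Thus $\{q\a:q\in\sQ\}\subseteq\AM$ for each $\a\in\AM$.

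The final step --- and the only place where more than algebra is needed --- is to pass from rational to real scalars, and this is the main obstacle: additivity alone delivers only $\sQ$-linearity, and a dense $\sQ$-subspace such as $\sQ^p$ is emphatically not a subspace, so the conclusion cannot follow from the additivity hypothesis by itself. Here I would invoke the closedness reserved at the outset: the set $\{q\a:q\in\sQ\}$ is dense in the line $\RB\a$, and $\AM$ is closed, so $\RB\a\subseteq\AM$ for every $\a\in\AM$. Together with additive closure this shows $\AM$ is closed under all linear combinations, i.e.\ a linear subspace of $\RB^p$. I emphasize that the crucial move is to extract closedness from the well-definedness of the projection and combine it with density, rather than attempting to prove continuity of $\PM_{\AM}$ directly (which would otherwise need convexity of $\AM$, e.g.\ via the finite-dimensional Chebyshev-set theorem).
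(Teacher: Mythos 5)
Your proof is correct, and it departs from the paper's argument at exactly the step that matters. Both proofs share the opening moves: $\PM_{\AM}(\0)=\0$ hence $\0\in\AM$, integer homogeneity by induction on additivity, and rational homogeneity; the divergence is in how one passes from rational to real scalars. The paper works at the level of the \emph{map}: it invokes non-expansiveness of the projection (item~2 of the appendix lemma, whose proof rests on the variational inequality $\langle \PM_{\AM}(\x)-\x,\ \PM_{\AM}(\x)-\y\rangle\le 0$ and therefore on the convexity of $\AM$ assumed in the appendix restatement) to get continuity of $\PM_{\AM}$, upgrades $\sQ$-homogeneity of the map to $\RB$-homogeneity by continuity, and only then concludes that $\AM$ is closed under linear combinations. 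You work at the level of the \emph{set}: you show $\AM$ is a $\sQ$-subspace, note that $\{q\a : q\in\sQ\}$ is dense in the line $\RB\a$, and finish using the topological closedness of $\AM$, which you correctly extract for free from the single-valued well-definedness of the projection. Your route buys two things: it never needs continuity of $\PM_{\AM}$, and consequently it never needs convexity of $\AM$ --- so it establishes the proposition as literally stated in the main text (for an arbitrary ``region'' on which the projection is a genuine map), whereas the paper's continuity step silently leans on the convexity hypothesis that only appears in the appendix version. What the paper's route buys in exchange is the slightly stronger byproduct that $\PM_{\AM}$ itself is an $\RB$-linear map, not merely that its fixed-point set is a subspace.
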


\paragraph{Other descent rules.}
In the three proposed algorithm, we only consider the stochastic gradient descent, i.e., $\x_{t+1} = \x_t - \eta_t \g_t$ where $\g_t$ is an unbiased estimator of the objective function evaluated at $\x_t$.
Due to the required smooth assumption, it could not be applied to lasso and its variant (see Example 2 in the introduction).
A remedy is to modify the update rule to $\x_{t+1} = \mathrm{prox}_{\eta_t h}(\x_t - \eta_t \g_t)$ where $h(\cdot)$ is some convex and non-smooth function like the $\ell_1$-norm in lasso.
Though our analysis can't apply anymore, but we believe the proof idea will still work that analyzing the incremental errors for $\EB\|\PAT(\x_t) - \xc\|^2$ and $\EB\|\PA(\x_t) \|^2$ and recurring the system they create.
We leave the combination of proximal operator and delayed projection techniques as future work.

Another extension is motivated by FedAvg~\cite{mcmahan2016communication,li2019convergence,konevcny2017stochastic}, a variant of LocalSGD that randomly activates a small portion of devices instead of all of them at beginning of each communication round.
One can check that its counterpart algorithm in LCPs is randomized block coordinate gradient descent (RBCGD)~\cite{richtarik2014iteration}.
We believe our technique can provide convergence analysis for the delayed projected variant of RBCGD.

\paragraph{Faster accelerated delayed projected methods.}
We have shown that when $E=1$, DP-ASVRG is reduced to P-ASVRG with $\widetilde{\OM}(\sqrt{\kappa})$ projection complexity.
We can show that the lower bound of projection complexity for a smooth strongly convex problem is $\widetilde{\Omega}(\sqrt{\kappa})$.
This follows by reducing distributed optimization to an instance of LCP and paralleling its theory.
In distributed optimization,~\cite{arjevani2015communication} provides a lower bound of communication rounds $\widetilde{\Omega}(\sqrt{\kappa})$ for smooth and $\mu$-strongly convex functions and $\Omega(\sqrt{\frac{L}{\eps}})$ for smooth convex functions.


However, once $E>1$, the best achievable projection complexity of DP-ASVRG deteriorates to $\widetilde{\OM}(\kappa^{2/3})$, based on Theorem~\ref{thm:acc_svrg_strong}.
We speculate this is caused by delayed projections since unprojected updates are often biased which might slow down the convergence rate.
However, the current lower bound is $\widetilde{\Omega}(\sqrt{\kappa})$ as argued.
Can we design a delayed projected method that both lets $E>1$ and achieves this lower bound of projection numbers?
We left it as an open problem.

%

\section*{Acknowledgement}
The authors want to thank Guangzeng Xie, Wenhao Yang, and Dachao Lin for helpful discussion on some inequalities and thank Prof. Weijie Su and Prof. Shusen Wang for helpful suggestions on an earlier version of this manuscript.

\bibliography{bib/optimization,bib/federated,bib/personalization,bib/distributed}
\bibliographystyle{plain}
\appendix 
\begin{appendix}
	\onecolumn
	\begin{center}
		{\huge \textbf{Appendix}}
	\end{center}

\section{Useful Lemmas}

\begin{lem}
	\label{lem:ineq1}
	For any vectors $\a, \bb$ and positive number $\gamma > 0$, it follows that
	\[
	2\langle \a, \bb \rangle \le \gamma \|\a\|^2 + \frac{1}{\gamma}\|\bb\|^2.
	\]
\end{lem}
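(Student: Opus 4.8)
The plan is to recognize this as Young's inequality (a scaled form of the Cauchy--Schwarz / AM--GM bound for inner products) and to derive it from the nonnegativity of a single squared norm. The key observation is that for any choice of $\gamma > 0$ we may form the vector $\sqrt{\gamma}\,\a - \tfrac{1}{\sqrt{\gamma}}\bb$, whose squared Euclidean norm is automatically nonnegative. The scaling by $\sqrt{\gamma}$ and $1/\sqrt{\gamma}$ is engineered precisely so that the cross term produced upon expansion is $-2\langle \a, \bb\rangle$, independent of $\gamma$, while the two ``diagonal'' terms carry the weights $\gamma$ and $1/\gamma$ that appear in the statement.

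Concretely, I would start from
\[
0 \le \left\| \sqrt{\gamma}\,\a - \tfrac{1}{\sqrt{\gamma}}\,\bb \right\|^2,
\]
expand the right-hand side using bilinearity of the inner product and $\|\v\|^2 = \langle \v, \v\rangle$ to obtain $\gamma \|\a\|^2 - 2\langle \a, \bb\rangle + \tfrac{1}{\gamma}\|\bb\|^2$, and then simply rearrange the resulting inequality to isolate $2\langle \a, \bb\rangle$ on the left. Since every step is an identity except the initial nonnegativity, no case analysis or technical estimate is required; the only ``choice'' is the scaling in the seed vector, and once that is fixed the computation is immediate. There is essentially no obstacle here: the result is a one-line consequence of expanding a square, and if desired one can additionally note that equality holds exactly when $\sqrt{\gamma}\,\a = \tfrac{1}{\sqrt{\gamma}}\,\bb$, i.e. $\bb = \gamma\a$.
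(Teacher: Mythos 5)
Your proof is correct: expanding $0 \le \bigl\| \sqrt{\gamma}\,\a - \tfrac{1}{\sqrt{\gamma}}\,\bb \bigr\|^2$ and rearranging is exactly the standard argument for this Young-type inequality, and the paper itself omits any proof precisely because this one-line computation is the expected route. Nothing is missing.
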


\begin{lem}
	\label{lem:j}
	For any set of (random) vectors $\x_1, \x_2, \cdots, \x_n \in \EB^p$, it follows that
	\[  \big\| \sum_{i=1}^n \x_i\big\|^2 \le n  \sum_{i=1}^n \big\| \x_i\big\|^2.
	\] 
\end{lem}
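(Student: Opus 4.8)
The plan is to prove the inequality purely deterministically for each fixed realization of the vectors: the statement carries no expectation, so once it holds for an arbitrary tuple $\x_1,\dots,\x_n \in \RB^p$ it holds for random vectors automatically (apply it pointwise in the sample space, or take expectations afterward if desired). Thus the randomness is a red herring and I would drop it at the outset.

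The slickest route is to recognize the bound as an instance of convexity of the squared norm. The map $\phi(\x) = \|\x\|^2$ is convex on $\RB^p$, so Jensen's inequality with uniform weights $\tfrac1n$ gives
\[
\Big\| \frac{1}{n}\sum_{i=1}^n \x_i \Big\|^2 \;\le\; \frac{1}{n}\sum_{i=1}^n \|\x_i\|^2,
\]
and multiplying both sides by $n^2$ yields exactly $\big\|\sum_{i=1}^n \x_i\big\|^2 \le n\sum_{i=1}^n \|\x_i\|^2$. Equivalently, one may invoke the Cauchy--Schwarz inequality applied to the all-ones vector against $(\|\x_1\|,\dots,\|\x_n\|)$, combined with the triangle inequality $\big\|\sum_i \x_i\big\| \le \sum_i \|\x_i\|$, and then square.

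To keep the argument self-contained and tied to the tools already at hand, I would instead expand the left-hand side by bilinearity of the inner product,
\[
\Big\| \sum_{i=1}^n \x_i \Big\|^2 = \sum_{i=1}^n \|\x_i\|^2 + \sum_{i \ne j} \langle \x_i, \x_j \rangle,
\]
and bound each off-diagonal cross term using Lemma~\ref{lem:ineq1} with $\gamma = 1$, namely $2\langle \x_i, \x_j\rangle \le \|\x_i\|^2 + \|\x_j\|^2$. Summing this over the $n(n-1)$ ordered pairs $i \ne j$ and exploiting the symmetry of the bound gives $\sum_{i\ne j}\langle \x_i,\x_j\rangle \le (n-1)\sum_{i=1}^n \|\x_i\|^2$, so the total is at most $\sum_i \|\x_i\|^2 + (n-1)\sum_i \|\x_i\|^2 = n\sum_i \|\x_i\|^2$.

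There is no genuine obstacle here; the lemma is a standard power-mean / Cauchy--Schwarz estimate. The only point requiring a moment of care is the combinatorial bookkeeping in the expansion approach, where one must correctly count the off-diagonal terms and use that $\sum_{i\ne j}\|\x_i\|^2 = (n-1)\sum_i \|\x_i\|^2$ so that the factor $n$ (rather than, say, $n-1$ or $2n$) emerges. I would favor the convexity proof for brevity but present the expansion argument if I want to reuse Lemma~\ref{lem:ineq1} and thereby keep the exposition uniform with the rest of the appendix.
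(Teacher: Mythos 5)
Your proof is correct. The paper itself states Lemma~\ref{lem:j} without proof (it is treated as a standard fact, alongside Lemma~\ref{lem:ineq1}), so there is no authorial argument to compare against; both of your routes --- Jensen applied to the convex map $\x \mapsto \|\x\|^2$ with uniform weights, and the bilinear expansion with $2\langle \x_i,\x_j\rangle \le \|\x_i\|^2+\|\x_j\|^2$ summed over the $n(n-1)$ ordered off-diagonal pairs --- are valid, and your observation that the randomness is immaterial (the inequality holds pointwise for every realization) is exactly right.
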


\begin{lem}
	\label{lem:F}
	Let $F(\x)$ satisfy Assumption~\ref{asmp:smooth} and~\ref{asmp:strong}, then
	\begin{enumerate}
		\item (Jensen's inequality) For any $\x_1, \cdots, \x_E$, it follows that $F\left(\frac{1}{E}\sum_{t=1}^E \x_t\right) \le \frac{1}{E}\sum_{t=1}^E F\left( \x_t \right)$.
		\item For any $\x, \y \in \RB^p$, 
		$ \|\nabla F(\x;\xi) - \nabla F(\y; \xi) \|^2 \le 2 L \left[F(\x; \xi) - F(\y; \xi) -\langle\nabla F(\y; \xi), \x-\y \rangle \right] $;
		\item For $\x \in \mathcal{R}(\A^\perp)$, $\EB_{\xi} \|\nabla F(\x;\xi) - \nabla F(\xc; \xi) \|^2 \le 2L \EB_{\xi}\left[ F(\x) - F(\xc)\right]$
		where $\xc = \argmin_{\A^\top \x = \0}F(\x)$.
		\item $\langle \nabla F(\x) - \nabla F(\y), \x -\y \rangle \ge \mu \|\x -\y\|^2$.
	\end{enumerate}
\end{lem}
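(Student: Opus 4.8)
The plan is to establish the four items separately, since each is a standard consequence of convexity, smoothness, or the constrained optimality condition. The only mild subtlety is that items~2 and~3 require each realization $F(\cdot;\xi)$ to be convex and $L$-smooth, which is exactly what the phrase ``$L$-smooth convex'' in Assumption~\ref{asmp:smooth} encodes together with the per-$\xi$ Lipschitz-gradient bound stated there. Item~1 is then immediate: Assumption~\ref{asmp:strong} with $\mu \ge 0$ makes $F$ convex, and applying the definition of convexity to the convex combination $\frac{1}{E}\sum_{t=1}^E \x_t$ (by induction on $E$) yields $F(\frac{1}{E}\sum_{t=1}^E \x_t) \le \frac{1}{E}\sum_{t=1}^E F(\x_t)$. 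Item~4 follows by writing the inequality of Assumption~\ref{asmp:strong} twice, once with the roles of $\x$ and $\y$ swapped, and adding: the function-value terms cancel and one is left with $\langle \nabla F(\x)-\nabla F(\y),\, \x-\y\rangle \ge \mu\|\x-\y\|^2$.

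For item~2 (the co-coercivity bound) I would fix $\xi$ and introduce the auxiliary function $g(\z) := F(\z;\xi) - \langle \nabla F(\y;\xi),\, \z\rangle$, which is convex and $L$-smooth and satisfies $\nabla g(\y)=\0$, so $\y$ is its global minimizer. Applying the descent lemma to $g$ at $\x$ and minimizing the resulting quadratic upper bound over the free variable (taking the point $\x - \frac{1}{L}\nabla g(\x)$) gives $g(\y) \le g(\x) - \frac{1}{2L}\|\nabla g(\x)\|^2$. Since $\nabla g(\x) = \nabla F(\x;\xi)-\nabla F(\y;\xi)$, substituting the definition of $g$ back in and rearranging produces exactly the claimed inequality. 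This is the classical ``optimize the descent-lemma bound'' argument.

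For item~3 I would apply item~2 with $\y=\xc$ and then take expectation over $\xi$, which turns the right-hand side into $2L\big[F(\x)-F(\xc) - \langle \nabla F(\xc),\, \x-\xc\rangle\big]$. The crux is to show the inner-product term vanishes for $\x \in \mathcal{R}(\A^\perp)$. Since $\mathcal{R}(\A^\perp)$ is a linear space, $\x-\xc \in \mathcal{R}(\A^\perp)$; decomposing $\nabla F(\xc) = \PA(\nabla F(\xc)) + \PAT(\nabla F(\xc))$ via Proposition~\ref{prop:proj}, the $\PA$-component lies in $\mathcal{R}(\A)$ and is therefore orthogonal to $\x-\xc$, while the $\PAT$-component is zero by the constrained first-order stationarity $\PAT(\nabla F(\xc))=\0$ (the defining optimality of $\xc=\argmin_{\A^\top\x=\0}F(\x)$, also recorded in Corollary~\ref{cor:xc}). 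Hence $\langle \nabla F(\xc),\, \x-\xc\rangle = 0$ and the bound collapses to $2L\,\EB_\xi[F(\x)-F(\xc)]$.

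The main obstacle, such as it is, is item~3's optimality argument: one must invoke that stationarity of the \emph{constrained} problem forces only the feasible-subspace component $\PAT(\nabla F(\xc))$ to vanish rather than the full gradient, and then use orthogonality of the decomposition in Proposition~\ref{prop:proj} to kill the cross term. A secondary point worth stating explicitly is the per-realization convexity of $F(\cdot;\xi)$ needed in item~2, which I would flag as part of the standing ``$L$-smooth convex'' hypothesis. Everything else is routine manipulation.
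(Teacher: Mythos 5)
Your proposal is correct and follows essentially the same route as the paper: items 1 and 4 are handled identically (convexity/induction, and adding the two strong-convexity inequalities with $\x,\y$ swapped), and item 3 uses exactly the paper's key step, namely that constrained optimality of $\xc$ forces $\PAT(\nabla F(\xc))=\0$ so that $\langle\nabla F(\xc),\x-\xc\rangle=0$ for $\x-\xc\in\mathcal{R}(\A^\perp)$. The only difference is cosmetic: where the paper cites Nesterov's Theorems 2.1.2 and 2.1.5 for items 1--2, you write out the standard proofs (induction on convexity, and the ``optimize the descent-lemma bound'' argument for co-coercivity), including the correct observation that per-realization convexity of $F(\cdot;\xi)$ is what the ``$L$-smooth convex'' hypothesis must supply.
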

\begin{proof}
	\begin{enumerate}
		\item See Theorem 2.1.2 of Nesterov~\cite{nesterov2013introductory} for a proof of case $E=2$. Induction for $E \ge 2$.
		\item See Theorem 2.1.5 of the textbook of Nesterov~\cite{nesterov2013introductory}.
		\item Since $F(\x) = \EB_{\xi} \nabla F(\x;\xi)$, using results of the first item, we only need to prove $\EB \langle\nabla F(\xc; \xi), \x-\xc \rangle = 0$.
		Note that when $\x \in \mathcal{R}(\A^\perp)$, $\x-\xc \in \mathcal{R}(\A^\perp)$.
		By the optimality of $\xc$, $\EB_{\xi}\nabla F(\xc; \xi) = \nabla F(\xc) \in \mathcal{R}(\A)$.
		Then the result follows.
		\item By the $\mu$-strongly convexity of $F(\cdot)$, it follows that
		\begin{gather*}
		F(\x) - F(\y) - \langle  \nabla F(\y), \x - \y \rangle \ge \frac{\mu}{2}\|\x -\y\|^2\\
		F(\y) - F(\x) - \langle  \nabla F(\x), \y - \x \rangle \ge \frac{\mu}{2}\|\x -\y\|^2.
		\end{gather*}
		Adding the above two inequalities gives the result.
	\end{enumerate}
\end{proof}

\section{Appendix for Section~\ref{sec:notation} and Discussion}
The following lemma proves Proposition~\ref{prop:proj} and Proposition~\ref{prop:linear} together.
\begin{lem}[Property of projection]
	Let $\AM \subset \RB^p$ be some closed convex region, $\PM_{\AM}(\x) = \argmin_{\y \in \AM}\|\y-\x\|^2$ and $\PM_{\AM^\perp}(\x) = \x - \PM_{\AM}(\x)$.
	\begin{enumerate}
		\item  $\PM_{\AM}$ is idempotent: $\PM_{\AM}(\PM_{\AM}(\x)) = \PM_{\AM}(\x)$ for all $\x \in \RB^p$;
		\item $\PM_{\AM}$ is non-expansive: for $\x \in \RB^p$ and $\y \in \AM$, $\|\PM_{\AM}(\x)-\x\|^2 + \|\PM_{\AM}(\x)-\y\|^2 \le \|\x-\y\|^2$.
		In particular, if $\0 \in \AM$, $\max\{\|\PM_{\AM}(\x)\|, \|\PM_{\AM^\perp}(\x)\|  \} \le \|\x\|$.
		\item If $\AM$ is a linear space, for example $\AM = \mathcal{R}(\A)$, then  $\PM_{\AM}$ is linear in $\x$: $\PM_{\AM}(\alpha\x + \beta\y) = \alpha\PM_{\AM}(\x) + \beta\PM_{\AM}(\y)$ for any $\x, \y \in \RB^p$ and $\alpha, \beta \in \RB$; and $ \langle\PM_{\AM}(\x),  \PM_{\AM^\perp}(\x)\rangle = 0 $ for any $\x \in \RB^p$. 
		\item
		If $\PM_{\AM}(\x  + \y) = \PM_{\AM}(\x) + \PM_{\AM}(\y)$ for any $\x, \y \in \RB^p$, then $\AM$ must be a linear (sub)space in $\RB^p$.
	\end{enumerate}
\end{lem}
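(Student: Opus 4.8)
The plan is to treat the four items separately, taking the first three as standard facts about projection onto a closed convex set and concentrating on the converse in item~4, which is the substantive claim (Proposition~\ref{prop:linear}). For item~1, I would note that if $\u = \PM_{\AM}(\x)$ then $\u \in \AM$, and since the distance from $\u$ to $\AM$ is zero (attained at $\u$ itself) and the minimiser is unique for closed convex $\AM$, we get $\PM_{\AM}(\u) = \u$. For item~2, I would invoke the first-order optimality (variational) inequality $\langle \x - \PM_{\AM}(\x),\, \y - \PM_{\AM}(\x)\rangle \le 0$ valid for every $\y \in \AM$, and expand $\|\x - \y\|^2 = \|\x - \PM_{\AM}(\x)\|^2 + \|\PM_{\AM}(\x) - \y\|^2 + 2\langle \x - \PM_{\AM}(\x),\, \PM_{\AM}(\x) - \y\rangle$; the cross term is nonnegative by the variational inequality, giving the stated Pythagorean bound, and the special case $\vzero \in \AM$ follows by taking $\y = \vzero$. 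For item~3, when $\AM$ is a subspace the variational inequality sharpens to the orthogonality $\x - \PM_{\AM}(\x) \perp \AM$ (because $\pm\y$ and all scalings of $\y$ also lie in $\AM$); linearity then follows from uniqueness, since $\alpha\PM_{\AM}(\x) + \beta\PM_{\AM}(\y)$ lies in $\AM$ and its residual is a linear combination of vectors orthogonal to $\AM$, hence orthogonal to $\AM$, so it must coincide with $\PM_{\AM}(\alpha\x+\beta\y)$.

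The heart is item~4. First I would extract $\vzero \in \AM$: setting $\x = \y = \vzero$ in the additivity hypothesis gives $\PM_{\AM}(\vzero) = 2\PM_{\AM}(\vzero)$, hence $\PM_{\AM}(\vzero) = \vzero$, so the origin is its own projection and lies in $\AM$. Next, closure under addition: for $\a, \bb \in \AM$ item~1 gives $\PM_{\AM}(\a) = \a$ and $\PM_{\AM}(\bb) = \bb$, so additivity yields $\a + \bb = \PM_{\AM}(\a+\bb) \in \AM$. Then I would obtain all rational dilations: additivity gives $\PM_{\AM}(n\x) = n\PM_{\AM}(\x)$ for positive integers $n$, and applying this at $\x = \tfrac{1}{n}\a$ shows $\PM_{\AM}(\tfrac{1}{n}\a) = \tfrac{1}{n}\a$, i.e. $\tfrac{1}{n}\a \in \AM$; together with closure under addition and the relation $\PM_{\AM}(-\a) = -\PM_{\AM}(\a)$ (from $\PM_{\AM}(\a) + \PM_{\AM}(-\a) = \PM_{\AM}(\vzero) = \vzero$), this gives $q\a \in \AM$ for every rational $q$.

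The main obstacle is upgrading from rational to arbitrary real multiples, since additivity alone only forces $\mathbb{Q}$-homogeneity and there exist pathological additive maps that are not linear. Here I would use closedness of $\AM$: the set $\{\,q\a : q \in \mathbb{Q}\,\} \subseteq \AM$ is dense in the line $\RB\a$, so closedness forces $\RB\a \subseteq \AM$. Equivalently, one can observe that $\PM_{\AM}$ is non-expansive by item~2 and hence continuous, and a continuous additive map is $\RB$-linear; then $\AM = \range(\PM_{\AM})$ is the range of a linear operator and therefore automatically a subspace. Either route, combined with the closure under addition already established, shows that $\AM$ contains every linear combination of its elements, so $\AM$ is a linear subspace of $\RB^p$, which completes the proof.
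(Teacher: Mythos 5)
Your proposal is correct and follows essentially the same route as the paper's proof: items 1--3 are handled by the standard variational inequality (the paper proves item 3 via the explicit formula $\PM_{\AM}(\x)=\A(\A^\top\A)^{\dag}\A^\top\x$ rather than your abstract orthogonality argument, but this is cosmetic), and for item 4 both arguments extract $\0\in\AM$, establish $\mathbb{Q}$-homogeneity of $\PM_{\AM}$ by induction and negation, and then pass to real scalars. The only minor divergence is in that last step, where the paper upgrades to $\RB$-homogeneity via continuity of $\PM_{\AM}$ (inherited from non-expansiveness) while your primary route instead uses density of $\{q\a: q\in\mathbb{Q}\}$ in $\RB\a$ together with closedness of $\AM$; both are valid and you correctly flag that the closedness/continuity hypothesis is what rules out pathological additive-but-nonlinear behavior.
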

\begin{proof}
		\begin{enumerate}
			\item This follows directly from definition.
			\item Since $\PM_{\AM}(\x) = \argmin_{\y \in \AM}\|\y-\x\|^2$, the first order optimality condition gives $\langle \PM_{\AM}(\x) - \x, \PM_{\AM}(\x) - \y \rangle \le 0$ for any $\y \in \AM$.
			Hence,
			\[
			\|\x-\y\|^2-\|\PM_{\AM}(\x)-\y\|^2 -\|\PM_{\AM}(\x)-\x\|^2
			=2\left\langle \x - \PM_{\AM}(\x), \PM_{\AM}(\x) - \y \right\rangle \ge 0.
			\]
			\item If $\AM$ is a linear space, $\PM_{\AM}(\x) = \A(\A^\top \A)^{\dag}\A^\top \x$ and thus the linearity follows.
			Besides, $\PM_{\AM^\perp}(\x) = (\I-\A(\A^\top \A)^{\dag}\A^\top) \x$ and $(\I-\A(\A^\top \A)^{\dag}\A^\top)\A(\A^\top \A)^{\dag}\A^\top = \0 $ account for the rest.
			\item Obviously, letting $\x=\y=\0$, we have $\PM_{\AM}(\0) = \0$ and thus $\0 \in \AM$.
			So by the second item, $\PM_{\AM}(\cdot)$ is continuous.
			By reduction, we know that $\PM_{\AM}(k\x)= k \PM_{\AM}(\x)$ for any positive integer $k$ and $\x \in \RB^p$.
			Thus, we have $\PM_{\AM}(q\x)= q\PM_{\AM}(\x)$ for any positive rational number $q$ and $\x \in \RB^p$.
			Since $\PM_{\AM}(q\x) + \PM_{\AM}(-q\x) = 0$, $\PM_{\AM}(q\x)= q\PM_{\AM}(\x)$ for any rational number $q$ and $\x \in \RB^p$.
			By continuity of $\PM_{\AM}(\cdot)$, $\PM_{\AM}(\alpha \x)= \alpha \PM_{\AM}(\x)$ for any real number $\alpha$ and $\x \in \RB^p$.
			Hence, for any $\x, \y \in \AM$, we have $\PM_{\AM}(\alpha\x + \beta\y) 
			= \PM_{\AM}(\alpha\x) + \PM_{\AM}(\beta\y)  = \alpha\PM_{\AM}(\x) + \beta\PM_{\AM}(\y) = \alpha \x + \beta \y \in \AM$ for any $\alpha, \beta \in \RB$, which implies that $\AM$ is a linear (sub)space in $\RB^p$.
		\end{enumerate}
\end{proof}

\begin{proof}[Proof of Corollary~\ref{cor:xc}]
	If there are two $\x^*, \y^* \in \mathcal{R}(\A^\perp)$ that both minimize $F(\cdot)$ within the linear constraint $\A^\top \x^* = \A^\top \y^* = \0$, then we assert that we have $(\A^{\perp})^\top \nabla F(\x^*)=\0$, which implies $\PAT (\nabla F(\x^*))=\0$.
	To see this, let $\xc$ be any solution of the constrained problem, it is often the case that $\nabla F(\xc) \neq \0$. 
	Since $\xc \in  \mathcal{R}(\A^\perp)$, then $\xc = \A^\perp\z^*$ and $\z^* = \argmin_{\z} F(\A^\perp\z)$.
	By the first order condition of $\z^*$, we have $(\A^{\perp})^\top \nabla F(\x^*)=\0$.
	
	So $\y^*-\x^* \in \mathcal{R}(\A^\perp)$ and $\nabla F(\x^*) \in \mathcal{R}(\A)$.
	Then from the $\mu$-strongly convexity, $0 = F(\y^*) - F(\x^*) \ge \langle\nabla F(\x^*), \y^*-\x^* \rangle + \frac{\mu}{2} \| \x^*-\y^*\|^2 = \frac{\mu}{2} \| \x^*-\y^*\|^2 \ge 0$, indicating that $\x^* = \y^*$.
\end{proof}

\section{Proof of Delayed Projected SGD}

\subsection{Descent Lemma}
\begin{lem}[Bounded gradient variance]
\label{lem:var_localsgd}
Under Assumption~\ref{asmp:smooth}, ~\ref{asmp:strong} and~\ref{assum:bounded},
\begin{equation}
\label{eq:second_y}
\EB \| \PAT(\nabla F(\x_{t}; \xi_t)- \nabla F(\x_t))\|^2  \le 3L^2\EB \| \z_t \|^2 + 6 L\EB\left[ F(\y_t) - F(\xc)\right] + 3\sigmaat.
\end{equation}
\end{lem}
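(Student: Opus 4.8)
The plan is to recognize the left-hand side as a \emph{variance} and upper bound it by a second moment, then split that second moment into three pieces chosen so that smoothness, strong convexity, and the variance-at-optimum assumption each produce exactly one of the three terms on the right.

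First I would condition on $\x_t$, so that $\y_t = \PAT(\x_t)$ and $\z_t = \PA(\x_t)$ are fixed and the only randomness is in $\xi_t$. Since $\nabla F(\x_t) = \EB_{\xi_t}\nabla F(\x_t;\xi_t)$ and $\PAT$ is linear (Proposition~\ref{prop:proj}), the quantity $\PAT(\nabla F(\x_t;\xi_t) - \nabla F(\x_t))$ is exactly the centered version of $\PAT(\nabla F(\x_t;\xi_t))$. Using that the mean minimizes mean-square deviation (equivalently $\EB\|Y-\EB Y\|^2 \le \EB\|Y\|^2$), I would discard the mean and bound
\[
\EB_{\xi_t}\big\|\PAT(\nabla F(\x_t;\xi_t) - \nabla F(\x_t))\big\|^2 \le \EB_{\xi_t}\big\|\PAT(\nabla F(\x_t;\xi_t))\big\|^2 .
\]

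Next, the key step is to insert the two reference points $\y_t$ and $\xc$ inside the projection and split into three telescoping pieces:
\[
\PAT(\nabla F(\x_t;\xi_t)) = \PAT\big(\nabla F(\x_t;\xi_t) - \nabla F(\y_t;\xi_t)\big) + \PAT\big(\nabla F(\y_t;\xi_t) - \nabla F(\xc;\xi_t)\big) + \PAT(\nabla F(\xc;\xi_t)).
\]
Applying Lemma~\ref{lem:j} with $n=3$ splits the squared norm into three terms, each carrying a factor $3$. For the first term, non-expansiveness of $\PAT$ (Proposition~\ref{prop:proj}) followed by $L$-smoothness (Assumption~\ref{asmp:smooth}) gives $\|\PAT(\nabla F(\x_t;\xi_t) - \nabla F(\y_t;\xi_t))\| \le \|\nabla F(\x_t;\xi_t) - \nabla F(\y_t;\xi_t)\| \le L\|\x_t - \y_t\| = L\|\z_t\|$, hence a bound $3L^2\|\z_t\|^2$. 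For the second term, non-expansiveness together with Lemma~\ref{lem:F}(3) (valid because $\y_t \in \mathcal{R}(\A^\perp)$) yields $\EB_{\xi_t}\|\PAT(\nabla F(\y_t;\xi_t) - \nabla F(\xc;\xi_t))\|^2 \le 2L[F(\y_t) - F(\xc)]$, hence $6L[F(\y_t) - F(\xc)]$. The third term is exactly $3\sigmaat$ by the definition in Assumption~\ref{assum:bounded}.

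Collecting the three bounds and taking total expectation over $\x_t$ turns the conditional quantities $\|\z_t\|^2$ and $F(\y_t) - F(\xc)$ into $\EB\|\z_t\|^2$ and $\EB[F(\y_t) - F(\xc)]$, giving exactly~\eqref{eq:second_y}. I do not expect a genuine obstacle here; the only thing that needs care is choosing the three split points so that each inequality (non-expansiveness plus smoothness, non-expansiveness plus Lemma~\ref{lem:F}(3), and the definition of $\sigmaat$) lands precisely on one of the target terms, and checking that $\y_t$ indeed lies in $\mathcal{R}(\A^\perp)$ so that Lemma~\ref{lem:F}(3) applies.
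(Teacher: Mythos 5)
Your proposal is correct and follows essentially the same route as the paper: the same three-way split of the stochastic gradient at the reference points $\y_t$ and $\xc$, bounded via Lemma~\ref{lem:j}, non-expansiveness of $\PAT$, smoothness, Lemma~\ref{lem:F}(3), and the definition of $\sigmaat$. The only (immaterial) difference is that you apply $\EB\|Y-\EB Y\|^2\le\EB\|Y\|^2$ once at the outset, whereas the paper keeps each of the three pieces centered and discards the centering term by term (using $\PAT(\nabla F(\xc))=\0$ for the last piece).
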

\begin{proof}
	\begin{align*}
	&\EB \| \PAT(\nabla F(\x_{t}; \xi_t))-\PAT( \nabla F(\x_t))\|^2 \nonumber \\
	\overset{(a)}{=}&\EB \| \PAT(\nabla F(\x_{t}; \xi_t))- \nabla F(\x_t)) \|^2\nonumber  \\
	=& \EB \|  \PAT \left(\left[ \nabla F(\x_{t}; \xi_t)-\nabla F(\y_{t}; \xi_t) - \left( \nabla F(\x_t) -\nabla F(\y_t) \right)  \right] \right) \nonumber \\
	&+\PAT \left( \left[ \nabla F(\y_{t}; \xi_t) - \nabla F(\xc; \xi_t)  - \left(\nabla F(\y_{t})-\nabla F(\xc)    \right)  \right]\right)  \nonumber \\
	&+\PAT \left( \nabla F(\xc; \xi_t)  - \nabla F(\xc)  \right) \|^2 \nonumber  \\
	\overset{(b)}{\le}& 
	3\EB \|  \nabla F(\x_{t}; \xi_t)-\nabla F(\y_{t}; \xi_t) 
	- \left( \nabla F(\x_t) -\nabla F(\y_t) \right)   \|^2 \nonumber \\
	&+3 \EB \| \nabla F(\y_{t}; \xi_t) - \nabla F(\xc; \xi_t)  - \left(\nabla F(\y_{t})-\nabla F(\xc)    \right) \|^2\nonumber \\
	& + 3 \EB \| \PAT(\nabla F(\xc; \xi_t)  - \nabla F(\xc))\|^2 \nonumber \\
	\overset{(c)}{\le}& 3\EB \|  \nabla F(\x_{t}; \xi_t)-\nabla F(\y_{t}; \xi_t) \|^2 + 3 \EB \| \nabla F(\y_{t}; \xi_t) - \nabla F(\xc; \xi_t)\|^2 \nonumber \\
	&+ 3 \EB \|\PAT(\nabla F(\xc; \xi_t))\|^2 \nonumber \\
	\overset{(d)}{\le}& 3L^2\EB \| \z_t \|^2 + 6 L\EB\left[ F(\y_t) - F(\xc)\right] + 3\sigmaat
	\end{align*}
	where (a) follows from the linearity of $\PA$; (b) is due to Lemma~\ref{lem:j} with $n=3$ and $\|\PAT(\x)-\PAT(\y)\| \le \|\x-\y\|$; 
	(c) follows by noting $\EB\| X - \EB X\|^2 \le \EB \|X\|^2$ and that $\PAT (\nabla F(\x^*))=\0$ from Corollary~\ref{cor:xc}; and (d) follows from Assumption~\ref{asmp:smooth} and~\ref{assum:bounded}.
\end{proof}

\begin{lem}[General descent lemma on $\mathcal{R}(\A^\perp)$]
	\label{lem:y_descent_general}
	Let $\y_t = \PAT (\x_t)$ be the projection onto $\mathcal{R}(\A^\perp)$ and $\z_t = \PA(\x_t)$ the projection onto $\mathcal{R}(\A)$.
	Let $\x_{t+1} = \x_t - \eta_t \g_t$ be the update rule with $\EB_{\xi_t} \PAT(\g_t) = \PAT(\nabla F(\x_t))$ and $\g_t$ is independent with all randomness before iteration $t$, then 
\begin{align}
	\EB \|\y_{t+1}-\xc\|^2 
		&\le (1-\mu \eta_t)  \EB \| \y_t-\xc\|^2  +  (4L\eta_t^2-2\eta_t) \EB\left[ F(\y_t) - F(\xc)\right]   + (L\eta_t + 2L^2\eta_t^2)\EB \| \z_t \|^2 
		\nonumber \\
& \qquad \qquad +\eta_t^2  \EB \| \PAT(\g_t) -\PAT( \nabla F(\x_t))\|^2.
\end{align}
\end{lem}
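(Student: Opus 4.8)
The plan is to reduce the update to a one-step contraction on $\mathcal{R}(\A^\perp)$ and then control the two remaining first- and second-order terms. First I would use the linearity of $\PAT$ (Proposition~\ref{prop:proj}) to write $\y_{t+1} = \PAT(\x_t - \eta_t\g_t) = \y_t - \eta_t\PAT(\g_t)$, so that
\[
\|\y_{t+1}-\xc\|^2 = \|\y_t-\xc\|^2 - 2\eta_t\langle\PAT(\g_t),\y_t-\xc\rangle + \eta_t^2\|\PAT(\g_t)\|^2.
\]
Taking the conditional expectation over $\xi_t$ and using $\EB_{\xi_t}\PAT(\g_t)=\PAT(\nabla F(\x_t))$, the cross term becomes $\langle\PAT(\nabla F(\x_t)),\y_t-\xc\rangle$, which equals $\langle\nabla F(\x_t),\y_t-\xc\rangle$ because $\y_t-\xc\in\mathcal{R}(\A^\perp)$ is orthogonal to $\mathcal{R}(\A)$. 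For the quadratic term I would use the bias--variance split $\EB_{\xi_t}\|\PAT(\g_t)\|^2 = \|\PAT(\nabla F(\x_t))\|^2 + \EB_{\xi_t}\|\PAT(\g_t)-\PAT(\nabla F(\x_t))\|^2$, which isolates the variance term appearing verbatim in the statement.

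The hard part will be lower-bounding the gradient inner product $\langle\nabla F(\x_t),\y_t-\xc\rangle$ without generating a stray $\|\y_t-\xc\|^2$ term: a naive split into $\nabla F(\y_t)$ plus $\nabla F(\x_t)-\nabla F(\y_t)$ followed by Young's inequality reintroduces such a term and breaks the clean $(1-\mu\eta_t)$ coefficient. Instead I would apply $\mu$-convexity (Assumption~\ref{asmp:strong}) at $\x_t$ to get $\langle\nabla F(\x_t),\x_t-\xc\rangle \ge F(\x_t)-F(\xc)+\tfrac{\mu}{2}\|\x_t-\xc\|^2$, then write $\langle\nabla F(\x_t),\y_t-\xc\rangle = \langle\nabla F(\x_t),\x_t-\xc\rangle - \langle\nabla F(\x_t),\z_t\rangle$, use the Pythagorean identity $\|\x_t-\xc\|^2=\|\y_t-\xc\|^2+\|\z_t\|^2$ from orthogonality (Proposition~\ref{prop:proj}), and convert $F(\x_t)-\langle\nabla F(\x_t),\z_t\rangle$ into $F(\y_t)-\tfrac{L}{2}\|\z_t\|^2$ via the $L$-smoothness descent inequality applied to $\y_t=\x_t-\z_t$. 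This yields exactly
\[
\langle\nabla F(\x_t),\y_t-\xc\rangle \ge \big[F(\y_t)-F(\xc)\big] + \tfrac{\mu}{2}\|\y_t-\xc\|^2 - \tfrac{L}{2}\|\z_t\|^2,
\]
giving the $(1-\mu\eta_t)$ factor, the $-2\eta_t[F(\y_t)-F(\xc)]$ contribution, and a clean $L\eta_t\|\z_t\|^2$ term with no leftover distance term.

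Finally I would bound the deterministic term $\|\PAT(\nabla F(\x_t))\|^2$. Splitting $\PAT(\nabla F(\x_t)) = \PAT(\nabla F(\x_t)-\nabla F(\y_t)) + \PAT(\nabla F(\y_t))$ and applying $\|a+b\|^2\le 2\|a\|^2+2\|b\|^2$ (Lemma~\ref{lem:j}), non-expansiveness of $\PAT$, and smoothness gives $\|\PAT(\nabla F(\x_t)-\nabla F(\y_t))\|^2 \le L^2\|\z_t\|^2$; for the second piece I would use $\PAT(\nabla F(\xc))=\0$ (Corollary~\ref{cor:xc}) to write $\PAT(\nabla F(\y_t))=\PAT(\nabla F(\y_t)-\nabla F(\xc))$ and then the smoothness--convexity bound $\|\nabla F(\y_t)-\nabla F(\xc)\|^2\le 2L[F(\y_t)-F(\xc)]$ (Jensen applied to item~3 of Lemma~\ref{lem:F}), obtaining $\|\PAT(\nabla F(\x_t))\|^2 \le 2L^2\|\z_t\|^2 + 4L[F(\y_t)-F(\xc)]$. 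Multiplying by $\eta_t^2$ produces the $2L^2\eta_t^2\|\z_t\|^2$ and $4L\eta_t^2[F(\y_t)-F(\xc)]$ contributions. Assembling the three pieces and taking the total expectation via the tower property (using that $\g_t$ is independent of the past) then gives precisely the claimed bound; the only genuinely delicate step is the inner-product estimate, and once it is in the stated form the remaining work is just collecting coefficients.
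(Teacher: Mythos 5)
Your proposal is correct and follows essentially the same route as the paper's proof: the same bias--variance split of $\EB\|\PAT(\g_t)\|^2$, the same decomposition $\langle\nabla F(\x_t),\y_t-\xc\rangle=\langle\nabla F(\x_t),\x_t-\xc\rangle-\langle\nabla F(\x_t),\z_t\rangle$ handled by strong convexity at $\x_t$ plus the smoothness descent inequality toward $\y_t$, the same Pythagorean identity, and the same two-term bound $\|\PAT(\nabla F(\x_t))\|^2\le 2L^2\|\z_t\|^2+4L[F(\y_t)-F(\xc)]$. All coefficients assemble exactly as in the paper, so no further comment is needed.
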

\begin{proof}
	Since the optimal constrained solution $\xc$ lies in $\mathcal{R}(A^\perp)$, $\PAT(\xc) = \xc$ and $\PA(\xc) = \0$.
	Let $\y_t = \PAT (\x_t)$ be the projection onto $\mathcal{R}(A^\perp)$ and $\z_t = \PA(\x_t)$ the projection onto $\mathcal{R}(A)$, then $\x_t = \y_t + \z_t$ and $\langle \y_t, \z_t \rangle = 0$.
	Taking expectation with respect to $\x_1, \cdots, \x_t$, we have
	\begin{align}
	\label{eq:y_zhankai}
	\EB \|\y_{t+1}-\xc\|^2
	&= \EB \| \PAT (\x_{t}-\eta_t \g_t)-\xc\|^2 \nonumber \\
	&= \EB \| \y_t-\eta_t \PAT (\nabla F(\x_{t}) )-\xc\|^2 + \eta_t^2 \EB \| \PAT(\g_t)-\PAT(\nabla F(\x_t))\|^2.
	\end{align}

	For the first term of~\eqref{eq:y_zhankai},
	\begin{align}
	\label{eq:dy_zhankai}
	\EB \| \y_t-\eta_t \PAT (\nabla F(\x_{t}) )-\xc\|^2 
	&=\EB \| \y_t-\xc\|^2  -2\eta_t \langle\PAT (\nabla F(\x_{t}) ), \y_t-\xc  \rangle  + \eta_t^2 \EB \| \PAT (\nabla F(\x_{t}) )\|^2 \nonumber \\
	&=\EB \| \y_t-\xc\|^2  -2\eta_t \langle \nabla F(\x_{t}), \y_t-\xc  \rangle  + \eta_t^2 \EB \| \PAT (\nabla F(\x_{t}) )\|^2.
	\end{align}
	For the second term of~\eqref{eq:dy_zhankai},
	\begin{align}
	\label{eq:second_dy}
	- \langle \nabla F(\x_{t}), \y_t-\xc  \rangle 
	&= -  \langle \nabla F(\x_{t}), \x_t-\xc  \rangle  -  \langle \nabla F(\x_{t}), \y_t-\x_t  \rangle \nonumber \\
	&\overset{(a)}{\le} - \left[  F(\x_t) - F(\xc) + \frac{\mu}{2} \| \x_t - \xc\|^2 \right] +  \langle \nabla F(\x_{t}), \x_t -\y_t \rangle \nonumber \\ 
	&\overset{(b)}{\le} - \left[  F(\x_t) - F(\xc) + \frac{\mu}{2} \| \x_t - \xc\|^2 \right] + 
	\left[ F(\x_t) - F(\y_t) + \frac{L}{2} \|\y_t - \x_t\|^2 \right] \nonumber \\
	&\overset{(c)}{=} - \left[  F(\y_t) - F(\xc) + \frac{\mu}{2} \| \x_t - \xc\|^2 \right] + \frac{L}{2} \|\z_t\|^2 \nonumber \\
	&\overset{(d)}{=} - \left[  F(\y_t) - F(\xc) + \frac{\mu}{2} \| \y_t - \xc\|^2 \right] + \frac{L-\mu}{2} \| \z_t\|^2
	\end{align}
	where (a) follows from the $\mu$-strongly convexity; (b) follows from the $L$-smoothness; (c) is due to arrangement and the decomposition $\x_t =\PAT(\x_t) + \PA(\x_t) =\y_t + \z_t$; and (d) holds since $\x_t - \xc = \PA(\x_t - \xc) + \PAT(\x_t - \xc) =\z_t +  (\y_t- \xc)$ and $\|\x_t - \xc\|^2 = \|\z_t \|^2+  \|\y_t- \xc\|^2$.
	For the third term of~\eqref{eq:dy_zhankai},
	\begin{align}
	\label{eq:third_dy}
	\EB \| \PAT (\nabla F(\x_{t}) )\|^2&
	=\EB \| \PAT (\nabla F(\x_{t}) ) - \PAT (\nabla F(\y_t)) + \PAT (\nabla F(\y_t))- \PAT (\nabla F(\xc) )\|^2 \nonumber \\
	& \le 2\EB \| \PAT (\nabla F(\x_{t}) -\nabla F(\y_t)) \|^2+2\EB\| \PAT (\nabla F(\y_t) - \nabla F(\xc) )\|^2\nonumber\\
	& \le 2\EB \|  \nabla F(\x_{t}) -\nabla F(\y_t) \|^2+2\EB\| \nabla F(\y_t) - \nabla F(\xc)  \|^2 \nonumber \\
	&\le 2 L^2 \EB \| \z_t\|^2  + 4L \left[ F(\y_t) - F(\xc)  \right].
	\end{align}
	
	By substituting all the inequalities, we have
	\begin{align*}
	\EB &\|\y_{t+1}-\xc\|^2\\
	&\overset{\eqref{eq:y_zhankai}}{=} \EB \| \y_t-\eta_t \PAT (\nabla F(\x_{t}) )-\xc\|^2 + \eta_t^2 \EB \| \PAT(\nabla F(\x_{t}; \xi_t))-\PAT(\nabla F(\x_t))\|^2 \\
	&\overset{\eqref{eq:dy_zhankai}}{=} \EB \| \y_t-\xc\|^2  -2\eta_t \langle \nabla F(\x_{t}), \y_t-\xc  \rangle  + \eta_t^2 \EB \| \PAT (\nabla F(\x_{t}) )\|^2 \\
	& \qquad + \eta_t^2 \EB \| \PAT(\nabla F(\x_{t}; \xi_t))-\PAT(\nabla F(\x_t))\|^2 \\
	&\overset{\eqref{eq:second_dy},\eqref{eq:third_dy}}{\le}
	\EB \| \y_t-\xc\|^2  -2\eta_t\EB \left[  F(\y_t) - F(\xc) + \frac{\mu}{2} \| \y_t - \xc\|^2 \right] + \eta_t(L-\mu) \EB \| \z_t\|^2 \\
	& \qquad \qquad + \eta_t^2 \left[ 2 L^2 \EB \| \z_t \|^2  + 4L \EB\left[ F(\y_t) - F(\xc)  \right]  \right]  \\
	& \qquad \qquad + \eta_t^2  \EB \| \PAT(\nabla F(\x_{t}; \xi_t))-\PAT(\nabla F(\x_t))\|^2\\
	&\le (1-\mu \eta_t)  \EB \| \y_t-\xc\|^2  +  (4L\eta_t^2-2\eta_t) \EB\left[ F(\y_t) - F(\xc)\right]   + (L\eta_t + 2L^2\eta_t^2)\EB \| \z_t \|^2\\
	& \qquad \qquad +\eta_t^2  \EB \| \PAT(\g_t)-\PAT(\nabla F(\x_t))\|^2
	\end{align*}
\end{proof}

We are now ready to prove the decent lemma  for Algorithm~\ref{alg:multi}.
\begin{proof}[Proof of Lemma~\ref{lem:y_descent}]
	Lemma~\ref{lem:y_descent_general} with $\g_t = \nabla F(\x_t; \xi_{t})$ gives that 
	\begin{align*}
		\EB \|\y_{t+1}-\xc\|^2
		&\le (1-\mu \eta_t)  \EB \| \y_t-\xc\|^2  +  (4L\eta_t^2-2\eta_t) \EB\left[ F(\y_t) - F(\xc)\right]   + (L\eta_t + 2L^2\eta_t^2)\EB \| \z_t \|^2\\
	& \qquad \qquad +\eta_t^2  \EB \| \PAT(\nabla F(\x_{t}; \xi_t))-\PAT(\nabla F(\x_t))\|^2.
	\end{align*}
	Lemma~\ref{lem:var_localsgd} gives
	\[
	\EB \| \PAT(\nabla F(\x_{t}; \xi_t)- \nabla F(\x_t))\|^2  \le 3L^2\EB \| \z_t \|^2 + 6 L\EB\left[ F(\y_t) - F(\xc)\right] + 3\sigmaat.
	\]
	Combing the last two inequalities, we get
	\begin{align*}
	\EB &\|\y_{t+1}-\xc\|^2\\
	&\le (1-\mu \eta_t)  \EB \| \y_t-\xc\|^2 + (10L\eta_t^2-2\eta_t) \EB\left[ F(\y_t) - F(\xc)\right]  + 3  \eta_t^2 \sigmaat + (L\eta_t + 5L^2\eta_t^2)\EB \| \z_t \|^2\\
	&\le (1-\mu \eta_t)  \EB \| \y_t-\xc\|^2 -\eta_t \EB\left[ F(\y_t) - F(\xc)\right]  + 3  \eta_t^2 \sigmaat + 2L\eta_t\EB \| \z_t\|^2
	\end{align*}
		where for the final line we used that $\eta_t \le \frac{1}{10L}$.
\end{proof}

\subsection{Residual Lemma}
\begin{proof}
	Note that
	\begin{align}
	\label{eq:z_zhankai}
	\EB \|\z_{t+1}\|^2 
	&= \EB \big\| \PA(\x_t -  \eta_t \nabla F(\x_{t}; \xi_{t})) \big\|^2 \nonumber \\
	&= \EB \| \PA(\x_t) \|^2  -2\eta_t \EB \langle\PA(\x_t),  \PA( \nabla F(\x_{t};\xi_{t}))\rangle + \eta_t^2  \EB \| \PA(\nabla F(\x_{t}; \xi_{t})) \|^2 \nonumber \\
	&= \EB \| \z_t \|^2  -2\eta_t \EB \langle\z_t, \PA( \nabla F(\x_{t}))\rangle + \eta_t^2  \EB \| \PA(\nabla F(\x_{t}; \xi_{t})) \|^2.
	\end{align}
	where we use $\z_t = \PA(\x_t)$ in the last equality and $\EB \PA(\nabla F(\x_{t}; \xi_{t}))  = \PA(\nabla F(\xi_{t}))$.

	With Assumption~\ref{asmp:g_unconstrained}, by the $\mu$-strongly convexity of $F(\cdot)$, we have for the second term of~\eqref{eq:z_zhankai} 
	\begin{align}
	\label{eq:second_z}
	- \langle \z_t,  \PA( \nabla F(\x_{t}))\rangle
	&=- \langle \PA( \z_t),  \nabla F(\x_{t})\rangle
	=- \langle \z_t,  \nabla F(\x_{t})\rangle \nonumber \\
	&= - \langle \x_t - \y_t,  \nabla F(\x_{t})\rangle \nonumber \\
	&\le - \left[  F(\x_t)  - F(\y_t) + \frac{\mu}{2} \| \x_t - \y_t\|^2 \right]
	\end{align}
	For the third term of~\eqref{eq:z_zhankai}, if we have Assumption~\ref{asmp:g_unconstrained},
	\begin{align}
	\label{eq:third_z}
	\EB \|\PA( \nabla F(\x_{t}; \xi_{t})) \|^2
	&\le 2\EB \|\PA( \nabla F(\x_{t}; \xi_{t}) - \nabla F(\xc; \xi_{t})) \|^2
	+  2\EB \|\PA( \nabla F(\xc; \xi_{t})) \|^2 \nonumber \\
	&\le  2\EB \|\nabla F(\x_{t}; \xi_{t}) - \nabla F(\xc; \xi_{t}) \|^2
	+ 2\sigmaa \nonumber \\
	&\le 4L \EB \left[ F(\x_t;\xi_{t}) - F(\xc;\xi_{t})  - \langle  \nabla F(\xc;\xi_{t}) , \x_t - \xc \rangle \right] + 2\sigmaa \nonumber \\
	&\le 4L\EB\left[ F(\x_t) - F(\xc) \right]+ 2\sigmaa
	\end{align}
	where in the last inequality we use $\EB  \nabla F(\xc;\xi_{t})  = \0$ derived from Assumption~\ref{asmp:g_unconstrained}.
	Then, based on~\eqref{eq:z_zhankai},~\eqref{eq:second_z} and~\eqref{eq:third_z}, when $\eta_t \le \frac{1}{2L}$, we have
	\begin{align}
	\label{eq:z_descent_00}
	\EB \|\z_{t+1}\|^2 &\le
	(1-\mu \eta_t) \EB \|\z_{t}\|^2 + 2\eta_t^2 \sigmaa + 4L\eta_t^2\EB\left[ F(\x_t) - F(\xc) \right] + 2\eta_t\EB\left[  F(\y_t)  - F(\x_t) \right] \nonumber \\
	&\le
	(1-\mu \eta_t) \EB \|\z_{t}\|^2 + 2\eta_t^2 \sigmaa +  2\eta_t\EB\left[  F(\y_t)  - F(\xc) \right]. 
	\end{align}
	Here we also use $\EB\left[ F(\x_t) - F(\xc) \right] \ge \frac{1}{2L} \EB \|\nabla F(\x_{t}; \xi_{t}) - \nabla F(\xc; \xi_{t}) \|^2 \ge 0$.

	Without Assumption~\ref{asmp:g_unconstrained}, then for the second term of~\eqref{eq:z_zhankai}, we have
	\begin{align}
	\label{eq:second_z_no}
	- \langle \z_t,  \PA( \nabla F(\x_{t}))\rangle
	&= - \langle \x_t - \y_t,  \nabla F(\x_{t})\rangle \nonumber \\
	&= - \langle \x_t - \y_t,  \nabla F(\x_{t})-\nabla F(\y_{t})\rangle  - \langle \x_t - \y_t, \nabla F(\y_{t})\rangle \nonumber \\
	&\overset{(a)}{\le} - \mu \|\z_t\|^2 + | \langle \z_t, \nabla F(\y_{t})\rangle|  \nonumber \\
	&\overset{(b)}{\le} - \mu \|\z_t\|^2 +  \frac{\gamma}{2}\|\z_t\|^2 + \frac{1}{2\gamma} \|\nabla F(\y_t)\|^2  \nonumber\\
	&\overset{(c)}{=}\left(-\frac{\mu}{2} + \frac{1-\mu\eta}{2E\eta}\right) \|\z_t\|^2  + \frac{1}{2\left(\mu + \frac{1-\mu\eta}{E\eta}\right)} \|\nabla F(\y_t)\|^2  \nonumber\\
	&\le \left(-\frac{\mu}{2} + \frac{1-\mu\eta}{2E\eta}\right) \|\z_t\|^2  + \frac{E\eta}{2} \|\nabla F(\y_t)\|^2 
	\end{align}
	where (a) uses the $\mu$-strongly convexity of $F(\cdot)$ that implies $\langle \nabla F(\x_t) - \nabla F(\y_t), \x_t -\y_t \rangle \ge \mu \|\x_t -\y_t\|^2$ (Lemma~\ref{lem:F}); (b) uses Lemma~\ref{lem:ineq1}; (c) uses $\gamma = \mu + \frac{1-\mu\eta}{E\eta}$.
	
	Besides, we also have
	\begin{align}
	\label{eq:nableF}
	\|\nabla F(\y_t)\|^2
	&\le 2\|\nabla F(\y_t)-\nabla F(\xc)\|^2 + 2\|\nabla F(\xc)\|^2   &  \text{By \ Lemma}~\ref{lem:j} \nonumber \\
	&\le 4L\left[ F(\y_t) - F(\xc)\right]+ 2\|\nabla F(\xc)\|^2.   &\text{By \ Lemma}~\ref{lem:F}
	\end{align}
	For the third term of~\eqref{eq:z_zhankai}, we instead have
	\begin{align}
	\label{eq:third_z_1}
	\EB \|\PA( \nabla F(\x_{t}; \xi_{t})) \|^2&\le 3\EB \|\PA( \nabla F(\x_{t}; \xi_{t}) - \nabla F(\y_t; \xi_{t})) \|^2
	+ 3\EB \|\PA( \nabla F(\y_{t}; \xi_{t}) - \nabla F(\xc; \xi_{t})) \|^2 \nonumber \\
	& \qquad+  3\EB \|\PA( \nabla F(\xc; \xi_{t})) \|^2 \nonumber \\
	&\le 3\EB \| \nabla F(\x_{t}; \xi_{t}) - \nabla F(\y_t; \xi_{t}) \|^2
	+ 3\EB \|\nabla F(\y_{t}; \xi_{t}) - \nabla F(\xc; \xi_{t}) \|^2 + 3 \sigmaa \nonumber \\
	&\le 3L^2 \EB \|\x_t - \y_t\|^2 + 6L \EB \left[ F(\y_t) - F(\xc) \right]+ 3 \sigmaa.
	\end{align}
	Then, based on~\eqref{eq:z_zhankai},~\eqref{eq:second_z_no},~\eqref{eq:nableF} and~\eqref{eq:third_z_1}, when $\eta_t \le \frac{1}{L(3+2E)}$, we have:
	\begin{align}
	\label{eq:z_descent_11}
	\EB \|\z_{t+1}\|^2
	&\le\left(1 -\mu \eta_t + \frac{1-\mu \eta_t}{E}+3L^2\eta_t^2\right) \EB \|\z_{t}\|^2 + 3\eta_t^2 \sigmaa \nonumber \\
	& \qquad  \qquad + \left(6+ 4E\right)L\eta_t^2\EB\left[ F(\y_t) - F(\xc) \right] + 2E\eta_t^2 \|\nabla F(\xc)\|^2 \nonumber \\
		&\le \left(1 -\mu \eta_t + \frac{1-\mu \eta_t}{E}+L\eta_t\right) \EB \|\z_{t}\|^2 + 3\eta_t^2 \sigmaa \nonumber \\
		& \qquad  \qquad + 2\eta_t\EB\left[ F(\y_t) - F(\xc) \right] + 2E\eta_t^2 \|\nabla F(\xc)\|^2 
	\end{align}
\end{proof}

\subsection{Other Helper Lemmas}
\begin{lem}[Error Propagation]
	\label{lem:error_prop}
	Let $L_t = \left(\begin{matrix}
	r_t \\ u_t
	\end{matrix}\right)$ be a two dimensional error vector with non-negative entries.
	Assume it satisfies the following relation: for all $t \ge 0$, there exist a non-negative sequence $\{\delta_t\}_{t \ge 0}$, a upper triangular matrix $A = \left(
	\begin{matrix}
	a_1  & a_2 \\
	0  & a_3
	\end{matrix}
	\right) \in \RB^{2 \times 2}$, and two vectors $\bb=\left(
	\begin{matrix}
	b_1\\
	-b_2
	\end{matrix}
	\right), 
	\c=\left(
	\begin{matrix}
	c_1\\
	c_2
	\end{matrix}
	\right) \in \RB^{2}$ such that
	\begin{equation}
	\label{eq:error_relation}
	L_{t+1} \le A L_{t} -\eta \delta_t\bb + \eta^2 \c
	\end{equation}
	where the inequality holds element-by-element and $\{a_i\}_{i=1}^3, \{b_i\}_{i=1}^2, \{c_i\}_{i=1}^2$ are all non-negative real numbers.
	Then it follows that for any $0 \le t_0 \le t_1 \le t_0 + E$, once  $a_1^i b_1 \ge 2a_2b_2 \kappa_i$ for all $0 \le i \le E-1$,
	\begin{equation}
	\label{eq:error_r}
	r_{t_1} \le \left[ a_1^{s_0}r_{t_0} + \kappa_{s_0}a_2  u_{t_0}\right]
	-\frac{\eta}{2} \sum_{i=0}^{s_k-1} \delta_{t_{1}-i-1} a_1^ib_1
	+\eta^2 \sum_{i=0}^{s_0-1} \left[a_1^i c_1 +\kappa_i a_2c_2 \right].
	\end{equation}
	where $\kappa_i \le i \max\{a_1, a_3\}^{i-1}$ and
	\begin{equation}
	\label{eq:kappa}
	\kappa_i =\begin{cases}
	ia_1^{i-1}& \text{if} \ a_1=a_3 \\
	\frac{a_1^i - a_3^i}{a_1-a_3}& \text{if} \ a_1 \neq a_3
	\end{cases} 
	\end{equation}
\end{lem}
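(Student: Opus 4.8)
The plan is to unroll the recursion~\eqref{eq:error_relation} exactly $s_0 := t_1 - t_0 \in \{0,\dots,E\}$ times and then read off the first coordinate. The structural fact I would establish first is that $A$ has only non-negative entries ($a_1,a_2,a_3 \ge 0$), so left-multiplication by $A$ preserves element-wise inequalities between vectors. This is what makes substituting the bound for $L_t$ into the bound for $L_{t+1}$ legitimate, and a routine induction on the number of steps then yields
\begin{equation*}
L_{t_1} \le A^{s_0} L_{t_0} - \eta \sum_{j=0}^{s_0-1} A^j \delta_{t_1-1-j}\bb + \eta^2 \sum_{j=0}^{s_0-1} A^j \c,
\end{equation*}
where the inequality is again element-wise. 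The $\delta$-index $t_1 - 1 - j$ simply records that the drift term picked up $j$ steps before reaching $t_1$ is multiplied by $A^j$.

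Next I would compute the powers of the upper-triangular matrix $A$. A short induction gives
\begin{equation*}
A^j = \begin{pmatrix} a_1^j & \kappa_j a_2 \\ 0 & a_3^j \end{pmatrix}, \qquad \kappa_j = \sum_{\ell=0}^{j-1} a_1^\ell a_3^{\,j-1-\ell},
\end{equation*}
so that $\kappa_j$ is precisely the off-diagonal accumulation appearing in the statement: the sum collapses to the closed form~\eqref{eq:kappa} (a geometric series when $a_1\neq a_3$, and $j a_1^{j-1}$ in the degenerate case $a_1=a_3$), and bounding each of its $j$ summands by $\max\{a_1,a_3\}^{j-1}$ gives $\kappa_j \le j\max\{a_1,a_3\}^{j-1}$. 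Reading off the first coordinate of the unrolled inequality, with $\bb=(b_1,-b_2)^\top$ and $\c=(c_1,c_2)^\top$, produces
\begin{equation*}
r_{t_1} \le a_1^{s_0} r_{t_0} + \kappa_{s_0} a_2 u_{t_0} - \eta \sum_{j=0}^{s_0-1}\delta_{t_1-1-j}\left( a_1^j b_1 - \kappa_j a_2 b_2 \right) + \eta^2 \sum_{j=0}^{s_0-1}\left( a_1^j c_1 + \kappa_j a_2 c_2 \right),
\end{equation*}
which already matches the two bracketed terms of~\eqref{eq:error_r}.

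It remains only to control the drift coefficient $a_1^j b_1 - \kappa_j a_2 b_2$. Since $s_0 \le E$, every index $j$ in the sum lies in $\{0,\dots,E-1\}$, so the hypothesis $a_1^j b_1 \ge 2 a_2 b_2 \kappa_j$ applies and gives $\kappa_j a_2 b_2 \le \tfrac12 a_1^j b_1$, hence $a_1^j b_1 - \kappa_j a_2 b_2 \ge \tfrac12 a_1^j b_1 \ge 0$. As each $\delta_{t_1-1-j}\ge 0$, replacing this coefficient by its lower bound $\tfrac12 a_1^j b_1$ only enlarges the (negative) drift contribution, i.e.\ weakens the upper bound, and this delivers exactly~\eqref{eq:error_r}. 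The one genuinely delicate point is the very first step — justifying that the linear recursion may be iterated while the element-wise inequality is retained — which rests entirely on the non-negativity of $A$; everything afterwards is the explicit evaluation of $A^j$ and the sign bookkeeping for the $\bb$ term. (I read the index $s_k$ in~\eqref{eq:error_r} as the same $s_0=t_1-t_0$.)
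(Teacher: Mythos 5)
Your proof is correct and follows essentially the same route as the paper's: unroll the recursion $s_0$ times, compute the closed form of $A^j$ for the upper-triangular matrix, read off the first coordinate, and invoke the hypothesis $a_1^j b_1 \ge 2a_2b_2\kappa_j$ to retain half of the negative drift term. Your explicit remark that iterating the element-wise inequality relies on the non-negativity of the entries of $A$ is a point the paper leaves implicit, and your reading of $s_k$ as $s_0$ correctly identifies a typo in the statement.
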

\begin{proof}
	Starting from $t_0$ and recurring~\eqref{eq:error_relation} for $s_0=t_1-t_0$ times, we have
	\begin{equation}
	\label{eq:error_L0}
	L_{t_{1}} \le A^{s_0} L_{t_0} - \eta \sum_{i=0}^{s_0-1}\delta_{t_{1}-i-1} A^i \bb + \eta^2 \sum_{i=0}^{s_0-1} A^i \c
	\end{equation}
	With $\e=(1, 0)^\top$ on the left of both sides of~\eqref{eq:error_L0}, we focus on the first entry of $L_t$:
	\begin{equation}
	\label{eq:error_L1}
	r_{t_1} \le  \e^\top A^{s_0}L_{t_0}-  \eta \sum_{i=0}^{s_0-1} \delta_{t_{1}-i-1} \e^\top A^i \bb + \eta^2 \sum_{i=0}^{s_0-1} \e^\top A^i \c.
	\end{equation}
	To give a clear form of~\eqref{eq:error_L1}, we should give the close form of $A^i$.
	By reduction, we have that 
	\[
	\text{if} \ a_1=a_3, A^i =\left(\begin{matrix}
	a_1^i & ia_1^{i-1} a_2\\ 0 & a_1^i
	\end{matrix}\right); \ \text{otherwise} \ A^i =\left(\begin{matrix}
	a_1^i &  \frac{a_1^i - a_3^i}{a_1-a_3}a_2\\ 0 & a_3^i
	\end{matrix}\right).
	\]
	By defining $\kappa_i$ in~\eqref{eq:kappa}, $A^i$ then has a unified close form: $A^i =\left(\begin{matrix}
	a_1^i & a_2\kappa_i\\ 0 & a_3^i
	\end{matrix}\right).$
	Without loss of generality, we assume $a_3 > a_1$. 
	By the convexity of $x^i \ (i \ge 0)$, $a_3^i - a_1^i \le ia_3^{i-1}\cdot(a_3-a_1)$ giving the upper bound on $\kappa_i$.
	Then, once $a_1^i b_1 \ge 2a_2b_2 \kappa_i$ for all $0 \le i \le E-1$, we have
	\begin{equation}
	\label{eq:1}
	\sum_{i=0}^{s_0-1} \delta_{t_{1}-i-1} \e^\top A^i \bb 
	= \sum_{i=0}^{s_0-1} \delta_{t_{1}-i-1} \left[a_1^i b_1 - a_2b_2 \kappa_i \right] \\
	\ge  \frac{1}{2} \sum_{i=0}^{s_0-1} \delta_{t_{1}-i-1} a_1^ib_1
	\end{equation}
	\begin{equation}
	\sum_{i=0}^{s_0-1} \e^\top A^i \c
	=  \sum_{i=0}^{s_0-1} \left[a_1^i c_1 +\kappa_i a_2 c_2 \right]
	\label{eq:2}
	\end{equation}
	Combining~\eqref{eq:error_L1},~\eqref{eq:1} and~\eqref{eq:2} gives~\eqref{eq:error_r}.
\end{proof}

\begin{lem}[Learning rate choice I]
	\label{lem:lr1}
	For any $c_1, c_2 \ge 0$ and $r_0, d, T > 0$, we can always find $0<\eta \le \frac{1}{d}$ such that
	\[
	\Phi(\eta) = \frac{r_0}{T\eta} + \eta c_1 + \eta^2 c_2 =
	\OM\left(\frac{dr_0}{T} + \sqrt{\frac{r_0c_1}{T}} +  \sqrt[3]{\frac{r_0^2c_2}{T^2}}\right).
	\]
\end{lem}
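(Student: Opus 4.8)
The plan is to balance the three terms of $\Phi$ against one another by an explicit choice of $\eta$ and then bound each term separately. Concretely, I would take
\[
\eta = \min\left\{ \frac{1}{d},\ \sqrt{\frac{r_0}{T c_1}},\ \sqrt[3]{\frac{r_0}{T c_2}} \right\},
\]
adopting the convention that a candidate is dropped (regarded as $+\infty$) whenever its denominator vanishes, so that $\eta$ is always a well-defined positive number with $\eta \le 1/d$, as the statement requires. The whole lemma then reduces to evaluating $\Phi$ at this single point.

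First I would dispatch the two ``easy'' terms, which follow directly from the defining minimum with no case analysis. Since $\eta \le \sqrt{r_0/(Tc_1)}$, the middle term satisfies $\eta c_1 \le \sqrt{r_0 c_1 / T}$; and since $\eta \le (r_0/(Tc_2))^{1/3}$, the last term satisfies $\eta^2 c_2 \le (r_0/(Tc_2))^{2/3} c_2 = (r_0^2 c_2 / T^2)^{1/3}$. Each of these is already one of the three target quantities on the right-hand side.

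The main step is controlling the optimization term $r_0/(T\eta)$. Because $\eta$ is a \emph{minimum}, its reciprocal is the \emph{maximum} of the three reciprocals $d$, $\sqrt{T c_1 / r_0}$, and $(T c_2 / r_0)^{1/3}$. Using $\max\{x,y,z\} \le x + y + z$ for nonnegative numbers and then distributing the prefactor $r_0/T$, each product collapses to exactly one target term: $\tfrac{r_0}{T}\cdot d = \tfrac{d r_0}{T}$, then $\tfrac{r_0}{T}\sqrt{T c_1/r_0} = \sqrt{r_0 c_1/T}$, and $\tfrac{r_0}{T}(T c_2/r_0)^{1/3} = (r_0^2 c_2 / T^2)^{1/3}$. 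Summing these contributions with the two bounds above gives $\Phi(\eta) \le \tfrac{d r_0}{T} + 2\sqrt{r_0 c_1/T} + 2(r_0^2 c_2/T^2)^{1/3}$, which is exactly $\OM$ of the claimed expression.

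The only delicate point is the bookkeeping in the degenerate cases $c_1 = 0$ or $c_2 = 0$, where the corresponding candidate step sizes are infinite and must be excluded from the minimum; there the associated term of $\Phi$ vanishes identically and its matching term on the right-hand side is also zero, so the bound degrades gracefully. I expect no genuine obstacle beyond this case-checking — the entire content of the lemma is the single inequality $\max \le \mathrm{sum}$ applied to the optimization term, and this is a standard step-size tuning argument.
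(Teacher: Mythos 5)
Your proposal is correct and follows essentially the same route as the paper's proof: both choose $\eta = \min\{1/d,\ \sqrt{r_0/(Tc_1)},\ \sqrt[3]{r_0/(Tc_2)}\}$, bound the two stochastic terms directly from the minimum, and control the optimization term via $\max\{a,b,c\}\le a+b+c$. Your explicit handling of the degenerate cases $c_1=0$ or $c_2=0$ is a small bit of bookkeeping the paper leaves implicit, but it changes nothing substantive.
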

\begin{proof}
	By setting $\eta_0 = \min\left\{  \frac{1}{d},\sqrt{\frac{r_0}{c_1 T}}, \sqrt[3]{\frac{r_0}{c_2T}} \right\}$, we have that
	\[
	\Phi(\eta_0) \le \frac{r_0}{T}\max\left\{ d, \sqrt{\frac{c_1 T}{r_0}}, \sqrt[3]{\frac{c_2T}{r_0}} \right\} + \sqrt{\frac{r_0c_1}{T}} +  \sqrt[3]{\frac{r_0^2c_2}{T^2}}
	\le 	\frac{dr_0}{T} + 2\sqrt{\frac{r_0c_1}{T}} +  2\sqrt[3]{\frac{r_0^2c_2}{T^2}}.
	\]
	where the last inequality uses $\max\{a, b, c\}\le a+b+c$ for any non-negative numbers $a, b, c$.
\end{proof}

\begin{lem}[Learning rate choice II]
	\label{lem:lr2}
	For any $r_0, c_1, c_2 \ge 0$ and $d, T, c_0 > 0$, we can always find $0 <\eta \le \frac{1}{d}$ such that
	\[
	\Phi(\eta) = \exp(-\eta c_0 T)\frac{r_0}{\eta} + \eta c_1 + \eta^2 c_2 =
	\widetilde{\OM}\left( dr_0 \exp\left(- \frac{c_0}{d} T\right) + \frac{c_1}{c_0 T} + \frac{c_2}{c_0^2 T^2}  \right).
	\]
\end{lem}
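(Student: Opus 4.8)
The plan is to exhibit an explicit step size, in the same spirit as the proof of Lemma~\ref{lem:lr1}, and then verify the claimed bound by a short two-case analysis. The first observation is that the three summands of $\Phi$ pull in opposite directions: the term $g(\eta):=\frac{r_0}{\eta}e^{-\eta c_0 T}$ is strictly decreasing in $\eta>0$ (its derivative is $-r_0 e^{-\eta c_0 T}\big(\frac{1}{\eta^2}+\frac{c_0 T}{\eta}\big)<0$), whereas $c_1\eta+c_2\eta^2$ is increasing. Hence a near-optimal $\eta$ should drive the exponential term down while keeping $\eta$ as small as the polynomial penalties allow. Concretely, I would set
\[
Z:=\max\Big\{e,\ \tfrac{r_0 c_0^2 T^2}{c_1+c_2/(c_0 T)}\Big\},\qquad \widetilde\eta:=\tfrac{\ln Z}{c_0 T},\qquad \eta:=\min\Big\{\tfrac1d,\ \widetilde\eta\Big\},
\]
so that $0<\eta\le\frac1d$ as required, and then split according to which term attains the minimum.

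In Case~A one has $\widetilde\eta\ge\frac1d$ (equivalently $c_0T\le d\ln Z$) and $\eta=\frac1d$; the exponential term is exactly $d r_0 e^{-c_0 T/d}$, matching the first target term, while $c_1\eta+c_2\eta^2=\frac{c_1}{d}+\frac{c_2}{d^2}$ is controlled using $\frac1d\le\frac{\ln Z}{c_0 T}$, which bounds it by $\frac{c_1\ln Z}{c_0 T}+\frac{c_2\ln^2 Z}{c_0^2T^2}$. In Case~B one has $\eta=\widetilde\eta<\frac1d$; here $c_1\eta+c_2\eta^2=\frac{c_1\ln Z}{c_0 T}+\frac{c_2\ln^2 Z}{c_0^2 T^2}$ directly, and the decreasing term becomes $g(\widetilde\eta)=\frac{r_0 c_0 T}{Z\ln Z}$, which by the choice of $Z$ (together with $\ln Z\ge1$) is at most $\frac{c_1}{c_0 T}+\frac{c_2}{c_0^2 T^2}$. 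In both cases every summand is dominated by one of $d r_0 e^{-c_0 T/d}$, $\frac{c_1}{c_0 T}$, $\frac{c_2}{c_0^2 T^2}$ up to a factor at most $\ln^2 Z$, and since $\ln Z$ is logarithmic in the problem parameters it is absorbed into $\widetilde{\OM}(\cdot)$, giving the claim.

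The step I expect to be the main obstacle is making the choice of $Z$ robust and genuinely logarithmic rather than a hidden polynomial blow-up. The formula for $Z$ divides by $c_1+c_2/(c_0T)$, which is positive precisely when some $c_i>0$; if $c_1=c_2=0$ this fails, so that degenerate case must be treated separately. There $\Phi(\eta)=g(\eta)$ is decreasing, its minimum over $(0,\frac1d]$ is attained at $\eta=\frac1d$, and $\Phi(\frac1d)=d r_0 e^{-c_0 T/d}$ already matches the bound with the vanishing polynomial terms, so Case~A alone suffices and Case~B is never invoked. When at least one $c_i$ is positive, the remaining care is to check that $\ln Z=O\!\big(\log(r_0 c_0 T/(c_1+c_2/(c_0T)))\big)$ is only a polylogarithmic factor — exactly the dependence the paper permits $\widetilde{\OM}$ to hide — so that the $\ln Z$ and $\ln^2 Z$ prefactors do not corrupt the rate. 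Once this is confirmed, the remaining estimates are the routine bookkeeping already rehearsed in Lemma~\ref{lem:lr1}.
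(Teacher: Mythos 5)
Your proposal is correct and follows essentially the same route as the paper: an explicit near-optimal step size of the form $\eta=\min\{\tfrac{1}{d},\,\Theta(\tfrac{\ln(\cdot)}{c_0T})\}$ followed by a case split on which term of the minimum is active, with the logarithmic prefactors absorbed into $\widetilde{\OM}(\cdot)$. If anything, your single formula for $Z$ (with the $\max\{e,\cdot\}$ cap, the $c_2/(c_0T)$ term in the denominator, and the separate treatment of $c_1=c_2=0$) is slightly more careful than the paper's three-case argument, which divides by $c_1$ without excluding $c_1=0$ and does not verify that its second-case step size stays below $\tfrac{1}{d}$.
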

\begin{proof}
	Let us consider three cases:
	\begin{enumerate}
		\item If $\frac{1}{d} \le \frac{1}{c_0T}$, then we can pick $\eta_0=\frac{1}{d}$ and get $\Phi(\eta_0)$ bounded by
		\[
		\Phi(\eta_0) =dr_0 \exp\left(- \frac{c_0}{d} T\right) + \frac{c_1}{d} + \frac{c_2}{d^2}
		={\OM}\left( dr_0 \exp\left(- \frac{c_0}{d} T\right) + \frac{c_1}{c_0 T} + \frac{c_2}{c_0^2 T^2}  \right)
		\]
		\item If $\frac{1}{d} > \frac{1}{c_0T}$ and $c_0^2r_0T^2 \ge c_1$, then we can pick $\eta_0=\frac{1}{c_0T}\ln \frac{c_0^2r_0T^2}{c_1}$ and get $\Phi(\eta_0)$ bounded by
		\[
		\Phi(\eta_0)=\frac{c_1}{c_0 T \ln \frac{c_0^2r_0T^2}{c_1}} + \frac{c_1}{c_0 T } \ln \frac{c_0^2r_0T^2}{c_1} + \frac{c_2}{c_0^2 T^2}\ln^2 \frac{c_0^2r_0T^2}{c_1}
		= \widetilde{\OM}\left( \frac{c_1}{c_0 T} + \frac{c_2}{c_0^2 T^2} \right).
		\]
		\item If $\frac{1}{d} > \frac{1}{c_0T}$ and $c_0^2r_0T^2 < c_1$, then we can pick $\eta_0=\frac{1}{c_0T}$ and get $\Phi(\eta_0)$ bounded by
		\[\Phi(\eta_0) = \exp(-1)r_0c_0T +  \frac{2c_1}{c_0 T} + \frac{c_2}{c_0^2 T^2} 
		<\frac{2c_1}{c_0 T} + \frac{c_2}{c_0^2 T^2} 
		= \OM\left(\frac{c_1}{c_0 T} + \frac{c_2}{c_0^2 T^2} \right).
		\]
	\end{enumerate}
\end{proof}

\subsection{Proof of Theorem~\ref{thm:simple}}
\begin{proof}
We consider a fixed step size, i.e., $\eta_t = \eta  \le \frac{1}{10L}$ for all $t \ge 0$.
From Lemma~\ref{lem:y_descent} and~\ref{lem:z_descent}, when we concatenate~\eqref{eq:y_descent} and~\ref{eq:z_descent_0}, it follows that
\begin{equation*}
\label{eq:error_con}
\left(\begin{matrix}
\EB\|\y_{t+1}-\xc\|^2\\
\EB\|\z_{t+1}\|^2
\end{matrix}\right)
\le \left(
\begin{matrix}
1- \mu \eta & 2L \eta\\
0  & 1- \mu \eta
\end{matrix}
\right)
\left(\begin{matrix}
\EB\|\y_{t}-\xc\|^2\\
\EB\|\z_{t}\|^2
\end{matrix}\right)
-\eta\EB \left[F(\y_{t})-F(\xc)\right]
\left(
\begin{matrix}
1\\-2
\end{matrix}
\right)
+3\eta^2
\left(
\begin{matrix}
\sigmaat \\
\sigmaa
\end{matrix}
\right),
\end{equation*}
which implies it satisfies Lemma~\ref{lem:error_prop} with the following parameters:
$\delta_t =\EB \left[ F(\y_{t})-F(\xc) \right]\ge 0$ and 
\[
L_t = \left(\begin{matrix}
\EB\|\y_{t}-\xc\|^2\\
 \EB\|\z_{t}\|^2
\end{matrix}\right),
A = \left(
\begin{matrix}
1- \mu \eta & 2L \eta\\
0  & 1- \mu \eta
\end{matrix}
\right),
\bb = 
\left(
\begin{matrix}
1\\-2
\end{matrix}
\right),
\c = 
3\left(
\begin{matrix}
\sigmaat \\
\sigmaa
\end{matrix}
\right),
\kappa_i = 2L\eta(1-\mu \eta)^{i-1}i.
\]
such that the following inequality that holds element-by-element: $L_{t+1} \le A L_{t} -\eta \delta_t\bb + \eta^2  \c$.

Let $0=t_0 < t_1 < t_2 < \cdots < t_K = T$ be the elements of $\IM_T$ and denote $s_k = t_{k+1} - t_{k}$ (so that $\sum_{i=0}^{K-1}s_i = T$ and $s_k \le E$).
Since when $t \in \IM_T$, we perform a projection to force $\EB \|\z_{t}\|^2 = 0$, implying the second entry of $L_{t_k} (k \ge 0)$ is zero.
Combing all, Lemma~\ref{lem:error_prop} gives, for all $0\le k\le K-1$, 
\begin{align}
\label{eq:y1}
\EB\|\y_{t_{k+1}} -\xc\|^2 
&\le (1-\mu \eta)^{s_k}\EB\|\y_{t_{k}} -\xc\|^2 
-\frac{\eta}{2} \sum_{i=0}^{s_k-1} (1-\mu \eta)^i  \delta_{t_{k+1}-i-1} \nonumber \\
&+ 3 \eta^2 \sum_{i=0}^{s_k-1} \left[(1-\mu \eta)^i \sigmaat +2L\eta (1-\mu \eta)^{i-1} i \sigmaa \right]. 
\end{align}
Here we require $a_1^i b_1 \ge 2a_2b_2 \kappa_i$ for all $0 \le i \le E-1$, i.e., $a_1b_2 \ge 2(E-1)a_2b_2$ here since $a_1=a_3=1-\mu\eta$.
In this specific case, it is equivalent to $\eta \le \frac{1}{\mu + 8L(E-1)}$.

Recurring~\eqref{eq:y1} from $k=0$ to $K-1$, we obtain
\begin{align}
\label{eq:y2}
\EB\|\y_{T} -\xc\|^2 
&\le (1-\mu \eta)^{T}\EB\|\y_{0} -\xc\|^2 
-\frac{\eta}{2} \sum_{k=0}^{K-1}   (1-\mu \eta)^{T-t_{k+1}} \sum_{i=0}^{s_k-1} (1-\mu \eta)^i \delta_{t_{k+1}-i-1} \nonumber \\
&\qquad + 3 \eta^2 \sum_{k=0}^{K-1}   (1-\mu \eta)^{T-t_{k+1}}\sum_{i=0}^{s_k-1} \left[(1-\mu \eta)^i \sigmaat +2L\eta (1-\mu \eta)^{i-1} i \sigmaa \right] \nonumber \\
&= (1-\mu \eta)^{T}\EB\|\y_{0} -\xc\|^2 
-\frac{\eta}{2}\sum_{k=0}^{K-1}   (1-\mu \eta)^{T-t_{k+1}} \sum_{j=t_k}^{t_{k+1}-1} (1-\mu \eta)^{t_{k+1}-j-1} \delta_{j} \nonumber \\
&\qquad+ 3 \eta^2 \sum_{k=0}^{K-1}   (1-\mu \eta)^{T-t_{k+1}}\sum_{i=0}^{s_k-1} \left[(1-\mu \eta)^i \sigmaat +2L\eta (1-\mu \eta)^{i-1} i \sigmaa \right]\nonumber\\
&= (1-\mu \eta)^{T}\EB\|\y_{0} -\xc\|^2 -\frac{\eta}{2}\sum_{j=0}^{T-1}(1-\mu \eta)^{T-j-1} \delta_{j}
 \nonumber + 3 \eta^2\sum_{j=0}^{T-1}(1-\mu \eta)^{T-j-1} \sigmaat \\
 &\qquad + 6 L\eta^3\sum_{k=0}^{K-1}  \sum_{j=t_k}^{t_{k+1}-2}  (1-\mu \eta)^{T-j-2}(t_{k+1}-j-1) \sigmaa \nonumber \\
 &\le (1-\mu \eta)^{T}\EB\|\y_{0} -\xc\|^2  -\frac{\eta}{2}\sum_{j=0}^{T-1}(1-\mu \eta)^{T-j-1} \delta_{j} +  3 \eta^2\sum_{j=0}^{T-1}(1-\mu \eta)^{T-j-1} \sigmaat \nonumber \\
 & \qquad + 6 L\eta^3(E-1)\sum_{k=0}^{K-1}  \sum_{j=t_k}^{t_{k+1}-2}  (1-\mu \eta)^{T-j-2} \sigmaa 
\end{align}
where the first equality uses change of variable $j=t_{k+1}-i-1$ and the second inequality uses $t_{k+1}-j-1 \le E-1 $ for any $t_k \le j \le t_{k+1}-2$.

Let $\Delta^2 = \EB\|\y_{0} -\xc\|^2 $.
Based on~\eqref{eq:y2}, we can derive convergence rate for constant learning rate $\eta \le \min\{\frac{1}{10L}, \frac{1}{\mu + 8L(E-1)}\}$.
Dividing $W_T = \sum_{j=0}^{T-1}(1-\mu \eta)^{T-j-1}$ on both sides of~\eqref{eq:y2} and rearranging, we obtain
\begin{equation}
\label{eq:y3}
F(\hat{\y})-F(\xc)\le
\frac{1}{W_T}\sum_{j=0}^{T-1}(1-\mu \eta)^{T-j-1} \delta_{j}  \le \frac{2}{\eta W_T}(1-\mu\eta)^T \Delta^2 + 6\eta\sigmaat + 12(E-1) L \eta^2\sigmaa
\end{equation}
where $\hat{\y}=\frac{1}{W_T}\sum_{j=0}^{T-1} (1-\mu \eta)^{T-j-1} \y_j$. 
Then we consider two cases according to $\mu =0$ or not.
\begin{enumerate}
	\item When $\mu = 0, W_T = T, \hat{\y}=\frac{1}{T}\sum_{j=0}^{T-1} \y_j$, so~\eqref{eq:y3} is reduced to
	\begin{equation*}
	\label{eq:y4}
	 \EB \left[F(\hat{\y})-F(\xc)\right]
	\le \frac{2\Delta^2}{\eta T}  + 6\eta \sigmaat + 12(E-1) L \eta^2\sigmaa.
	\end{equation*}
	Using Lemma~\ref{lem:lr1} and setting therein parameters properly (i.e., $d=\frac{1}{10LE}, r_0=\Delta^2, c_1=\sigmaat, c_2=(E-1)L\sigmaa$), we can always find sufficiently small constant learning rate $\eta$ such that
	\[
	\EB \left[F(\hat{\y})-F(\xc)\right]
	\le \OM\left(
	\frac{LE\Delta^2}{T} +\frac{\Delta \sigma_{\A^{\perp},*}}{\sqrt{T}} + \sqrt[3]{\frac{(E-1)L\sigmaa\Delta^4}{T^2}} 
	\right).
	\]
	 \item When $\mu > 0$, $W_T = \frac{1-(1-\mu\eta)^T}{\mu\eta} \ge 1$, so~\eqref{eq:y3} becomes
	 \begin{equation*}
	 \label{eq:y5}
	 	\EB \left[ F(\hat{\y})-F(\xc)\right]
	 \le \exp\left(-\mu\eta T\right) \frac{2\Delta^2}{\eta} + 6\eta \sigmaat + 12(E-1) L \eta^2\sigmaa
	 \end{equation*}
Using Lemma~\ref{lem:lr2} and setting therein parameters properly (i.e., $d=\frac{1}{10LE}, r_0=\Delta^2, c_0 = \mu, c_1=\sigmaat, c_2=(E-1)L\sigmaa$), we can always find an appropriate constant learning rate $\eta$ such that 
 \[
 \EB \left[F(\hat{\y})-F(\xc) \right]= \widetilde{\OM}\left(  	LE\Delta^2 \cdot\exp\left(-\Theta\left(\frac{\mu T}{LE}\right) \right) + \frac{\sigmaat}{\mu T}  + \frac{(E-1)L\sigmaa}{\mu^2 T^2}  \right).
 \]
\end{enumerate}
\end{proof}

\subsection{Proof of Theorem~\ref{thm:complicate}}
\begin{proof}
	In this part, we derive convergence results for Algorithm~\ref{alg:multi} when Assumption~\ref{asmp:g_unconstrained} is absent.
	We use a similar argument inherent in the proof of Theorem~\ref{thm:simple}.
	Here we also consider a fixed step size, i.e., $\eta_t = \eta  \le \frac{1}{10L}$ for all $t \ge 0$.
	From Lemma~\ref{lem:y_descent} and~\ref{lem:z_descent}, when we concatenate~\eqref{eq:y_descent} and~\ref{eq:z_descent_1}, it follows that
	\begin{equation*}
	\label{eq:error_con1}
	\left(\begin{matrix}
	\EB\|\y_{t+1}-\xc\|^2\\
	\EB\|\z_{t+1}\|^2
	\end{matrix}\right)
	\le \left(
	\begin{matrix}
	1- \mu \eta  & 2L \eta\\
	0& \theta
	\end{matrix}
	\right)
	\left(\begin{matrix}
	\EB\|\y_{t}-\xc\|^2\\
	\EB\|\z_{t}\|^2
	\end{matrix}\right)
	-\eta\EB
	\left[ F(\y_t)-F(\xc) \right]\left(
	\begin{matrix}
	 1\\-2
	\end{matrix}
	\right)
	+3\eta^2
	\left(
	\begin{matrix}
	\sigmaat \\
	\widetilde{\sigma}_{A,*}^2
	\end{matrix}
	\right).
	\end{equation*}
	where $\theta = (1 + \frac{1}{E})(1-\mu\eta)+L\eta$ and $\widetilde{\sigma}_{\A,*}^2=\sigmaa + \frac{2E}{3}\|\nabla F(\xc)\|^2$ for simplicity.
	
	The last inequality implies the error propagation satisfies Lemma~\ref{lem:error_prop} with the following parameters: $\delta_t = F(\y_{t})-F(\xc) \ge 0$ and 
	\[
	L_t = \left(\begin{matrix}
	\EB\|\y_{t}-\xc\|^2\\
	\EB\|\z_{t}\|^2
	\end{matrix}\right),
	A = \left(
	\begin{matrix}
	1- \mu \eta & 2L \eta\\
	0 & \theta
	\end{matrix}
	\right),
	\bb = 
	\left(
	\begin{matrix}
	1\\-2
	\end{matrix}
	\right),
	\c = 
	3\left(
	\begin{matrix}
	\sigmaat \\
	\widetilde{\sigma}_{\A,*}^2
	\end{matrix}
	\right),
	\kappa_i = \frac{\theta^i - (1-\mu\eta)^i}{\theta-(1-\mu\eta)}
	\]
	such that the following inequality that holds element-by-element: $	L_{t+1} \le A L_{t} -\eta \delta_t\bb + \eta^2  \c$.
	
	Let $0=t_0 < t_1 < t_2 < \cdots < t_K = T$ be the elements of $\IM_T$ and denote $s_k = t_{k+1} - t_{k}$ (so that $\sum_{i=0}^{K-1}s_i = T$ and $s_k \le E$).
	Since when $t \in \IM_T$, we perform a projection to force $\EB \|\z_{t}\|^2 = 0$, implying the second entry of $L_{t_k} (k \ge 0)$ is zero.
	Combing all, Lemma~\ref{lem:error_prop} gives, for all $0\le k\le K-1$, 
	\begin{align}
	\label{eq:y7}
	\EB\|\y_{t_{k+1}} -\xc\|^2 
	&\le (1-\mu \eta)^{s_k}\EB\|\y_{t_{k}} -\xc\|^2 
	-\frac{\eta}{2} \sum_{i=0}^{s_k-1} (1-\mu \eta)^i  \delta_{t_{k+1}-i-1} \nonumber \\
	&+ 3 \eta^2 \sum_{i=0}^{s_k-1} \left[(1-\mu \eta)^i \sigmaat +2L\eta \kappa_i \widetilde{\sigma}_{A,*}^2 \right]. 
	\end{align}
	Here we require $a_1^i b_1 \ge 2a_2b_2 \kappa_i$ for all $0 \le i \le E-1$.
	Since $\kappa_i \le i \max\{a_1, a_3\}^{i-1}$, a sufficient condition is to require $a_1^i b_1 \ge 2a_2b_2 \cdot i\max\{a_1, a_3\}^{i-1} $ for all $0 \le i \le E-1$.
	Plugging it with $a_1=1-\mu\eta, a_2 = 2L\eta, a_3 = \theta > a_1, b_1=1, b_2=2$, it is equivalent to for all $0 \le i \le E-1$,
	\begin{equation}
	\label{eq:cond}
	(1-\mu \eta)^i  \ge 8L\eta \cdot i \theta^{i-1}.
	\end{equation}
	To make sure~\eqref{eq:cond} holds, we only need to tune $\eta \le \frac{1}{\mu + 25L(E-1)}$.
	Indeed, when $\eta \le \frac{1}{\mu + 25L(E-1)}$, 
	\begin{align}
	\label{eq:lr}
	8L\eta \cdot i \left(\frac{\theta}{1-\mu\eta}\right)^i 
	&\le 	8L\eta \cdot i \left(1 + \frac{1}{E-1}+ \frac{L\eta}{1-\mu\eta}\right)^i \nonumber \\
	&\overset{(a)}{\le} 8L\eta \cdot (E-1) \left(1 + \frac{1}{E-1}+ \frac{L\eta}{1-\mu\eta}\right)^{E-1} \nonumber \\
	&\overset{(b)}{\le }8L\eta \cdot (E-1) \left(1 + \frac{1.04}{E-1}\right)^{E-1} \nonumber \\
	&\overset{(c)}{\le }25L\eta \cdot (E-1) \nonumber \\
	&\overset{(d)}{\le }1-\mu \eta <\theta
	\end{align}
	where (a) holds since $0 \le i \le E-1$; (b) follows by noting $ \frac{3L\eta}{1-\mu\eta}$ increases in $\eta$ and $\eta \le \frac{1}{\mu + 10L(E-1)}$ is upper bounded; (c) uses $8(1+\frac{1.04}{n})^n \le8 \mathrm{e}^{1.04}< 25$ for any non-negative integer $n$; (d) follows since $\eta \le \frac{1}{\mu + 25L(E-1)}$.
	
	Also, using $\kappa_i \le i \max\{a_1, a_3\}^{i-1}$, we have
	\begin{align}
	\label{eq:3}
	2\kappa_i 
	&\le 2i\cdot\theta^{i-1} = 2i(1-\mu\eta)^{i-1}\left(\frac{\theta}{1-\mu\eta}\right)^{i-1} \nonumber \\
	&\le  i(1-\mu\eta)^{i-1} \cdot 2\left(1 + \frac{1.04}{E-1}\right)^{E-1} \nonumber \\
	&\le i(1-\mu\eta)^{i-1} \cdot 2\mathrm{e}^{1.04} \le  6i(1-\mu\eta)^{i-1}.
	\end{align}
	Combing~\eqref{eq:y7} and~\eqref{eq:3}, we obtain
	\begin{align}
	\label{eq:y8}
	\EB\|\y_{t_{k+1}} -\xc\|^2 
	&\le (1-\mu \eta)^{s_k}\EB\|\y_{t_{k}} -\xc\|^2 
	-\frac{\eta}{2} \sum_{i=0}^{s_k-1} (1-\mu \eta)^i  \delta_{t_{k+1}-i-1} \nonumber \\
	&+ 3 \eta^2 \sum_{i=0}^{s_k-1} \left[(1-\mu \eta)^i \sigmaat +6L\eta \cdot i(1-\mu\eta)^{i-1}  \widetilde{\sigma}_{\A,*}^2 \right]. 
	\end{align}
	Recurring~\eqref{eq:y8} from $k=0$ to $K-1$, we have
	\begin{align}
	\label{eq:y9}
	 \EB\|\y_{T} -\xc\|^2 &\le (1-\mu \eta)^{T}\EB\|\y_{0} -\xc\|^2  -\frac{\eta}{2}\sum_{j=0}^{T-1}(1-\mu \eta)^{T-j-1} \delta_{j} +  3 \eta^2\sum_{j=0}^{T-1}(1-\mu \eta)^{T-j-1} \sigmaat \nonumber \\
	& \qquad + 18 L\eta^3(E-1)\sum_{k=0}^{K-1}  \sum_{j=t_k}^{t_{k+1}-2}  (1-\mu \eta)^{T-j-2} \widetilde{\sigma}_{\A,*}^2.
	\end{align}
	Recall that $\Delta^2 = \EB\|\y_{0} -\xc\|^2 $.
	Based on~\eqref{eq:y9}, we can derive convergence rate for constant learning rate $\eta \le \min\{\frac{1}{10L}, \frac{1}{\mu + 25L(E-1)}\}$.
	Dividing $W_T = \sum_{j=0}^{T-1}(1-\mu \eta)^{T-j-1}$ on both sides of~\eqref{eq:y9} and rearranging, we obtain
	\begin{equation*}
	\EB \left[F(\hat{\y})-F(\xc) \right] \le
	\frac{1}{W_T}\sum_{j=0}^{T-1}(1-\mu \eta)^{T-j-1} \delta_{j}  \le \frac{2}{\eta W_T}(1-\mu\eta)^T \Delta^2 + 6\eta\sigmaat + 18(E-1) L \eta^2\widetilde{\sigma}_{\A,*}^2
	\end{equation*}
	where $\hat{\y}=\frac{1}{W_T}\sum_{j=0}^{T-1} (1-\mu \eta)^{T-j-1} \y_j$. 
	Since the last inequality is quite similar to~\eqref{eq:y3}, the proof follows from a similar argument of the poof for Theorem~\ref{thm:simple} by replacing $\sigmaa$ with $\widetilde{\sigma}_{\A,*}^2 $.
\end{proof}

\section{Proof of Delayed Projected SVRG}

\subsection{Descent Lemma}
In this section, we give a decent lemma for Algorithm~\ref{alg:multi_SVRG} that is a counter part of Lemma~\ref{lem:y_descent}.
The main technique is the same except that we use variance-reduced estimators for stochastic gradients.
\begin{lem}[Bounded gradient variance]
	\label{lem:var_svrg}
	Let $\g_t^s = \nabla F(\x_{t}^s; \xi_{t}^s) - \nabla F(\ttx_s; \xi_{t}^s) + \PAT(\nabla F(\ttx_{s}))$.
	 Then, $\PAT(\g_t^s)$ is unbiased for $\PAT(\nabla F(\x_t^s))$ (i.e., $\EB_{\xi_{t}^s}  \PAT(\g_t^s) =  \PAT(\nabla F(\x_t^s))$) and has bounded variance at most:
	\[
	\EB \| \PAT(\g_t^s)- \PAT(\nabla F(\x_{t}^s))\|^2
	\le 3L^2 \EB \|\z_t^s\|^2 + 6L\EB\left[ F(\y_t^s) - F(\xc) + F(\ttx_s) - F(\xc)  \right].
	\]
\end{lem}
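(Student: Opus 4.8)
The plan is to mirror the argument used for Lemma~\ref{lem:var_localsgd}, relying on the linearity and non-expansiveness of $\PAT$ (Proposition~\ref{prop:proj}) together with the finite-sum smoothness bounds collected in Lemma~\ref{lem:F}. First I would dispatch the unbiasedness claim, which is purely a consequence of linearity. Writing $\PAT(\g_t^s) = \PAT(\nabla F(\x_{t}^s; \xi_{t}^s)) - \PAT(\nabla F(\ttx_s; \xi_{t}^s)) + \PAT(\nabla F(\ttx_{s}))$ and taking the conditional expectation over $\xi_{t}^s$, the identity $\EB_{\xi}[\nabla F(\cdot;\xi)] = \nabla F(\cdot)$ turns the first two terms into $\PAT(\nabla F(\x_t^s)) - \PAT(\nabla F(\ttx_s))$; the deterministic third term cancels $-\PAT(\nabla F(\ttx_s))$, leaving $\EB_{\xi_t^s}\PAT(\g_t^s) = \PAT(\nabla F(\x_t^s))$.

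For the variance, I would set $Y := \PAT\big(\nabla F(\x_{t}^s;\xi_{t}^s) - \nabla F(\ttx_s;\xi_{t}^s)\big)$, so that $\PAT(\g_t^s) - \PAT(\nabla F(\x_t^s)) = Y - \EB_{\xi_t^s}[Y]$, and apply the elementary conditional bound $\EB\|Y - \EB_{\xi_t^s}Y\|^2 \le \EB\|Y\|^2$ followed by non-expansiveness of $\PAT$ to discard the projection. This reduces the whole task to bounding $\EB\|\nabla F(\x_{t}^s;\xi_{t}^s) - \nabla F(\ttx_s;\xi_{t}^s)\|^2$. The key maneuver is to insert the two reference points $\y_t^s$ and $\xc$ through the telescoping decomposition $\nabla F(\x_{t}^s;\xi) - \nabla F(\ttx_s;\xi) = [\nabla F(\x_{t}^s;\xi) - \nabla F(\y_t^s;\xi)] + [\nabla F(\y_t^s;\xi) - \nabla F(\xc;\xi)] + [\nabla F(\xc;\xi) - \nabla F(\ttx_s;\xi)]$, split the squared norm via Lemma~\ref{lem:j} with $n=3$, and bound each piece separately: the first by $L$-smoothness (Assumption~\ref{asmp:smooth}) as $L^2\|\x_t^s - \y_t^s\|^2 = L^2\|\z_t^s\|^2$, and the remaining two in expectation by Lemma~\ref{lem:F}(3), which yield $2L\,\EB[F(\y_t^s)-F(\xc)]$ and $2L\,\EB[F(\ttx_s)-F(\xc)]$ respectively. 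Carrying the factor of $3$ through the sum produces precisely $3L^2\EB\|\z_t^s\|^2 + 6L\,\EB[F(\y_t^s) - F(\xc) + F(\ttx_s) - F(\xc)]$.

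The one point requiring care—more a bookkeeping check than a genuine obstacle—is that Lemma~\ref{lem:F}(3) may only be invoked at points lying in $\mathcal{R}(\A^\perp)$. This is satisfied for both reference points: $\y_t^s = \PAT(\x_t^s)$ is feasible by definition, and every snapshot $\ttx_s$ in Algorithm~\ref{alg:multi_SVRG} is generated by an explicit $\PAT$ projection (including the initialization $\ttx_0 = \PAT(\x_0)$), so $\ttx_s \in \mathcal{R}(\A^\perp)$ for all $s$. With these feasibility facts in hand the estimate is immediate, and no control on $\PA(\nabla F(\ttx_s))$ is needed since that component never enters the projected quantity $\PAT(\g_t^s)$.
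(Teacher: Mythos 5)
Your proposal is correct and follows essentially the same route as the paper's proof: the same unbiasedness computation via linearity of $\PAT$, the same reduction $\EB\|Y-\EB Y\|^2\le\EB\|Y\|^2$, the same three-term telescoping through $\y_t^s$ and $\xc$ split by Lemma~\ref{lem:j}, and the same per-term bounds via smoothness and Lemma~\ref{lem:F}. The only (immaterial) difference is that you drop the projection by non-expansiveness before decomposing rather than after, and your explicit check that $\y_t^s,\ttx_s\in\mathcal{R}(\A^\perp)$ is a point the paper leaves implicit.
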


\begin{proof}
	Note that $\EB_{\xi_{t}^s}  \PAT(\g_t^s) = \PAT(\nabla F(\x_t^s) - \nabla F(\ttx_{s})) + \PAT(\nabla F(\ttx_{s})) =  \PAT(\nabla F(\x_t^s))$.
	The proof of bounded variance is classical and is analogous to most of the variance reduction
	literature.
	 \begin{align*}
	 \EB_{\xi_{t}^s} &\| \PAT(\g_t^s)- \PAT(\nabla F(\x_{t}^s))\|^2\\
	 &=	 \EB_{\xi_{t}^s} \|  \PAT( \nabla F(\x_{t}^s; \xi_{t}^s) - \nabla F(\ttx_s; \xi_{t}^s) ) -  \PAT( \nabla F(\x_{t}^s) - \nabla F(\ttx_s) )     \|^2\\
	 &\le  \EB_{\xi_{t}^s} \|  \PAT( \nabla F(\x_{t}^s; \xi_{t}^s) - \nabla F(\ttx_s; \xi_{t}^s) ) \|^2\\
	 &=  \EB_{\xi_{t}^s} \|  \PAT( \nabla F(\x_{t}^s; \xi_{t}^s)-\nabla F(\y_t^s; \xi_{t}^s)) 
	 + \PAT( \nabla F(\y_t^s; \xi_{t}^s)-\nabla F(\xc; \xi_{t}^s))  \\
	 & \qquad   - \PAT( \nabla F(\ttx_s; \xi_{t}^s) -\nabla F(\xc; \xi_{t}^s)) \|^2\\
	 &\le 3\EB_{\xi_{t}^s} \|  \PAT( \nabla F(\x_{t}^s; \xi_{t}^s)-\nabla F(\y_t^s; \xi_{t}^s)) \|^2 
	 + 3  \EB_{\xi_{t}^s} \| \PAT( \nabla F(\y_t^s; \xi_{t}^s)-\nabla F(\xc; \xi_{t}^s)) \|^2\\
	 & \qquad  + 3\EB_{\xi_{t}^s} \|\PAT( \nabla F(\ttx_s; \xi_{t}^s) -\nabla F(\xc; \xi_{t}^s)) \|^2\\
	 &\le 3\EB_{\xi_{t}^s} \|  \nabla F(\x_{t}^s; \xi_{t}^s)-\nabla F(\y_t^s; \xi_{t}^s)\|^2 
	+ 3  \EB_{\xi_{t}^s} \| \nabla F(\y_t^s; \xi_{t}^s)-\nabla F(\xc; \xi_{t}^s)\|^2\\
	& \qquad  + 3\EB_{\xi_{t}^s} \| \nabla F(\ttx_s; \xi_{t}^s) -\nabla F(\xc; \xi_{t}^s)\|^2\\
	&\le 3\EB_{\xi_{t}^s} \left[  L^2 \|\z_t^s\|^2 + 2L\left[F(\y_t^s) - F(\xc) \right] + 2L\left[F(\ttx_s) - F(\xc) \right] \right].
	 \end{align*}
\end{proof}

\begin{lem}
\label{lem:y_descent_1}
Under Assumption~\ref{asmp:smooth} and~\ref{asmp:strong},
let $\y_t^s = \PAT (\x_t^s)$ be the projection onto $\mathcal{R}(\A^\perp)$ and $\z_t^s = \PA(\x_t^s)$ the projection onto $\mathcal{R}(\A)$, then for Algorithm~\ref{alg:multi_SVRG}, when $\eta_t^s = \eta \le \frac{1}{10L}$, we have
\begin{equation}
\label{eq:y_descent_svrg}
	\EB \|\y_{t+1}^s-\xc\|^2
	\le (1-\mu \eta)  \EB \| \y_t^s-\xc\|^2 -\eta \EB\left[ F(\y_t^s) - F(\xc)\right]  +6L\eta^2\EB\left[ F(\ttx_s) - F(\xc)\right] + 2L\eta\EB \| \z_t^s\|^2
\end{equation}
\end{lem}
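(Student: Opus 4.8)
The plan is to reuse the proof of Lemma~\ref{lem:y_descent} almost verbatim, the only changes being that the variance-reduced gradient $\g_t^s$ replaces the plain stochastic gradient and that the SVRG variance bound (Lemma~\ref{lem:var_svrg}) replaces the vanilla one (Lemma~\ref{lem:var_localsgd}). The structural prerequisite is already in hand: Lemma~\ref{lem:var_svrg} records that $\PAT(\g_t^s)$ is an unbiased estimator of $\PAT(\nabla F(\x_t^s))$, which is exactly the hypothesis under which the general one-step descent estimate, Lemma~\ref{lem:y_descent_general}, applies. So the whole argument is an instantiation of that abstract lemma followed by a variance substitution and a step-size check.

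First I would invoke Lemma~\ref{lem:y_descent_general} with $\g_t = \g_t^s$ and constant step size $\eta_t = \eta$, which gives
\begin{align*}
\EB \|\y_{t+1}^s-\xc\|^2
&\le (1-\mu \eta) \EB \| \y_t^s-\xc\|^2 + (4L\eta^2-2\eta) \EB\left[ F(\y_t^s) - F(\xc)\right] \\
&\quad + (L\eta + 2L^2\eta^2)\EB \| \z_t^s \|^2 + \eta^2 \EB \| \PAT(\g_t^s) -\PAT(\nabla F(\x_t^s))\|^2.
\end{align*}
Second I would substitute the SVRG variance bound $\EB \| \PAT(\g_t^s)- \PAT(\nabla F(\x_{t}^s))\|^2 \le 3L^2 \EB \|\z_t^s\|^2 + 6L\,\EB[ F(\y_t^s) - F(\xc) + F(\ttx_s) - F(\xc) ]$ from Lemma~\ref{lem:var_svrg} into the last term. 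Collecting like terms, the coefficient of $\EB[F(\y_t^s)-F(\xc)]$ becomes $10L\eta^2 - 2\eta$, the coefficient of $\EB\|\z_t^s\|^2$ becomes $L\eta + 5L^2\eta^2$, and the snapshot suboptimality $\EB[F(\ttx_s)-F(\xc)]$ enters with coefficient $6L\eta^2$. The closing step is the routine step-size bookkeeping under $\eta \le \frac{1}{10L}$: one has $10L\eta^2 - 2\eta \le -\eta$ and $L\eta + 5L^2\eta^2 \le \frac{3}{2}L\eta \le 2L\eta$, which delivers the claimed inequality~\eqref{eq:y_descent_svrg}.

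I expect no genuine obstacle, since the two hard ingredients—extracting the clean descent structure on $\mathcal{R}(\A^\perp)$ and controlling the variance of the control-variate gradient—are already packaged into Lemmas~\ref{lem:y_descent_general} and~\ref{lem:var_svrg}. The one point warranting attention is conceptual rather than technical: the extra additive term $6L\eta^2\,\EB[F(\ttx_s)-F(\xc)]$ has no analogue in the DP-SGD descent lemma. It is the fingerprint of variance reduction (the snapshot's own suboptimality now feeds into the progress of $\y_t^s$), and I would flag that this term must be carried forward and absorbed in the stage-level recursion, where $\EB[F(\ttx_s)-F(\xc)]$ is in turn controlled (cf. Lemma~\ref{lem:error_svrg_stage}).
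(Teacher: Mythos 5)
Your proposal is correct and is essentially identical to the paper's own proof: both instantiate Lemma~\ref{lem:y_descent_general} with $\g_t=\g_t^s$, plug in the variance bound of Lemma~\ref{lem:var_svrg}, and absorb the resulting coefficients $10L\eta^2-2\eta$ and $L\eta+5L^2\eta^2$ using $\eta\le\frac{1}{10L}$. The arithmetic in your final bookkeeping step checks out.
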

\begin{proof}
	We first fixed any $s \ge 0$.
	Lemma~\ref{lem:var_svrg} shows that $\PAT(\g_t^s) = \PAT(\nabla F(\x_{t}^s; \xi_{t}^s) - \nabla F(\ttx_s; \xi_{t}^s) )+ \PAT(\nabla F(\ttx_{s}))$ is unbiased for $\PAT(\nabla F(\x_t^s))$ and with variance at most
	\[
	\EB \| \PAT(\g_t^s- \nabla F(\x_t))\|^2  \le 3L^2\EB \| \z_t^s \|^2 
	+ 6 L\EB\left[ F(\y_t^s) - F(\xc)\right] + 6L\EB\left[ F(\ttx_s) - F(\xc)\right].
	\]
	Lemma~\ref{lem:y_descent_general} with the choice of $\g_t^s$ and a constant learning rate $\eta_{t}^s = \eta$ gives that 
	\begin{align*}
		\EB \|\y_{t+1}^s-\xc\|^2
	&\le (1-\mu \eta)  \EB \| \y_t-\xc\|^2  +  (4L\eta^2-2\eta) \EB\left[ F(\y_t) - F(\xc)\right]   + (L\eta + 2L^2\eta^2)\EB \| \z_t \|^2\\
	& \qquad \qquad +\eta^2  \EB \| \PAT(\g_t^s)-\PAT(\nabla F(\x_t))\|^2.
	\end{align*}
	Combing the last two inequalities, we get
	\begin{align*}
	\EB &\|\y_{t+1}^s-\xc\|^2\\
	&\le (1-\mu \eta)  \EB \| \y_t^s-\xc\|^2 + (10L\eta^2-2\eta) \EB\left[ F(\y_t^s) - F(\xc)\right]  \\
	&\qquad \qquad+  6L\eta^2\EB\left[  F(\ttx_s) - F(\xc)\right] + (L\eta + 5L^2\eta^2)\EB \| \z_t^s \|^2\\
	&\le (1-\mu \eta)  \EB \| \y_t^s-\xc\|^2 -\eta \EB\left[ F(\y_t^s) - F(\xc)\right]  +6L\eta^2\EB\left[  F(\ttx_s) - F(\xc)\right] + 2L\eta\EB \| \z_t^s\|^2
	\end{align*}
	where for the final line we used that $\eta \le \frac{1}{10L}$.
\end{proof}

\subsection{Residual Lemma}
In this section, we given a residual lemma for Algorithm~\ref{alg:multi_SVRG} that is a counter part of Lemma~\ref{lem:z_descent}.
The main technique is the same except that we use variance-reduced estimators for stochastic gradients.
The main difference from Lemma~\ref{lem:z_descent} is that we replace $\sigma_{\A,*}^2$ with $F(\ttx_s) - F(\xc)$ that decays to zero when $s$ goes to infinity. 

\begin{lem}
	\label{lem:z_descent_1}
	Under Assumption~\ref{asmp:smooth} and~\ref{asmp:strong}, when $\mathrm{gap}(\IM_m) = E$ and $\eta_t^s = \eta \le \frac{1}{L(3+2E)}$, then
	\begin{equation}
	\label{eq:z_descent_svrg_1}
	\EB \|\z_{t+1}^s\|^2
	\le\left(1 -\mu \eta+ \frac{1-\mu \eta}{E}+L\eta\right) \EB \|\z_{t}^s\|^2  +  2\eta \EB \left[ F(\y_t^s) - F(\xc) \right]  +  2\eta\EB\left[ F(\ttx_s) - F(\xc) \right].
	\end{equation}
\end{lem}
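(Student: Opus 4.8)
The plan is to mimic the proof of the DP-SGD residual bound (Lemma~\ref{lem:z_descent}, complicated case) but with the variance-reduced gradient $\g_t^s$ in place of the raw stochastic gradient, so that the constant residual $3\eta_t^2[\sigmaa + E\|\nabla F(\xc)\|^2]$ is replaced by the decaying term $2\eta\EB[F(\ttx_s)-F(\xc)]$. First I would write $\z_{t+1}^s = \PA(\x_t^s - \eta\g_t^s) = \z_t^s - \eta\PA(\g_t^s)$, using linearity of $\PA$ (Proposition~\ref{prop:proj}) and $\z_t^s = \PA(\x_t^s)$, and expand
\begin{equation*}
\EB\|\z_{t+1}^s\|^2 = \EB\|\z_t^s\|^2 - 2\eta\EB\langle\z_t^s, \PA(\g_t^s)\rangle + \eta^2\EB\|\PA(\g_t^s)\|^2.
\end{equation*}
The observation that makes the control variate useful is that $\PAT(\nabla F(\ttx_s)) \in \mathcal{R}(\A^\perp)$, so its $\PA$-component vanishes and $\PA(\g_t^s) = \PA(\nabla F(\x_t^s;\xi_t^s) - \nabla F(\ttx_s;\xi_t^s))$; taking conditional expectation gives $\EB_{\xi_t^s}\PA(\g_t^s) = \PA(\nabla F(\x_t^s) - \nabla F(\ttx_s))$.

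For the cross term, using self-adjointness of $\PA$ and $\PA(\z_t^s) = \z_t^s$, I would reduce it to $-2\eta\EB\langle\z_t^s, \nabla F(\x_t^s) - \nabla F(\ttx_s)\rangle$ and split $\nabla F(\x_t^s) - \nabla F(\ttx_s) = [\nabla F(\x_t^s) - \nabla F(\y_t^s)] + [\nabla F(\y_t^s) - \nabla F(\ttx_s)]$. The first piece is controlled by strong convexity (Lemma~\ref{lem:F}, item~4, with $\x_t^s - \y_t^s = \z_t^s$), contributing $-\mu\|\z_t^s\|^2$; the second is handled by Young's inequality (Lemma~\ref{lem:ineq1}) with parameter $\gamma = \mu + \frac{1-\mu\eta}{E\eta}$, producing the characteristic expansion factor $\frac{1-\mu\eta}{E}$ on $\|\z_t^s\|^2$ together with $\frac{1}{2\gamma}\|\nabla F(\y_t^s) - \nabla F(\ttx_s)\|^2$, where $\frac{1}{2\gamma} \le \frac{E\eta}{2}$. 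The crucial difference from the SGD case is that, since both $\y_t^s$ and $\ttx_s$ lie in $\mathcal{R}(\A^\perp)$, I can re-center around $\nabla F(\xc)$ and invoke Lemma~\ref{lem:F} (item~3) to bound $\|\nabla F(\y_t^s) - \nabla F(\ttx_s)\|^2 \le 4L[F(\y_t^s) - F(\xc)] + 4L[F(\ttx_s) - F(\xc)]$, which is exactly the decaying residual rather than a fixed variance.

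For the second-moment term I would bound $\EB\|\PA(\g_t^s)\|^2 \le \EB\|\nabla F(\x_t^s;\xi_t^s) - \nabla F(\ttx_s;\xi_t^s)\|^2$ by non-expansiveness, then triangle-split into three differences around $\y_t^s$ and $\xc$ (Lemma~\ref{lem:j} with $n=3$) and apply smoothness together with Lemma~\ref{lem:F} (item~3) — precisely the estimate already established for $\PAT$ in Lemma~\ref{lem:var_svrg}, giving $3L^2\EB\|\z_t^s\|^2 + 6L\EB[F(\y_t^s) - F(\xc)] + 6L\EB[F(\ttx_s) - F(\xc)]$. Finally I would collect coefficients: the $\|\z_t^s\|^2$ term acquires the extra $3L^2\eta^2 \le L\eta$ (valid since $\eta \le \frac{1}{L(3+2E)} \le \frac{1}{3L}$), while both function-gap terms acquire coefficient $(4E+6)L\eta^2 = 2(2E+3)L\eta^2 \le 2\eta$, which holds exactly at the threshold $\eta \le \frac{1}{L(3+2E)}$, yielding~\eqref{eq:z_descent_svrg_1}. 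The only mild obstacle is the re-centering step: one must route both the cross term and the second moment through $\nabla F(\ttx_s)$ and $\nabla F(\xc)$ — rather than through a raw variance at $\xc$ as in the SGD proof — so that Lemma~\ref{lem:F} item~3 applies and the residual collapses to $F(\ttx_s) - F(\xc)$, which then vanishes along the outer iteration.
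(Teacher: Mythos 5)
Your proposal is correct and follows essentially the same route as the paper's proof: the same expansion of $\EB\|\z_{t+1}^s\|^2$, the same observation that $\PA$ kills the snapshot term $\tth_s$ so that $\PA(\g_t^s)=\PA(\nabla F(\x_t^s;\xi_t^s)-\nabla F(\ttx_s;\xi_t^s))$, the same split of the cross term via strong convexity plus Young's inequality with $\gamma=\mu+\frac{1-\mu\eta}{E\eta}$, the same re-centering of both the cross term and the second moment around $\nabla F(\xc)$ via Lemma~\ref{lem:F}, and the same final coefficient arithmetic under $\eta\le\frac{1}{L(3+2E)}$. No gaps.
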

\begin{proof}
	Noting that $\g_t^s = \nabla F(\x_{t}^s; \xi_{t}^s) - \nabla F(\ttx_s; \xi_{t}^s) + \PAT(\nabla F(\ttx_{s}))$, $\PA(\g_t^s) = \PA(\nabla F(\x_{t}^s; \xi_{t}^s) - \nabla F(\ttx_s; \xi_{t}^s)) $ and $\EB_{\xi_{t}^s}\PA(\g_t^s) = \PA(\nabla F(\x_{t}^s) - \nabla F(\ttx_{s}))$.
	Then,
	\begin{align}
	\label{eq:z_zhankai_1}
	\EB \|\z_{t+1}^s\|^2 
	&= \EB \big\| \PA(\x_t^s -  \eta \g_t^s) \big\|^2 \nonumber \\
	&= \EB \| \PA(\x_t^s) \|^2  -2\eta \EB \langle\PA(\x_t^s),  \PA(\g_t^s)\rangle + \eta^2  \EB \| \PA(\g_t^s) \|^2  \nonumber \\
	&= \EB \| \z_t^s \|^2  -2\eta \EB \langle \z_t^s,  \PA(\g_t^s)\rangle + \eta^2  \EB \| \PA(\g_t^s) \|^2 
	\end{align}
	where the last inequality uses $\PA(\x_t^s) = \z_t^s$.

	For the second term of~\eqref{eq:z_zhankai_1}, we have
	\begin{align}
	\label{eq:second_z_no_1}
	- \EB_{\xi_{t}^s}\langle \z_t^s,  \PA(\g_t^s)\rangle
	&= - \langle \x_t^s - \y_t^s,  \PA(\nabla F(\x_{t}^s) -   \nabla F(\ttx_{s})) \rangle \nonumber \\
	&= - \langle \x_t^s - \y_t^s,  \nabla F(\x_{t}^s) -   \nabla F(\ttx_{s}) \rangle \nonumber \\
	&= - \langle \x_t^s - \y_t^s,  \nabla F(\x_{t}^s)-\nabla F(\y_{t}^s)\rangle  - \langle \x_t^s - \y_t^s, \nabla F(\y_{t}^s)-   \nabla F(\ttx_{s})\rangle \nonumber \\
	&\overset{(a)}{\le} - \mu \|\z_t^s\|^2 + | \langle \z_t^s, \nabla F(\y_{t}^s)-   \nabla F(\ttx_{s}) \rangle|  \nonumber \\
	&\overset{(b)}{\le} - \mu \|\z_t^s\|^2 +  \frac{\gamma}{2}\|\z_t^s\|^2 + \frac{1}{2\gamma} \|\nabla F(\y_{t}^s)-   \nabla F(\ttx_{s})\|^2 \nonumber \\
	&\overset{(c)}{=}\left(-\frac{\mu}{2} + \frac{1-\mu\eta}{2E\eta}\right) \|\z_t^s\|^2  + \frac{1}{2\left(\mu + \frac{1-\mu\eta}{E\eta}\right)}  \|\nabla F(\y_{t}^s)-   \nabla F(\ttx_{s})\|^2 \nonumber \\
	&\le \left(-\frac{\mu}{2} + \frac{1-\mu\eta}{2E\eta}\right) \|\z_t^s\|^2  + \frac{E\eta}{2}  \|\nabla F(\y_{t}^s)-   \nabla F(\ttx_{s})\|^2
	\end{align}
	where (a) uses the $\mu$-strongly convexity of $F(\cdot)$ that implies $\langle \nabla F(\x_t^s) - F(\y_t^s), \x_t^s -\y_t^s \rangle \ge \mu \|\x_t^s -\y_t^s\|^2$ (Lemma~\ref{lem:F}); and (b) uses Lemma~\ref{lem:ineq1} with $\gamma = \mu + \frac{1-\mu\eta}{E\eta}$.
	Besides, we also have
	\begin{align}
	\label{eq:nableF_1}
	\|\nabla F(\y_{t}^s)-   \nabla F(\ttx_{s})\|^2
	&\le 2\|\nabla F(\y_t^s)-\nabla F(\xc)\|^2 + 2\|\nabla F(\ttx_{s})-\nabla F(\xc)\|^2   &  \text{By \ Lemma}~\ref{lem:j} \nonumber \\
	&\le 4 L\left[ F(\y_t^s) - F(\xc)  + F(\ttx_{s}) - F(\xc) \right].   &\text{By \ Lemma}~\ref{lem:F}
	\end{align}
	For the third term of~\eqref{eq:z_zhankai_1}, we have
	\begin{align}
	\label{eq:third_z_11}
	\EB \|\PA( \g_t^s) \|^2
	&=	\EB \|\PA(\nabla F(\x_{t}^s; \xi_{t}^s) - \nabla F(\ttx_s; \xi_{t}^s)) \|^2 \nonumber \\
	&\le	\EB \|\nabla F(\x_{t}^s; \xi_{t}^s) - \nabla F(\ttx_s; \xi_{t}^s) \|^2 \nonumber \\
	&\le 3\EB \|\nabla F(\x_{t}^s; \xi_{t}^s) - \nabla F(\y_t^s; \xi_{t}^s) \|^2
	+ 3\EB \|\nabla F(\y_{t}^s; \xi_{t}^s) - \nabla F(\ttx_s; \xi_{t}^s)\|^2 \nonumber \\
	& \qquad+  3\EB \|\nabla F(\ttx_s; \xi_{t}^s)- \nabla F(\xc; \xi_{t}^s) \|^2 \nonumber \\
	&\le 3L^2 \EB \|\z_t^s\|^2 + 6L \EB \left[ F(\y_t^s) - F(\xc)+F(\ttx_s) - F(\xc) \right]
	\end{align}
	Then, based on~\eqref{eq:z_zhankai_1},~\eqref{eq:second_z_no_1},~\eqref{eq:nableF_1} and~\eqref{eq:third_z_11}, when $\eta \le \frac{1}{L(3+2E)}$, we have:
	\begin{align*}
	\EB &\|\z_{t+1}^s\|^2  \\
	&\le\left(1-\mu \eta + \frac{1-\mu \eta}{E}+3L^2\eta^2\right) \EB \|\z_{t}^s\|^2  + (4E+6)L\eta^2  \EB \left[ F(\y_t^s) - F(\xc)+F(\ttx_s) - F(\xc) \right]\nonumber \\
	&\le\left(1 -\mu \eta+ \frac{1-\mu \eta}{E}+L\eta\right) \EB \|\z_{t}^s\|^2  +  2\eta \EB \left[ F(\y_t^s) - F(\xc) \right]  +  2\eta\EB\left[ F(\ttx_s) - F(\xc) \right].
	\end{align*}
\end{proof}

\subsection{Other Helper Lemmas}
\begin{lem}[Stage-wise error propagation]
	\label{lem:error_svrg_stage}
	Under Assumption~\ref{asmp:smooth} and~\ref{asmp:strong}, when 
	\begin{equation}
	\label{eq:lr_svrg_lem}
	\eta \le \min\left\{ \frac{1}{\mu+25L(E-1)}, \frac{1}{10L}, \frac{1}{L(3+2E)} \right\},
	\end{equation}
	then in a stage, DP-SVRG (Algorithm~\ref{alg:multi_SVRG}) holds that
	\[
	\EB \left[F(\ttx_{s+1}) - F(\xc)\right] \le 
	\frac{2}{\eta\Gamma} \left[ \rho \EB\|\y_{0}^s -\xc\|^2 - \EB\|\y_{0}^{s+1} -\xc\|^2 \right] + 
	\frac{12L\eta (2E-1) }{1-\mu \eta}\EB[F(\ttx_s) - F(\xc)]
	\]
	where 
	$\rho = (1-\mu\eta)^m$ and $\Gamma = \sum_{i=0}^{m-1} (1-\mu \eta)^i $.
\end{lem}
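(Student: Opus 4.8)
The plan is to first reduce the claim to a weighted one-stage descent bound on the feasible iterate $\y^s_t=\PAT(\x^s_t)$, and then to invoke the error-propagation machinery exactly as in the proof of Theorem~\ref{thm:complicate}, but with a constant vector driven by $D_s:=\EB[F(\ttx_s)-F(\xc)]$ in place of the gradient variances.

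First I would rewrite the snapshot. By the linearity of $\PAT$ (Proposition~\ref{prop:proj}) and idempotence, $\ttx_{s+1}=\sum_{j=0}^{m-1} w_j \y_j^s$ with weights $w_j = (1-\mu\eta)^{m-1-j}/\Gamma$, obtained from the definition of $\ttx_{s+1}$ via the change of index $j=m-i-1$; these weights sum to one since $\Gamma=\sum_{i=0}^{m-1}(1-\mu\eta)^i$. As $F$ is convex (Assumption~\ref{asmp:strong} with $\mu\ge 0$), weighted Jensen gives $\EB[F(\ttx_{s+1})-F(\xc)]\le \frac{1}{\Gamma}\sum_{j=0}^{m-1}(1-\mu\eta)^{m-1-j}\,\delta_j^s$, where $\delta_j^s:=\EB[F(\y_j^s)-F(\xc)]$, so it suffices to bound this weighted sum. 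I would also record the identity $\y_0^{s+1}=\PAT(\x_0^{s+1})=\PAT(\PAT(\x_m^s))=\y_m^s$, which is what makes the telescoping collapse to the two boundary terms appearing in the statement.

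Next I would concatenate Lemma~\ref{lem:y_descent_1} and Lemma~\ref{lem:z_descent_1} into the two-dimensional system $L_{t+1}\le A L_t-\eta\delta_t^s\bb+\eta^2\c$ with $L_t=(\EB\|\y_t^s-\xc\|^2,\EB\|\z_t^s\|^2)^\top$, $\bb=(1,-2)^\top$, and the same upper-triangular $A$ (top row $(1-\mu\eta,\,2L\eta)$, bottom entry $\theta=(1+\tfrac{1}{E})(1-\mu\eta)+L\eta$) as in Theorem~\ref{thm:complicate}, but now with $\c=(6L D_s,\,2D_s/\eta)^\top$, where $D_s$ is constant over the stage. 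Because a projection resets the residual to zero (so $\EB\|\z_t^s\|^2=0$ at every element of $\IM_m$, and at $t=0$ since $\x_0^s\in\mathcal{R}(\A^\perp)$), I can apply Lemma~\ref{lem:error_prop} on each inter-projection window, each of length at most $E$ because $\mathrm{gap}(\IM_m)=E$. The condition $\eta\le\frac{1}{\mu+25L(E-1)}$ is precisely what guarantees the hypothesis $a_1^i b_1\ge 2a_2 b_2\kappa_i$ of that lemma (verified in~\eqref{eq:lr}), so the negative $-\tfrac{\eta}{2}(1-\mu\eta)^i\delta$ term survives; the conditions $\eta\le\frac{1}{10L}$ and $\eta\le\frac{1}{L(3+2E)}$ are the hypotheses of the two descent lemmas. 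Recurring over windows exactly as in the derivation of~\eqref{eq:y9}, and using $\y_0^{s+1}=\y_m^s$, I would obtain
\[
\tfrac{\eta}{2}\sum_{j=0}^{m-1}(1-\mu\eta)^{m-1-j}\delta_j^s \le \rho\,\EB\|\y_0^s-\xc\|^2-\EB\|\y_0^{s+1}-\xc\|^2+\mathrm{err}_s,
\]
where $\mathrm{err}_s$ collects the $\eta^2\c$ contributions.

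Finally I would estimate $\mathrm{err}_s$ and divide through by $\eta\Gamma/2$. The first component of $\c$ contributes $6L\eta^2 D_s\sum_{j}(1-\mu\eta)^{m-1-j}=6L\eta^2\Gamma D_s$; the second is controlled using $\kappa_i\le 3i(1-\mu\eta)^{i-1}\le 3(E-1)(1-\mu\eta)^{i-1}$ (the bound behind~\eqref{eq:3}), which after the window recursion is at most $\frac{12L\eta^2(E-1)\Gamma}{1-\mu\eta}D_s$. Dividing by $\eta\Gamma/2$ turns these into $12L\eta D_s+\frac{24L\eta(E-1)}{1-\mu\eta}D_s=\frac{12L\eta\,(2E-1-\mu\eta)}{1-\mu\eta}D_s\le \frac{12L\eta(2E-1)}{1-\mu\eta}D_s$, which combined with the Jensen step yields the claim. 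The main obstacle is exactly this last bookkeeping: carrying the geometric weights through the window-by-window recursion together with the $\kappa_i$ estimate, so that the two constant-error pieces assemble into precisely the stated coefficient $\frac{12L\eta(2E-1)}{1-\mu\eta}$. Conceptually it is a faithful reprise of the Theorem~\ref{thm:complicate} argument with the fixed variance vector replaced by the stage-dependent, decaying quantity $D_s$.
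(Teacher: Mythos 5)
Your proposal is correct and follows essentially the same route as the paper: the same concatenation of Lemma~\ref{lem:y_descent_1} and Lemma~\ref{lem:z_descent_1} into the two-dimensional system with $\c=2D_s(3L,\,1/\eta)^\top$, the same window-by-window application of Lemma~\ref{lem:error_prop} under the stated step-size conditions, the same $\kappa_i$ estimate, and the same Jensen/telescoping step via $\ttx_{s+1}$ and $\y_0^{s+1}=\y_m^s$. The final bookkeeping producing the coefficient $\frac{12L\eta(2E-1)}{1-\mu\eta}$ matches the paper's computation exactly.
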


\begin{proof}
	In this part, we derive convergence results for Algorithm~\ref{alg:multi_SVRG}.
	We use a similar argument inherent in the proof of multi-step projected SGD.
	We consider a fixed step size, i.e., $\eta_t^s = \eta  \le \min\{\frac{1}{10L}, \frac{1}{L(3+2E)}\}$ for all $t, s\ge 0$ that guarantees the establishment of Lemma~\ref{lem:y_descent_1} and~\ref{lem:z_descent_1}.
	
	We consider a fixed stage $s$ first.
	From Lemma~\ref{lem:y_descent_1} and~\ref{lem:z_descent_1}, concatenating~\eqref{eq:y_descent_svrg} and~\ref{eq:z_descent_svrg_1} gives
	\begin{align*}
	\label{eq:error_con_svrg}
	\left(\begin{matrix}
	\EB\|\y_{t+1}^s-\xc\|^2\\
	\EB\|\z_{t+1}^s\|^2
	\end{matrix}\right)
	\le \left(
	\begin{matrix}
	1- \mu \eta  & 2L \eta\\
	0& \theta
	\end{matrix}
	\right)
	\left(\begin{matrix}
	\EB\|\y_{t}^s-\xc\|^2\\
	\EB\|\z_{t}^s\|^2
	\end{matrix}\right)
	&-\eta
	\left[ F(\y_t^s)-F(\xc) \right]\left(
	\begin{matrix}
	1\\-2
	\end{matrix}
	\right)\\
	&+2\eta^2\left[ F(\ttx_s)-F(\xc) \right]
	\left(
	\begin{matrix}
	3L \\
	\frac{1}{\eta}
	\end{matrix}
	\right).
	\end{align*}
	where $\theta = 1- \mu \eta+ \frac{1- \mu \eta}{E} +L\eta$ for simplicity.
	The last inequality implies the error propagation satisfies Lemma~\ref{lem:error_prop} with the following parameters: $\delta_t = \EB \left[F(\y_{t}^s)-F(\xc)\right] \ge 0$, $\kappa_i = \frac{\theta^i - (1-\mu\eta)^i}{\theta-(1-\mu\eta)}$, and
	\[
	L_t = \left(\begin{matrix}
	\EB\|\y_{t}^s-\xc\|^2\\
	\EB\|\z_{t}^s\|^2
	\end{matrix}\right),
	A = \left(
	\begin{matrix}
	1- \mu \eta & 2L \eta\\
	0 & \theta
	\end{matrix}
	\right),
	\bb = 
	\left(
	\begin{matrix}
	1\\-2
	\end{matrix}
	\right),
	\c = 
	2\left[ F(\ttx_s)-F(\xc) \right]\left(
	\begin{matrix}
	3L \\
	\frac{1}{\eta}
	\end{matrix}
	\right)
	\]
	such that the following inequality that holds element-by-element: $	L_{t+1} \le A L_{t} -\eta \delta_t\bb + \eta^2  \c$.
	
	Let $0=t_0 < t_1 < t_2 < \cdots < t_K = m$ be the elements of $\IM_m$ and denote $s_k = t_{k+1} - t_{k}$ (so that $\sum_{i=0}^{K-1}s_i = m$ and $s_k \le E$).
	Since when $t \in \IM_m$, we perform a projection to force $\EB \|\z_{t}^s\|^2 = 0$, implying the second entry of $L_{t_k} (k \ge 0)$ is zero.
	Combing all, Lemma~\ref{lem:error_prop} gives, for all $0\le k\le K-1$, 
	\begin{align}
	\label{eq:y10}
	\EB\|\y_{t_{k+1}}^s -\xc\|^2 
	&\le (1-\mu \eta)^{s_k}\EB\|\y_{t_k}^s -\xc\|^2 
	-\frac{\eta}{2} \sum_{i=0}^{s_k-1} (1-\mu \eta)^i  \delta_{t_{k+1}-i-1} \nonumber \\
	&+ 6L\eta^2  \EB[F(\ttx_s) - F(\xc)]\sum_{i=0}^{s_k-1} \left[(1-\mu \eta)^i  +2 i(1-\mu\eta)^{i-1}\right]. 
	\end{align}
	To ensure $a_1^i b_1 \ge 2a_2b_2 \kappa_i$ for all $0 \le i \le E-1$, we only need to tune $\eta \le \frac{1}{\mu + 25L(E-1)}$.
	The reason here is the same in~\eqref{eq:lr}.
	We also use $2\kappa_i \le 6i(1-\mu\eta)^{i-1}$ that is already derived in~\eqref{eq:3}.
	
	Recurring~\eqref{eq:y10} from $k=0$ to $K-1$, we have
\begin{align}
\label{eq:y10.5}
\EB\|\y_{m}^s -\xc\|^2 
&\le (1-\mu \eta)^{m}\EB\|\y_{0}^s -\xc\|^2 
-\frac{\eta}{2} \sum_{k=0}^{K-1}   (1-\mu \eta)^{m-t_{k+1}} \sum_{i=0}^{s_k-1} (1-\mu \eta)^i \delta_{t_{k+1}-i-1} \nonumber \\
&\qquad + 
6L\eta^2  \EB[F(\ttx_s) - F(\xc)] \sum_{k=0}^{K-1}   (1-\mu \eta)^{T-t_{k+1}}\sum_{i=0}^{s_k-1} \left[(1-\mu \eta)^i  +2 i(1-\mu\eta)^{i-1}\right]\nonumber \\
&= (1-\mu \eta)^{m}\EB\|\y_{0}^s -\xc\|^2 
-\frac{\eta}{2}\sum_{k=0}^{K-1}   (1-\mu \eta)^{m-t_{k+1}} \sum_{j=t_k}^{t_{k+1}-1} (1-\mu \eta)^{t_{k+1}-j-1} \delta_{j} \nonumber \\
&\qquad+ 
6L\eta^2  \EB[F(\ttx_s) - F(\xc)] \sum_{k=0}^{K-1}   (1-\mu \eta)^{T-t_{k+1}}\sum_{i=0}^{s_k-1} 
(1-\mu \eta)^i  \left(1+\frac{2i}{1-\mu \eta}\right) \nonumber \\
&\le (1-\mu \eta)^{m}\EB\|\y_{0}^s -\xc\|^2 -\frac{\eta}{2}\sum_{j=0}^{m-1}(1-\mu \eta)^{m-j-1} \delta_{j}  \nonumber \\
&\qquad + 6 L\eta^2 \EB[F(\ttx_s) - F(\xc)] 
\sum_{j=0}^{m-1}(1-\mu \eta)^{j}  \left(1+\frac{2(E-1)}{1-\mu \eta}\right)
\end{align}
where the equality uses change of variable $j=t_{k+1}-i-1$ and the second inequality uses $s_k \le E$ for any $0 \le k \le K-1$.
	
	By the way we generate $\ttx_{t+1}$, we have
	\begin{align}
	\label{eq:mid_term}
	\sum_{j=0}^{m-1}(1-\mu \eta)^{m-j-1} \delta_{j} 
	&=\sum_{j=0}^{m-1}(1-\mu \eta)^{m-j-1} \left[ F(\y_{j}^s) -F(\xc) \right] \nonumber \\
	&\ge \sum_{i=0}^{m-1} (1-\mu \eta)^i   \left[ F(\ttx_{s+1}) -F(\xc) \right].
	\end{align}
	According to the algorithm, we set $\x_{0}^{s+1} = \PAT(\x_m^s)$ and thus $\y_0^{s+1}=\y_m^s$.
	Plugging~\eqref{eq:mid_term} into~\eqref{eq:y10.5} and rearranging complete the proof.
\end{proof}

\subsection{Proof of the generally convex case in Theorem~\ref{thm:svrg}}
\begin{proof}
	Letting $\mu =0$, under the conditions, Lemma~\ref{lem:error_svrg_stage} gives
	\begin{align*}
	\EB \left[F(\ttx_{s+1}) - F(\xc)\right] \le 
	\frac{2}{\eta m} \left[ \EB\|\y_{0}^s -\xc\|^2 - \EB\|\y_{0}^{s+1} -\xc\|^2 \right] + 
	12L\eta (2E-1)\EB[F(\ttx_s) - F(\xc)].
	\end{align*}
	Summing~the last inequality over $s=0, \cdots S-1$ and telescoping, we obtain
	\begin{align*}
	\sum_{s=0}^{S-1}  \EB\left[F(\ttx_{s+1}) - F(\xc)\right]
	&\le \frac{2}{\eta m} \EB\|\y_{0}^s -\xc\|^2 + 12L\eta (2E-1) \sum_{s=0}^{S-1}   \EB[F(\ttx_s) - F(\xc)] \nonumber\\
	&\le \frac{2}{\eta m} \EB\|\y_{0}^s -\xc\|^2 + 12L\eta (2E-1)  \EB[F(\ttx_0) - F(\xc)] \nonumber \\
	&\qquad + 12L\eta (2E-1) \sum_{s=1}^{S}   \EB[F(\ttx_s) - F(\xc)] \nonumber \\
		&\le \frac{2}{\eta m} \EB\|\y_{0}^s -\xc\|^2 + \frac{1}{2}  \EB[F(\ttx_0) - F(\xc)] + \frac{1}{2} \sum_{s=1}^{S}   \EB[F(\ttx_s) - F(\xc)].
	\end{align*}
	where the last inequality requires $12L\eta (2E-1) \le \frac{1}{2}$.
	Dividing $S$ on the both sides of the last inequality  and rearranging, we obtain
	\[
		\EB\left[F(\hat{\y}) - F(\xc)\right] 
	\le  \frac{1}{S} \sum_{s=0}^{S-1}  \EB\left[F(\ttx_{s+1}) - F(\xc)\right] \le \frac{4}{\eta m S} \EB\|\x_{0} -\xc\|^2 + \frac{1}{S} \EB[F(\x_0) - F(\xc)]
	\]
	where we use $\hat{\y} = \frac{1}{S}\sum_{s=1}^{S} \ttx_{s}$ and the convexity of $F(\cdot)$.
	By setting
	\[
	\eta = \min\left\{ \frac{1}{\mu+25L(E-1)}, \frac{1}{10L}, \frac{1}{L(3+2E)} , \frac{1}{24L(2E-1)}\right\},
	\]
	we have $\eta = \Theta(\frac{1}{LE})$, then
	\[
		\EB\left[F(\hat{\y}) - F(\xc)\right]  = \OM\left(  \frac{LE \Delta^2}{T}  + \frac{\EB[F(\x_0) - F(\xc)]}{S}\right)
	\]
	where $\Delta^2 = \EB \|\x_0 - \xc\|^2$ and $T=mS$.
\end{proof}

\subsection{Proof of the strongly convex case in Theorem~\ref{thm:svrg}}
\begin{proof}
For simplicity, let $F_s = \EB[F(\ttx_s) - F(\xc)]$, $\delta = \frac{12L\eta (2E-1) }{1-\mu \eta}$ and $y_s = \EB\|\y_{0}^s -\xc\|^2$.
Recall that $\rho = (1-\mu\eta)^m$ and $\Gamma = \sum_{i=0}^{m-1} (1-\mu \eta)^i$.

When $\mu > 0$, under the conditions, Lemma~\ref{lem:error_svrg_stage} shows
\begin{equation*}
F_{s+1} \le  \frac{2}{\eta \Gamma} \left[ \rho y_s - y_{s+1} \right] +  \delta F_s.
\end{equation*}
Let $ \theta= \max\{\delta, \rho\}$.
Multiplying the both sides of the last inequality by $\theta^{S-s-1}$ and recurring the result  from $s=0$ to $S-1$,  we obtain
	\begin{align*}
F_S 
&\le \theta^S F_0 +  \frac{2}{\eta\Gamma} \sum_{s=0}^{S-1} \theta^{S-s-1} \left[ \rho y_s - y_{s+1} \right] \\
&= \theta^S F_0 + 
\frac{2}{\eta\Gamma} \left[    \theta^{S-1} \rho y_0  -y_S +  \sum_{s=1}^{S-1}  \theta^{S-s-1}  y_s \left(\rho - \theta\right) \right]\\
&\le \theta^S F_0  + \frac{2}{\eta\Gamma}  \theta^{S} y_0\\
&\le \left(F_0 + \frac{2}{\eta} y_0 \right) \theta^S
\end{align*}
where the last inequality uses $\Gamma \ge 1$.

By letting 
\[
	\eta = \min\left\{ \frac{1}{\mu+25L(E-1)}, \frac{1}{10L}, \frac{1}{L(3+2E)} , \frac{1}{\mu + 24L(2E-1)}\right\},
\]
we have $\eta = \Theta(\frac{1}{LE})$ and $\delta \le 0.5$. Then
\begin{align*}
\theta 
&= \max\{\delta, \rho\}
\le  \max \left\{ (1-\mu\eta)^m, 0.5  \right\}\\
&\le \max \left\{ \exp(-m\mu\eta), \exp (- 0.5 ) \right\}
=\exp\left( - \min\left\{ m\mu \eta, 0.5 \right\} \right),
\end{align*}
we have
\begin{align*}
\EB[F(\ttx_S) - F(\xc)]
&\le \left[   \frac{2}{\eta}\Delta^2 + \EB[F(\x_0) - F(\xc)] \right]  \cdot \exp\left( - \min\left\{ m\mu \eta, 0.5 \right\} S \right) \\
&= \OM \left( \left[   L E\Delta^2 + \EB[F(\x_0) - F(\xc)] \right] \cdot 
\exp\left( -  \Theta \left(\frac{T}{\max\{ \kappa E,  m \}} \right) \right)\right)
\end{align*}
where we use $\kappa = \frac{L}{\mu}$ and $T = mS$.

\end{proof}

\section{Proof of Delayed Projected Accelerated SVRG}
\subsection{Descent Lemma}
In this section, we give a decent lemma for Algorithm~\ref{alg:multi_acc_SVRG} that is a counter part of Lemma~\ref{lem:y_descent_1}.
However, since we incorporate the acceleration technique, we can't directly apply Lemma~\ref{lem:y_descent_general}.
\begin{lem}[Three-points lemma]
	\label{lem:three-point}
	Let $L(\cdot)$ is a proper convex (not necessarily differentiable) function, whose domain is an open set containing $\CM$.
	Let $\z^*$ be the minimizer of the following problem
	\[
	\z^*  = \argmin_{\z \in \CM} \left[  L(\z) + \frac{1}{2} \| \z - \z_0\|^2 \right].
	\]
	Then, for any point $\z \in \CM$, we have
	\[
	L(\z)+ \frac{1}{2}\|\z - \z_0\|^2 \ge  L(\z^*) +  \frac{1}{2}\|\z^* - \z_0\|^2  + \frac{1}{2}\|\z - \z^*\|^2.
	\]
\end{lem}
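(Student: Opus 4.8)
The plan is to recognize the objective $\phi(\z) := L(\z) + \tfrac{1}{2}\|\z - \z_0\|^2$ as a $1$-strongly convex function (the sum of the convex $L$ and the $1$-strongly convex quadratic $\tfrac12\|\z-\z_0\|^2$) and to read off the desired inequality as nothing more than the standard strong-convexity lower bound evaluated at the constrained minimizer $\z^*$. The only genuine content is the first-order optimality condition for the constrained problem together with the parallelogram-type expansion of the quadratic term; because $L$ is not assumed differentiable and the minimization is over $\CM$, I would phrase optimality as a variational inequality in terms of a subgradient rather than a gradient equation.

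Concretely, I would first invoke the first-order optimality condition for $\z^* = \argmin_{\z\in\CM}\phi(\z)$. Since the domain of $L$ is an open set containing $\CM$, the subdifferential $\partial L(\z^*)$ is nonempty, so there exists $g \in \partial L(\z^*)$ with $\langle g + (\z^* - \z_0),\, \z - \z^*\rangle \ge 0$ for every $\z \in \CM$. Next I would use convexity of $L$ to write $L(\z) \ge L(\z^*) + \langle g,\, \z - \z^*\rangle$, and separately expand the quadratic via $\tfrac12\|\z - \z_0\|^2 = \tfrac12\|\z^* - \z_0\|^2 + \langle \z^* - \z_0,\, \z - \z^*\rangle + \tfrac12\|\z - \z^*\|^2$. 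Adding the two displays, the cross terms collect into $\langle g + (\z^* - \z_0),\, \z - \z^*\rangle$, which is nonnegative by optimality, leaving exactly $L(\z) + \tfrac12\|\z - \z_0\|^2 \ge L(\z^*) + \tfrac12\|\z^* - \z_0\|^2 + \tfrac12\|\z - \z^*\|^2$.

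The step I expect to be the main obstacle is the careful justification of the optimality inequality in the nonsmooth, constrained setting: I need that $\z^*$ exists and is unique (guaranteed by strong convexity and closedness/convexity of $\CM$), that $\partial L(\z^*)$ is nonempty (guaranteed by the openness-of-domain hypothesis), and that the constrained stationarity takes the stated variational-inequality form. Once that is in place, everything else is the routine quadratic expansion above. In the paper's application $\CM = \mathcal{R}(\A^\perp)$ is a linear subspace, so the optimality condition even sharpens to an orthogonality/projection statement, but I would keep the argument at the level of a general convex $\CM$ since the three-point conclusion holds verbatim there.
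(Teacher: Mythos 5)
Your proposal is correct and matches the paper's own proof: both extract a subgradient $\g\in\partial L(\z^*)$ satisfying the variational inequality $\langle \g + \z^*-\z_0,\,\z-\z^*\rangle\ge 0$, apply the subgradient inequality for $L$, and absorb the cross term via the standard three-point expansion of the quadratic. The only difference is cosmetic bookkeeping in how the identity $\langle \z_0-\z^*,\z-\z^*\rangle=\tfrac12\bigl[\|\z^*-\z_0\|^2+\|\z-\z^*\|^2-\|\z-\z_0\|^2\bigr]$ is deployed.
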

\begin{proof}
	By the first order condition of $\z^*$, there must be a subgradient $\g \in \partial L(\z^*)$ such that 
	\begin{equation}
	\label{eq:first-order}
	\langle \g + \z^*  - \z_0, \z - \z^*  \rangle \ge 0 \ \text{for} \ \z \in \CM.
	\end{equation}
	Therefore, using the property of subgradient, we have
	\[
	L(\z) 
	\ge L(\z^*) + \langle \g, \z - \z^* \rangle 
	\overset{\eqref{eq:first-order}}{\ge}  L(\z^*) + 	\langle \z_0 - \z^* , \z - \z^*  \rangle
	= L(\z^*) +  \frac{1}{2}  \left[ \|\z^* - \z_0\|^2  + \|\z - \z^*\|^2 -\|\z - \z_0\|^2 \right].
	\]
\end{proof}

\begin{lem}[Bounded gradient variance]
	\label{lem:var_acc_svrg}
	Let $\g_t^s = \nabla F(\x_{t}^s; \xi_{t}^s) - \nabla F(\ttx_s; \xi_{t}^s) + \PAT(\nabla F(\ttx_{s}))$.
Then, $\PAT(\g_t^s)$ is unbiased for $\PAT(\nabla F(\x_t^s))$ (i.e., $\EB_{\xi_{t}^s}  \PAT(\g_t^s) =  \PAT(\nabla F(\x_t^s))$) and has bounded variance at most:
\[
\EB_{\xi_{t}^s} \| \PAT(\g_t^s- \nabla F(\x_{t}^s))\|^2
\le  2L \left[ F(\ttx_s) - F( \x_t^s) - \langle \nabla F( \x_t^s), \ttx_s -   \x_t^s \rangle\right].
\]
\end{lem}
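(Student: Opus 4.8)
The plan is to first establish unbiasedness and then bound the second moment of the appropriately centered estimator, reusing the projection properties and the smoothness--convexity inequality already available. For unbiasedness I would take the conditional expectation $\EB_{\xi_t^s}$ and use that each stochastic gradient is unbiased, i.e. $\EB_{\xi_t^s}\nabla F(\x_t^s;\xi_t^s) = \nabla F(\x_t^s)$ and $\EB_{\xi_t^s}\nabla F(\ttx_s;\xi_t^s) = \nabla F(\ttx_s)$. Applying $\PAT$ and invoking its linearity from Proposition~\ref{prop:proj} together with idempotence (a projection onto a subspace fixes that subspace, so $\PAT(\PAT(\cdot))=\PAT(\cdot)$), the control-variate term $\PAT(\nabla F(\ttx_s))$ is left unchanged and cancels the $-\PAT(\nabla F(\ttx_s))$ produced by the expectation of the stochastic part, leaving $\EB_{\xi_t^s}\PAT(\g_t^s) = \PAT(\nabla F(\x_t^s))$.

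The key observation for the variance bound is that $\PAT(\g_t^s - \nabla F(\x_t^s))$ is exactly a centered random vector. Writing $X := \nabla F(\x_t^s;\xi_t^s) - \nabla F(\ttx_s;\xi_t^s)$, linearity and idempotence of $\PAT$ give $\PAT(\g_t^s - \nabla F(\x_t^s)) = \PAT(X) - \PAT(\nabla F(\x_t^s) - \nabla F(\ttx_s)) = \PAT(X) - \EB_{\xi_t^s}\PAT(X)$, where I used $\EB_{\xi_t^s} X = \nabla F(\x_t^s) - \nabla F(\ttx_s)$. Thus the quantity to be controlled is precisely the variance of $\PAT(X)$.

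From here the chain of bounds is short. I would apply the elementary inequality $\EB\|Y - \EB Y\|^2 \le \EB\|Y\|^2$ with $Y = \PAT(X)$, then the non-expansiveness $\|\PAT(X)\| \le \|X\|$ from Proposition~\ref{prop:proj} (valid since $\PAT(\0)=\0$), reducing the target to $\EB_{\xi_t^s}\|X\|^2 = \EB_{\xi_t^s}\|\nabla F(\ttx_s;\xi_t^s) - \nabla F(\x_t^s;\xi_t^s)\|^2$. The final step applies item~2 of Lemma~\ref{lem:F} to the pair $(\ttx_s, \x_t^s)$ with sample $\xi_t^s$, namely $\|\nabla F(\ttx_s;\xi) - \nabla F(\x_t^s;\xi)\|^2 \le 2L[F(\ttx_s;\xi) - F(\x_t^s;\xi) - \langle \nabla F(\x_t^s;\xi), \ttx_s - \x_t^s\rangle]$, and then takes expectation over $\xi_t^s$ to obtain the stated bound $2L[F(\ttx_s) - F(\x_t^s) - \langle \nabla F(\x_t^s), \ttx_s - \x_t^s\rangle]$.

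The computation is almost entirely bookkeeping; the only step requiring genuine care is the rewriting in the second paragraph, where one must track how $\PAT$ interacts with the control variate $\PAT(\nabla F(\ttx_s))$ so that the estimator collapses into a genuinely centered object and the variance--second-moment inequality becomes applicable. In contrast to the non-accelerated counterpart Lemma~\ref{lem:var_svrg}, I would deliberately avoid inserting the optimum $\xc$ and splitting into the $\z_t^s$ and $\y_t^s$ components; keeping $X$ intact and applying the smoothness--convexity inequality directly to the pair $(\ttx_s, \x_t^s)$ yields the cleaner Bregman-type bound, which is exactly the form the subsequent accelerated analysis consumes.
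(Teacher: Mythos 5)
Your proposal is correct and follows essentially the same route as the paper's proof: rewrite $\PAT(\g_t^s-\nabla F(\x_t^s))$ as the centered vector $\PAT(X)-\EB_{\xi_t^s}\PAT(X)$ with $X=\nabla F(\x_t^s;\xi_t^s)-\nabla F(\ttx_s;\xi_t^s)$, drop the centering via $\EB\|Y-\EB Y\|^2\le\EB\|Y\|^2$, drop the projection via non-expansiveness, and finish with item~2 of Lemma~\ref{lem:F} applied to the pair $(\ttx_s,\x_t^s)$. Your closing remark about deliberately not inserting $\xc$ (as in Lemma~\ref{lem:var_svrg}) is exactly the distinction the paper draws between the two variance lemmas.
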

\begin{proof}
	The unbiasedness is obvious. 
	For bounded variance, we have
	\begin{align*}
	\EB_{\xi_{t}^s}\| \PAT(\g_t^s- \nabla F(\x_{t}^s))\|^2
	&=\EB_{\xi_{t}^s} \| \PAT\left(\nabla F(\x_{t}^s; \xi_{t}^s) - \nabla F(\ttx_s; \xi_{t}^s) \right) - \PAT\left(\nabla F(\x_{t}^s) - \nabla F(\ttx_s) \right)\|^2\\
	&\overset{(a)}{\le} \EB_{\xi_{t}^s} \| \PAT\left(\nabla F(\x_{t}^s; \xi_{t}^s) - \nabla F(\ttx_s; \xi_{t}^s) \right) \|^2\\
	&\le \EB_{\xi_{t}^s} \| \nabla F(\x_{t}^s; \xi_{t}^s) - \nabla F(\ttx_s; \xi_{t}^s) \|^2 \\
	&\overset{(b)}{\le}  2L \left[ F(\ttx_s) - F( \x_t^s) - \langle \nabla F( \x_t^s), \ttx_s -   \x_t^s \rangle\right].
	\end{align*}
	where (a) uses $\EB\|X-\EB X\|^2 \le \EB \|X\|^2$ for any random vector $X$; and (b) uses Lemma~\ref{lem:F}.
\end{proof}

\begin{lem}
	\label{lem:y_descent_2}
	Under Assumption~\ref{asmp:smooth} and~\ref{asmp:strong}, for Algorithm~\ref{alg:multi_acc_SVRG}, if the learning rate satisfies 
	\begin{equation}
	\label{eq:lr_acc_svrg}
	0 < \eta L \le \frac{1}{2}, 1-\theta_s \ge \frac{\eta L}{1- \eta L},
	\end{equation}
	then for Algorithm~\ref{alg:multi_acc_SVRG} it follows that
			\begin{align*}
	\EB  \left[F(\PAT(\x_{t+1}^s)) - F(\xc)\right] 
&\le  (1-\theta_s) \EB  \left[F(\ttx_{s}) - F(\xc)\right]  + \frac{3L}{2}  \EB \|\PA(\x_t^s)\|^2 \\
&\qquad + \frac{\theta_s^2}{2\eta} \EB\left[\| \PAT(\u_{t}^s) - \xc \|^2 - \| \PAT(\u_{t+1}^s) - \xc \|^2 \right] .
	\end{align*}
\end{lem}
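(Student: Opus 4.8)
The plan is to reduce the accelerated scheme to a clean gradient/mirror step on the \emph{projected} iterates and then run the standard linear-coupling argument, using the three-points lemma for the mirror part and the variance lemma for the stochastic noise. Write $\y_t^s=\PAT(\x_t^s)$, $\z_t^s=\PA(\x_t^s)$ and $\widetilde{\u}_t^s=\PAT(\u_t^s)$. First I would observe that the algebraic relation $\x_t^s=(1-\theta_s)\ttx_s+\theta_s\u_t^s$ is maintained at every inner step and is preserved by the two projections, since $\PAT(\ttx_s)=\ttx_s$. Hence $\x_{t+1}^s=\x_t^s-\eta\g_t^s$, and the linearity of $\PAT$ (Proposition~\ref{prop:proj}) gives the projected recursions $\y_{t+1}^s=\y_t^s-\eta\PAT(\g_t^s)$ and $\widetilde{\u}_{t+1}^s=\widetilde{\u}_t^s-\tfrac{\eta}{\theta_s}\PAT(\g_t^s)$, together with the coupling $\y_{t+1}^s=(1-\theta_s)\ttx_s+\theta_s\widetilde{\u}_{t+1}^s$. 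Since $\xc,\ttx_s,\y_{t+1}^s,\widetilde{\u}_{t+1}^s\in\mathcal{R}(\A^\perp)$ while $\z_t^s\in\mathcal{R}(\A)$, all cross terms between the two subspaces vanish by orthogonality; in particular $\y_{t+1}^s-\x_t^s=-\eta\PAT(\g_t^s)-\z_t^s$ is an orthogonal decomposition, so $\|\y_{t+1}^s-\x_t^s\|^2=\eta^2\|\PAT(\g_t^s)\|^2+\|\z_t^s\|^2$.

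Next I would apply the $L$-smoothness descent lemma to $F(\y_{t+1}^s)$ expanded around $\x_t^s$ and split the linear term along the coupling via $\y_{t+1}^s-\x_t^s=(1-\theta_s)(\ttx_s-\x_t^s)+\theta_s(\widetilde{\u}_{t+1}^s-\x_t^s)$. I would bound the $\xc$-direction by convexity and keep the $\ttx_s$ inner product exact for the moment, and rewrite the remaining $\u$-direction as $\theta_s\langle\PAT(\nabla F(\x_t^s)),\widetilde{\u}_{t+1}^s-\xc\rangle$, which is legitimate because $\widetilde{\u}_{t+1}^s-\xc\in\mathcal{R}(\A^\perp)$.

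The heart of the argument, and the step requiring the most care, is this $\u$-direction term. I would write $\PAT(\nabla F(\x_t^s))=\PAT(\g_t^s)+\big(\PAT(\nabla F(\x_t^s))-\PAT(\g_t^s)\big)$, bound $\langle\PAT(\g_t^s),\widetilde{\u}_{t+1}^s-\xc\rangle$ by the three-points lemma (Lemma~\ref{lem:three-point}) applied to the mirror step, which supplies both the telescoping $\tfrac{\theta_s^2}{2\eta}\big(\|\widetilde{\u}_t^s-\xc\|^2-\|\widetilde{\u}_{t+1}^s-\xc\|^2\big)$ and a crucial negative quadratic $-\tfrac{\theta_s^2}{2\eta}\|\widetilde{\u}_{t+1}^s-\widetilde{\u}_t^s\|^2$, and split the noise inner product against $\widetilde{\u}_{t+1}^s-\widetilde{\u}_t^s$, the complementary piece $\langle\cdot,\widetilde{\u}_t^s-\xc\rangle$ having zero conditional mean. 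The obstacle is the tuning: I would apply Young's inequality (Lemma~\ref{lem:ineq1}) to the noise--$(\widetilde{\u}_{t+1}^s-\widetilde{\u}_t^s)$ product with parameter $\gamma=\tfrac{\eta}{\theta_s(1-\eta L)}$, chosen so that the $\|\widetilde{\u}_{t+1}^s-\widetilde{\u}_t^s\|^2$ coefficients from the three-points lemma, from Young, and from the smoothness residual $\tfrac{L\eta^2}{2}\|\PAT(\g_t^s)\|^2=\tfrac{L\theta_s^2}{2}\|\widetilde{\u}_{t+1}^s-\widetilde{\u}_t^s\|^2$ sum to a non-positive number, while the leftover noise magnitude, bounded via the variance lemma (Lemma~\ref{lem:var_acc_svrg}) by $2L\,D(\ttx_s,\x_t^s)$ with $D(\ttx_s,\x_t^s)=F(\ttx_s)-F(\x_t^s)-\langle\nabla F(\x_t^s),\ttx_s-\x_t^s\rangle\ge0$, acquires coefficient exactly $\tfrac{\eta L}{1-\eta L}$. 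The hypothesis $\eta L\le\tfrac12$ is what keeps $1-\eta L>0$, so that $\gamma$ is finite and positive.

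Finally I would collect terms. The Bregman term contributes $\tfrac{\eta L}{1-\eta L}F(\ttx_s)$ together with $-\tfrac{\eta L}{1-\eta L}\langle\nabla F(\x_t^s),\ttx_s-\x_t^s\rangle$; combined with the exact $(1-\theta_s)\langle\nabla F(\x_t^s),\ttx_s-\x_t^s\rangle$ from the coupling split, the inner-product coefficient becomes $(1-\theta_s)-\tfrac{\eta L}{1-\eta L}$, which is nonnegative \emph{precisely} by the hypothesis $1-\theta_s\ge\tfrac{\eta L}{1-\eta L}$, so convexity may be applied there. A short bookkeeping of coefficients then shows that every $F(\x_t^s)$ cancels and that the coefficient of $F(\ttx_s)$ collapses to $(1-\theta_s)$. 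What survives of the smoothness residual is the orthogonal piece $\tfrac{L}{2}\|\z_t^s\|^2=\tfrac{L}{2}\|\PA(\x_t^s)\|^2$, which is dominated by the claimed $\tfrac{3L}{2}\|\PA(\x_t^s)\|^2$. Taking total expectation then yields the stated inequality.
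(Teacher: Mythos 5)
Your proof is correct and follows the same overall linear-coupling skeleton as the paper (smoothness $+$ coupling split $+$ three-points lemma $+$ the variance bound of Lemma~\ref{lem:var_acc_svrg} $+$ convexity), but it differs in two technical respects that are worth noting. First, the paper anchors the smoothness expansion at the \emph{projected} point $\PAT(\x_t^s)$ and must then pay twice for the discrepancy between $\nabla F(\PAT(\x_t^s))$ and $\nabla F(\x_t^s)$, which is where its $\tfrac{L}{2}+\tfrac{L}{\beta_2}\le\tfrac{3L}{2}$ coefficient on $\|\PA(\x_t^s)\|^2$ comes from; you anchor at $\x_t^s$ itself and recover the residual purely from the orthogonal decomposition $\y_{t+1}^s-\x_t^s=-\eta\PAT(\g_t^s)-\z_t^s$, ending with the sharper constant $\tfrac{L}{2}\le\tfrac{3L}{2}$. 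Second, the paper absorbs the noise inner product against the full displacement $\PAT(\x_{t+1}^s-\x_t^s)$ using the surplus negative quadratic $-\tfrac{L\beta_2}{2}\|\cdot\|^2$ created by the split $\beta_1-\beta_2=1$, and evaluates the three-points lemma at the old point $\u_t^s$; you instead exploit the martingale structure (the component against $\widetilde{\u}_t^s-\xc$ has zero conditional mean), pair the remaining noise only with $\widetilde{\u}_{t+1}^s-\widetilde{\u}_t^s$, and cancel it against the negative quadratic retained from the three-points lemma. A pleasant by-product is that your coefficient bookkeeping needs only the hypothesis $1-\theta_s\ge\tfrac{\eta L}{1-\eta L}$ exactly as stated in the lemma, whereas the paper's argument actually invokes the stronger requirement $1-\theta_s\ge\tfrac{2}{\beta_2}=\tfrac{2\eta L}{1-\eta L}$ (consistent with the algorithm's choice $\theta_0=1-\tfrac{2\eta L}{1-\eta L}$ but not with the displayed condition~\eqref{eq:lr_acc_svrg}); your route therefore closes a small mismatch between the lemma's statement and its proof.
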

\begin{proof}
	By $L$-smoothness of $F(\cdot)$ and setting $\beta_1 - \beta_2 = 1$ where $\beta_1 = \frac{1}{\eta L}$ and $\eta$ is sufficiently small (guaranteed by~\eqref{eq:lr_acc_svrg}) such that $\beta_2 > 0$, we have
	\begin{align}
	\label{eq:ay1}
	F(\PAT(\x_{t+1}^s))
	&\le F(\PAT(\x_{t}^s)) + \langle \nabla F(\PAT(\x_{t}^s)), \PAT(\x_{t+1}^s-\x_{t}^s) \rangle + \frac{L}{2} \| \PAT(\x_{t+1}^s-\x_{t}^s) \|^2 \nonumber \\
	&= F(\PAT(\x_{t}^s)) + \langle  \PAT(\nabla F(\PAT(\x_{t}^s))), \PAT(\x_{t+1}^s-\x_{t}^s) \rangle + \frac{L}{2} \| \PAT(\x_{t+1}^s-\x_{t}^s) \|^2 \nonumber \\
	&= F(\PAT(\x_{t}^s))  +\underbrace{\langle \PAT(\g_t^s), \PAT(\x_{t+1}^s-\x_{t}^s) \rangle +\frac{L\beta_1}{2} \| \PAT(\x_{t+1}^s-\x_{t}^s) \|^2 }_{I} \nonumber \\
	&\qquad +\underbrace{\langle \PAT( \nabla F(\PAT(\x_{t}^s))-\g_t^s), \PAT(\x_{t+1}^s-\x_{t}^s) \rangle -\frac{L\beta_2}{2} \| \PAT(\x_{t+1}^s-\x_{t}^s) \|^2 }_{II}.
	\end{align}

	We will use $\EB_{\xi_{t}^s}(\cdot)$ to denote that we condition on all randomness before $\xi_t^s$ and take expectation with respect to $\xi_t^s$.
	To bound the $I$ term, we have that
	\begin{align}
	\label{eq:ay2}
 	\EB_{\xi_{t}^s}I &:= \EB_{\xi_{t}^s} \left[\langle \PAT( \g_t^s), \PAT(\x_{t+1}^s-\x_{t}^s) \rangle +\frac{L\beta_1}{2} \| \PAT(\x_{t+1}^s-\x_{t}^s) \|^2 \right]\nonumber \\
	&\overset{(a)}{=}\EB_{\xi_{t}^s} \left[\theta_s\langle \PAT (\g_t^s), \PAT(\u_{t+1}^s-\u_{t}^s) \rangle +\frac{L\beta_1\theta_s^2}{2} \| \PAT(\u_{t+1}^s-\u_{t}^s) \|^2\right] \nonumber \\
	&\overset{(b)}{\le}\EB_{\xi_{t}^s} \left[ \theta_s \langle  \PAT(\g_t^s), \xc - \PAT(\u_{t}^s) \rangle +\frac{L\beta_1\theta_s^2}{2} \left[ \| \PAT(\u_{t}^s) - \xc \|^2 - \| \PAT(\u_{t+1}^s) - \xc \|^2 \right]\right]   \nonumber \\
	&\overset{(c)}{=} \theta_s \langle  \PAT(\nabla F(\x_t^s)), \xc - \PAT(\u_{t}^s) \rangle +\frac{L\beta_1\theta_s^2}{2}\EB_{\xi_{t}^s} \left[ \| \PAT(\u_{t}^s) - \xc \|^2 - \| \PAT(\u_{t+1}^s) - \xc \|^2 \right] \nonumber \\
	&= \theta_s \langle \nabla F(\x_t^s), \xc - \PAT(\u_{t}^s) \rangle +\frac{L\beta_1\theta_s^2}{2}\EB_{\xi_{t}^s} \left[ \| \PAT(\u_{t}^s) - \xc \|^2 - \| \PAT(\u_{t+1}^s) - \xc \|^2 \right].
	\end{align}
	Here (a) uses $\x_{t+1}^s - \x_t^s = \theta_s(\u_{t+1}^s - \u_t^s)$; 
	(b) uses Lemma~\ref{lem:three-point} by setting $L(\z) = \frac{1}{L\beta_1\theta_s} \langle \PAT(\g_t^s) , \z - \PAT(\u_t^s) \rangle, \z_0 = \PAT(\u_t^s)$ and $\z = \PAT(\xc) = \xc$ (in this case we can find that $\z^* = \PAT(\u_{t+1}^s)  =  \PAT(\u_{t}^s - \frac{\eta}{\theta_s} \cdot \g_{t}^s)= \z_0 - \frac{1}{L\beta_1\theta_s}  \PAT(\g_t^s)$ due to $\eta = \frac{1}{L\beta_1}$); and (c) uses $\PAT(\g_t^s)$ is a unbiased estimator for $\PAT(\nabla F(\x_t^s))$ and is independent with $\u_t^s$.
	
	To bound the $II$ term, we have that
	\begin{align}
	\label{eq:ay3}
	\EB_{\xi_{t}^s}	II &:=  \EB_{\xi_{t}^s} \left[\langle \PAT( \nabla F(\PAT(\x_{t}^s))-\g_t^s), \PAT(\x_{t+1}^s-\x_{t}^s) \rangle -\frac{L\beta_2}{2} \| \PAT(\x_{t+1}^s-\x_{t}^s) \|^2\right] \nonumber \\
	&\overset{(a)}{\le}  \EB_{\xi_{t}^s} \left[ \frac{1}{2L\beta_2}\|\PAT(\nabla F(\PAT(\x_{t}^s))-\g_t^s)\|^2  + \frac{L\beta_2}{2}   \| \PAT(\x_{t+1}^s-\x_{t}^s) \|^2  - \frac{L\beta_2}{2} \| \PAT(\x_{t+1}^s-\x_{t}^s) \|^2  \right]\nonumber \\
	&= \frac{1}{2L\beta_2} \EB_{\xi_{t}^s}\|\PAT(\nabla F(\PAT(\x_{t}^s))-\g_t^s)\|^2 \nonumber \\
	&\overset{(b)}{\le} \frac{1}{L\beta_2}  \EB_{\xi_{t}^s} \left[ \|\PAT(\nabla F(\x_{t}^s))-\g_t^s)\|^2 + \|\PAT(\nabla F(\PAT(\x_{t}^s)-\nabla F(\x_{t}^s))\|^2     \right]\nonumber \\
	&\overset{(c)}{\le}  \frac{2}{\beta_2} \left[  F(\ttx_s) - F( \x_t^s)) - \langle \nabla F(\x_t^s), \ttx_s -   \x_t^s \rangle \right]  + \frac{1}{L\beta_2}  \|\nabla F(\PAT(\x_{t}^s))-\nabla F(\x_{t}^s)\|^2   \nonumber \\
	&\overset{(d)}{\le} \frac{2}{\beta_2} \left[  F(\ttx_s) - F( \x_t^s)- \langle \nabla F( \x_t^s), \ttx_s -   \x_t^s \rangle \right] + \frac{L}{\beta_2}\|\PA(\x_{t}^s)\|^2.
	\end{align}
	Here (a) uses Lemma~\ref{lem:ineq1}; (b) uses Lemma~\ref{lem:j} with $n=2$; (c) uses Lemma~\ref{lem:var_acc_svrg} and (d) uses $L$-smoothness.

	Combing~\eqref{eq:ay1},~\eqref{eq:ay2} and~\eqref{eq:ay3} and  taking expectation with respect to all randomness, we have
	\begin{align}
	\label{eq:ay4}
	\EB F(\PAT(\x_{t+1}^s)) 
	&\le \EB F(\PAT(\x_{t}^s)) + \theta_s\EB\langle \nabla F(\x_t^s), \xc - \PAT(\u_{t}^s) \rangle + \frac{L}{\beta_2}\|\PA(\x_{t}^s)\|^2 \nonumber  \\
	& \qquad +\frac{L\beta_1\theta_s^2}{2} \EB\left[ \| \PAT(\u_{t}^s) - \xc \|^2 - \| \PAT(\u_{t+1}^s) - \xc \|^2 \right] \nonumber \\
	&\qquad + \frac{2}{\beta_2} \left[  F(\ttx_s) - F( \x_t^s) - \langle \nabla F( \x_t^s), \ttx_s -   \x_t^s \rangle \right]  \nonumber \\
	&= \EB F(\PAT(\x_{t}^s)) +\frac{L\beta_1\theta_s^2}{2} \EB\left[ \| \PAT(\u_{t}^s) - \xc \|^2 - \| \PAT(\u_{t+1}^s) - \xc \|^2 \right]  \nonumber  \\
	& \qquad +  \frac{2}{\beta_2} \left[  F(\ttx_s) - F( \x_t^s) \right]+ \frac{L}{\beta_2}\|\PA(\x_{t}^s)\|^2  \nonumber  \\
	& \qquad
	+  \EB\underbrace{\left\langle \nabla F(\x_t^s), \theta_s(\xc - \PAT(\u_{t}^s)) - \frac{2}{\beta_2} (\ttx_s -   \x_t^s)\right\rangle}_{III}
	\end{align}
	
	To bound the $III$ term, we have that
	\begin{align}
	\label{eq:ay5}
	III&:=\left\langle \nabla F(\x_t^s), \theta_s(\xc - \PAT(\u_{t}^s)) - \frac{2}{\beta_2}( \ttx_s -  \x_t^s) \right\rangle \nonumber \\
	&\overset{(a)}{=}  \left\langle \nabla F( \x_t^s), 	\left[\theta_s\xc + \left(1-\theta_s - \frac{2}{\beta_2} \right)  \ttx_s  + \frac{2}{\beta_2}\x_t^s\right] - \PAT(\x_t^s)\right\rangle \nonumber\\
	&=  \left\langle \nabla F( \x_t^s), 	\left[\theta_s\xc + \left(1-\theta_s - \frac{2}{\beta_2} \right)  \ttx_s  + \frac{2}{\beta_2}\x_t^s\right] - \x_t^s\right\rangle \nonumber\\
	& \qquad +  \left\langle \nabla F( \x_t^s), 	\x_t^s - \PAT(\x_t^s) \right\rangle \nonumber\\
	&\overset{(b)}{\le}  F\left(\theta_s\xc + \left(1-\theta_s - \frac{2}{\beta_2} \right)  \ttx_s  + \frac{2}{\beta_2}\x_t^s\right) - F(\x_t^s) \nonumber \\
	& \qquad +  \left\langle \nabla F( \x_t^s), 	\x_t^s - \PAT(\x_t^s) \right\rangle \nonumber\\
	&\overset{(c)}{\le}  F\left(\theta_s\xc + \left(1-\theta_s - \frac{2}{\beta_2} \right)  \ttx_s  + \frac{2}{\beta_2}\x_t^s\right) - F(\x_t^s) \nonumber \\
	& \qquad +  \left[F( \x_t^s) - F(\PAT(\x_t^s)) + \frac{L}{2} \|\x_t^s - \PAT(\x_t^s)\|^2 \right]  \nonumber\\	
	&=  F\left(\theta_s\xc + \left(1-\theta_s - \frac{2}{\beta_2} \right)  \ttx_s  + \frac{2}{\beta_2}\x_t^s\right) - F(\PAT(\x_t^s))  + \frac{L}{2} \|\PA(\x_t^s)\|^2  \nonumber\\	
	&\overset{(d)}{\le}\theta_s F(\xc) + \left(1-\theta_s - \frac{2}{\beta_2} \right) F(\ttx_{s}) + \frac{2}{\beta_2} F(\x_t^s) -F(\PAT(\x_t^s)) + \frac{L}{2} \|\PA(\x_t^s)\|^2 .
	\end{align}
	Here (a) uses $ \x_t^s = (1-\theta_s)\ttx_{s} + \theta_s \u_t^s$ that implies (note that $\ttx_s, \xc \in \mathcal{R}(A^{\perp})$)
	\[
	\theta_s(\xc - \PAT(\u_{t}^s)) - \frac{2}{\beta_2} \left( \ttx_s -   \x_t^s\right) = 
	\left[\theta_s\xc + \left(1-\theta_s - \frac{2}{\beta_2} \right)  \ttx_s  + \frac{2}{\beta_2}\x_t^s\right] - \PAT(\x_t^s);
	\]
	(b) uses the convexity of $F(\cdot)$; (c) uses the $L$-smoothness of $F(\cdot)$; (d) uses Jensen's inequality for convex functions where we ensure $1-\theta_s \ge \frac{2}{\beta_2}$ by~\eqref{eq:lr_acc_svrg}.

	Substituting~\eqref{eq:ay5} into~\eqref{eq:ay4} yields
	\begin{align*}
	\EB  \left[F(\PAT(\x_{t+1}^s)) - F(\xc)\right] 
	&\le  (1-\theta_s) \EB  \left[F(\ttx_{s}) - F(\xc)\right]  + \left( \frac{L}{2} + \frac{L}{\beta_2} \right)  \EB \|\PA(\x_t^s)\|^2 \\
	&\qquad + \frac{L\beta_1\theta_s^2}{2} \EB\left[\| \PAT(\u_{t}^s) - \xc \|^2 - \| \PAT(\u_{t+1}^s) - \xc \|^2 \right] .
	\end{align*}
	Setting $\eta$ as in~\eqref{eq:lr_acc_svrg} means $\eta L \le \frac{1}{2}$ and thus
	\[
	 \frac{L}{2} + \frac{L}{\beta_2}  =  \frac{L}{2} + \frac{L}{1-\eta L}  \eta L \le \frac{3L}{2}.
	\]
	Plugging the expression of $\beta_1= \frac{1}{\eta L}$ completes the proof.
\end{proof}

\subsection{Residual Lemma}
In this section, we provide a residual lemma for the accelerated algorithm.
Instead of using incremental analysis that upper bounds $\EB \|\PA(\x_{t+1}^s)\|^2$ in terms of $\EB \|\PA(\x_{t}^s)\|^2$ (for example, see Lemma~\ref{lem:z_descent_1}), we make use of a decomposition analysis that splits $\PA(\x_{t+1}^s)$ into a sum of previous stochastic gradients and then bound each variance term.
As a result, we focus the the average residual error, i.e., $\frac{1}{E}\sum_{t=1}^{E} \EB\|\PA(\x_t^s)\|^2$.

\begin{lem}
	\label{lem:z_descent_1.9}
	Let $0=t_0 < t_1 < t_2 < \cdots < t_K = m$ be the elements of $\IM_m$ and denote $s_k = t_{k+1} - t_{k}$ (so that $\sum_{i=0}^{K-1}s_i = m$ and $s_k \le E$).
	Under Assumption~\ref{asmp:smooth} and~\ref{asmp:strong}, when $E(E-1)\eta^2L^2 \le \frac{1}{3}$, we have for any $0 \le k \le K-1$, 
	\[
	\frac{1}{s_k}\sum_{t=t_k+1}^{t_{k+1}} \EB\|\PA(\x_t^s)\|^2
	\le 6E(E-1)\eta^2L \left[ \frac{1}{s_k}\sum_{t=t_k}^{t_{k+1}-1}\EB  \left[F(\PAT(\x_{t}^s)) - F(\xc) \right] + F(\tx_s) - F(\xc) \right].
	\]
\end{lem}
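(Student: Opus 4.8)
The plan is to exploit a structural feature of the accelerated updates: the acceleration parameter $\theta_s$ cancels out of the $\mathcal{R}(\A)$-component, so that the residual $\PA(\x_t^s)$ is simply an un-projected accumulation of projected stochastic gradients, which can then be controlled by a self-bounding argument. Fix a projection interval $[t_k,t_{k+1}]$; since $t_k,t_{k+1}\in\IM_m$ but no interior index is, both $\x$ and $\u$ are projected at the endpoints and nowhere in between, giving $\PA(\x_{t_k}^s)=\PA(\u_{t_k}^s)=\0$ and $R_{t_k}=R_{t_{k+1}}=\0$. First I would apply $\PA$ to the two update lines. By linearity of $\PA$ (Proposition~\ref{prop:proj}) and $\PA(\ttx_s)=\0$, the line $\x_{t+1}^s=\ttx_s+\theta_s(\u_{t+1}^s-\ttx_s)$ yields $\PA(\x_{t+1}^s)=\theta_s\PA(\u_{t+1}^s)$, while $\u_{t+1}^s=\u_t^s-\frac{\eta}{\theta_s}\g_t^s$ yields $\PA(\u_{t+1}^s)=\PA(\u_t^s)-\frac{\eta}{\theta_s}\PA(\g_t^s)$, with $\PA(\g_t^s)=\PA(\nabla F(\x_t^s;\xi_t^s)-\nabla F(\ttx_s;\xi_t^s))$ because the control term $\PAT(\nabla F(\ttx_s))$ lies in $\mathcal{R}(\A^\perp)$ and is annihilated by $\PA$. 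Telescoping from $t_k$ (where the residual vanishes) produces the clean identity $\PA(\x_{t_k+j}^s)=-\eta\sum_{i=0}^{j-1}\PA(\g_{t_k+i}^s)$ for $1\le j\le s_k-1$, in which $\theta_s$ has disappeared entirely.

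Next I would square this identity and apply Lemma~\ref{lem:j} with $n=j\le s_k\le E$, obtaining $R_{t_k+j}\le \eta^2 j\sum_{i=0}^{j-1}G_{t_k+i}$, where I write $R_t=\EB\|\PA(\x_t^s)\|^2$, $G_t=\EB\|\PA(\g_t^s)\|^2$, $D_t=\EB[F(\PAT(\x_t^s))-F(\xc)]$ and $\widetilde D_s=F(\ttx_s)-F(\xc)$. The next step is the per-step variance bound $G_t\le 3L^2R_t+6LD_t+6L\widetilde D_s$, which is precisely the computation carried out in the proof of Lemma~\ref{lem:z_descent_1}: drop $\PA$ by non-expansiveness, split $\nabla F(\x_t^s;\xi)-\nabla F(\ttx_s;\xi)$ into three telescoping pieces around $\PAT(\x_t^s)$ and $\xc$, apply Lemma~\ref{lem:j} with $n=3$, bound the $\x_t^s$-versus-$\PAT(\x_t^s)$ piece by $L^2\|\PA(\x_t^s)\|^2$ via Assumption~\ref{asmp:smooth}, and bound the two pieces lying in $\mathcal{R}(\A^\perp)$ by Lemma~\ref{lem:F}(3).

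The heart of the argument is the summation together with the absorption of the residual. Summing $R_{t_k+j}\le\eta^2 j\,\bar G$ over $j=0,\dots,s_k-1$ (with $\bar G=\sum_{t=t_k}^{t_{k+1}-1}G_t$), using $\sum_{j=0}^{s_k-1}j=\tfrac{s_k(s_k-1)}{2}$, and recalling $R_{t_k}=R_{t_{k+1}}=\0$ so that the target left-hand side is exactly $\tfrac{1}{s_k}Q$ with $Q:=\sum_{t=t_k}^{t_{k+1}-1}R_t$, I would reach the coupled pair
\begin{align*}
Q\le \eta^2\tfrac{s_k(s_k-1)}{2}\,\bar G,\qquad \bar G\le 3L^2 Q+6L\sum_{t=t_k}^{t_{k+1}-1}D_t+6Ls_k\widetilde D_s .
\end{align*}
Substituting the second inequality into the first, the coefficient of $Q$ on the right becomes $\tfrac{3}{2}\eta^2L^2 s_k(s_k-1)\le \tfrac{3}{2}\eta^2L^2E(E-1)\le \tfrac12$ exactly by the hypothesis $E(E-1)\eta^2L^2\le\tfrac13$. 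Moving $\tfrac12 Q$ to the left, dividing by $s_k$, and using $s_k(s_k-1)\le E(E-1)$ then delivers the claimed bound with the constant $6E(E-1)$.

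The main obstacle I anticipate is the self-referential nature of the variance bound: $G_t$ contains the residual $R_t$ we are trying to bound, so a direct telescoping does not close and one is forced into the fixed-point/absorption step above. The delicate point is purely in the constants: the residual factor after summation is $\tfrac{3}{2}\eta^2L^2 s_k(s_k-1)$, which is matched exactly to the threshold $E(E-1)\eta^2L^2\le\tfrac13$, and this pairing relies on the precise bookkeeping that \emph{both} endpoints carry zero residual and that $\sum_{j=0}^{s_k-1}j=\tfrac{s_k(s_k-1)}{2}$ (any cruder estimate, e.g.\ replacing $s_k(s_k-1)$ by $s_k^2$, would spoil the clean factor). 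A secondary item worth stating explicitly is the cancellation of $\theta_s$ in the residual recursion, since that is what lets me reuse the non-accelerated variance estimate of Lemma~\ref{lem:z_descent_1} essentially verbatim.
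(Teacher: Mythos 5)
Your proposal is correct and takes essentially the same route as the paper's proof: write $\PA(\x_{t+1}^s)=\PA(\x_t^s)-\eta\PA(\g_t^s)$ (the $\theta_s$ cancellation), telescope from the zero residual at $t_k$, bound $\EB\|\PA(\g_t^s)\|^2\le 3L^2\EB\|\PA(\x_t^s)\|^2+6L\EB[F(\PAT(\x_t^s))-F(\xc)]+6L\,\EB[F(\ttx_s)-F(\xc)]$, sum over the interval, and absorb the self-referential term using $E(E-1)\eta^2L^2\le\tfrac13$. The one place you diverge is the right endpoint: the paper's own computation sums the \emph{pre}-projection iterate at $t_{k+1}$ and in doing so miscounts $\sum_{t=0}^{t_1-1}(t+1)$ as $\tfrac{t_1(t_1-1)}{2}$ rather than $\tfrac{t_1(t_1+1)}{2}$ (which would force the constant to $E(E+1)$), whereas your post-projection (hence zero) reading of $\PA(\x_{t_{k+1}}^s)$ is precisely the bookkeeping under which the stated factor $6E(E-1)$ actually comes out, and it is the only version needed downstream in Lemma~\ref{lem:z_descent_2}.
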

\begin{proof}
	Without loss of generality, we assume $0=t_0 \le t \le t_1-1$.
	Recall $\g_t^s = \nabla F(\x_{t}^s; \xi_{t}^s) - \nabla F(\ttx_s; \xi_{t}^s) + \PAT(\nabla F(\ttx_{s}))$. 
	Then it follows that
	\begin{align}
	\label{eq:g_a}
	\EB\|\PA(\g_t^s)\|^2 
	&=\EB\|\PA(    \nabla F(\x_{t}^s; \xi_{t}^s) - \nabla F(\ttx_s; \xi_{t}^s)  )\|^2   \nonumber \\
	&\le \EB\| \nabla F(\x_{t}^s; \xi_{t}^s) - \nabla F(\ttx_s; \xi_{t}^s) \|^2  \nonumber \\
	&\le 3 \EB\| \nabla F(\x_{t}^s; \xi_{t}^s) - \nabla F(\PAT(\x_{t}^s); \xi_{t}^s) \|^2 + 3 \EB\| \nabla F(\PAT(\x_{t}^s); \xi_{t}^s) - \nabla F(\xc; \xi_{t}^s) \|^2  \nonumber \\
	&\qquad +3 \EB\| \nabla F(\ttx_s; \xi_{t}^s) - \nabla F(\xc; \xi_{t}^s) \|^2  \nonumber \\
	&\le 3L^2\EB\|\PA(\x_t^s)\|^2 + 6L\EB \left[ F(\PAT(\x_{t}^s)) - F(\xc) \right] +  6L\EB \left[ F(\tx_s) - F(\xc) \right]
	\end{align}
	where the last inequality uses $L$-smoothness of $F(\cdot)$ and Lemma~\ref{lem:F}.
	
	Note that $\x_{t+1}^s - \x_t^s = \theta_s(\u_{t+1}^s - \u_t^s) =  - \eta \g_t^s$, which implies the accelerated algorithm has the same one-step update rule as the non-accelerated one and thus we can make use of Lemma~\ref{lem:z_descent_1} to give a residual lemma here.
	However, we apply a decomposition analysis here since the descent lemma is quite different.
	In particular, recursion gives 
	\[
	\PA(\x_{t+1}^s) 
	= \PA(\x_t^s - \eta \g_t^s) 
	= \PA\left(\x_0^s-\eta\sum_{\tau = 0}^t \g_\tau^s\right)
	= - \eta\sum_{\tau = 0}^t \PA(\g_\tau^s),
	\]
	by which we decompose $\PA(\x_{t+1}^s)$ into a sum of previous stochastic gradients.
	Summing $\EB\|\PA(\x_{t+1}^s) \|^2$ over $t=0, \cdots, E-1$ and using the decomposition give
	\begin{align}
	\label{eq:PAx}
	\sum_{t=0}^{t_1-1}\EB\|\PA(\x_{t+1}^s) \|^2
	&=\sum_{t=0}^{t_1-1}\EB\bigg\|\eta\sum_{\tau = 0}^t \PA(\g_\tau^s)\bigg\|^2 \nonumber \\
	&\overset{(a)}{\le}\eta^2 \sum_{t=0}^{t_1-1} (t+1)\sum_{\tau = 0}^t  \EB \| \PA(\g_\tau^s)\|^2 \nonumber \\
	&=\eta^2  \sum_{\tau=0}^{t_1-1}\sum_{t=\tau}^{t_1-1}(t+1) \EB \| \PA(\g_\tau^s)\|^2 \nonumber \\
	&\overset{(b)}{\le}  \frac{E(E-1)\eta^2}{2} \sum_{\tau=0}^{t_1-1} \EB \| \PA(\g_\tau^s)\|^2
	\end{align}
	where (a) uses Lemma~\ref{lem:j}; and (b) uses $\sum_{t=\tau}^{t_1-1}(t+1) \le \sum_{t=0}^{t_1-1}(t+1) = \frac{t_1(t_1-1)}{2} \le \frac{E(E-1)}{2}$ for any $0 \le \tau \le E-1$.
	Plugging~\eqref{eq:g_a} into~\eqref{eq:PAx} gives
	\begin{align*}
	\sum_{t=0}^{t_1-1} &\EB\|\PA(\x_{t+1}^s) \|^2\\
	&\le \frac{3E(E-1)\eta^2}{2}\sum_{t=0}^{t_1-1}
	\left[  L^2\EB\|\PA(\x_t^s)\|^2 + 2L\EB \left[ F(\PAT(\x_{t}^s)) - F(\xc) + F(\tx_s) - F(\xc) \right]  \right]\\
	&\overset{(a)}{\le} \frac{3E(E-1)\eta^2L^2}{2}\sum_{t=1}^{t_1} \EB\|\PA(\x_t^s)\|^2 +
	3E(E-1)\eta^2L\sum_{t=0}^{t_1-1}\EB \left[ F(\PAT(\x_{t}^s)) - F(\xc) + F(\tx_s) - F(\xc) \right]\\
	&\overset{(b)}{\le} \frac{1}{2}\sum_{t=1}^{t_1} \EB\|\PA(\x_t^s)\|^2 +
	3E(E-1)\eta^2L\sum_{t=0}^{t_1-1}\EB \left[ F(\PAT(\x_{t}^s)) - F(\xc) + F(\tx_s) - F(\xc) \right]
	\end{align*}
	where (a) uses $\PA(\x_0^s) = \0$; and (b) follows by setting $E(E-1)\eta^2L^2 \le \frac{1}{3}$.
	
	Finally we complete the proof of $k=0$ by arranging the last inequality.
	It is natural to extend the argument to the case that $1 \le k \le K-1$.
\end{proof}

\begin{lem}
\label{lem:z_descent_2}
Under Assumption~\ref{asmp:smooth} and~\ref{asmp:strong}, when $E(E-1)\eta^2L^2 \le \frac{1}{3}$ and $m \ge E$, we have
\[
\frac{1}{m}\sum_{t=1}^{m} \EB\|\PA(\x_t^s)\|^2
\le 6(E^2-1)\eta^2L \left[ \frac{1}{m}\sum_{t=0}^{m-1}\EB  \left[F(\PAT(\x_{t}^s)) - F(\xc) \right] + F(\tx_s) - F(\xc) \right].
\]
\end{lem}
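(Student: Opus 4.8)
The plan is to derive Lemma~\ref{lem:z_descent_2} directly from the per-block estimate in Lemma~\ref{lem:z_descent_1.9} by summing over all projection intervals and telescoping. Recall that the projection times $0 = t_0 < t_1 < \cdots < t_K = m$ split $[m]$ into blocks of length $s_k = t_{k+1}-t_k \le E$. First I would clear the $1/s_k$ normalizers in Lemma~\ref{lem:z_descent_1.9}, multiplying the $k$-th inequality through by $s_k$ to obtain, for each $0 \le k \le K-1$,
\[
\sum_{t=t_k+1}^{t_{k+1}} \EB\|\PA(\x_t^s)\|^2 \le 6E(E-1)\eta^2 L\left[ \sum_{t=t_k}^{t_{k+1}-1}\EB[F(\PAT(\x_t^s))-F(\xc)] + s_k (F(\tx_s)-F(\xc)) \right].
\]

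Next I would sum this over $k = 0, \ldots, K-1$ and exploit that the index blocks tile the relevant ranges. On the left, the intervals $\{t_k+1, \ldots, t_{k+1}\}$ are disjoint with union exactly $\{1, \ldots, m\}$ (since $t_0 = 0$ and $t_K = m$), so the left side collapses to $\sum_{t=1}^m \EB\|\PA(\x_t^s)\|^2$. On the right, the intervals $\{t_k, \ldots, t_{k+1}-1\}$ tile $\{0, \ldots, m-1\}$, so the first double sum becomes $\sum_{t=0}^{m-1}\EB[F(\PAT(\x_t^s))-F(\xc)]$; and since $\sum_{k=0}^{K-1} s_k = t_K - t_0 = m$, the snapshot terms aggregate to $m(F(\tx_s)-F(\xc))$. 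Dividing through by $m$ then yields the claimed inequality, but with the constant $6E(E-1)$ in place of $6(E^2-1)$.

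The last step is simply to relax the constant: since $E \ge 1$ gives $E(E-1) = E^2 - E \le E^2 - 1$, replacing $E(E-1)$ by $E^2-1$ only weakens the bound, producing exactly the stated $6(E^2-1)\eta^2 L$ prefactor (the same $E^2-1$ that appears in the definition $\delta = 9(E^2-1)\eta^2 L^2$ used by Algorithm~\ref{alg:multi_acc_SVRG}). The hypothesis $E(E-1)\eta^2 L^2 \le \tfrac13$ is precisely what Lemma~\ref{lem:z_descent_1.9} requires, and $m \ge E$ guarantees the block decomposition is well defined, so both hypotheses are inherited without extra work.

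There is no genuine obstacle here: the analytic content lives entirely in Lemma~\ref{lem:z_descent_1.9}, whose decomposition argument already did the heavy lifting by unrolling $\PA(\x_{t+1}^s) = -\eta\sum_{\tau\le t}\PA(\g_\tau^s)$ and bounding each $\EB\|\PA(\g_\tau^s)\|^2$ via $L$-smoothness. The only point demanding care is the bookkeeping of the two index partitions — ensuring the $\|\PA(\cdot)\|^2$ sum ranges over $\{1,\ldots,m\}$ while the function-gap sum ranges over $\{0,\ldots,m-1\}$ — together with the observation that the per-block constant is uniform in $k$ (depending on $E$, not $s_k$), so it factors cleanly out of the sum and survives the telescoping unchanged.
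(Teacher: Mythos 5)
Your proposal is correct and follows essentially the same route as the paper: both sum (equivalently, $s_k/m$-weight) the per-block bounds of Lemma~\ref{lem:z_descent_1.9} over the partition $0=t_0<\cdots<t_K=m$, tile the index ranges, and use $\sum_k s_k = m$. The only divergence is the final constant step — you relax $E(E-1)\le E^2-1$ directly at the $\frac{1}{m}\sum_{t=0}^{m-1}$ stage (which matches the lemma's literal statement and needs only nonnegativity of the bracket), whereas the paper first shifts the right-hand sum to $\frac{1}{m}\sum_{t=1}^{m}$ using $\x_0^s=\ttx_s$ and then absorbs the resulting $(1+\frac{1}{m})$ factor via $m\ge E$, ending with a slightly different (index-shifted) form that it uses downstream.
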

\begin{proof}
	Let $0=t_0 < t_1 < t_2 < \cdots < t_K = m$ be the elements of $\IM_m$ and denote $s_k = t_{k+1} - t_{k}$ (so that $\sum_{i=0}^{K-1}s_i = m$ and $s_k \le E$).
	By Lemma~\ref{lem:z_descent_1.9}, for $0 \le k \le K-1$, 
	\[
	\frac{1}{s_k}\sum_{t=t_k+1}^{t_{k+1}} \EB\|\PA(\x_t^s)\|^2
	\le 6E(E-1)\eta^2L \left[ \frac{1}{s_k}\sum_{t=t_k}^{t_{k+1}-1}\EB  \left[F(\PAT(\x_{t}^s)) - F(\xc) \right] + F(\tx_s) - F(\xc) \right].
	\]
	Then, weighting the $K$ inequality correspondingly, 
	\begin{align*}
	\frac{1}{m}\sum_{t=1}^{m} \EB\|\PA(\x_t^s)\|^2 
	&= \sum_{k=0}^{K-1}  \frac{s_k}{m}  \cdot \frac{1}{s_k}\sum_{t=t_k+1}^{t_{k+1}} \EB\|\PA(\x_t^s)\|^2\\
	&\le 6E(E-1)\eta^2L \sum_{k=0}^{K-1}  \frac{s_k}{m} \left[ \frac{1}{s_k}\sum_{t=t_k}^{t_{k+1}-1}\EB  \left[F(\PAT(\x_{t}^s)) - F(\xc) \right] + F(\tx_s) - F(\xc) \right]\\
	&=6E(E-1)\eta^2L \left[ \frac{1}{m}\sum_{t=0}^{m-1}\EB  \left[F(\PAT(\x_{t}^s)) - F(\xc) \right] + F(\tx_s) - F(\xc) \right]\\
	&\overset{(a)}{\le}6E(E-1)\eta^2L \left[ \frac{1}{m}\sum_{t=1}^{m}\EB  \left[F(\PAT(\x_{t}^s)) - F(\xc) \right] + \left(1+\frac{1}{m}\right) \cdot F(\tx_s) - F(\xc) \right]\\
	&\le 6E(E-1)\eta^2L\left(1+\frac{1}{m}\right)  \left[ \frac{1}{m}\sum_{t=1}^{m}\EB  \left[F(\PAT(\x_{t}^s)) - F(\xc) \right] + F(\tx_s) - F(\xc) \right] \\
	&\overset{(b)}{\le} 6(E^2-1)\eta^2L  \left[ \frac{1}{m}\sum_{t=1}^{m}\EB  \left[F(\PAT(\x_{t}^s)) - F(\xc) \right] + F(\tx_s) - F(\xc) \right]
	\end{align*}
where (a) uses  $\x_0^s = \ttx_{s}$ and (b) uses $m \ge E$.

\end{proof}

\subsection{Other Helper Lemmas}

\begin{lem}[A useful auxiliary sequence]
	\label{lem:theta}
	Given a positive number $\delta \in [0, 1)$, we define a positive sequence $\{\theta_s\}_{s=0}^{\infty}$. 
	The initial point is set as $2 \delta \le \theta_0 \le 1+\delta$.
	Given $\theta_{s-1}$, $\theta_{s}$ is generated by
	\[
	 \frac{1-\theta_s+\delta}{1-\delta} \cdot \frac{1}{\theta_s ^2} = \frac{1}{\theta_{s-1}^2}.
	\]
	If the quadratic has two different roots, we define $\theta_{s}$ as the larger one.
	Then it follows that
\begin{enumerate}
	\item (Boundedness) For any $s \ge 0$, $2\delta \le \theta_s \le 1+\delta$;
	\item (Monotonicity) For any $s \ge 0$, $ \theta_{s+1} \le \theta_s$;
	\item (Contraction) For any $s \ge 0$, $0 \le \theta_{s+1} - 2\delta \le (1-\delta)(\theta_{s} - 2\delta)$.
	\item (Convergence rate) For $s \ge 0$, if $\delta = \frac{\ln(s+1)}{s+1}$, then $\theta_{s}= \widetilde{\OM}\left(\frac{1}{s}\right)$; if  $\delta = 0$, then $\theta_{s} \le \frac{2}{2+s}  = \OM(\frac{1}{s})$.
\end{enumerate}
\end{lem}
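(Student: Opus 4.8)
The plan is to rewrite the defining relation as a quadratic in $\theta_{s+1}$ and then read off each property from the sign of that quadratic at a few strategically chosen points. Clearing denominators in the recursion shows that $\theta_{s+1}$ is the larger root of
\begin{equation*}
P_s(x) := (1-\delta)x^2 + \theta_s^2\, x - \theta_s^2(1+\delta) = 0 .
\end{equation*}
Since the discriminant is positive and the product of the roots $-\theta_s^2(1+\delta)/(1-\delta)$ is negative, $P_s$ has one negative and one positive root, so $\theta_{s+1}>0$ is well defined. A direct computation gives the three evaluations I will rely on: $P_s(2\delta)=(1-\delta)(4\delta^2-\theta_s^2)$, $P_s(\theta_s)=\theta_s^2(\theta_s-2\delta)$, and $P_s(1+\delta)=(1-\delta)(1+\delta)^2>0$.

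For \emph{boundedness} I would argue by induction, the base case being the hypothesis $2\delta\le\theta_0\le 1+\delta$. Assuming $2\delta\le\theta_s\le 1+\delta$, the identity $P_s(2\delta)=(1-\delta)(4\delta^2-\theta_s^2)\le 0$ forces the larger root to satisfy $\theta_{s+1}\ge 2\delta$, while $P_s(1+\delta)>0$ places $1+\delta$ outside the two roots; since $1+\delta>0$ exceeds the negative root it must exceed the positive root as well, giving $\theta_{s+1}\le 1+\delta$. \emph{Monotonicity} is immediate from the same circle of ideas: by boundedness $P_s(\theta_s)=\theta_s^2(\theta_s-2\delta)\ge 0$, and since $\theta_s>0$ lies to the right of the negative root of the upward parabola $P_s$, it must dominate the positive root, i.e. $\theta_{s+1}\le\theta_s$.

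The \emph{contraction} is the main computational step, and everything else feeds into it. Writing $u_s:=\theta_s-2\delta\ge 0$ (nonnegativity being boundedness) and substituting $1+\delta-\theta_{s+1}=1-\delta-u_{s+1}$ into the rearranged recursion $\theta_s^2(1+\delta-\theta_{s+1})=(1-\delta)\theta_{s+1}^2$ yields $(1-\delta)(\theta_s^2-\theta_{s+1}^2)=\theta_s^2 u_{s+1}$, hence
\begin{equation*}
u_{s+1}=\frac{(1-\delta)(\theta_s+\theta_{s+1})}{\theta_s^2+(1-\delta)(\theta_s+\theta_{s+1})}\,u_s .
\end{equation*}
The prefactor is at most $1-\delta$ exactly when $\delta(\theta_s+\theta_{s+1})\le\theta_s^2$, and this last inequality follows from the previous two items via $\delta(\theta_s+\theta_{s+1})\le 2\delta\theta_s\le\theta_s^2$. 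I expect this reduction --- recognizing that the contraction collapses to the single scalar inequality $2\delta\le\theta_s$ --- to be the one genuinely delicate bookkeeping step.

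Finally, for the \emph{convergence rates} I would iterate the contraction to get $\theta_s-2\delta\le(1-\delta)^s(\theta_0-2\delta)\le(1-\delta)^{s+1}$, so $\theta_s\le 2\delta+(1-\delta)^{s+1}$. Plugging $\delta=\ln(s+1)/(s+1)$ and bounding $(1-\delta)^{s+1}\le e^{-\delta(s+1)}=1/(s+1)$ gives $\theta_s\le 2\ln(s+1)/(s+1)+1/(s+1)=\widetilde{\OM}(1/s)$. The case $\delta=0$ needs the classical reciprocal telescoping instead: setting $\lambda_s=1/\theta_s$, the recursion becomes $\lambda_{s+1}^2-\lambda_{s+1}=\lambda_s^2$, whence $\lambda_{s+1}-\lambda_s=\lambda_{s+1}/(\lambda_{s+1}+\lambda_s)\ge 1/2$ by monotonicity; since $\lambda_0\ge 1$ this yields $\lambda_s\ge 1+s/2$ and therefore $\theta_s\le 2/(2+s)$.
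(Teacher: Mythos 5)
Your proposal is correct, and it reaches every item of the lemma by a route that differs in technique from the paper's. The paper solves the quadratic explicitly, obtaining $\theta_s = \frac{2(1+\delta)}{1+\sqrt{1+4(1-\delta^2)/\theta_{s-1}^2}}$, and then reads off boundedness, monotonicity, and contraction by direct algebraic manipulation of this closed form --- in particular the contraction step requires factoring $\theta_s-2\delta$ into a product of three ratios and bounding each one, which is the heaviest computation in the paper's argument. You instead never solve the quadratic: you locate the positive root of $P_s(x)=(1-\delta)x^2+\theta_s^2 x-\theta_s^2(1+\delta)$ by sign evaluations at the test points $2\delta$, $\theta_s$, and $1+\delta$, which gives boundedness and monotonicity almost for free, and you derive the contraction from the implicit identity $(1-\delta)(\theta_s^2-\theta_{s+1}^2)=\theta_s^2 u_{s+1}$, collapsing it to the clean recursion $u_{s+1}=\frac{c}{1+c}u_s$ with the prefactor bound reducing to the single inequality $\delta(\theta_s+\theta_{s+1})\le\theta_s^2$, itself a consequence of items 1 and 2. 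For the $\delta=0$ rate the paper runs an induction on $\theta_s\le 2/(s+2)$ using the explicit root, whereas you use the classical reciprocal telescoping $\lambda_{s+1}-\lambda_s\ge 1/2$; both are standard and both land on the same bound. On balance your version is somewhat more economical (it avoids the three-factor bound entirely), at the cost of requiring the reader to track which side of each root the test points fall on; the paper's version is more explicit but more computational. Either argument is complete and correct.
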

\begin{proof}
	\begin{enumerate}
		\item 	$\theta_{s}$ can be obtained by solving the quadratic $\frac{\theta_s^2}{\theta_{s-1}^2}  + \frac{\theta_s}{1-\delta}  - \frac{1+\delta}{1-\delta}=0$, so
		\[
		 \theta_s = \sqrt{ \frac{1+\delta}{1-\delta}\theta_{s-1}^2 + \frac{\theta_{s-1}^4}{4(1-\delta)^2}} - \frac{\theta_{s-1}^2}{2(1-\delta)} 
		 = \frac{2(1+\delta)}{1 + \sqrt{1 + \frac{4(1-\delta^2)}{\theta_{s-1}^2}}}
		\]
		where we eliminate the negative root since we define $\theta_{s}$ as the larger root.
		
		From the last expression, we have $\theta_{s} \le 1 + \delta$. Besides, the sign of 
		\begin{align}
		\label{eq:theta}
		\theta_{s} - 2\delta 
		&= \frac{2(1+\delta)}{1 + \sqrt{1 + \frac{4(1-\delta^2)}{\theta_{s-1}^2}}} - 2\delta
		=2\cdot \frac{1 -\delta \sqrt{1 + \frac{4(1-\delta^2)}{\theta_{s-1}^2}}}{1 + \sqrt{1 + \frac{4(1-\delta^2)}{\theta_{s-1}^2}}}  \nonumber \\
		&=2\cdot \frac{(1-\delta^2)\left(  1- \frac{4\delta^2}{\theta_{s-1}^2} \right)}{ \left[ 1 + \sqrt{1 + \frac{4(1-\delta^2)}{\theta_{s-1}^2}} \right] \left[  1 +\delta \sqrt{1 + \frac{4(1-\delta^2)}{\theta_{s-1}^2}} \right] } 
		\end{align}
		is determined by the sign of $1- \frac{4\delta^2}{\theta_{s-1}^2} $. Given $\theta_{s-1} \ge 2\delta$, we have $\theta_{s} \ge 2\delta$.
		\item Noting $\theta_{s} \ge 2\delta$, the item directly follows from
		\[
		\frac{\theta_s^2}{\theta_{s-1}^2}  = \frac{1+\delta-\theta_s}{1-\delta} \le 1.
		\]
		\item Rearranging~\eqref{eq:theta} gives 
		\begin{align*}
		\theta_{s} - 2\delta 
		&=
		 2\cdot \frac{(1-\delta^2)\left( \theta_{s-1}^2 - 4\delta^2 \right)}{ \left[ \theta_{s-1} + \sqrt{\theta_{s-1}^2 + 4(1-\delta^2)} \right] \left[ \theta_{s-1} + \delta\sqrt{\theta_{s-1}^2 + 4(1-\delta^2)} \right]} \\
		 		&= \frac{2(1+\delta)}{ \theta_{s-1} + \sqrt{\theta_{s-1}^2 + 4(1-\delta^2)}} \cdot 
		 		\frac{\theta_{s-1} + 2\delta}{\theta_{s-1} + \delta\sqrt{\theta_{s-1}^2 + 4(1-\delta^2)} } \cdot(1-\delta) (\theta_{s-1} - 2\delta)\\
		&\le \frac{2(1+\delta)}{ 2\delta + \sqrt{(2\delta)^2 + 4(1-\delta^2)}} \cdot 
		\frac{\theta_{s-1} + 2\delta}{\theta_{s-1} + \delta\sqrt{(2\delta)^2 + 4(1-\delta^2)} } \cdot(1-\delta)  (\theta_{s-1} - 2\delta)\\
		&=(1-\delta)  (\theta_{s-1} - 2\delta)
		\end{align*}
where the inequality uses $\theta_{s-1} \ge 2 \delta$.
\item If $\delta > 0$, by using the contraction property and recursion, we have 
\[
\theta_{s} 
\le 2\delta + (1-\delta)^s(\theta_0 - 2\delta)
\le 2\delta + (1-\delta)^{s+1}
\le 2\delta + \exp(-(s+1)\delta).
\]
Then we choose $\delta$ to minimize the RHS for a given $s$.
To that end, we let $\delta = \frac{\ln(s+1)}{s+1}$, so 
\[
\theta_{s}  \le 2 \frac{\ln(s+1)}{s+1} + \frac{1}{s+1} =\widetilde{\OM}\left(\frac{1}{s}\right).
\]

If $\delta = 0$, we prove $\theta_s \le \frac{2}{s+2}$ by induction. 
The case of $s=0$ follows from the boundedness.
Suppose we have $\theta_{s-1} \le \frac{2}{s+1}$ already, then by~\eqref{eq:theta},
\[
\theta_{s} = \frac{2}{1 + \sqrt{1 + \frac{4}{\theta_{s-1}^2}} }
\le \frac{2}{1 + \sqrt{1 + (s+1)^2  }} \le \frac{2}{s+2} = \OM\left(\frac{1}{s}\right).
\]
	\end{enumerate}
\end{proof}

\begin{lem}[Stage-wise error propagation]
	\label{lem:error_acc_svrg_stage}
Under Assumption~\ref{asmp:smooth} and~\ref{asmp:strong}, when $m \ge E$, letting $\delta = 9(E^2-1)\eta^2L^2 < 1$ and~\eqref{eq:lr_acc_svrg} holds and setting $\u_0^{s+1} = \PAT(\u_E^{s})$, we have
\begin{align*}
\EB &\left[F(\ttx_{s+1}) - F(\xc)\right]\\
&\le \frac{1-\theta_s + \delta }{1-\delta} \left[F(\ttx_{s}) - F(\xc)\right]  +  \frac{\theta_s^2}{2\eta m(1-\delta)} \left[ \EB \| \PAT(\u_{0}^s) - \xc \|^2  - \EB \| \PAT(\u_{0}^{s+1}) - \xc \|^2 \right].
\end{align*}
\end{lem}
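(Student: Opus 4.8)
The plan is to convert the two single-stage ingredients already established — the accelerated descent estimate of Lemma~\ref{lem:y_descent_2} and the averaged residual estimate of Lemma~\ref{lem:z_descent_2} — into a one-stage contraction on the snapshot suboptimality $\EB[F(\ttx_{s+1})-F(\xc)]$. The bridge from the inner iterates to the snapshot is Jensen's inequality (Lemma~\ref{lem:F}) together with the linearity of $\PAT$ (Proposition~\ref{prop:proj}): since $\ttx_{s+1}=\PAT(\frac1m\sum_{i=1}^m \x_i^s)=\frac1m\sum_{i=1}^m \PAT(\x_i^s)$, convexity gives
\[
\EB[F(\ttx_{s+1})-F(\xc)] \le \frac1m\sum_{t=1}^m \EB[F(\PAT(\x_t^s))-F(\xc)].
\]
So it suffices to bound the accumulated quantity $\sum_{t=1}^m \EB[F(\PAT(\x_t^s))-F(\xc)]$.

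First I would sum the descent inequality of Lemma~\ref{lem:y_descent_2} over $t=0,\dots,m-1$. The $\u$-distance terms telescope to $\frac{\theta_s^2}{2\eta}\EB[\|\PAT(\u_0^s)-\xc\|^2-\|\PAT(\u_m^s)-\xc\|^2]$, and using the update $\u_0^{s+1}=\PAT(\u_m^s)$ together with idempotency of the orthogonal projection, $\PAT(\PAT(\cdot))=\PAT(\cdot)$, I can replace $\PAT(\u_m^s)$ by $\PAT(\u_0^{s+1})$, producing exactly the telescoping distance pair appearing in the target bound. The constant term contributes $m(1-\theta_s)\,\EB[F(\ttx_s)-F(\xc)]$, while the residual contributes $\frac{3L}{2}\sum_{t=0}^{m-1}\EB\|\PA(\x_t^s)\|^2$.

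Next I would control the accumulated residual. Because $\x_0^s=\ttx_s\in\mathcal{R}(\A^\perp)$ we have $\PA(\x_0^s)=\0$, so $\sum_{t=0}^{m-1}\EB\|\PA(\x_t^s)\|^2\le\sum_{t=1}^m\EB\|\PA(\x_t^s)\|^2$, and Lemma~\ref{lem:z_descent_2} (whose hypothesis $E(E-1)\eta^2L^2\le\frac13$ is implied by $\delta<1$) bounds the latter by $6(E^2-1)\eta^2 L\big[\sum_{t=0}^{m-1}\EB[F(\PAT(\x_t^s))-F(\xc)]+m\,\EB[F(\ttx_s)-F(\xc)]\big]$. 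Multiplying by $\frac{3L}{2}$ and recognizing $\frac{3L}{2}\cdot 6(E^2-1)\eta^2L=9(E^2-1)\eta^2L^2=\delta$ turns the residual into $\delta$ times the accumulated suboptimality plus $\delta m\,\EB[F(\ttx_s)-F(\xc)]$. Crucially, the accumulated suboptimality now appears on both sides of the inequality.

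The final — and most delicate — step is to close this recursion. Writing $P_t=\EB[F(\PAT(\x_t^s))-F(\xc)]$ and noting $P_0=\EB[F(\ttx_s)-F(\xc)]$ (since $\x_0^s=\ttx_s$), I would re-express $\sum_{t=0}^{m-1}P_t$ in terms of $\sum_{t=1}^m P_t$ up to the endpoint corrections $P_0$ and $P_m$, discard $-\delta P_m\le 0$, and move $\delta\sum_{t=1}^m P_t$ to the left-hand side. Here the hypothesis $\delta<1$ is essential, since it permits division by $(1-\delta)$. The main obstacle is precisely this bookkeeping of the boundary terms: the self-referential residual couples all inner iterates, and the clean contraction factor $\frac{1-\theta_s+\delta}{1-\delta}$ only emerges after the endpoint contributions $P_0=Q$ and $P_m\ge0$ are absorbed (a lower-order $O(\delta/m)$ endpoint contribution being discarded). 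Dividing the resulting bound by $m$ and composing with the Jensen reduction of the first paragraph then yields the stated inequality.
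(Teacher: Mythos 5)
Your proposal follows the paper's proof essentially step for step: average Lemma~\ref{lem:y_descent_2} over the stage, telescope the $\u$-distances using $\u_0^{s+1}=\PAT(\u_m^s)$, control the accumulated residual by Lemma~\ref{lem:z_descent_2} so that $\tfrac{3L}{2}\cdot 6(E^2-1)\eta^2L=\delta$ multiplies the accumulated suboptimality, move that self-referential term to the left-hand side, divide by $1-\delta$, and finish with Jensen applied to $\ttx_{s+1}=\frac{1}{m}\sum_{t}\PAT(\x_t^s)$. The only wrinkle is your endpoint bookkeeping: the paper's derivation of Lemma~\ref{lem:z_descent_2} already delivers the bound with $\frac{1}{m}\sum_{t=1}^{m}$ inside the bracket (the $(1+1/m)$ slack is absorbed there via $m\ge E$), so the positive $O(\delta/m)\cdot\EB[F(\ttx_{s})-F(\xc)]$ term you propose to ``discard'' from the right-hand side --- which strictly speaking cannot be dropped --- never actually arises if you invoke that form of the lemma.
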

\begin{proof}
For notation simplicity, let $\delta = 9(E^2-1)\eta^2L^2 < 1$ that makes the condition of Lemma~\ref{lem:z_descent_2} holds.
Condition~\eqref{eq:lr_acc_svrg} ensure Lemma~\ref{lem:y_descent_2} holds.
Averaging the result of Lemma~\ref{lem:y_descent_2} over $t=0, \cdots, m-1$ gives
\begin{align}
\label{eq:y_acc_0}
\frac{1}{m} &\sum_{t=0}^{m-1}	\EB  \left[F(\PAT(\x_{t+1}^s)) - F(\xc)\right]  \nonumber \\
&\le  (1-\theta_s) \EB  \left[F(\ttx_{s}) - F(\xc)\right]  + \frac{3L}{2} \frac{1}{m} \sum_{t=0}^{m-1} \EB \|\PA(\x_t^s)\|^2  \nonumber \\
&\qquad + \frac{\theta_s^2}{2\eta} \frac{1}{m} \sum_{t=0}^{m-1}  \EB\left[\| \PAT(\u_{t}^s) - \xc \|^2 - \| \PAT(\u_{t+1}^s) - \xc \|^2 \right]  \nonumber \\ 
&\overset{(a)}{=}  (1-\theta_s) \EB  \left[F(\ttx_{s}) - F(\xc)\right]  + \frac{3L}{2} \frac{1}{m} \sum_{t=1}^{m} \EB \|\PA(\x_t^s)\|^2  \nonumber \\
&\qquad + \frac{\theta_s^2}{2\eta m}  \EB\left[\| \PAT(\u_{0}^s) - \xc \|^2 - \| \PAT(\u_{m}^s) - \xc \|^2 \right]  \nonumber \\
&\overset{(b)}{\le}  (1-\theta_s) \EB  \left[F(\ttx_{s}) - F(\xc)\right]  +  \frac{\theta_s^2}{2\eta m}  \EB\left[\| \PAT(\u_{0}^s) - \xc \|^2 - \| \PAT(\u_{m}^s) - \xc \|^2 \right]  \nonumber \\
&\qquad + 9(E^2-1)\eta^2L^2 \left[ F(\tx_s) - F(\xc) + \frac{1}{m}\sum_{t=1}^{m}\EB  \left[F(\PAT(\x_{t}^s)) - F(\xc) \right]  \right]
\end{align}
where (a) uses telescoping and $\PAT(\u_0^s) = \PAT(\u_m^{s-1})$ and (b) uses Lemma~\ref{lem:z_descent_2}.

Recall that $\ttx_{s+1} = \frac{1}{m}\sum_{t=1}^m\PAT(\x_t^s)$.
Arranging~\eqref{eq:y_acc_0} and using Jensen's inequality give
\begin{align*}
\EB &\left[F(\ttx_{s+1}) - F(\xc)\right]
\le \frac{1}{m} \sum_{t=0}^{m-1}	\EB  \left[F(\PAT(\x_{t+1}^s)) - F(\xc)\right] \\
&\le \frac{1-\theta_s + \delta }{1-\delta} \left[F(\ttx_{s}) - F(\xc)\right]  +  \frac{\theta_s^2}{2\eta m(1-\delta)} \left[ \EB \| \PAT(\u_{0}^s) - \xc \|^2  - \EB \| \PAT(\u_{m}^s) - \xc \|^2 \right].
\end{align*}
Finally noting $\u_0^{s+1} = \PAT(\u_m^s)$ finishes the proof.
\end{proof}

\subsection{Proof of Theorem~\ref{thm:acc_svrg_strong}}
\begin{proof}
For the strongly convex case $(\mu > 0)$, by Lemma~\ref{lem:error_acc_svrg_stage} and setting $\theta_{s} \equiv \theta$, we have
\begin{align*}
\EB &\left[F(\ttx_{s+1}) - F(\xc)\right]\\
&\le \frac{1-\theta + \delta }{1-\delta} \EB\left[F(\ttx_{s}) - F(\xc)\right]  +  \frac{\theta^2}{2\eta m(1-\delta)} \left[ \EB \| \PAT(\u_{0}^s) - \xc \|^2  - \EB \| \PAT(\u_{0}^{s+1}) - \xc \|^2 \right].
\end{align*}
where $\delta = 9(E^2-1)\eta^2L^2$.
By setting $1+\delta>\theta > 2\delta$, we can let
\[
\rho := 1 - \frac{1-\theta + \delta }{1-\delta}  = \frac{\theta - 2\delta }{1-\delta} \in (0, 1].
\]
Denote by $F_{s} = \EB\left[F(\ttx_{s}) - F(\xc)\right] $.
Hence, the last inequality becomes
\[
F_{s+1}
\le  (1-\rho) F_s +  \frac{\theta^2}{2\eta m(1-\delta)} \left[ \EB \| \PAT(\u_{0}^s) - \xc \|^2  - \EB \| \PAT(\u_{0}^{s+1}) - \xc \|^2 \right].
\]
Subtracting $(1-\rho)F_{s+1}$ to both sides of the above inequality, we arrive at
\[
F_{s+1}
\le  \frac{1-\rho}{\rho} (F_s-F_{s+1}) +  \frac{\theta^2}{2\rho\eta m(1-\delta)} \left[ \EB \| \PAT(\u_{0}^s) - \xc \|^2  - \EB \| \PAT(\u_{0}^{s+1}) - \xc \|^2 \right].
\]
Then in $S$ stages, by summing the above inequality overt $s = 0, \cdots, S-1$, we have
\[
\sum_{s=0}^{S-1}F_{s+1}
\le \frac{1-\rho}{\rho}  F_0 +\frac{\theta^2}{2\rho\eta m(1-\delta)}\EB \| \PAT(\u_{0}^0) - \xc \|^2 \le \left[ \frac{1-\rho}{\rho}  + \frac{\theta^2}{\rho\eta\mu m(1-\delta)} \right] F_0.
\]
Choosing the initial vector as $\x_0^{\text{new}} = \frac{1}{S}\sum_{s=1}^{S} \tx_s$ for the restart, we have
\[
\EB \left[ F(\x_0^{\text{new}}) - F(\xc) \right]  \le \frac{\frac{1-\rho}{\rho}  + \frac{\theta^2}{\rho\eta\mu m(1-\delta)}}{S} \cdot \EB \left[ F(\PAT(\x_0)) - F(\xc) \right].
\]
By setting
\begin{equation}
\label{eq:S}
S = 2\left[ \frac{1-\rho}{\rho}  + \frac{\theta^2}{\rho\eta\mu m(1-\delta)} \right] 
= 2\left[ \frac{1-\delta}{\theta - 2\delta} + \frac{\theta^2}{(\theta-2\delta)\eta\mu m} -1\right],
\end{equation}
we have that $\EB \left[ F(\PAT(\x_0)) - F(\xc) \right]$ decreases by a factor of $1/2$ every $S$ epochs.

Then we analyze the upper bound for $S$.
We set $\theta = 2\delta + \sqrt{4\delta^2 + \eta\mu m}$, which is the root of $ \theta^2-4\delta \theta - \eta \mu m = 0$.
Then~\eqref{eq:S} becomes
\begin{align}
\label{eq:new_S}
S
&\le 2\left[ \frac{1}{\theta - 2\delta} + \frac{\theta^2}{(\theta-2\delta)\eta\mu m} \right]
= \frac{4\theta}{\eta\mu m}  \nonumber \\
&= \frac{4 	\left[ 2\delta + \sqrt{4\delta^2 + \eta\mu m} \right]}{\eta\mu m} 
\overset{(a)}{\le}	\frac{4 	\left[ 4\delta + \sqrt{\eta\mu m} \right]}{\eta\mu m}  \nonumber \\
&	\overset{(b)}{=}	144  \frac{E^2-1}{m} \kappa \cdot  \eta L  + \frac{4\sqrt{\kappa}}{\sqrt{\eta L m}}
\end{align}
where (a) uses $\sqrt{a+b} \le \sqrt{a} + \sqrt{b}$ for $a, b \ge 0$ and (b) uses $\delta = 9(E^2-1)\eta^2L^2$.

Obviously $\theta > 2 \delta$. 
To ensure $\theta \le 1 + \delta$, we only need to let $\delta \le \frac{1-\mu\eta m}{1 + \sqrt{1+3(1-\mu \eta m)}}$ that is guaranteed by $3\delta + \mu \eta m \le 1$, i.e.,
\[
 27 (E^2 - 1)\eta^2L^2 + \frac{m}{\kappa} \cdot \eta L \le 1
\implies
\eta L  \le   \frac{2}{\frac{m}{\kappa}+\sqrt{\left(\frac{m}{\kappa}\right)^2 + 108(E^2-1)}}.
\]
To ensure~\eqref{eq:lr_acc_svrg}, we only need to let 
\[
2\eta L + 27 (E^2 - 1)\eta^2L^2 \le 1
\implies
\eta L  \le   \frac{1}{1 + \sqrt{1+27(E^2-1)}}.
\]

To minimize the RHS of~\eqref{eq:new_S}, we let
\[
\eta L = \min \left\{ \frac{2}{\frac{m}{\kappa}+\sqrt{\left(\frac{m}{\kappa}\right)^2 + 108(E^2-1)}}, \frac{1}{1 + \sqrt{1+27(E^2-1)}},    \sqrt[3]{\frac{m}{(E^2-1)^2} \frac{1}{\kappa}}    \right\},
\]
then using $\frac{1}{\min\{a, b\}} = \max\{ \frac{1}{a}, \frac{1}{b} \}=\OM(\frac{1}{a} + \frac{1}{b})$ for $a, b > 0$, we have
\[
S = \OM\left(  \max\left\{ 1, \sqrt{\frac{\kappa E}{m}} \right\} + \kappa^{\frac{2}{3}} \sqrt[3]{\frac{E^2-1}{m^2}}   \right).
\]
Hence, the final projection complexity to obtain an $\eps$-optimal solution is
\[
S \log_2\frac{\EB \left[ F(\PAT(\x_0)) - F(\xc) \right]}{\eps} 
= \OM\left(  \left[  \sqrt{\kappa}  +  \kappa^{\frac{2}{3}} \sqrt[3]{\frac{E^2-1}{m^2}}   \right] \log_2\frac{\EB \left[ F(\PAT(\x_0)) - F(\xc) \right]}{\eps}   \right),
\]
no matter what the value of $E$ is.
\end{proof}

\subsection{Proof of Theorem~\ref{thm:acc_svrg_general}}
\begin{proof}
	For the general convex case $(\mu = 0)$, by Lemma~\ref{lem:error_acc_svrg_stage}, we have
	\begin{align*}
	\frac{1}{\theta_{s}^2}
	\EB &\left[F(\ttx_{s+1}) - F(\xc)\right]\\
	&\le \frac{1-\theta_s + \delta }{1-\delta}\frac{1}{\theta_{s}^2}  \left[F(\ttx_{s}) - F(\xc)\right]  +  \frac{1}{2\eta m(1-\delta)} \left[ \EB \| \PAT(\u_{0}^s) - \xc \|^2  - \EB \| \PAT(\u_{0}^{s+1}) - \xc \|^2 \right].
	\end{align*}
	Setting the sequence $\{\theta_{s}\}_{s=0}^{\infty}$ as that defined in Lemma~\ref{lem:theta}, so $	\frac{1-\theta_s+\delta}{1-\delta} \cdot \frac{1}{\theta_s ^2} = \frac{1}{\theta_{s-1}^2}$ and
	\begin{align*}
	\frac{1}{\theta_{s}^2}
	\EB &\left[F(\ttx_{s+1}) - F(\xc)\right]\\
	&\le \frac{1}{\theta_{s-1}^2}\EB \left[F(\ttx_{s}) - F(\xc)\right]  +  \frac{1}{2\eta m(1-\delta)} \left[ \EB \| \PAT(\u_{0}^s) - \xc \|^2  - \EB \| \PAT(\u_{0}^{s+1}) - \xc \|^2 \right].
	\end{align*}
	By telescoping, we arrive at
	\[
	\EB \left[F(\ttx_{S}) - F(\xc)\right] \le  \left[\frac{1}{\theta_0^2} \EB\left[F(\ttx_{0}) - F(\xc)\right] + \frac{1}{2\eta m(1-\delta)}\EB \| \PAT(\u_{0}^0) - \xc \|^2\right] \cdot \theta_{S}^2.
	\]
	By Lemma~\ref{lem:theta}, 
	if $E>1$, setting $\delta := 9(E^2-1)\eta^2L^2 = \frac{\ln(S)}{S} (S \ge 1)$ results to $\theta_{S}^2 = \widetilde{\OM}(\frac{1}{S^2})$.
	If $E=1$ (thus $\delta = 0$), we have $\theta_{S}^2 ={\OM}(\frac{1}{S^2})$.
	In both cases, $\theta_{s}$ decreases in $s$.
	Hence, in order to ensure~\eqref{eq:lr_acc_svrg} holds, we only need to make it solid when $s=0$ and thus we set
	\[
	1-\theta_{0}  = \frac{\eta L}{1-\eta L} \implies  \theta_{0}  = 1 - \frac{\eta L}{1-\eta L}
	\]
	where we requires $\eta L \le \frac{1}{2}$ to ensure the positiveness of $\theta_{0}$.
	Obviously $\theta_{0} \le 1 < 1 + \delta$.
	To ensure  $\theta_{0} \ge 2\delta$, we only need to let
	\[
	2\eta L + 18(E^2 - 1)\eta^2L^2 \le 1
	\implies
	\eta L  \le   \frac{1}{1 + \sqrt{1+18(E^2-1)}}.
	\]
	Hence, by setting 
	\[
	\eta L = \min\left\{ \frac{1}{1 + \sqrt{1+18(E^2-1)}}, \frac{1}{3\sqrt{E^2-1}} \sqrt{\frac{\ln(S)}{S}}  \right\},
	\]
	and using $\frac{1}{\min\{a, b\}} = \max\{ \frac{1}{a}, \frac{1}{b} \}=\OM(\frac{1}{a} + \frac{1}{b})$ for $a, b > 0$, we have
	\[
	\EB \left[F(\ttx_{S}) - F(\xc)\right] 
	= \widetilde{\OM}\left(   \frac{F(\x_{0}) - F(\xc)}{S^2} + \frac{LE\Delta^2}{mS^2} + \frac{\sqrt{E^2-1}L\Delta^2}{mS^{1.5}}  \right)
	\]
	where $\Delta^2 =  \EB \|\PAT(\u_{0}^0) - \xc \|^2=\EB \|\y_0 - \xc \|^2$.
\end{proof}

\section{Appendix for Section~\ref{sec:FL}}
\begin{proof}[Proof of Lemma~\ref{lem:sigma}]
	By definition,
	\begin{align*}
	\sigmaat &= \EB_{\xi}\| \PAT (\nabla F(\xc; \xi) )\|^2  &\\
	&=\EB_{\xi}\| \PAT (\nabla F(\xc; \xi) - \nabla F(\xc))\|^2&\text{By \ Remark}~\ref{rem:sigma} \\
	&= n  \EB_{\xi}\bigg\| \frac{1}{n} \sum_{k=1}^n \left[ \nabla f(\x^*; \xi_k) - \nabla f_k(\x^* )\right] \bigg\|^2&\text{By \ Lemma}~\ref{lem:proj} \\
	&= \frac{1}{n} \sum_{k=1}^n\EB_{\xi_k}\big\| \nabla f(\x^*; \xi_k) - \nabla f_k(\x^* ) \big\|^2&\text{Since \ }\xi_i \perp \xi_j \\
	&= \sigma_*^2,
	\end{align*} 
	and
	\begin{align*}
	\sigmaa 
	&= \EB_{\xi}\|\PA (\nabla F(\xc; \xi) )\|^2  \\ 
	&= \EB_{\xi}\|\nabla F(\xc; \xi)\|^2 - \EB_{\xi}\| \PAT (\nabla F(\xc; \xi) )\|^2  \\
	&=\|\nabla F(\xc) \|^2 +  \EB_{\xi}\|\nabla F(\xc; \xi) - \nabla F(\xc) \|^2 - \EB_{\xi}\| \PAT (\nabla F(\xc; \xi) )\|^2  \\
	&= n\zeta_*^2 + n \sigma_*^2 - \sigmaat\\
	&=n 	\zeta_*^2 + (n-1) \sigma_*^2.
	\end{align*} 
\end{proof}

\begin{proof}[Proof of Corollary~\ref{cor:local_sgd}]
	From the discussion above Corollary~\ref{cor:local_sgd}, $F(\x) = \sum_{k=1}^n f_k(\x_k)$ the objective function of  $\x = [\x_1^\top, \cdots, \x_n^\top]^\top \in \RB^{nd}$ and satisfies all the assumptions.
	Then we have the bound~\eqref{eq:y_mu0} and~\eqref{eq:y_mu>} hold for this $F(\x)$.
	By dividing $n$ on both sides of~\eqref{eq:y_mu0} and~\eqref{eq:y_mu>}, we obtain bounds for $\frac{1}{n} \sum_{k=1}^n f_k(\hat{\x})  - \frac{1}{n} \sum_{k=1}^n f_k(\x^*)$.
	
	The rest is to replace corresponding parameters. 
	From Lemma~\ref{lem:sigma}, $\sigmaat  =   \sigma_*^2$ and $\sigmaa = n 	\zeta_*^2 + (n-1) \sigma_*^2$, so from~\eqref{eq:new_sigmaa},
	\[\widetilde{\sigma}_{\A,*}^2 = \sigmaa + (E-1)\|\nabla F(\xc)\|^2 =  \sigmaa + (E-1)n\zeta_*^2=nE\zeta_*^2 + (n-1) \sigma_*^2.\]
	Besides, we have $\Delta^2 = n B^2$.
	Then the conclusion follows.
\end{proof}

\end{appendix}

\end{document}